\newtheorem{theorem}{Theorem}[section]
\newtheorem{lemma}[theorem]{Lemma}
\newtheorem{proposition}[theorem]{Proposition}
\newtheorem{claim}{Claim}[section]
\theoremstyle{remark}
\newtheorem{remark}{Remark}[section]
\numberwithin{equation}{section}
\newcommand{\R}{\mathbb{R}}
\newcommand{\N}{\mathbb{N}}
\newcommand{\Z}{\mathbb{Z}}
\newcommand{\la}{\langle}
\newcommand{\ra}{\rangle}
\newcommand{\pd}{\partial}
\renewcommand{\a}{\alpha}
\newcommand{\eps}{\varepsilon}
\newcommand{\wP}{\widetilde{P}}
\newcommand{\wR}{\widetilde{R}}
\newcommand{\wS}{\widetilde{S}}
\newcommand{\cC}{\mathcal{C}}
\newcommand{\wC}{\widetilde{\mathcal{C}}}
\newcommand{\mF}{\mathcal{F}}
\newcommand{\mL}{\mathcal{L}}
\newcommand{\bS}{\bar{S}}
\newcommand{\bM}{\mathbb{M}}
\newcommand{\cN}{\mathcal{N}}
\newcommand{\fN}{\mathfrak{N}}
\newcommand{\ta}{\tilde{a}}
\newcommand{\tc}{\tilde{c}}
\newcommand{\tu}{\tilde{u}}
\newcommand{\tv}{\tilde{v}}
\newcommand{\tx}{\tilde{x}}
\newcommand{\bb}{\mathbf{b}}
\newcommand{\bd}{\mathbf{d}}
\newcommand{\tpsi}{\tilde{\psi}}
\DeclareMathOperator{\sech}{sech}
\DeclareMathOperator{\supp}{supp}
\DeclareMathOperator{\diag}{diag}
\begin{document}
\title[Stability of line solitons, II.]{Stability of line solitons \\ for the KP-II equation in $\R^2$, II.}
\author{Tetsu Mizumachi}
\email{tetsum@hiroshima-u.ac.jp}
\address{Department of Mathematics,
Graduate School of Science,
Hiroshima University\linebreak
1-7-1 Kagamiyama,
Higashi-Hiroshima 739-8521,
Japan}
\keywords{KP-II, line soliton, stability}
\subjclass[2010]{Primary 35B35, 37K40;\\Secondary 35Q35}
\begin{abstract}
The KP-II equation was derived by Kadmotsev and Petviashvili \cite{KP}
to explain stability of line solitary waves of shallow water.
Recently, Mizumachi \cite{Mi} has proved nonlinear stability
of $1$-line solitons for exponentially localized perturbations.
In this paper,  we prove stability of $1$-line solitons
for perturbations in $(1+x^2)^{-1/2-0}H^1(\R^2)$ and perturbations
in $H^1(\R^2)\cap \pd_xL^2(\R^2)$.
\end{abstract}

\maketitle
\section{Introduction}
\label{sec:intro}
The KP-II equation
\begin{equation}\label{eq:KPII}
\partial_x(\pd_tu+\pd_x^3u+3\pd_x(u^2))+3\partial_y^2u=0
\quad\text{for $t>0$ and $(x,y)\in \R^2$,}
\end{equation}
is a generalization to two spatial dimensions of the KdV equation
\begin{equation}
  \label{eq:KdV}
\pd_tu+\pd_x^3u+3\pd_x(u^2)=0\,,
\end{equation}
and has been derived as a model in the study of the transverse stability
of solitary wave solutions to the KdV equation
with respect to two dimensional perturbation when the surface tension
is weak or absent.  See \cite{KP} for the derivation of \eqref{eq:KPII}.
\par
The global well-posedness of \eqref{eq:KPII} in $H^s(\R^2)$ ($s\ge0$)
on the background of line solitons has been studied by Molinet,
Saut and Tzvetkov \cite{MST} whose  proof is based on the work of
Bourgain \cite{Bourgain}.
For the other contributions on the Cauchy problem of the KP-II equation,
see e.g. \cite{GPS,Hadac,HHK,IM, Tak,TT,Tz,Ukai} and the references therein.
\par
Let
$$\varphi_c(x)\equiv c\cosh^{-2}\Big(\sqrt{\frac{c}{2}}\,x\Big),\quad c>0.
$$
Then $\varphi_c(x-2ct)$ is a solitary wave solution of the KdV equation
\eqref{eq:KdV} and a line soliton solution of \eqref{eq:KPII} as well.
\par
Let us briefly explain known results on stability of $1$-solitons for
the KdV equation first.
Stability of the $1$-soliton $\varphi_c(x-2ct)$ of \eqref{eq:KdV}
was proved by \cite{Benjamin,Bona,We} 
using the fact that $\varphi_c$ is a minimizer of the Hamiltonian
on the manifold $\{u\in H^1(\R)\mid \|u\|_{L^2(\R)}=\|\varphi_c\|_{L^2(\R)}\}$.
As is well known, a solitary wave of the KdV equation travels
at a speed faster than the maximum group velocity of linear waves
and the larger solitary wave moves faster to the right.
Using this property, Pego and Weinstein \cite{PW} prove asymptotic stability
of solitary wave solutions of \eqref{eq:KdV} in an exponentially
weighted space. Later, Martel and Merle established the Liouville theorem
for the generalized KdV equations by using a virial type identity
and prove the asymptotic stability of solitary waves in $H^1_{loc}(\R)$
(see e.g. \cite{MM}). For stability of multi-solitons
of the generalized KdV equations, see \cite{MMT}.
\par
For the KP-II equation, its Hamiltonian is infinitely indefinite and
the variational approach such as \cite{GSS} is not applicable.  Hence
it seems natural to study stability of line solitons using strong
linear stability of line solitons.  Spectral transverse stability of
line solitons of \eqref{eq:KPII} has been studied by
\cite{APS,Burtsev}.  See also \cite{Haragus} for transverse linear
stability of cnoidal waves.  Alexander \textit{et al.} \cite{APS}
proved that the spectrum of the linearized operator in $L^2(\R^2)$
consists of the entire imaginary axis.  On the other hand, in an
exponentially weighted space where the size of perturbations are
biased in the direction of motion, the spectrum of the linearized
operator consists of a curve of resonant continuous eigenvalues which
goes through $0$ and the set of continuous spectrum which locates in
the stable half plane and is away from the imaginary axis
(see \cite{Burtsev, Mi}).  The former one appears because line solitons
are not localized in the transversal direction and $0$, which is
related to the symmetry of line solitons, cannot be an isolated
eigenvalue of the linearized operator.  Such a situation is common
with planer traveling wave solutions for the heat equation.  See
e.g. \cite{Kapitula, Le-Xin, Xin}.
\par
Using the inverse scattering method, Villarroel and Ablowitz \cite{VA}
studied solutions of around line solitons for \eqref{eq:KPII}.
Recently, Mizumachi \cite{Mi} has proved transversal stability of 
line soliton solutions of \eqref{eq:KPII} for exponentially localized
perturbations. The idea is to use the exponential decay property
of the linearized equation satisfying a secular term condition and
describe variations of local amplitudes and local inclinations of
the crest of modulating line solitons by a system of Burgers equations.
\par

The purpose of the present paper is to prove transverse stability 
of the line soliton solutions for perturbations which are the
$x$-derivative of $L^2(\R^2)$ functions and for polynomially localized
perturbations. Now let us introduce our results.
\begin{theorem}
  \label{thm:stability}
  Let $c_0>0$ and $u(t,x,y)$ be a solution of
\eqref{eq:KPII} satisfying  $u(0,x,y)=\varphi_{c_0}(x)+v_0(x,y)$.
There exist positive constants $\eps_0$ and $C$
  satisfying the following: if
 $v_0\in H^{1/2}(\R^2)\cap \pd_xL^2(\R^2)$ and
$\|v_0\|_{L^2(\R^2)}+\||D_x|^{1/2}v_0\|_{L^2}+\||D_x|^{-1/2}|D_y|^{1/2}v_0\|_{L^2(\R^2)} <\eps_0$
then there exist $C^1$-functions $c(t,y)$ and $x(t,y)$
such that for every $t\ge0$ and $k\ge0$,
\begin{align}
\label{OS} &
\|u(t,x,y)-\varphi_{c(t,y)}(x-x(t,y))\|_{L^2(\R^2)}\le C\|v_0\|_{L^2}\,,\\
\label{phase-sup} & 
\left\|c(t,\cdot)-c_0\right\|_{H^k(\R)}+\left\|\pd_yx(t,\cdot)\right\|_{H^k(\R)}
+\|x_t(t,\cdot)-2c(t,\cdot)\|_{H^k(\R)}\le C\|v_0\|_{L^2}\,,\\
\label{phase2} &  
\lim_{t\to\infty}\left(\left\|\pd_yc(t,\cdot)\right\|_{H^k(\R)}+
\left\|\pd_y^2x(t,\cdot)\right\|_{H^k(\R)}\right)=0\,,
\end{align}
and for any$R>0$,
\begin{equation}
  \label{AS}
\lim_{t\to\infty}
\left\|u(t,x+x(t,y),y)-\varphi_{c(t,y)}(x)\right\|_{L^2((x>-R)\times\R_y)}=0\,.
\end{equation}
\end{theorem}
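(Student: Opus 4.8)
The proof adapts the modulational strategy of \cite{Mi}, but replaces the exponentially weighted spaces and the exponential linear decay used there by $L^2$-based anisotropic spaces and by virial-type monotonicity estimates in the spirit of Martel and Merle's work on the generalized KdV equation (see \cite{MM}). \emph{Step 1 (modulation).} Passing to the modulated variable $z=x-x(t,y)$, one writes a solution of \eqref{eq:KPII} as $u(t,x,y)=\varphi_{c(t,y)}(z)+v(t,z,y)$ and fixes $c(t,y)$ and $x(t,y)$ by secular-term conditions: after a Fourier transform in $y$, the low transverse frequency part ($|\eta|<\eta_0$) of $v$ is required to be orthogonal to the two adjoint resonant modes of the linearized operator $\mL_c$, namely those carrying the scaling and translation invariances of the line soliton, which are responsible for $0$ being a resonant point of the continuous spectrum. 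Substituting the ansatz into \eqref{eq:KPII} yields an evolution equation $\pd_tv=\mL_cv+\cN$ for the remainder, where $\cN$ collects the genuinely nonlinear term $3\pd_z(v^2)$ together with the $z$-localized modulation terms built from $\pd_tc$, $\pd_tx-2c$ and the $y$-derivatives of $c$ and $x$, and a coupled system for $(c,x)$; after the change of unknowns of \cite{Mi} this system takes the form of Burgers-type equations with a diffusive second-order part and forcing that is quadratic in $(v,\pd_yc,\pd_yx)$.

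\emph{Step 2 (linear and virial estimates).} This is the core. The dispersive part of \eqref{eq:KPII} is a bounded (essentially unitary) group precisely on the $\pd_xL^2$-type space, which is why that hypothesis is indispensable, and the soliton potential $-6\pd_z(\varphi_c\,\cdot\,)$ is a relatively compact, $z$-localized perturbation. Decomposing in the transverse frequency $\eta$, the part with $|\eta|\ge\eta_0$ and the part with $|\eta|<\eta_0$ left over once the leading $\xi,\eta\to0$ resonance has been removed by the modulation of Step 1 are controlled by local smoothing for \eqref{eq:KPII} in the far field and by monotonicity arguments near the soliton. Time decay is then supplied by a virial functional: for a weight $\psi(z)$ increasing in $z$ (and translating slowly in $t$), $\int\psi\,v^2\,dz\,dy$ is almost non-increasing up to lower-order and $\eps_0$-small errors, which yields decay of $v$ on $\{z>-R\}$ and the spacetime bound $\int_0^\infty\|v(t)\|_{L^2(z>-R)}^2\,dt\lesssim\|v_0\|_{L^2}^2$. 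Every term of $\cN$ other than $3\pd_z(v^2)$ is $z$-localized, so this spacetime bound controls its contribution to the Duhamel integral, while $3\pd_z(v^2)$ is small and is absorbed using $H^{1/2}(\R^2)\hookrightarrow L^4(\R^2)$ and Kato-type local smoothing. One concludes $\|v(t)\|_{L^2}+\||D_z|^{1/2}v(t)\|_{L^2}+\||D_z|^{-1/2}|D_y|^{1/2}v(t)\|_{L^2}\lesssim\|v_0\|_{L^2}$ for all $t\ge0$.

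\emph{Step 3 (modulation dynamics, bootstrap, conclusions).} Feeding the spacetime bound of Step 2 into the Burgers system of Step 1 and using parabolic smoothing gives $\|c(t)-c_0\|_{H^k}+\|\pd_yx(t)\|_{H^k}+\|x_t(t)-2c(t)\|_{H^k}\lesssim\|v_0\|_{L^2}$ for all $t\ge0$ and $k\ge0$, which is \eqref{phase-sup}, and $\|\pd_yc(t)\|_{H^k}+\|\pd_y^2x(t)\|_{H^k}\to0$ as $t\to\infty$, which is \eqref{phase2}. Since all nonlinear and modulation source terms are quadratic in the quantities being estimated, a continuity argument closes for $\eps_0$ small: assuming on $[0,T]$ both $\sup_{t\le T}\bigl(\|v(t)\|_{L^2}+\text{(modulation norms)}\bigr)\le A\|v_0\|_{L^2}$ and $\int_0^T\|v(t)\|_{L^2(z>-R)}^2\,dt\le A\|v_0\|_{L^2}^2$, Steps 2--3 recover these bounds with $A$ replaced by $A/2$, whence $T=\infty$; undoing the modulation gives \eqref{OS}. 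Finally, \eqref{AS} follows because the spacetime bound of Step 2 together with a uniform bound on $\tfrac{d}{dt}\|v(t)\|_{L^2(z>-R)}^2$ forces $\|v(t)\|_{L^2(z>-R)}\to0$, and since $c(t,y)$ and $\pd_yx(t,y)$ are small and slowly varying this transfers to $\|u(t,x+x(t,y),y)-\varphi_{c(t,y)}(x)\|_{L^2((x>-R)\times\R_y)}\to0$.

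\emph{Main obstacle.} The difficulty is concentrated in the linear and virial theory of Step 2. In \cite{Mi} the exponential weight produced a genuine spectral gap, whereas here the spectrum of $\mL_c$ on $L^2(\R^2)$ fills the entire imaginary axis, so boundedness of the linear flow and local decay of $v$ must be extracted from virial functionals carefully matched to the anisotropy of KP-II and to the $\pd_xL^2$ structure --- it is precisely this structure that removes the dangerous $\xi\to0$ resonance --- while simultaneously verifying that the forcing in the Burgers system for $(c,x)$ is genuinely quadratic and spacetime-integrable. Making the two coupled pieces, the two-dimensional remainder $v$ in anisotropic $L^2$ norms and the one-dimensional diffusively evolving modulation parameters, close together in these weak norms is the heart of the argument.
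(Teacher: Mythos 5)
There is a genuine gap, and it sits at the heart of the argument. The paper's central new idea is the splitting $v=v_1+v_2$: one lets $v_1$ be a \emph{small free solution} of the KP-II equation with $v_1(0)=v_0$, so that the remainder $v_2:=v-v_1$ starts from zero and stays in the exponentially weighted space $X=L^2(\R^2;e^{2\a z}dzdy)$ (Lemma~\ref{lem:diff}). It is on $v_2$, not on $v$, that the secular orthogonality condition \eqref{eq:orth} is imposed, and it is to $v_2$ that the exponential semigroup decay of Proposition~\ref{prop:semigroup-est} is applied. Your proposal imposes the orthogonality directly on $v$ (after projecting to low transverse frequencies). That is precisely what the paper rules out as unfeasible: the adjoint resonant eigenfunctions $g_k^*(z,\eta,c)$ grow exponentially as $z\to+\infty$, so the pairing $\int v\,\overline{g_k^*}\,dz\,dy$ is not defined for $v$ that is merely $L^2$ in $z$. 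Without the $v_1/v_2$ decomposition the modulation equations cannot even be written down for polynomially (or merely $L^2$) localized data, which is the whole point of this paper over \cite{Mi}.

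Further, you replace the exponentially weighted linear theory entirely by virial/monotonicity arguments "in the spirit of Martel and Merle." The paper does not do this: it retains the weighted semigroup estimates of \cite{Mi} for the low transverse frequencies of $v_2$ (Lemma~\ref{lem:nonresonant-ylow}) and supplements them with a weighted virial estimate for the high transverse frequencies (Lemma~\ref{lem:virial}); the Martel--Merle style virial identity is applied only to the \emph{free} part $v_1$ (Lemma~\ref{lem:virial-0}). Your Step~2 also claims a time-uniform bound on $\||D_z|^{1/2}v(t)\|_{L^2}+\||D_z|^{-1/2}|D_y|^{1/2}v(t)\|_{L^2}$ by $\|v_0\|_{L^2}$; the paper makes no such claim and it is not needed --- only $\|v(t)\|_{L^2}$ is controlled (Lemma~\ref{lem:M4-bound}). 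What the $H^{1/2}$-type hypotheses on $v_0$ are actually for is the Hadac--Herr--Koch small-data scattering bound $\sup_t\|\tv_1(t)\|_{L^3}\lesssim\eps_0$ (Lemma~\ref{lem:v1-b}), which is needed to close the virial estimate for $v_2$ because, after splitting, the cross term $v_1v_2$ cannot be integrated by parts away. You also omit the auxiliary term $\psi_{c,L}$ enforcing the mass constraint \eqref{eq:0mean}, and the change of variables $\bb\mapsto\bd$ in \eqref{eq:btod} that eliminates the cubic terms in the energy identity for the Burgers system --- without which the $L^2(0,T;Y)$ bounds on $c_y$ and $x_{yy}$ do not close for data that is not strongly localized. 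Finally, the $H^k$ bounds in \eqref{phase-sup} are not a consequence of parabolic smoothing; they hold trivially because $\tc$ and $x_y$ lie in $Y$, whose elements have Fourier support in $[-\eta_0,\eta_0]$, so all Sobolev norms are comparable.
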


\begin{theorem}
  \label{thm:poly}
Let $c_0>0$ and $s>1$. Suppose that $u$ is a solutions of 
\eqref{eq:KPII} satisfying  $u(0,x,y)=\varphi_{c_0}(x)+v_0(x,y)$.
Then there exist positive constants $\eps_0$ and $C$ such that 
if $\|\la x\ra^sv_0\|_{H^1(\R^2)} <\eps_0$,
there exist $c(t,y)$ and $x(t,y)$ satisfying
\eqref{phase2}, \eqref{AS} and
\begin{align}
\label{OS'} &
\|u(t,x,y)-\varphi_{c(t,y)}(x-x(t,y))\|_{L^2(\R^2)}
\le C\|\la x\ra^sv_0\|_{H^1(\R^2)}\,,\\
\label{phase-sup'} & 
\left\|c(t,\cdot)-c_0\right\|_{H^k(\R)}+\left\|\pd_yx(t,\cdot)\right\|_{H^k(\R)}
+\|x_t(t,\cdot)-2c(t,\cdot)\|_
{H^k(\R)}\le C\|\la x\ra^sv_0\|_{H^1(\R^2)}
\end{align}
for every $t\ge0$ and $k\ge0$.
\end{theorem}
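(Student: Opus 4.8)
The plan is to deduce Theorem~\ref{thm:poly} from Theorem~\ref{thm:stability}. Since $\la x\ra^sv_0\in H^1(\R^2)$ forces $v_0\in H^1\subset H^{1/2}$, the only obstruction to applying Theorem~\ref{thm:stability} directly is that $v_0$ need not lie in $\pd_xL^2(\R^2)$, and the single offending quantity is the $x$-mean
\[
m(y):=\int_\R v_0(x,y)\,dx .
\]
Because $s>1/2$, Cauchy--Schwarz in $x$ shows that $m$ is well defined with $\|m\|_{H^1(\R_y)}\lesssim\|\la x\ra^sv_0\|_{H^1(\R^2)}$, hence small. This mean cannot be carried by a remainder in $\pd_xL^2$ --- formally it would obey the ill-posed equation $\pd_tm=-3\pd_y^2m$ under the KP-II flow --- so it must be absorbed into the amplitude of the soliton. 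Since $\pd_c\int_\R\varphi_c(x)\,dx=\pd_c\bigl(2\sqrt{2c}\bigr)=\sqrt{2/c}\neq0$, the implicit function theorem provides, for $\|m\|$ small, a function $c(\cdot)$ with $c(y)-c_0=\sqrt{c_0/2}\,m(y)+O(m(y)^2)$ in $H^1(\R_y)$, with $c(y)\to c_0$ as $|y|\to\infty$, and with $\int_\R\bigl(\varphi_{c(y)}(x)-\varphi_{c_0}(x)\bigr)\,dx=m(y)$.

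Next I would set
\[
\tv_0(x,y):=v_0(x,y)-\bigl(\varphi_{c(y)}(x)-\varphi_{c_0}(x)\bigr),
\]
so that $u(0,x,y)=\varphi_{c(y)}(x)+\tv_0(x,y)$ and $\int_\R\tv_0(x,y)\,dx=0$ for a.e.\ $y$; since $\varphi_c$ is Schwartz in $x$ and $C^1$ in $c$ near $c_0$ one still has $\|\la x\ra^s\tv_0\|_{H^1(\R^2)}\lesssim\|\la x\ra^sv_0\|_{H^1(\R^2)}$. The key point is then the elementary fact that any $f$ with $\la x\ra^sf\in H^1(\R^2)$, $s>1$, and $\int_\R f(x,y)\,dx=0$ for a.e.\ $y$ belongs to $\pd_xL^2(\R^2)$ with
\[
\||D_x|^{1/2}f\|_{L^2}+\||D_x|^{-1/2}|D_y|^{1/2}f\|_{L^2}\lesssim\|\la x\ra^sf\|_{H^1(\R^2)} .
\]
The second term is the delicate one; it is estimated by writing $\mF_xf(\xi,y)=\int_\R f(x,y)(e^{-ix\xi}-1)\,dx$ (using the vanishing mean), bounding $|e^{-ix\xi}-1|\lesssim|\xi|^{\a}|x|^{\a}$ with $0<\a<s-\tfrac12$, applying Minkowski's and Cauchy--Schwarz's inequalities in $x$, and splitting the $\xi$-integral at $|\xi|=1$. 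Applied to $f=\tv_0$, this verifies the hypotheses of Theorem~\ref{thm:stability}.

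Finally I would run the scheme that proves Theorem~\ref{thm:stability}, now with the $y$-dependent profile $\varphi_{c(\cdot)}(x)$ in place of the constant-amplitude line soliton: decompose $u(t,x,y)=\varphi_{c(t,y)}(x-x(t,y))+v(t,x,y)$ with $c(0,y)=c(y)$, $x(0,y)=0$, $v(0)=\tv_0$, and carry out verbatim the modulation equations and the weighted energy and virial estimates of Theorem~\ref{thm:stability}, whose only inputs are the smallness of $v$ in the norm above and of $c-c_0$ in $H^1(\R_y)$. The local convergence \eqref{AS}, the decay \eqref{phase2}, and the smoothing of $c(t,\cdot)-c_0$ and $\pd_yx(t,\cdot)$ from $H^1$ to $H^k$ for $t>0$ then follow from the parabolic, Burgers-type structure of the modulation system exactly as there, and \eqref{OS'}, \eqref{phase-sup'} hold with $\|\la x\ra^sv_0\|_{H^1}$ in place of $\|v_0\|_{L^2}$ thanks to the bounds on $\tv_0$ and on $c-c_0$. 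I expect the main obstacle to be precisely this last step: one must check that the construction of the modulation parameters and every a priori estimate in the proof of Theorem~\ref{thm:stability} is genuinely insensitive to replacing the constant background $c_0$ by the $H^1$-close, $y$-dependent amplitude $c(\cdot)$, and that the asserted regularity of $c(t,\cdot)$ and $x(t,\cdot)$ holds uniformly down to $t=0$ although $c(0,\cdot)$ is only of class $H^1$ in $y$ --- it is here that the fine structure of the modulation system, together with the margin afforded by $s>1$ in the polynomially weighted estimates, is used in full.
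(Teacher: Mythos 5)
Your proposal is correct and follows essentially the same route as the paper's proof (Lemma~\ref{lem:nonzeromean1} and Section~\ref{sec:poly}): absorb the $x$-mean $m(y)=\int v_0\,dx$ into a $y$-dependent amplitude $c_1(y)$ so that the remainder $v_*$ has zero $x$-mean, verify that $v_*$ falls within the scope of the scheme behind Theorem~\ref{thm:stability}, and re-run that scheme with small but nonzero initial modulation data. Two small technical remarks. First, the paper gets $\pd_x^{-1}v_*\in L^2$ by direct integration from $\pm\infty$ (using the zero mean) and then passes to the anisotropic norms $\||D_x|^{\pm1/2}\cdot\|_{L^2}$, $\||D_x|^{-1/2}|D_y|^{1/2}\cdot\|_{L^2}$ by elementary multiplier bounds such as $|\xi|^{-1/2}|\eta|^{1/2}\le\tfrac12(|\xi|^{-1}+|\eta|)$; your fractional-difference argument $|e^{-ix\xi}-1|\lesssim|x\xi|^\alpha$ works but is not needed. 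Second, the concern you flag at the end — that $c(0,\cdot)$ is only $H^1$ in $y$ while the modulation scheme requires $H^k$ bounds — is resolved in the paper not by a smoothing property of the Burgers system but simply by the construction: the modulation parameters $(\tc,\tx)$ always lie in the band-limited space $Y$ (they come from Lemma~\ref{lem:decomp} applied to $\tu(0)=\varphi_{c_1(y)}$ after subtracting the free evolution of $v_*$), so all $H^k$-norms are automatically controlled by $\eta_0^k$ times the $Y$-norm; the non-band-limited part of $c_1(y)-c_0$ is carried by a nonzero initial $v_2(0)\in X$. Thus one cannot prescribe $c(0,y)=c(y)$ and $v(0)=\tv_0$ literally; the orthogonality condition \eqref{eq:orth} at $t=0$ fixes $(v_2(0),\tc(0),\tx(0))$ with $\|v_2(0)\|_{X\cap L^2}+\|\tc(0)\|_Y\lesssim\|\la x\ra^{s/2}v_0\|_{L^2}$, and the a priori estimates then go through verbatim with these nonzero initial values.
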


\begin{remark}
By \eqref{phase-sup} and \eqref{phase2},
$$\lim_{t\to\infty}\sup_{y\in\R}(|c(t,y)-c_0|+|x_y(t,y)|)=0\,,$$
and as $t\to\infty$, the modulating line soliton $\varphi_{c(t,y)}(x-x(t,y))$
converges to a $y$-independent modulating line soliton
$\varphi_{c_0}(x-x(t,0))$ in $L^2(\R_x\times (|y|\le R))$ for any $R>0$.
Hence it follows from \eqref{AS} that
$$\lim_{t\to\infty}
\left\|u(t,x+x(t,0),y)-\varphi_{c_0}(x)
\right\|_{L^2((x>-R)\times(|y|\le R))}=0\,.$$
We remark that the phase shift $x(t,y)$ in \eqref{OS} and \eqref{AS}
cannot be uniform in $y$ because of the variation of the local phase shift
around $y=\pm2\sqrt{2c_0}t+O(\sqrt{t})$.
See Theorems~1.4 and 1.5 in \cite{Mi}.
\end{remark}

\begin{remark}
The KP-II equation has no localized solitary waves (see \cite{dBM,dBS}).
On the other hand, the KP-I equation has stable ground states
(see \cite{dBS,LiuW}) and line solitons of the KP-I equation are unstable
(see \cite{RT1,RT2,Z}). See e.g. \cite{Kl-Saut} and the references therein
for numerical studies of KP-type equations.
\end{remark}

\begin{remark}
Following the idea of Merle and Vega \cite{MV},
Mizumachi and Tzvetkov \cite{MT} 
used the Miura transformation to prove stability of line soliton
solutions to the perturbations which are periodic in the transverse directions.
They prove that the Miura transformation gives a local isomorphism
between solutions around a $1$-line soliton and solutions
around the null solution of KP-II via solutions around a kink of MKP-II.
\par
The argument in \cite{MT} fails for localized perturbations because in
view of the resonant continuous eigenvalues of MKP-II in
$L^2(\R^2;e^{2\a x}dxdy)$ with $\a\in(0,\sqrt{2c_0})$ (see Lemma~2.5
in \cite{Mi}), the motion of waves along the crest of modulating line
kink of MKP-II is expected to be unilateral, whereas the wave motion
along the crest of a modulating line soliton for the KP-II equation is
bidirectional (see Theorem~1.5 in \cite{Mi}).
\end{remark}
Now let us explain our strategy of the proof.
To prove stability of line solitons in \cite{Mi}, we rely on the fact that
solutions of the linearized equation decay exponentially in exponentially
weighted norm as $t\to\infty$ if data are orthogonal to the adjoint
resonant continuous eigenmodes.
To describe the behavior of solutions around a line soliton, we
represent them by using an ansatz
\begin{equation}
  \label{eq:ansatz0}
u(t,x,y)=\varphi_{c(t,y)}(z)-\psi_{c(t,y)}(z+3t)+v(t,z,y)\,,\quad
z=x-x(t,y)\,,
\end{equation}
where $c(t,y)$ and $x(t,y)$ are the local amplitude and the local
phase shift of the modulating line soliton $\varphi_{c(t,y)}(x-x(t,y))$
at time $t$ along the line parallel to the $x$-axis and
$\psi_{c(t,y)}$ is an auxiliary function so that
$$\int_\R v(t,z,y)\,dz=\int_\R v(0,z,y)\,dz\quad\text{for any $y\in\R$.}$$
One of the key step is to prove $\|v(t)\|_{L^2_{loc}}$
is square integrable in time. 
In \cite{Mi}, we impose a non secular condition on $v(t)$ such that
the perturbation $v(t)$ is orthogonal to the adjoint resonant
eigenfunctions in order to apply the strong linear stability property
of line solitons (see Proposition~\ref{prop:semigroup-est}
in Section~\ref{sec:preliminaries})to $v$.
Since the adjoint resonant eigenfunctions grow exponentially as
$x\to\infty$, the secular term condition is not feasible for $v(t)$
which is not exponentially localized as $x\to\infty$.  
Following the idea of \cite{Mi1,MPQ,MT}, we split the perturbation $v(t)$
into a sum of a small solution $v_1(t)$ of \eqref{eq:KPII} satisfying
$v_1(0)=v_0$ and the remainder part $v_2(t)$.
As is the same with other long wave models, the solitary wave part
moves faster than the freely propagating freely propagating perturbations
and the localized $L^2$-norms of $v_1$ are square integrable in time
thanks to the virial identity. 
The remainder part $v_2(t)$ is exponentially localized
as $x\to\infty$ and is mainly driven by the interaction between
$v_1$ and the line soliton. We impose the secular term condition on $v_2$
to apply the linear stability estimate.
Using the linear stability estimate as well as a virial type identity,
we have the square integrability
of $\|e^{\a z}v_2(t)\|_{L^2}$ in time for small $\a>0$.
\par
For Boussinesq equations, Pedersen \cite{Ped} heuristically observed that
modulation of line solitary waves are described by a system of Burgers
equations. We expect the method presented in this paper is applicable
to the other $2$-dimensional long wave models.
\par

Our plan of the present paper is as follows.
In Section~\ref{sec:preliminaries}, we recollect strong linear stability
property of line solitons that are proved in \cite{Mi}.
In Section~\ref{sec:decomp}, we decompose a solution around line solitons
into a sum of the modulating line soliton $\varphi_{c(t,y)}(z)$,
a small freely propagating part $v_1$, an exponentially localized remainder
part $v_2$ and an auxiliary function $\psi_{c(t,y)}$.
In Section~\ref{sec:modulation}, we compute the time derivative
of the secular term condition on $v_2$ and derive a system of Burgers equations
that describe the local amplitude $c(t,y)$ and the local phase shift
$x(t,y)$.
In Section~\ref{sec:apriori}, we estimate $\tc(t):=c(t)-c_0$ and $x_y(t)$.
In the present paper, $\tc(t)$ and $x_y(t)$ are not necessarily
pseudo-measures and we are not able to estimate $\mathcal{F}^{-1}L^\infty-L^2$
estimates for $\tc$ and $x_y$. Instead, we use the monotonicity formula
to obtain time global bounds for $\tc(t)$ and $x_y(t)$.
Since the terms related to $v_1(t)$ are merely square integrable in time
and cubic terms that appear in the energy identity are not necessarily
integrable in time, we use a change of variables to eliminate these
terms to obtain time global estimates.
In Section~\ref{sec:L2norm}, we estimate the $L^2$-norm of
the remainder term $v$.
In Section~\ref{sec:v1}, we introduce several estimates
for $v_1$ which is a small solution of \eqref{eq:KPII}.
First, we show that a virial identity by \cite{dBM} ensures that
localized norm of $v_1$ is square integrable in time.
Then, we explain that the nonlinear scattering theory in \cite{HHK}
gives a time global bound for $L^p$-norms with $p>2$ if $v_1(0)=v_0\in
|D_x|^{1/2}L^2(\R^2)$ and $v_0$ is sufficiently smooth.
In Section~\ref{sec:X-norm}, we estimate the exponentially weighted norm of
$v_2$ following the lines of \cite{Mi}. We use the semigroup estimate
introduced in Section~\ref{sec:preliminaries} to estimate the low frequencies
in $y$ and apply a virial type estimate to estimate high frequencies in $y$
to avoid a loss of derivatives.
Since we split the perturbation $v$ into two parts $v_1$ and $v_2$,
we cannot cancel the derivative of the nonlinear term by integration 
by parts and we need a time global bound of $\|v_1(t)\|_{L^3}$
to estimate the exponentially localized energy norm of $v_2(t)$
by using the virial identity.
In Sections~\ref{sec:thm1} and \ref{sec:poly},
we prove Theorems~\ref{thm:stability} and \ref{thm:poly}.

\par
Finally, let us introduce several notations. 
For Banach spaces $V$ and $W$, let $B(V,W)$ be the space of all
linear continuous operators from $V$ to $W$ and let
$\|T\|_{B(V,W)}=\sup_{\|x\|_V=1}\|Tu\|_W$ for $T\in B(V,W)$.
We abbreviate $B(V,V)$ as $B(V)$.
For $f\in \mathcal{S}(\R^n)$ and $m\in \mathcal{S}'(\R^n)$, let 
\begin{gather*}
(\mathcal{F}f)(\xi)=\hat{f}(\xi)
=(2\pi)^{-n/2}\int_{\R^n}f(x)e^{-ix\xi}\,dx\,,\\
(\mathcal{F}^{-1}f)(x)=\check{f}(x)=\hat{f}(-x)\,,\quad
(m(D_x)f)(x)=(2\pi)^{-n/2}(\check{m}*f)(x)\,.
\end{gather*}
We use $a\lesssim b$ and $a=O(b)$ to mean that there exists a
positive constant such that $a\le Cb$. 
Various constants will be simply denoted
by $C$ and $C_i$ ($i\in\mathbb{N}$) in the course of the
calculations. We denote $\la x\ra=\sqrt{1+x^2}$ for $x\in\R$.
\bigskip

\section*{Acknowledgment}
The author would like to express his gratitude to
Professor~Nikolay Tzvetkov for helpful discussions and his hospitality
during the author's visit to University of Cergy-Pontoise.
This research is supported by Grant-in-Aid for Scientific
Research (No. 21540220).
\bigskip

\section{Preliminaries}
\label{sec:preliminaries}
In this section, we recollect decay estimates of the semigroup
generated by the linearized operator around a $1$-line soliton
in exponentially weighted spaces.
\par
Since \eqref{eq:KPII} is invariant under the scaling
$u\mapsto \lambda^2 u(\lambda^3t,\lambda x,\lambda^2y)$,
we may assume  $c_0=2$ in Theorems~\ref{thm:stability} and \ref{thm:poly}
without loss of generality. 
Let $$\varphi=\varphi_2\,,\quad
\mL=-\pd_x^3+4\pd_x-3\pd_x^{-1}\pd_y^2-6\pd_x(\varphi \cdot)\,.$$
We remark that $e^{t\mL}$ is a $C^0$-semigroup on $X:=L^2(\R^2;e^{2\a x}dxdy)$
for any $\a>0$ because $\mL_0:=-\pd_x^3+4\pd_x-3\pd_x^{-1}\pd_y^2$
is $m$-dissipative on $X$ and  $\mL-\mL_0$ is infinitesimally small
with respect to $\mL_0$.
\par
We have the following exponential decay estimates for $e^{t\mL_0}$ on $X$.
\begin{lemma}\emph{(\cite[Lemma~3.4]{Mi})}
  \label{lem:free-semigroup}
Suppose $\a>0$. Then there exists a positive constant
$C$ such that for every $f\in C_0^\infty(\R^2)$ and $t>0$,
  \begin{gather*}
\|e^{t\mL_0}f\|_X \le Ce^{-\a(4-\a^2)t}\|f\|_X\,,\\
\|e^{t\mL_0}\pd_x f\|_X +\|e^{t\mL_0}\pd_x^{-1}\pd_y f\|_X 
\le C(1+t^{-1/2})e^{-\a(4-\a^2)t}\|f\|_X\,,\\
\|e^{t\mL_0}\pd_x f\|_X 
\le C(1+t^{-3/4})e^{-\a(4-\a^2)t}\|e^{ax}f\|_{L^1_xL^2_y}\,.
  \end{gather*}
\end{lemma}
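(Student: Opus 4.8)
Since $\mL_0$ has constant coefficients in $(x,y)$, the plan is to conjugate by the weight $e^{\a x}$ and reduce all three bounds to pointwise estimates on a Fourier multiplier symbol. Write $\mL_0=p(D)$, the Fourier multiplier with symbol
$$p(\xi,\eta)=i\Bigl(\xi^3+4\xi-\frac{3\eta^2}{\xi}\Bigr),$$
which is purely imaginary; this is why $e^{t\mL_0}$ is unitary on the unweighted $L^2(\R^2)$. For nice $g$ one has $\mathcal{F}(e^{\a x}g)(\xi,\eta)=\hat g(\xi+i\a,\eta)$ by analytic continuation, so conjugation by $e^{\a x}$ shifts the $x$-frequency: $e^{\a x}p(D)e^{-\a x}$ is the multiplier with symbol $p(\xi+i\a,\eta)$. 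A direct computation gives
$$\mathrm{Re}\,p(\xi+i\a,\eta)=-\a(4-\a^2)-3\a\xi^2-\frac{3\a\eta^2}{\xi^2+\a^2}\le-\a(4-\a^2)\qquad(\a>0),$$
so $e^{\a x}e^{t\mL_0}e^{-\a x}$ is the $L^2(\R^2)$ multiplier $m_t:=e^{tp(\xi+i\a,\eta)}$ with $|m_t|\le e^{-\a(4-\a^2)t}$ everywhere. By Plancherel and density of $C_0^\infty(\R^2)$ in $X$ this is the first bound, with $C=1$.

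For the second bound I would track the extra symbol factors that $\pd_x$ and $\pd_x^{-1}\pd_y$ produce under the conjugation, namely $i(\xi+i\a)=i\xi-\a$ and $\eta/(\xi+i\a)$. Using $|\xi+i\a|\le|\xi|+\a$ together with the Gaussian $e^{-3\a t\xi^2}$ contained in $\mathrm{Re}\,p$, and the elementary bound $\sup_{s\in\R}|s|e^{-3\a ts^2}\lesssim t^{-1/2}$, one obtains $|i(\xi+i\a)m_t(\xi,\eta)|\lesssim(1+t^{-1/2})e^{-\a(4-\a^2)t}$; the factor $\eta/(\xi+i\a)$ is treated the same way, applying the bound with $s=\eta/\sqrt{\xi^2+\a^2}$ and the factor $e^{-3\a t\eta^2/(\xi^2+\a^2)}$. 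Plancherel then gives the second line.

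For the third bound ($L^1_xL^2_y\to X$), I would take the partial Fourier transform in $y$ and, for each fixed $\eta$, apply the one–dimensional Hausdorff–Young inequality in $x$: with $M_t(\xi,\eta):=i(\xi+i\a)m_t(\xi,\eta)$ the symbol of $e^{\a x}e^{t\mL_0}\pd_x e^{-\a x}$, one has $\|M_t(D_x,\eta)h\|_{L^2_x}\lesssim\|M_t(\cdot,\eta)\|_{L^2_\xi}\|h\|_{L^1_x}$. Minkowski's integral inequality bounds $\bigl\|\,\|h(\cdot,\eta)\|_{L^1_x}\bigr\|_{L^2_\eta}$ by $\|h\|_{L^1_x(L^2_\eta)}=\|e^{\a x}f\|_{L^1_xL^2_y}$ (Plancherel in $y$), so it remains to estimate $\sup_\eta\|M_t(\cdot,\eta)\|_{L^2_\xi}$. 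Discarding the $\eta$-dependent factor ($\le1$),
$$\int_\R|\xi+i\a|^2e^{2t\,\mathrm{Re}\,p(\xi+i\a,\eta)}\,d\xi\le e^{-2\a(4-\a^2)t}\int_\R(\xi^2+\a^2)e^{-6\a t\xi^2}\,d\xi,$$
and the last integral is $\lesssim t^{-3/2}+t^{-1/2}\lesssim(1+t^{-3/4})^2$, which yields the third bound.

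I do not expect a genuine obstacle: the argument is an explicit symbol computation, and the only care needed is (i) to justify that the shifted symbol $p(\xi+i\a,\eta)$ is a bona fide bounded $L^2(\R^2)$ multiplier — this is exactly the $m$-dissipativity of $\mL_0$ on $X$ noted before the statement, equivalently $\mathrm{Re}\,p(\xi+i\a,\eta)\le0$ — and (ii) to match the $t\to0^+$ singularities $t^{-1/2}$ and $t^{-3/4}$ against the boundedness (indeed decay) of the polynomial-in-$t$ factors as $t\to\infty$. It is essential that the weight is one–sided and equal to $e^{\a x}$, so that the conjugation shifts only $\xi$ and leaves $\eta$ untouched.
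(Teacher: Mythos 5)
Your proof is correct, and it uses the standard conjugation-by-$e^{\a x}$ argument that reduces the weighted semigroup estimates to pointwise bounds on the shifted symbol $p(\xi+i\a,\eta)$; this is essentially the approach of \cite[Lemma~3.4]{Mi}, which the paper cites without reproving. The symbol computation, the Gaussian-decay bounds for the extra factors $i(\xi+i\a)$ and $\eta/(\xi+i\a)$, and the $L^1_x L^2_y\to L^2$ step via Hausdorff--Young in $x$ plus Minkowski all check out.
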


Solutions of $\pd_tu=\mL u$ satisfying a \textit{secular term condition}
decay like solutions to the free equation $\pd_tu=\mL_0u$.
To be more precise, let us introduce a family of
continuous resonant eigenvalues near $0$ and the corresponding
continuous eigenfunctions of the linearized operator $\mL$.
Let 
\begin{gather*}
\beta(\eta)=\sqrt{1+i\eta}\,,\quad \lambda(\eta)=4i\eta\beta(\eta)\,,\\
g(x,\eta)=\frac{-i}{2\eta\beta(\eta)}
\pd_x^2(e^{-\beta(\eta)x}\sech x),\quad
g^*(x,\eta)=\pd_x(e^{\beta(-\eta)x}\sech x)\,.
\end{gather*}
Then
$$\mL(\eta)g(x,\pm\eta)=\lambda(\pm\eta)g(x,\pm\eta)\,,\quad
\label{eq:lem-kp2}
\mL(\eta)^*g^*(x,\pm\eta)=\lambda(\mp\eta)g^*(x,\pm\eta)\,.$$
Now we define a spectral projection to the resonant eigenmodes
$\{g_\pm(x,\eta)\}$. 
Let 
\begin{gather*}
g_1(x,\eta)=2\Re g(x,\eta)\,,\quad g_2(x,\eta)=-2\eta\Im g(x,\eta)\,,\\
g_1^*(x,\eta)=\Re g^*(x,\eta)\,,\quad g_2^*(x,\eta)=-\eta^{-1}\Im g^*(x,\eta)\,,
\end{gather*}
and $P_0(\eta_0)$ be a projection
to resonant modes defined by
\begin{equation*}
P_0(\eta_0)f(x,y)=\frac{1}{2\pi}\sum_{k=1,\,2}
\int_{-\eta_0}^{\eta_0}a_k(\eta)g_k(x,\eta)e^{iy\eta}\,d\eta\,,  
\end{equation*}
\begin{align*}
 a_k(\eta)=&\int_\R \lim_{M\to\infty}\left(\int_{-M}^M f(x_1,y_1)e^{-iy_1\eta}
\,dy_1\right)\overline{g_k^*(x_1,\eta)}\,dx_1
\\=& \sqrt{2\pi}\int_\R (\mF_yf)(x,\eta)\overline{g_k^*(x,\eta)}\,dx\,.
  \end{align*}
For $\eta_0$ and $M$ satisfying $0<\eta_0\le M\le \infty$, let
\begin{gather*}
P_1(\eta_0, M)u(x,y):=\frac{1}{2\pi}\int_{\eta_0\le |\eta|\le M}
\int_\R  u(x,y_1)e^{i\eta(y-y_1)}dy_1d\eta\,,
\\ P_2(\eta_0,M):= P_1(0,M)-P_0(\eta_0)\,.
\end{gather*}
Then we have the following.
\begin{proposition}\emph{(\cite[Proposition~3.2 and Corollary~3.3]{Mi})}
\label{prop:semigroup-est}
Let $\a\in (0,2)$ and $\eta_1$ be a positive number satisfying
$\Re\beta(\eta_1)-1<\a$.  Then there exist positive constants
$K$ and  $b$ such that for any $\eta_0\in(0,\eta_1]$, $M\ge \eta_0$,
$f\in X$ and $t\ge 0$,
$$ \|e^{t\mL}P_2(\eta_0,M)f\|_X\le Ke^{-bt}\|f\|_X\,.$$
Moreover, there exist positive constants $K'$ and $b'$ such that for $t>0$,
\begin{gather*}
\|e^{t\mL}P_2(\eta_0,M)\pd_xf\|_X \le
K'e^{-b' t}t^{-1/2}\|e^{ax}f\|_X\,,\\
\|e^{t\mL}P_2(\eta_0,M)\pd_xf\|_X \le
K'e^{-b' t}t^{-3/4}\|e^{ax}f\|_{L^1_xL^2_y}\,.  
\end{gather*}
\end{proposition}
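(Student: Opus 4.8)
The statement is a fibre-wise exponential decay of the linearized KP-II semigroup on the exponentially weighted space $X$, and the natural route is to diagonalize in the transverse variable and reduce to a uniform family of one-dimensional semigroup estimates. First I would remove the weight by the unitary $Uf=e^{\a x}f$, so that $e^{t\mL}$ on $X$ is conjugate to $e^{t\widetilde{\mL}}$ on $L^2(\R^2)$ with $\widetilde{\mL}=U\mL U^{-1}$, and take the Fourier transform in $y$. Since $\widetilde{\mL}$ commutes with $y$-translations this gives a direct integral $\mF_y\widetilde{\mL}\mF_y^{-1}=\int^{\oplus}\widetilde{\mL}(\eta)\,d\eta$, with $\widetilde{\mL}(\eta)=e^{\a x}\bigl(-\pd_x^3+4\pd_x+3\eta^2\pd_x^{-1}-6\pd_x(\varphi\,\cdot)\bigr)e^{-\a x}$ acting on $L^2(\R_x)$, and the fibre of $P_2(\eta_0,M)$ is the identity for $\eta_0\le|\eta|\le M$ and $I-\widetilde{P}_0(\eta)$ for $|\eta|\le\eta_0$, where $\widetilde{P}_0(\eta)$ is the rank $\le2$ spectral projection of $\widetilde{\mL}(\eta)$ onto $\spann\{e^{\a x}g_1(\cdot,\eta),e^{\a x}g_2(\cdot,\eta)\}$; working with $g_1,g_2$ instead of $g(\cdot,\pm\eta)$ makes this projection extend smoothly through $\eta=0$, where the two resonant eigenfunctions coalesce. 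By Plancherel in $y$ it then suffices to prove $\|e^{t\widetilde{\mL}(\eta)}\|_{B(L^2_x)}\le Ke^{-bt}$ for $\eta_0\le|\eta|\le M$ and $\|e^{t\widetilde{\mL}(\eta)}(I-\widetilde{P}_0(\eta))\|_{B(L^2_x)}\le Ke^{-bt}$ for $|\eta|\le\eta_0$.

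The core is the spectral analysis of $\widetilde{\mL}(\eta)$. Conjugating and taking the Fourier transform in $x$ shows its free part $\widetilde{\mL}_0(\eta)$ is the Fourier multiplier with symbol $p_\eta(\xi)$ satisfying $\Re p_\eta(\xi)=-\a\bigl(3\xi^2+4-\a^2+3\eta^2(\xi^2+\a^2)^{-1}\bigr)\le-\a(4-\a^2)<0$, which is the content of Lemma~\ref{lem:free-semigroup}, while the potential term $-6\pd_x(\varphi\,\cdot)$ conjugated by $e^{\a x}$ is relatively compact since $e^{\a x}\varphi$ decays ($\a<2=\sqrt{2c_0}$). Hence $\sigma_{\mathrm{ess}}(\widetilde{\mL}(\eta))=\{p_\eta(\xi):\xi\in\R\}$ is unchanged and there are only finitely many eigenvalues to the right of it. Exploiting that $\varphi=2\sech^2$ is reflectionless, I would solve $\widetilde{\mL}(\eta)g=\lambda g$ essentially explicitly: for $|\eta|\le\eta_1$ the only eigenvalues outside the essential spectrum are $\lambda(\pm\eta)=\pm4i\eta\beta(\pm\eta)$, with eigenfunctions $e^{\a x}g(\cdot,\pm\eta)$ and adjoint data $e^{-\a x}g^*(\cdot,\pm\eta)$, whereas for $|\eta|>\eta_1$ the corresponding eigenfunction leaves $L^2(\R_x)$ and the spectrum sits in $\{\Re z\le-\a(4-\a^2)\}$; moreover $\Re\lambda(\pm\eta)<0$ for $\eta\ne0$, $\Re\lambda(\pm\eta)=O(\eta^2)$ as $\eta\to0$, and $\Re\lambda(\pm\eta)\to-\infty$ as $|\eta|\to\infty$. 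This yields the spectral gap: there is $b>0$ with $\sigma(\widetilde{\mL}(\eta))\subset\{\Re z\le-b\}$ for $\eta_0\le|\eta|$ and $\sigma\bigl(\widetilde{\mL}(\eta)|_{\ker\widetilde{P}_0(\eta)}\bigr)\subset\{\Re z\le-b\}$ for $|\eta|\le\eta_0$.

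To pass from the gap to the semigroup bounds I would use resolvent estimates and the Gearhart--Pr\"uss theorem. For $\Re\lambda\ge-b$ write $(\lambda-\widetilde{\mL}(\eta))^{-1}=(\lambda-\widetilde{\mL}_0(\eta))^{-1}\bigl(I-\widetilde{V}_\eta(\lambda-\widetilde{\mL}_0(\eta))^{-1}\bigr)^{-1}$ with $\widetilde{V}_\eta=-6e^{\a x}\pd_x(\varphi\,\cdot)e^{-\a x}$; analytic Fredholm theory together with the explicit eigenvalue computation gives invertibility on $\{\Re z\ge-b\}$ off the resonant modes, with $\sup_{\Re\lambda\ge-b}\|(\lambda-\widetilde{\mL}(\eta))^{-1}\|_{B(L^2_x)}$ bounded uniformly, hence the two fibre-wise exponential bounds and, after integrating in $\eta$ via Plancherel, the first inequality. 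For the two smoothing estimates I would apply the Duhamel identity $e^{t\widetilde{\mL}(\eta)}=e^{t\widetilde{\mL}_0(\eta)}+\int_0^te^{(t-s)\widetilde{\mL}(\eta)}\widetilde{V}_\eta e^{s\widetilde{\mL}_0(\eta)}\,ds$ to $e^{\a x}\pd_xf$, insert the fibre of $P_2$, and combine the free gains $s^{-1/2}$ (respectively $s^{-3/4}$, when the $L^1_xL^2_y$-norm is used) from Lemma~\ref{lem:free-semigroup} with the exponential decay just proved, using $\int_0^t(t-s)^{-\theta}e^{-b(t-s)}s^{-1/2}\,ds\lesssim e^{-b't}t^{-1/2}$; integrating in $\eta$ then gives the claimed $X$-estimates.

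The main difficulty is the uniformity of the constants as $\eta$ ranges over the whole line. Near $\eta=0$ the eigenvalues $\lambda(\pm\eta)$ pinch the imaginary axis and the resonant eigenfunctions degenerate into a Jordan-type structure, so $\widetilde{P}_0(\eta)$ and the reduced resolvent must be built (via $g_1,g_2$) to stay bounded through $\eta=0$; and as $|\eta|\to\infty$ one must rule out spectrum drifting back toward the imaginary axis, which the compact-perturbation argument does not detect directly and which I expect to need a separate dissipativity estimate for $\widetilde{\mL}(\eta)$ at large $|\eta|$. Reflectionlessness of $2\sech^2$ is what makes the explicit eigenvalue calculation, and hence the sharp gap, feasible.
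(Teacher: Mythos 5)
The paper does not prove this proposition; it states it as a verbatim recall of Proposition~3.2 and Corollary~3.3 of \cite{Mi}, so there is no ``paper's own proof'' to compare against line by line. Your outline does reproduce the strategy used in \cite{Mi} (and, more broadly, the Pego--Weinstein fibre-decomposition program): conjugate by $e^{\a x}$ to pass from $X$ to $L^2$, take Fourier transform in $y$, compute the symbol of the conjugated free part and observe $\Re p_\eta(\xi)\le -\a(4-\a^2)$, treat the potential as a relatively compact perturbation, identify the resonant eigenpair $\lambda(\pm\eta)$, $e^{\a x}g(\cdot,\pm\eta)$ coming from the reflectionless $2\sech^2$ profile, establish a spectral gap on the range of the fibre of $P_2$, convert it to semigroup decay via uniform resolvent bounds, and get the smoothing via Duhamel. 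In this sense you have found the right approach.

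Two points deserve flagging. First, the key nontrivial ingredient is the claim that $\lambda(\pm\eta)$ are the \emph{only} points of discrete spectrum of $\widetilde{\mL}(\eta)$ to the right of the essential spectrum, uniformly over the relevant range of $\eta$; your sketch asserts this (``exploiting reflectionlessness'') but does not establish it, and in \cite{Mi} this is precisely where the real work lies (an explicit resolvent/Evans-function analysis built on the integrable structure of $2\sech^2$, with separate arguments near $\eta=0$, where the two resonant modes coalesce, and for large $\eta$). Second, the convolution estimate you quote, $\int_0^t(t-s)^{-\theta}e^{-b(t-s)}s^{-1/2}\,ds\lesssim e^{-b't}t^{-1/2}$, is not true as written (for $\theta=0$ the left side decays only like $t^{-1/2}$). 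What you need is to retain the free semigroup's own exponential factor $e^{-cs}$ from Lemma~\ref{lem:free-semigroup} inside the Duhamel integral, which makes the integral exponentially small for $t\ge1$, and then to combine the resulting bound on $[0,1]$ with the semigroup property and the first (unweighted) estimate to propagate to all $t\ge1$; only after that bookkeeping does the bound $K'e^{-b't}t^{-1/2}$ (and its $t^{-3/4}$ analogue) come out.
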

\bigskip

\section{Decomposition of the perturbed line soliton}
\label{sec:decomp}
Let us decompose a solution around a line soliton solution
$\varphi(x-4t)$ into a sum of a modulating line soliton
and a non-resonant dispersive
part plus a small wave which is caused by amplitude changes of the line
soliton:
\begin{equation}
  \label{eq:decomp}
u(t,x,y)=\varphi_{c(t,y)}(z)-\psi_{c(t,y),L}(z+3t)+v(t,z,y)\,,\quad
z=x-x(t,y)\,,
\end{equation}
where
$\psi_{c,L}(x)=2(\sqrt{2c}-2)\psi(x+L)$,
$\psi(x)$ is a nonnegative function such that
$\psi(x)=0$ if  $|x|\ge1$ and that $\int_\R \psi(x)\,dx=1$
and $L>0$ is a large constant to be fixed later.
The modulation parameters $c(t_0,y_0)$ and $x(t_0,y_0)$ denote
the maximum height and the phase shift of the modulating line soliton
$\varphi_{c(t,y)}(x-x(t,y))$ along the line $y=y_0$ at the time $t=t_0$,
and $\psi_{c,L}$ is an auxiliary function such that
\begin{equation}
  \label{eq:0mean}
\int_\R \psi_{c,L}(x)\,dx=\int_\R(\varphi_c(x)-\varphi(x))\,dx\,.
\end{equation}
Since a localized solution to KP-type equations
satisfies $\int_\R u(t,x,y)\,dx=0$ for any $y\in\R$ and $t>0$
(see \cite{MST_contr}), it is natural to expect small perturbations
appear in the rear of the solitary wave if the solitary wave is amplified.
\par
To utilize exponential linear stability of line solitons for solutions
that are not exponentially localized in space,
we further decompose $v$ into a small solution of \eqref{eq:KPII}
and an exponentially localized part following the idea of \cite{Mi1}
(see also \cite{MPQ,MT2}).
Let $\tv_1$ be a solution of
\begin{equation}
\label{eq:tv1}
\left\{\begin{aligned}
& \pd_t\tv_1+\pd_x^3\tv_1+3\pd_x(\tv_1^2)+3\pd_x^{-1}\pd_y^2\tv_1=0\,,\\
& \tv_1(0,x,y)=v_0(x,y)\,,    
  \end{aligned}\right.
\end{equation}
and
\begin{equation}
  \label{eq:decomp2}
v_1(t,z,y)=\tv_1(t,z+x(t,y),y)\,,\quad v_2(t,z,y)=v(t,z,y)-v_1(t,z,y)\,.
\end{equation}
Obviously, we have $v_2(0)=0$ and $v_2(t)\in X:=L^2(\R^2;e^{2\a z}dzdy)$
for $t\ge0$ as long as the decomposition \eqref{eq:decomp} persists. Indeed,
we have the following.
\begin{lemma}
\label{lem:diff}
Let $v_0\in H^{1/2}(\R^2)$ and $\tv_1(t)$ be a solution of \eqref{eq:tv1}.
Suppose $u(t)$ is a solution of \eqref{eq:KPII} satisfying
$u(0,x,y)=\varphi(x)+v_0(x,y)$. 
Let 
$w(t,x,y)=u(t,x+4t,y)-\varphi(x)-\tv_1(t,x+4t,y)$.
Then for any $\a\in [0,1)$,
\begin{gather}
  \label{eq:diff-utv1}
w \in  C([0,\infty);X)\,,\\
\label{eq:winX3}
\pd_xw\,,\enskip \pd_x^{-1}\pd_yw\in L^2(0,T;X)
\quad\text{for every $T>0$.}
\end{gather}
Moreover if $v_0\in  \pd_xL^2(\R^2)$ in addition, then
\begin{equation}
  \label{eq:pd-1bu}
\pd_x^{-1}\left(u(t,x,y)-\varphi(x-4t)\right)\in 
C([0,\infty);L^2(\R^2))\,.  
\end{equation}
\end{lemma}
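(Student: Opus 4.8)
The plan is to pass to the frame moving with the line soliton, to derive from \eqref{eq:KPII} and \eqref{eq:tv1} an equation for $w$ whose inhomogeneity is exponentially localized, and then to run a weighted energy estimate. Write $\bar u(t,x,y)=u(t,x+4t,y)$ and $\bar v_1(t,x,y)=\tv_1(t,x+4t,y)$; both solve the KP-II equation in the moving frame, $\pd_tU=\mL_0U-3\pd_x(U^2)$, and $\varphi$ is a stationary solution of it because $-\pd_x^3\varphi+4\pd_x\varphi-3\pd_x(\varphi^2)=0$. Substituting $\bar u=\varphi+\bar v_1+w$ and using that $\bar v_1$ solves the same equation, all the terms that do not contain $w$ cancel and one obtains
\begin{equation*}
\pd_tw=\mL w-3\pd_x(w^2)-6\pd_x(\bar v_1w)-6\pd_x(\varphi\bar v_1)\,,\qquad w(0)=0\,,
\end{equation*}
with $\mL=\mL_0-6\pd_x(\varphi\,\cdot)$. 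By global well-posedness of \eqref{eq:KPII} in $H^{1/2}$ one has $\bar v_1\in C([0,\infty);H^{1/2}(\R^2))$ with $\|\bar v_1(t)\|_{L^2}=\|v_0\|_{L^2}$, and $u(t)-\varphi(\cdot-4t)\in C([0,\infty);H^{1/2}(\R^2))$, so $w\in C([0,\infty);L^2(\R^2))$; this is the $\a=0$ case of \eqref{eq:diff-utv1}. The embedding $H^{1/2}(\R^2)\hookrightarrow L^4(\R^2)$ also gives $\|\bar v_1(t)\|_{L^4}\lesssim\|\bar v_1(t)\|_{H^{1/2}}$, continuous in $t$; this is the only size information on $\bar v_1$ the argument uses, and $\bar v_1$ is never differentiated.

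The main step is a weighted energy estimate for $\a\in(0,1)$. Pairing the $w$-equation with $e^{2\a x}w$ and using that $\mL_0$ is $m$-dissipative on $X=L^2(\R^2;e^{2\a x}dxdy)$ (equivalently, conjugating by $e^{\a x}$), the third-order term and the $\pd_x^{-1}\pd_y^2$ term yield $-c_1\|\pd_xw\|_X^2-c_2\|\pd_x^{-1}\pd_yw\|_X^2$ with $c_1,c_2>0$, so that
\begin{equation*}
\tfrac12\tfrac{d}{dt}\|w\|_X^2+c_1\|\pd_xw\|_X^2+c_2\|\pd_x^{-1}\pd_yw\|_X^2\le C\|w\|_X^2+\bigl|\mathrm{NL}\bigr|+\bigl|\mathrm{F}\bigr|\,.
\end{equation*}
Since $\a<1$ the functions $e^{2\a x}\varphi$ and $e^{2\a x}\varphi'$ are bounded (recall $\varphi(x)\sim 8e^{-2x}$ as $x\to+\infty$), so the potential term $-6\pd_x(\varphi w)$ inside $\mL$ and the inhomogeneity $\mathrm{F}=-6\pd_x(\varphi\bar v_1)$ are, after an integration by parts, bounded by $\eps(\|\pd_xw\|_X^2+\|w\|_X^2)+C\|w\|_X^2+C_\eps(\|v_0\|_{L^2}^2+\|w\|_{L^2}^2)$. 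For the genuinely nonlinear terms $\mathrm{NL}=-3\pd_x(w^2)-6\pd_x(\bar v_1w)$ one integrates by parts keeping one $x$-derivative on $w$, and then controls the resulting integrals by anisotropic (KP-type) Gagliardo–Nirenberg inequalities, bounding $\|e^{\a x/2}w\|_{L^4}$ and similar norms in terms of $\|w\|_X$, $\|w\|_{L^2}$, $\|\pd_xw\|_X$ and $\|\pd_x^{-1}\pd_yw\|_X$, using $\|\bar v_1\|_{L^4}\lesssim\|\bar v_1\|_{H^{1/2}}$ for the $\bar v_1w$ term, and absorbing all factors of $\|\pd_xw\|_X$ and $\|\pd_x^{-1}\pd_yw\|_X$ by Young's inequality. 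This gives a differential inequality $\tfrac{d}{dt}\|w\|_X^2\le A(t)(\|w\|_X^2+\|w\|_X^4)+B(t)$ with $A,B$ bounded on each $[0,T]$ by the finite $H^{1/2}$ norms of $u$ and $\tv_1$; since $w(0)=0$, Gronwall gives $w\in C([0,T];X)$, and integrating the energy identity gives $\pd_xw,\pd_x^{-1}\pd_yw\in L^2(0,T;X)$, for every $T>0$, which is \eqref{eq:diff-utv1} and \eqref{eq:winX3}. To make the formal computation rigorous for $v_0\in H^{1/2}$ one runs it for smooth, rapidly decaying data and passes to the limit using continuous dependence for \eqref{eq:KPII} and weak-$*$ compactness in $X$.

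For \eqref{eq:pd-1bu}, assume in addition $v_0\in\pd_xL^2(\R^2)$, so that $\pd_x^{-1}v_0\in L^2(\R^2)$. Each of the three terms of $\mL_0$ is skew-adjoint on $L^2(\R^2)$, hence $e^{t\mL_0}$ is a unitary group there. The function $\bar V:=\bar u-\varphi=w+\bar v_1$ solves $\pd_t\bar V=\mL_0\bar V-6\pd_x(\varphi\bar V)-3\pd_x(\bar V^2)$ with $\bar V(0)=v_0$, so putting
\begin{equation*}
W(t):=e^{t\mL_0}(\pd_x^{-1}v_0)-\int_0^te^{(t-s)\mL_0}\bigl(6\varphi\bar V(s)+3\bar V(s)^2\bigr)\,ds
\end{equation*}
and using unitarity together with $\|\varphi\bar V\|_{L^2}\lesssim\|\bar V\|_{L^2}$ and $\|\bar V^2\|_{L^2}=\|\bar V\|_{L^4}^2\lesssim\|\bar V\|_{H^{1/2}}^2$, one gets $W\in C([0,\infty);L^2(\R^2))$. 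Applying $\pd_x$, which commutes with $e^{t\mL_0}$, shows that $\pd_xW$ satisfies the same Duhamel equation as $\bar V$, hence $\pd_xW=\bar V$ by uniqueness, i.e. $W=\pd_x^{-1}\bar V$; undoing the frame shift gives $\pd_x^{-1}(u(t,x,y)-\varphi(x-4t))=W(t,x-4t,y)\in C([0,\infty);L^2(\R^2))$.

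I expect the step that needs the most care to be the nonlinear terms in the weighted estimate: because $v_0$ lies only in $H^{1/2}$, $\pd_x\bar v_1$ is not a function, so one must integrate by parts in $-6\pd_x(\bar v_1w)$ so that the $x$-derivative falls on $w$, and then close the $w^2$ and $\bar v_1w$ contributions by anisotropic Sobolev inequalities in the exponentially weighted space using only the $L^4$-control of $\bar v_1$; in particular one needs boundedness of $\pd_x^{-1}$ on $X$ in order to estimate $\pd_x^{-1}\pd_y(e^{\a x}w)$ by $\|w\|_X$, $\|\pd_x^{-1}\pd_yw\|_X$ and $\|w\|_{L^2}$. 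The remaining ingredients — the frame change, the cancellation producing the $w$-equation, the dissipativity of $\mL_0$ on $X$, and the argument for \eqref{eq:pd-1bu} — are routine.
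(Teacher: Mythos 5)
Your proposal follows essentially the same route as the paper: change to the moving frame, derive the $w$-equation $\pd_tw=\mL_0w-\pd_x\fN_1$ with $\fN_1=6\varphi(w+\bar v_1)+3w(w+2\bar v_1)$, run a weighted energy estimate closed by anisotropic Sobolev inequalities, and use the Duhamel formula with the unitary group $e^{t\mL_0}$ for \eqref{eq:pd-1bu}. Two points deserve scrutiny.

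First, there is a genuine gap in your treatment of \eqref{eq:diff-utv1}. Gronwall applied to the energy inequality only gives $w\in L^\infty(0,T;X)$ (together with $\pd_xw,\pd_x^{-1}\pd_yw\in L^2(0,T;X)$); it does not give strong continuity of $t\mapsto w(t)$ in the $X$-norm. This is not cosmetic: $w\in L^\infty(0,T;X)\cap C([0,T];L^2)$ yields at best weak continuity in $X$. The paper closes this by a separate step: using Claim~\ref{cl:aniso} and the bounds already obtained, it shows $\fN_1\in L^{8/3}(0,T;X)$, writes $w(t)=-\int_0^te^{(t-s)\mL_0}\pd_x\fN_1(s)\,ds$, and deduces continuity from the $C^0$-semigroup property of $e^{t\mL_0}$ on $X$ together with Lemma~\ref{lem:free-semigroup}. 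Your proposal asserts continuity without any such argument.

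Second, your route to rigor is a bit off. Multiplying by $e^{2\a x}w$ directly is not licit before one knows $w(t)\in X$, and approximating $v_0$ by ``smooth, rapidly decaying'' data does not obviously help, since KP-II need not propagate one-sided exponential weights from $t=0$. The paper instead first takes $v_0\in H^3\cap\pd_xH^2$ so that the solution is classical, then multiplies by $2p_n(x)w$ with the \emph{bounded} weights $p_n(x)=e^{2\a n}(1+\tanh\a(x-n))\uparrow 2e^{2\a x}$, so that all integrals are finite for each fixed $n$; the constants are uniform in $n$ because $p_n'\le 2\a p_n$, $|p_n'''|\le 4\a^2 p_n'$, and the coefficient $4-4\a^2$ of the weighted $\int p_n'w^2$ term stays positive precisely when $\a<1$ (this, rather than boundedness of $e^{2\a x}\varphi$, is the sharp reason for the restriction $\a<1$). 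Letting $n\to\infty$ and then passing from $H^3$ to $H^{1/2}$ data completes the estimate. Your anisotropic-Sobolev/Young estimates for the nonlinear and potential terms are in the right spirit, and the argument for \eqref{eq:pd-1bu} via unitarity of $e^{t\mL_0}$ on $L^2$ matches the paper's; but the two points above, especially the continuity step, need to be supplied.
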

We remark that by \cite{MST}, 
$\pd_xw$, $\pd_x^{-1}\pd_yw \in L^\infty_xL^2([-T,T]\times \R_y)$ for
any $T>0$ provided $v_0\in L^2(\R^2)$.
To prove Lemma~\ref{lem:diff}, we use the following imbedding
inequalities.
\begin{claim}
  \label{cl:aniso}
Let $p_n(x)=e^{2\a nx}(1+\tanh\a(x-n))$. 
There exists a positive constant $C$ such that for every $n\in \N$,
\begin{equation}
  \label{eq:aniso1}
\begin{split}
& \int_{\R^2} p_n'(x)^3w^6(s,x,y)\,dxdy
\\ \le & C\left[
\int_{\R^2} p_n'(x)
\left\{(\pd_xw)^2+(\pd_x^{-1}\pd_yw)^2+w^2\right\}(s,x,y)\,dxdy\right]^3\,.
\end{split}
\end{equation}
Moreover for any $p\in[2,6]$, 
\begin{equation}
  \label{eq:aniso2}
  \|e^{\a x}u\|_{L^p}\le C_1\|u\|_X^{\frac{3}{p}-\frac12}
(\|\pd_xu\|_X+\|\pd_x^{-1}\pd_yu\|_X+\|u\|_X)^{\frac{3}{2}-\frac{3}{p}}\,.
\end{equation}
\end{claim}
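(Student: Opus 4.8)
The plan is to deduce both estimates from the unweighted anisotropic Sobolev inequality
\[
\|g\|_{L^6(\R^2)}^6\le C_0\,\|\pd_xg\|_{L^2}^4\,\|\pd_x^{-1}\pd_yg\|_{L^2}^2,
\]
valid whenever $g,\ \pd_xg,\ \pd_x^{-1}\pd_yg\in L^2(\R^2)$, which I would first establish by iterating one--dimensional Gagliardo--Nirenberg inequalities: for fixed $y$, $\int_\R g^6\,dx\le\|g(\cdot,y)\|_{L^\infty_x}^4\|g(\cdot,y)\|_{L^2_x}^2\lesssim\|g(\cdot,y)\|_{L^2_x}^4\|\pd_xg(\cdot,y)\|_{L^2_x}^2$; integrating in $y$ and setting $\mu(y)=\|g(\cdot,y)\|_{L^2_x}^2$ reduces matters to bounding $\|\mu\|_{L^\infty_y}$, and here $\mu'(y)=2\int g\,\pd_yg\,dx=-2\int\pd_xg\cdot\pd_x^{-1}\pd_yg\,dx$ gives $\|\mu\|_{L^\infty_y}\le\|\mu'\|_{L^1_y}\le2\|\pd_xg\|_{L^2}\|\pd_x^{-1}\pd_yg\|_{L^2}$ (using $\mu\in L^1_y$). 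Since $a^4b^2\le(a^2+b^2)^3$, this already implies $\|g\|_{L^6}^6\lesssim\bigl(\|\pd_xg\|_{L^2}^2+\|\pd_x^{-1}\pd_yg\|_{L^2}^2+\|g\|_{L^2}^2\bigr)^3$, which is the form that both \eqref{eq:aniso1} and \eqref{eq:aniso2} call for.

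For \eqref{eq:aniso2} I would set $v=e^{\a x}u$, so that $\|e^{\a x}u\|_{L^p}=\|v\|_{L^p}$ and $\|v\|_{L^2}=\|u\|_X$. Then $\pd_xv=e^{\a x}(\pd_xu+\a u)$ gives $\|\pd_xv\|_{L^2}\le\|\pd_xu\|_X+\a\|u\|_X$; and, writing $W=\pd_x^{-1}\pd_yu\in X$, one has $\pd_x^{-1}\pd_yv=e^{\a x}W-\a\,\pd_x^{-1}(e^{\a x}W)$, the last term being controlled by a one--dimensional weighted Hardy estimate so that $\|\pd_x^{-1}\pd_yv\|_{L^2}\lesssim\|\pd_x^{-1}\pd_yu\|_X$. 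Inserting these into the $p=6$ case of the unweighted inequality proves \eqref{eq:aniso2} for $p=6$; the case $p=2$ is trivial with constant $1$; and the general $p\in[2,6]$ follows by interpolation, $\|v\|_{L^p}\le\|v\|_{L^2}^{3/p-1/2}\|v\|_{L^6}^{3/2-3/p}$.

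For \eqref{eq:aniso1} I would put $g=(p_n')^{1/2}w$, so its left side equals $\|g\|_{L^6}^6$, and apply the unweighted inequality; it then remains to bound $\|\pd_xg\|_{L^2}^2$ and $\|\pd_x^{-1}\pd_yg\|_{L^2}^2$ by $\int_{\R^2}p_n'\{(\pd_xw)^2+(\pd_x^{-1}\pd_yw)^2+w^2\}$ \emph{with constants independent of $n$}. For $\pd_xg=(p_n')^{1/2}\pd_xw+\tfrac12(p_n')^{-1/2}p_n''\,w$, expanding $\|\pd_xg\|_{L^2}^2$ and integrating the cross term by parts leaves the coefficient $\bigl((p_n'')^2-2p_n'p_n'''\bigr)/(4p_n')$ in front of $w^2$; since $p_n'/p_n=\a\bigl(2n+1-\tanh\a(x-n)\bigr)$, one checks that this coefficient is $\le0$ for all large $n$ (and bounded by $C\,p_n'$ in absolute value for the remaining finitely many $n$), so that $\|\pd_xg\|_{L^2}^2\le\int p_n'(\pd_xw)^2+C\int p_n'w^2$ uniformly in $n$. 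The term $\|\pd_x^{-1}\pd_yg\|_{L^2}$ is the crux: $\pd_x^{-1}\pd_yg=(p_n')^{1/2}\,\pd_x^{-1}\pd_yw-\pd_x^{-1}\!\bigl(\tfrac12(p_n')^{-1/2}p_n''\,\pd_x^{-1}\pd_yw\bigr)$, and the commutator term carries the factor $p_n''/p_n'\sim 2n\a$, which is not small; to keep the final constant uniform in $n$ one exploits the Poincar\'e--type gain $p_n'\ge2n\a\,p_n$---whence, after integrating $\int p_n'w^2=-2\int p_nw\,\pd_xw\,dx$ by parts in $x$, one gets $\int p_n'w^2\le(n\a)^{-2}\int p_n'(\pd_xw)^2$---together with the sign cancellation above, which in combination compensate the $n$--growth produced by the weight.

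The main obstacle is exactly this last point: the substitution $g=(p_n')^{1/2}w$ forces a logarithmic--derivative factor $p_n''/p_n'\sim 2n\a$ into the commutator term appearing in $\pd_x^{-1}\pd_y\bigl((p_n')^{1/2}w\bigr)$, so uniformity of $C$ in $n$ is not automatic and rests on the careful interplay of the favorable sign of $(p_n'')^2-2p_n'p_n'''$ with the Poincar\'e--type gain $p_n'\gtrsim n\,p_n$; everything else is routine.
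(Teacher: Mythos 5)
Your plan has a genuine gap at the central step, and it occurs in both inequalities. The strategy ``substitute a weighted function into the \emph{unweighted} anisotropic Sobolev inequality'' forces you to bound $\|\pd_x^{-1}\pd_y(\text{weight}\cdot w)\|_{L^2}$, and this quantity is simply not under control: the Fourier multiplier $1/\xi$ does not commute usefully with multiplication by a non-constant weight. For \eqref{eq:aniso2}, with $v=e^{\a x}u$ and $W=\pd_x^{-1}\pd_y u$, one has on the Fourier side
$\mF[\pd_x^{-1}\pd_y v](\xi,\eta)=\tfrac{\eta}{\xi}\,\hat u(\xi+i\a,\eta)$
while $\mF[e^{\a x}W](\xi,\eta)=\tfrac{\eta}{\xi+i\a}\,\hat u(\xi+i\a,\eta)$; the ratio of moduli is $\sqrt{1+\a^2/\xi^2}$, unbounded near $\xi=0$, so $\|\pd_x^{-1}\pd_y v\|_{L^2}$ cannot be estimated by $\|\pd_x^{-1}\pd_y u\|_X$. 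Your identity $\pd_x^{-1}\pd_y v=e^{\a x}W-\a\,\pd_x^{-1}(e^{\a x}W)$ is correct, but the appeal to a ``one-dimensional weighted Hardy estimate'' is what fails: $\pd_x^{-1}$ (in any of its incarnations) is not bounded on $L^2(\R)$, and there is no Hardy inequality of the needed unweighted $L^2\to L^2$ form. The same obstruction appears in your treatment of \eqref{eq:aniso1} for $\|\pd_x^{-1}\pd_y((p_n')^{1/2}w)\|_{L^2}$: you correctly flag this as ``the crux,'' but the proposed compensation does not close the estimate.

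A second, smaller point: the $O(n)$ growth of $p_n''/p_n'$ that worries you should not arise at all. The stated properties \eqref{eq:pn} --- in particular $|p_n''|\le 2\a p_n'$ and $|p_n'''|\le 4\a^2 p_n'$ uniformly in $n$ --- are what make the argument work, and they also force $p_n\uparrow 2e^{2\a x}$ as used later in the paper. The printed formula $p_n(x)=e^{2\a n x}(1+\tanh\a(x-n))$ satisfies neither; it is evidently a misprint (a weight such as $p_n(x)=e^{2\a x}(1-\tanh\a(x-n))$ satisfies both and gives $p_n''=-2\a\tanh\a(x-n)\,p_n'$). So the whole discussion of a Poincar\'e-type gain $p_n'\gtrsim n\,p_n$ is chasing a phantom.

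The argument that the paper is referring to (the proofs of \cite[Lemma~2]{MST_KPI} and \cite[Claim~5.1]{MT}) avoids the substitution altogether. One applies the one-dimensional Gagliardo--Nirenberg inequality in $x$ at each fixed $y$, \emph{with the weight kept inside}, obtaining
$\int_\R (p_n')^3w^6\,dx \lesssim \bigl(\int_\R p_n'(w^2+(\pd_xw)^2)\,dx\bigr)^2\int_\R p_n'w^2\,dx$,
and then one controls $\sup_y m(y)$, $m(y)=\int p_n'w^2\,dx$, via $\int|m'(y)|\,dy$. The key is that
$m'(y)=2\int p_n' w\,\pd_y w\,dx = -2\int\bigl(p_n'' w+p_n'\pd_x w\bigr)\pd_x^{-1}\pd_y w\,dx$,
so the integration by parts throws $\pd_x$ onto the weight and onto $w$, and only $\pd_x^{-1}\pd_y w$ ever appears --- never $\pd_x^{-1}$ of a weighted quantity. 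With $|p_n''|\le 2\a p_n'$ and Cauchy--Schwarz this gives $\int|m'|\,dy\lesssim \int\!\!\int p_n'\{(\pd_xw)^2+(\pd_x^{-1}\pd_yw)^2+w^2\}$, and \eqref{eq:aniso1} follows. Then \eqref{eq:aniso2} is obtained exactly as the paper says: let $n\to\infty$ (monotone convergence, $p_n'\uparrow 4\a e^{2\a x}$) for $p=6$, $p=2$ is trivial, and interpolate. Your derivation of the unweighted $L^6$ inequality itself is correct and is essentially the same device; the error is only in how you try to attach the weight to it.
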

\begin{proof}
First, we remark 
\begin{equation}
  \label{eq:pn}
0<p_n'(x)\le 2\a p_n(x)\le 4\a e^{2\a x}\,,\quad
|p_n''(x)|\le 2\a p_n'(x)\,,\quad  |p_n'''(x)|\le 4\a^2p_n'(x)\,.
\end{equation}
Using \eqref{eq:pn}, we have \eqref{eq:aniso1} in the same way
as the proof of \cite[Lemma~2]{MST_KPI} and \cite[Claim~5.1]{MT}.
\par
Eq.~\eqref{eq:aniso2} is obvious if $p=2$.
For $p=6$, we have \eqref{eq:aniso2} with $p=6$
by passing the limit to $n\to\infty$ in \eqref{eq:aniso1}
because $p_n'(x)>0$ for every $x\in\R$ and $p_n'(x)$
is monotone increasing in $n$.
Thus we have \eqref{eq:aniso2} by interpolation.
\end{proof}

\begin{proof}[Proof of Lemma~\ref{lem:diff}]
First, we prove \eqref{eq:diff-utv1} assuming that $v_0\in H^3(\R^2)$
and $v_0\in \pd_x H^2(\R^2)$. Then it follows from \cite{Bourgain,MST}
that $\tv_1$, $w\in C(\R;H^3(\R^2))$ and
$\pd_x^{-1}\tv_1$, $\pd_x^{-1}w\in C(\R;H^2(\R^2))$.
Since $\mL_0\varphi=3\pd_x\varphi^2$ and $u$ and $\tv_1$ are
solutions of \eqref{eq:KPII},
\begin{equation}
  \label{eq:diff-uv1}
\left\{
  \begin{aligned}
& \pd_t w=\mL_0w-\pd_x\fN_1\,,\\
& w(0,x,y)=0\,,
  \end{aligned}\right.
\end{equation}
where $\fN_1=6\varphi(w+\bar{v}_1)+3w(w+2\bar{v}_1)$.
Multiplying \eqref{eq:diff-uv1} by $2p_n(x)w(t,x,y)$
and integrating the resulting equation by parts, we have
\begin{equation}
  \label{eq:weid}
  \begin{split}
& \frac{d}{dt}\int_{\R^2} p_n(x)w^2(t,x,y)\,dxdy +
\int_{\R^2}p_n'(x)\left\{\mathcal{E}(w)-4w^3\right\}(t,x,y)\,dxdy
\\ =& 6\int_{\R^2}\left\{p_n'(x)(\bar{v}_1(t,x,y)+\varphi(x))
-p_n(x)(\pd_x\bar{v}_1(t,x,y)+\varphi'(x))\right\}w(t,x,y)^2\,dxdy
\\ & -12\int_{\R^2}p_n(x)w(t,x,y)\pd_x(\varphi(x)\bar{v}_1(t,x,y))\,dxdy
+\int_{\R^2}p_n'''(x)w^2(t,x,y)\,dxdy\,,
  \end{split}
\end{equation}
where $\mathcal{E}(w)=3(\pd_zw)^2+3(\pd_z^{-1}\pd_yw)^2+4w^2$.
By Claim~\ref{cl:aniso},
  \begin{align*}
& \left|\int_{\R^2}p_n'(x)w^3(t,x,y)\,dxdy\right|
\\ \lesssim & \|w(t)\|_{L^2}
\left(\int_{\R^2}p_n'(x)w^2(t,x,y)\,dxdy\right)^{1/4}
\left(\int_{\R^2} p_n'(x)\mathcal{E}(w)(t,x,y)\,dxdy\right)^{3/4}
\,,  \end{align*}  
and it follows from \eqref{eq:pn} and the above that there exist
positive constants $\nu$ and $C_1$ such that for any $n\in\N$,
$T\ge0$ and $t\in[0,T]$,
\begin{align*}
&  \int_{\R^2} p_n(x)w^2(t,x,y)\,dxdy +\nu \int_0^t \int_{\R^2} p_n'(x) \mathcal{E}(w)(s,x,y)\,dxdyds
\\ \le & C_1T\sup_{t\in[0,T]}\|\bar{v}_1(t)\|_{H^1}^2 
\\  &
+C_1\sup_{t\in[0,T]}(1+\|\bar{v}_1(t)\|_{H^3}+\|w(t)\|_{L^2}^4)
\int_0^t\int_{\R^2} p_n(x)w^2(s,x,y)\,dxdyds\,.
\end{align*}
By Gronwall's inequality, we have for $t\in[0,T]$,
$$\int_{\R^2}p_n(x)w^2(t,x,y)\,dxdy\le C_2\sup_{t\in[0,T]}\|\bar{v}_1(t)\|_{H^1}^2\,,$$
where $C_2$ is a constant independent of $n$.
By passing the limit to $n\to\infty$, we have
\begin{equation*}
\|w(t)\|_X^2 \le   C_2\sup_{t\in[0,T]}\|\bar{v}_1(t)\|_{H^1}^2
\quad\text{for $t\in[0,T]$.}
\end{equation*}
since $0<p_n(x)\uparrow 2e^{2\a x}$ as $n\to\infty$.
Thus we prove $w\in L^\infty(0,T;X)$ and
$\pd_xw$, $\pd_x^{-1}\pd_y\in L^2(0,T;X)$ for every $T\ge0$
provided $v_0\in H^3(\R^2)\cap \pd_xH^2(\R^2)$.
\par
Let $p(x)=e^{2\a x}$. Integrating by parts the second and the third terms
of the right hand side of \eqref{eq:weid}, integrating the resulting
over $[0,t]$ and passing the limit to $n\to\infty$, we have 
\begin{align*}
& \int_{\R^2} p(x)w^2(t,x,y)\,dxdy +
\int_0^t\int_{\R^2}p'(x)\left\{\mathcal{E}(w)-4w^3\right\}(s,x,y)\,dxdyds
\\ =& 12\int_{\R^2}(\bar{v}_1(s,x,y)+\varphi(x))
\left\{p'(x)w^2(s,x,y)+p(x)(w\pd_xw)(s,x,y)\right\}\,dxdy
\\ & +12\int_0^t\int_{\R^2}\pd_x\{p(x)w(s,x,y)\}\varphi(x)\bar{v}_1(s,x,y)\,dxdyds
\\ & +\int_0^t\int_{\R^2}p'''(x)w^2(s,x,y)\,dxdyds\,.
 \end{align*}
By the H\"older inequality and Claim~\ref{cl:aniso},
\begin{align*}
& \left|\int_{\R^2}
\left\{p'(x)w^2(s,x,y)+p(x)(w\pd_xw)(s,x,y)\right\}
(\bar{v}_1(s,x,y)+\varphi(x))\,dxdy\right|
\\ \lesssim & (\|\pd_xw(s)\|_X+\|w(s)\|_X)\|e^{\a x }w(s)\|_{L^4}
\|\tv_1(s)+\varphi\|_{L^4}
\\ \lesssim & 
\|\tv_1(s)+\varphi\|_{L^4}\|w(s)\|_X^{\frac14}
\|\mathcal{E}(w(s))^{1/2}\|_X^{\frac{7}{4}} \,,
\end{align*}
\begin{align*}
\left|\int_{\R^2}\pd_x\{p(x)w(s,x,y)\}\varphi(x)\bar{v}_1(s,x,y)\,dxdy\right|
 \lesssim & \|\bar{v}_1(s)\|_{L^2}\|\mathcal{E}(w(s))^{1/2}\|_X\,,
\end{align*}
and
\begin{equation*}
\left|\int_{\R^2}p(x)w^3(s,x,y)\,dxdy\right|
\lesssim \|w(s)\|_{L^2}\|w(s)\|_X^{1/2}\|\mathcal{E}(w(s))^{1/2}\|_X^{3/2}\,.
\end{equation*}
Combining the above, we have for $t\in[0,T]$,
  \begin{equation}
\label{eq:winX2}
    \begin{split}
& \|w(t)\|_X^2+\nu\int_0^t\|\mathcal{E}(w(s))^{1/2}\|_X^2\,ds
\\ \lesssim & 
\int_0^t \left\{\|\bar{v}_1(s)\|_{L^2}^2+
(\|\bar{v}_1(s)\|_{L^2}^2+\|w(s)\|_{L^2}^4+\|\varphi+\bar{v}_1(s)\|_{L^4}^8)
\|w(s)\|_X^2\right\}\,ds\,.     
    \end{split}
  \end{equation}
where $\nu$ is a positive constant independent of $T$.
Since $\|\bar{v}_1(t)\|_{L^2}=\|v_0\|_{L^2}$ for every $t\in\R$ and $H^{1/2}(\R^2)\subset L^4(\R^2)$,
it follows from the Gronwall's inequality that
\begin{equation}
  \label{eq:winX}
\|w(t)\|_X^2 \le   C_3Te^{C_4t}\|v_0\|_{L^2}^2
\quad\text{for $t\in[0,T]$,}
\end{equation}
where $C_3$ and $C_4$ are positive constants depending only on
$\|v_1(t)\|_{H^{1/2}}$ and $\|w(t)\|_{L^2}$.
By a standard limiting argument, we have \eqref{eq:winX} and
\eqref{eq:winX3} for every $v_0\in H^{1/2}(\R^2)$.
\par

Next, we will show that $w\in C([0,\infty);X)$.
By Claim~\ref{cl:aniso}, \eqref{eq:winX} and \eqref{eq:winX3}
that
$$\|e^{ax}w\|_{L^4}\lesssim \|w\|_X^{1/4}\|\mathcal{E}(w)^{1/2}\|_X^{3/4}
\in L^{8/3}(0,T;X)\,,$$
\begin{align*}
\|\fN_1\|_X\lesssim & \|w\|_X+\|\bar{v}_1\|_{L^2}
+(\|w\|_{L^4}+\|\bar{v}_1\|_{L^4})\|e^{ax}w\|_{L^4}\in L^{8/3}(0,T;X)\,.
\end{align*}
By the variation of constants formula,
\begin{equation}
  \label{eq:varw}
w(t)=-\int_0^t e^{(t-s)\mL_0}\pd_x\fN_1\,.
\end{equation}
By Lemma~\ref{lem:free-semigroup}, \eqref{eq:varw} and the fact that
$\fN_1\in L^{8/3}(0,T;X)$, we have for $h>0$,
\begin{align*}
\|w(t+h)-w(t)\|_X \le 
\left\|(e^{h\mL_0}-I)\int_0^te^{(t-s)\mL_0}\pd_x\fN_1(s)\,ds\right\|_X
+O(h^{1/8})\,.
\end{align*}
Since $e^{t\mL_0}$ is a $C^0$-semigroup on $X$, 
it follows that $w\in C([0,\infty);X)$.
\par
Finally, we will show \eqref{eq:pd-1bu}.
Let $\bar{u}(t,x,y):=u(t,x+4t,y)-\varphi(x)$.
Then by the variation of constants formula,
\begin{equation}
  \label{eq:baru-int}
\bar{u}(t)=e^{t\mL_0}v_0-3\pd_x\int_0^te^{(t-s)\mL_0}
\left(2\varphi \bar{u}(s)+\bar{u}^2(s)\right)\,ds\,.
\end{equation}
Since $e^{t\mL_0}$ is unitary on $L^2(\R^2)$, $\pd_x^{-1}v_0\in L^2(\R^2)$
and $\bar{u}(t)\in C(\R;H^{1/2}(\R^2))$,
we easily see that \eqref{eq:pd-1bu} follows from \eqref{eq:baru-int}.
Thus we complete the proof.
\end{proof}

Next, we will show the continuity of 
 $H^{1/2}(\R^2)\ni v_0 \mapsto u-\tv_1-\varphi(x-4t)\in X$.
 \begin{lemma}
\label{lem:diff-conti}
Let $v_0\in H^{1/2}(\R^2)$ and $v_{0,n}\in H^{1/2}(\R^2)$ for
$n\in\N$.  Suppose $\tv_1$, $\tv_{1,n}$, $u$ and $u_n$
   be solutions of \eqref{eq:KPII} satisfying
$\tv_1(0,x,y)=v_0(x,y)$, $\tv_{1,n}(0,x,y)=v_{0,n}(x,y)$, 
$u(0,x,y)=\varphi(x)+v_0(x,y)$ and  $u_n(0,x,y)=\varphi(x)+v_{0,n}(x,y)$.
If $\lim_{n\to\infty}\|v_{0,n}-v_0\|_{H^{1/2}(\R^2)}=0$, then for any $T\in(0,\infty)$,
\begin{equation*}
\lim_{n\to\infty}\sup_{t\in[0,T]}\|u(t)-\tv_1(t)-u_n(t)+\tv_{1,n}(t)\|_X=0\,.
\end{equation*}
 \end{lemma}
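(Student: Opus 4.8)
The plan is to rerun the weighted energy estimate from the proof of Lemma~\ref{lem:diff}, this time for the difference of the two solutions, treating the difference of the freely propagating parts as a small inhomogeneous term. We may assume $\a\in(0,1)$, the case $\a=0$ being immediate from continuous dependence of \eqref{eq:KPII} in $H^{1/2}(\R^2)$.

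As in the proof of Lemma~\ref{lem:diff}, set $\bar{v}_1(t,x,y)=\tv_1(t,x+4t,y)$ and $w(t,x,y)=u(t,x+4t,y)-\varphi(x)-\bar{v}_1(t,x,y)$, and define $\bar{v}_{1,n}$, $w_n$ analogously from $\tv_{1,n}$, $u_n$. Since $\|u(t)-\tv_1(t)-u_n(t)+\tv_{1,n}(t)\|_X=e^{4\a t}\|w_n(t)-w(t)\|_X$ and $e^{4\a t}$ is bounded on $[0,T]$, it suffices to prove $\sup_{t\in[0,T]}\|w_n(t)-w(t)\|_X\to0$. Write $W=w_n-w$ and $V=\bar{v}_{1,n}-\bar{v}_1$. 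Subtracting the two copies of \eqref{eq:diff-uv1} gives
\begin{equation*}
\pd_tW=\mL_0W-\pd_x\fN\,,\qquad W(0)=0\,,\qquad
\fN=6\varphi(W+V)+3(w_n+w)W+6\bigl(w_nV+W\bar{v}_1\bigr)\,,
\end{equation*}
an equation of the same type as the one satisfied by $w$ in Lemma~\ref{lem:diff}: linear in $W$ with coefficients $\varphi,w,w_n,\bar{v}_1$, plus an inhomogeneous term $6\varphi V+6w_nV$ that is linear in $V$.

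First I would record the uniform bounds. By global well-posedness of \eqref{eq:KPII} in $H^{1/2}(\R^2)$ (\cite{Bourgain,MST}), $\sup_n\|v_{0,n}\|_{H^{1/2}}<\infty$, and continuous dependence on the data gives $\delta_n:=\sup_{t\in[0,T]}\|V(t)\|_{H^{1/2}}\to0$ together with $\sup_n\sup_{t\in[0,T]}\bigl(\|\bar{v}_{1,n}(t)\|_{H^{1/2}}+\|w_n(t)\|_{H^{1/2}}\bigr)<\infty$; moreover \eqref{eq:winX} and \eqref{eq:winX2} applied to $w_n$ (and to $w$) yield, uniformly in $n$, $\sup_{t\in[0,T]}\|w_n(t)\|_X+\int_0^T\|\mathcal{E}(w_n(s))^{1/2}\|_X^2\,ds\le C(T)$, where $\mathcal{E}(f)=3(\pd_xf)^2+3(\pd_x^{-1}\pd_yf)^2+4f^2$ as in Claim~\ref{cl:aniso}. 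Then I would carry out the energy estimate exactly as in the derivation of \eqref{eq:weid}--\eqref{eq:winX2}: multiply the equation for $W$ by $2p_n(x)W$, integrate by parts, and pass to the limit in the truncation of the weight. The only terms not immediately controlled are the quadratic ones $3(w_n+w)W$, $6w_nV$ and $6W\bar{v}_1$ in $\pd_x\fN$, because $w,w_n$ are bounded only in $X$ (and $\mathcal{E}(w)^{1/2},\mathcal{E}(w_n)^{1/2}$ only in $L^2(0,T;X)$), not in $L^\infty$. These are treated precisely as the cubic term $p'w^3$ and the mixed terms $p\,\pd_x(w\bar{v}_1)$, $p\,\pd_x(\varphi\bar{v}_1)$ were treated in Lemma~\ref{lem:diff}: whenever a derivative in $\pd_x\fN$ would land on one of the external functions $\varphi,\bar{v}_1,w,w_n,V$, integrate by parts once more to move it onto $p_nW^2=2p_nW\pd_xW$, and use the anisotropic inequality \eqref{eq:aniso2} with $p=4$, i.e. $\|e^{\a x}W\|_{L^4}\lesssim\|W\|_X^{1/4}\|\mathcal{E}(W)^{1/2}\|_X^{3/4}$, together with H\"older's and Young's inequalities, to absorb a power of $\|\mathcal{E}(W)^{1/2}\|_X$ into the dissipative term. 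This yields
\begin{equation*}
\|W(t)\|_X^2+\nu\int_0^t\|\mathcal{E}(W(s))^{1/2}\|_X^2\,ds\le\int_0^t\bigl\{A_n(s)\|W(s)\|_X^2+B_n(s)\bigr\}\,ds\,,
\end{equation*}
where $A_n(\cdot)$ is built from $\|w\|_{L^4},\|w_n\|_{L^4},\|\bar{v}_1\|_{L^4},\|\varphi\|_{L^4\cap L^\infty}$ and $\|\mathcal{E}(w_n)^{1/2}\|_X$, hence bounded in $L^1(0,T)$ uniformly in $n$, and $B_n(s)\lesssim\delta_n^2\bigl(1+\|\mathcal{E}(w_n(s))^{1/2}\|_X^2+\|\mathcal{E}(w(s))^{1/2}\|_X^2\bigr)$ carries the factor $\delta_n$ from the terms that are linear in $V$. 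Gronwall's inequality then gives $\sup_{t\in[0,T]}\|W(t)\|_X^2\le C(T)\delta_n^2\to0$, which is the assertion.

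The main difficulty is the one already present in Lemma~\ref{lem:diff}: since $w$ and $w_n$ are only $X$-valued and not bounded, the quadratic interactions $(w_n+w)W$ and $w_nV$ cannot be estimated directly, and the $x$-derivative in $\pd_x\fN$ must be moved by integration by parts so that it always falls on $W$, after which \eqref{eq:aniso2} makes the resulting terms absorbable; the new features here --- that the coefficients need only be uniformly integrable in time and that the forcing carries $\delta_n\to0$ --- are harmless. As in Lemma~\ref{lem:diff}, I would first run this computation for smooth data $v_0,v_{0,n}\in H^3(\R^2)\cap\pd_xH^2(\R^2)$, where every manipulation is justified, and then pass to general $H^{1/2}(\R^2)$ data by the density and continuity argument used there.
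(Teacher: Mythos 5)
Your proposal is correct and follows essentially the same path as the paper's own proof: subtract the two copies of the evolution equation for $w$ and $w_n$, multiply the resulting equation for the difference by $2e^{2\a x}\cdot(\text{difference})$, integrate by parts so derivatives land only on the difference, control the nonlinear terms via the anisotropic inequality of Claim~\ref{cl:aniso} with $p=4$ together with the $H^{1/2}(\R^2)\hookrightarrow L^4(\R^2)$ embedding and the uniform-in-$n$ bounds from Lemma~\ref{lem:diff} and KP-II well-posedness, absorb the $\mathcal{E}$-terms by Young, and close with Gronwall. The only differences from the paper are cosmetic: the paper works with $\tilde w_n=w-w_n$ and groups the nonlinearity as $\fN_2+\fN_3$ with $\fN_3=6(\varphi+w)V$, whereas you use $W=w_n-w$ and the equivalent split $6\varphi(W+V)+3(w_n+w)W+6(w_nV+W\bar v_1)$; the paper correspondingly puts $\|\mathcal E(w)^{1/2}\|_X$ (not $w_n$) into the inhomogeneous part and keeps the Gronwall coefficient purely in terms of the uniformly bounded $H^{1/2}$ norms, which is a slightly cleaner bookkeeping, but your version still produces an $L^1(0,T)$ coefficient and an $o(1)$ forcing, so the conclusion is the same.
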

 \begin{proof}
Let $\bar{v}_{1,n}(t,x,y)=\tv_{1,n}(t,x+4t,y)$,
$w_n(t,x,y)=u_n(t,x+4t,y)-\varphi(x)-\bar{v}_{1,n}(t,x,y)$ and
$\tilde{w}_n=w-w_n$.
Then
\begin{equation}
  \label{eq:twn}
\left\{
  \begin{aligned}
& \pd_t\tilde{w}_n=\mL_0\tilde{w}_n-\pd_x(\fN_2+\fN_3)\,,\\
& \tilde{w}_n(0,x,y)=0\,,
  \end{aligned}\right.
\end{equation}
 where
$$\fN_2(t)=3(2\varphi+2\bar{v}_{1,n}(t)+w(t)+w_n(t))\tilde{w}_n(t)\,,
\quad \fN_3(t)=6(\varphi+w(t))(\bar{v}_{1,n}(t)-\bar{v}_1(t))\,.$$
Multiplying \eqref{eq:twn} by $2e^{2\a x}\tilde{w}_n$ and integrating the
resulting equation over $\R^2\times[0,t]$, we have
\begin{equation}
\label{eq:twn-est}
\begin{split}
& \|\tilde{w}_n(t)\|_X^2
+2\a\int_0^t\|\mathcal{E}(\tilde{w}_n(s))^{1/2}\|_X^2\,ds
\\ = & -2\int_0^t\int_{\R^2} e^{2\a x}
\tilde{w}_n(s)\pd_x(\fN_2(s)+\fN_3(s))\,dxdyds\,.
\end{split}
\end{equation}
Using Claim~\ref{cl:aniso} and the fact that $L^4(\R^2)\subset H^{1/2}(\R^2)$
\begin{align*}
& \left|\int_{\R^2} e^{2\a x}
\tilde{w}_n\pd_x\fN_2\,dxdy\right|
\\ \lesssim  & \|e^{ax}\tilde{w}_n\|_{L^4}
(\|\pd_x\tilde{w}_n\|_X+\|\tilde{w}_n\|_X)
(1+\|\bar{v}_{1,n}\|_{H^{1/2}}+\|w+w_n\|_{H^{1/2}})
\\ \lesssim & 
(1+\|\bar{v}_{1,n}\|_{H^{1/2}}+\|w+w_n\|_{H^{1/2}})
\|\tilde{w}_n\|_X^{1/4}\|\mathcal{E}(\tilde{w}_n)^{1/2}\|_X^{7/4}\,,
\end{align*}
\begin{align*}
 \left|\int_{\R^2} e^{2\a x}
\tilde{w}_n\pd_x\fN_3\,dxdy\right|
\lesssim & (1+\|e^{ax}w\|_{L^4})\|\bar{v}_{1,n}-\bar{v}_1\|_{L^4}
(\|\tilde{w}_n\|_X+\|\pd_x\tilde{w}_n\|_X)
\\ \lesssim &
(1+\|\mathcal{E}(w)^{1/2}\|_X)
\|\bar{v}_{1,n}-\bar{v}_1\|_{H^{1/2}}\|\mathcal{E}(\tilde{w}_n)^{1/2}\|_X\,.
\end{align*}
Combining the above with \eqref{eq:twn-est}, we have
\begin{equation}
  \label{eq:twn-est2}
  \begin{split}
& \|\tilde{w}_n(t)\|_X^2\lesssim 
(T+\|\mathcal{E}(w_n)^{1/2}\|_{L^2(0,T;X)})
\sup_{t\in[0,T]}\|v_{1,n}(t)-v_1(t)\|_{H^{1/2}}^2
\\ & +
\sup_{t\in[0,T]}(1+\|\bar{v}_{1,n}(t)\|_{H^{1/2}}+\|w_n(t)+w(t)\|_{H^{1/2}})^8
\int_0^t \|\tilde{w}_n(s)\|_X^2\,ds\,.
  \end{split}
\end{equation}
Thanks to the wellposedness of \eqref{eq:KPII} (e.g. \cite{Bourgain,MST}),
$$\lim_{n\to\infty}\sup_{t\in[0,T]}\|v_{1,n}(t)-v_1(t)\|_{H^{1/2}}=0\,,\quad
\lim_{n\to\infty}\sup_{t\in[0,T]}\|\tilde{w}_n(t)\|_{H^{1/2}}=0\,.$$
Thus by  \eqref{eq:winX}, \eqref{eq:winX3} and \eqref{eq:twn-est2},
we have for $t\in[0,T]$,
\begin{equation}
  \label{eq:twn-est3}
\|\tilde{w}_n(t)\|_X^2\le
C_1\sup_{t\in[0,T]}\|v_{1,n}(t)-v_1(t)\|_{H^{1/2}}^2
+C_2\int_0^t \|\tilde{w}_n(s)\|_X^2\,ds,,
\end{equation}
where $C_1$ and $C_2$ are positive constants independent of $n$.
Applying Gronwall's inequality to \eqref{eq:twn-est2}, we obtain
Lemma~\ref{lem:diff-conti}. Thus we complete the proof.
 \end{proof}

To fix the decomposition \eqref{eq:decomp}, we impose that 
$v_2(t,z,y)$ is symplectically orthogonal to low frequency resonant modes.
More precisely, we impose the constraint that for $k=1$, $2$,
\begin{equation}
  \label{eq:orth}
\lim_{M\to\infty}\int_{-M}^M \int_\R
v_2(t,z,y)\overline{g_k^*(z,\eta,c(t,y))}e^{-iy\eta}\,dzdy=0
\quad\text{in $L^2(-\eta_0,\eta_0)$,}
\end{equation}
where 
$g_1^*(z,\eta,c)=cg_1^*(\sqrt{c/2}z,\eta)$ and
$g_2^*(z,\eta,c)=\frac{c}{2}g_2^*(\sqrt{c/2}z,\eta)$.
\par
We will show that the decomposition \eqref{eq:decomp} with
\eqref{eq:decomp2} and \eqref{eq:orth} is well defined as long as
$v_2$ remains small in the exponentially weighted space $X$.
\par

Next, we introduce functionals to prove the existence of
the representation \eqref{eq:decomp}, \eqref{eq:decomp2} that satisfies 
the orthogonality condition \eqref{eq:orth}.
\par
Now let us introduce the subspaces of $L^2(\R)$ to analyze modulation
parameters $c(t,y)$ and $x(t,y))$.
For an $\eta_0>0$, let $Y$ and $Z$ be closed subspaces of $L^2(\R)$ defined by
$$Y=\mF^{-1}_\eta Z\,,\quad
Z=\{f\in L^2(\R)\mid\supp f \subset[-\eta_0,\eta_0]\}\,.$$
Let $Y_1=\mF^{-1}_\eta Z_1$ and
$Z_1=\{f\in Z \mid\|f\|_{Z_1}:=\|f\|_{L^\infty}<\infty\}$.
\begin{remark}
  \label{rem:smoothness}
We have
\begin{equation}
  \label{eq:H^s-Y}
\|f\|_{\dot{H}^s}\le \eta_0^s\|f\|_{L^2}
\quad\text{for any $s\ge0$ and $f\in Y$,}  
\end{equation}
since $\hat{f}$ is $0$ outside of $[-\eta_0,\eta_0]$.
We have $\|f\|_{L^\infty}\lesssim \|f\|_{L^2}$ for any $f\in Y$.
\par
Let $\wP_1$ be a projection defined by
$\wP_1 f=\mF_\eta^{-1}\mathbf{1}_{[-\eta_0,\eta_0]}\mF_yf$,
where $\mathbf{1}_{[-\eta_0,\eta_0]}(\eta)=1$ for $\eta\in[-\eta_0,\eta_0]$
and $\mathbf{1}_{[-\eta_0,\eta_0]}(\eta)=0$ for $\eta\not\in[-\eta_0,\eta_0]$.
Then $\|\wP_1f\|_{Y_1}\le (2\pi)^{-1/2}\|f\|_{L^1}(\R)$ for any $f\in L^1(\R)$.
In particular, for any $f$, $g\in Y$,
\begin{equation}
  \label{eq:Y1-L1}
\|\wP_1(fg)\|_{Y_1}\le (2\pi)^{-1/2}\|fg\|_{L^1}\le (2\pi)^{-1/2}\|f\|_Y\|g\|_Y\,.
\end{equation}
\end{remark}

For $\tu\in X$ and $\gamma$, $\tc\in Y$ and $L\ge 0$, let 
$c(y)=2+\tc(y)$ and 
\begin{align*}
F_k[\tu,\tc,\gamma,L](\eta):=\mathbf{1}_{[-\eta_0,\eta_0]}&(\eta)
\lim_{M\to\infty}\int_{-M}^M\int_\R
\bigl\{\tu(x,y)+\varphi(x)- \varphi_{c(y)}(x-\gamma(y))
\\ & +\psi_{c(y),L}(x-\gamma(y))\bigr\}
\overline{g_k^*(x-\gamma(y),\eta,c(y))}e^{-iy\eta}\,dxdy\,.
\end{align*}
The mapping $F=(F_1,F_2)$ maps 
$X\times Y\times Y\times \R$ into $Z\times Z$.
\begin{lemma} \emph{(\cite[Lemma~5.1]{Mi})}
\label{lem:F_k}
Let $\a\in(0,2)$,  $\tu\in X$, $\tc$, $\gamma\in Y$ and $L\ge 0$.
Then there exists a $\delta>0$ such that if 
$\|\tc\|_Y+\|\gamma\|_Y\le \delta$, then
$F_k[\tu,\tc,\gamma,L]\in Z$ for $k=1$, $2$.
\end{lemma}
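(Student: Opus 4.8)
The plan is to show that the truncations
$F_k^{(M)}(\eta):=\mathbf{1}_{[-\eta_0,\eta_0]}(\eta)\int_{-M}^{M}\!\!\int_{\R}\mathcal{G}(x,y)\,\overline{g_k^*(x-\gamma(y),\eta,c(y))}\,e^{-iy\eta}\,dx\,dy$ of the integral defining $F_k$, where $c(y):=2+\tc(y)$ and $\mathcal{G}(x,y):=\tu(x,y)+\varphi(x)-\varphi_{c(y)}(x-\gamma(y))+\psi_{c(y),L}(x-\gamma(y))$, converge in $L^2(-\eta_0,\eta_0)$ as $M\to\infty$ with the limit bounded by $\|\tu\|_X+\|\tc\|_Y+\|\gamma\|_Y$; since each $F_k^{(M)}$ is supported in $[-\eta_0,\eta_0]$, this yields $F_k\in Z$ (and fixes the meaning of the limit in the definition of $F_k$). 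Setting $\sigma(y):=\sqrt{c(y)/2}$, I would choose $\delta$ so small — recalling that $\eta_0$ is taken with $\Re\beta(\eta_0)-1<\a$, the standing restriction of Proposition~\ref{prop:semigroup-est} — that $\|\tc\|_Y+\|\gamma\|_Y\le\delta$ forces the strict inequalities $(\Re\beta(\eta_0)-1)\sup_y\sigma(y)<\a<\inf_y\sqrt{2c(y)}$; this is possible since $\a\in(0,2)$ and $\|\tc\|_{L^\infty}\lesssim\|\tc\|_Y$ by Remark~\ref{rem:smoothness}. I would first record two facts. (i) From $g^*(x,\eta)=e^{\beta(-\eta)x}\sech x\,(\beta(-\eta)-\tanh x)$ one reads off $|g^*(x,\eta)|+|\pd_xg^*(x,\eta)|\lesssim e^{(\Re\beta(\eta)-1)x}$ for $x\ge0$ and $\lesssim e^{(\Re\beta(\eta)+1)x}$ for $x\le0$, uniformly for $|\eta|\le\eta_0$, and likewise for the rescaled kernels $g_k^*(\cdot,\eta,c)$ with $c$ near $2$; combined with the choice of $\delta$ this gives a constant $C_0$ with $\sup_{|\eta|\le\eta_0}\sup_{y}\|e^{-\a\,\cdot}g_k^*(\cdot-\gamma(y),\eta,c(y))\|_{L^2_x}\le C_0$, so each $F_k^{(M)}$ is an absolutely convergent integral. (ii) $e^{\a\,\cdot}\mathcal{G}\in L^2(\R^2)$ with $\|e^{\a\,\cdot}\mathcal{G}\|_{L^2(\R^2)}\lesssim\|\tu\|_X+\|\tc\|_Y+\|\gamma\|_Y$: indeed $e^{\a x}\tu\in L^2(\R^2)$ by the definition of $X$, and the mean value theorem in $(c,\gamma)$ together with the exponential decay of $\varphi_c,\psi_{c,L}$ in $x$ (using $\sqrt{2c(y)}>\a$) gives $\|e^{\a\,\cdot}(\mathcal{G}-\tu)(\cdot,y)\|_{L^2_x}\lesssim|\tc(y)|+|\gamma(y)|\in L^2_y$.

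Next I would split the kernel, $g_k^*(x-\gamma(y),\eta,c(y))=g_k^*(x,\eta,2)+r_k(x,y,\eta)$, and accordingly $F_k^{(M)}=A_k^{(M)}+B_k^{(M)}$. The remainder is the easy piece: using the scaling relation and the mean value theorem in the spatial variable and in the amplitude, one obtains $|r_k(x,y,\eta)|\le(|\tc(y)|+|\gamma(y)|)\,w_k(x,\eta)$ with $\sup_{|\eta|\le\eta_0}\|e^{-\a\,\cdot}w_k(\cdot,\eta)\|_{L^2_x}\le C_1$ (the only new ingredient being an extra factor linear in $x$ from differentiating the rescaling, which is absorbed precisely because $(\Re\beta(\eta_0)-1)\sup_y\sigma(y)<\a$ is strict); then Cauchy--Schwarz in $x$ and then in $y$, against $|\tc|+|\gamma|\in L^2_y$ and using (ii), gives $\iint_{\R^2}|\mathcal{G}||r_k|\,dx\,dy\lesssim(\|\tu\|_X+\|\tc\|_Y+\|\gamma\|_Y)(\|\tc\|_Y+\|\gamma\|_Y)$ uniformly for $|\eta|\le\eta_0$. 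Hence, by dominated convergence, $B_k^{(M)}(\eta)$ converges pointwise on $[-\eta_0,\eta_0]$ to an absolutely convergent integral, and, being uniformly bounded there, in $L^2(-\eta_0,\eta_0)$ as well.

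The main obstacle is the piece $A_k^{(M)}$. Here the kernel $g_k^*(x,\eta,2)$ is independent of $y$, so Fubini gives $A_k^{(M)}(\eta)=\sqrt{2\pi}\,\mathbf{1}_{[-\eta_0,\eta_0]}(\eta)\int_{\R}\mF_y^{(M)}[\mathcal{G}(x,\cdot)](\eta)\,\overline{g_k^*(x,\eta,2)}\,dx$, where $\mF_y^{(M)}$ is the $y$-Fourier transform restricted to $|y|\le M$. The difficulty is that $\tu\in X$ controls $\|e^{\a\,\cdot}\tu\|_{L^2(\R^2)}$ but \emph{not} $\int_{\R}\|e^{\a\,\cdot}\tu(\cdot,y)\|_{L^2_x}\,dy$, so one cannot treat $A_k^{(M)}$ by absolute convergence of the double integral; instead I would write $\mathcal{G}=e^{-\a x}(e^{\a x}\mathcal{G})$, apply Cauchy--Schwarz in $x$ via (i), and Plancherel in $y$, to get $\|A_k^{(M)}-A_k^{(M')}\|_{L^2(-\eta_0,\eta_0)}^2\lesssim\int_{\R}\int_{|y|>M}e^{2\a x}|\mathcal{G}(x,y)|^2\,dy\,dx$, which tends to $0$ as $M\to\infty$ by (ii). Thus $A_k^{(M)}$ is Cauchy, hence converges in $L^2(-\eta_0,\eta_0)$ to some $A_k$ with $\|A_k\|_{L^2}\lesssim\|e^{\a\,\cdot}\mathcal{G}\|_{L^2(\R^2)}$. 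Adding the two limits, $F_k^{(M)}\to F_k:=A_k+\lim_M B_k^{(M)}$ in $L^2(-\eta_0,\eta_0)$, with $\|F_k\|_Z\lesssim\|\tu\|_X+\|\tc\|_Y+\|\gamma\|_Y$, proving the claim. The crux throughout is the compatibility of the weight $e^{2\a x}$ with the growth of $g_k^*$, encoded in $(\Re\beta(\eta_0)-1)\sup_y\sigma(y)<\a<\inf_y\sqrt{2c(y)}$: once this holds, coupling Plancherel in $y$ with Cauchy--Schwarz in $x$ produces $L^2(-\eta_0,\eta_0)$-convergence of the truncations rather than merely pointwise-in-$\eta$ convergence. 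The parameter $L$ causes no trouble, since $\psi(\cdot+L)$ is then supported where $e^{\a x}$ is small.
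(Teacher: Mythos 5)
Your proof is correct, and the decomposition you use is the natural one. The heart of the matter, which you identify precisely, is that $\tu\in X$ controls $\|e^{\a x}\tu\|_{L^2(\R^2)}$ but not an $L^1_y L^2_x$ quantity, so absolute convergence of the double integral in $y$ is unavailable for the $\tu$-contribution. Splitting the kernel into the $y$-independent piece $g_k^*(x,\eta,2)$, for which Fubini, Cauchy--Schwarz in $x$ against the weight $e^{-\a x}$, and Plancherel in $y$ give a genuinely $L^2$ argument, plus a remainder $r_k$ that carries the prefactor $|\tc(y)|+|\gamma(y)|\in L^2_y$ (so that Cauchy--Schwarz in $y$ closes it), is exactly what is needed. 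Your bookkeeping of the growth/decay of $g^*$ and $\pd_z g^*$ at $\pm\infty$, the rescaling $g_k^*(z,\eta,c)=O(c)\,g_k^*(\sqrt{c/2}\,z,\eta)$ which introduces a polynomial factor under $\pd_c$, and the choice of $\delta$ to enforce the strict inequalities $(\Re\beta(\eta_0)-1)\sup_y\sigma(y)<\a$ and $\a<(\Re\beta(\eta_0)+1)\inf_y\sigma(y)$ (the latter contained in your $\a<\inf_y\sqrt{2c(y)}$ since $\Re\beta\ge1$) are all sound; the compactly supported $\psi_{c,L}$ causes no trouble, as you say.

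The paper does not reproduce the proof but cites \cite[Lemma~5.1]{Mi}, so a line-by-line comparison is not possible here; your route (Plancherel in $y$ to convert the missing $L^1_y$ control into $L^2_y$ control, plus a mean-value estimate on the kernel deviation) is the standard one for this kind of statement and is what the cited source is organized around. One small point worth making explicit in a write-up: the choice of $\eta_0$ with $\Re\beta(\eta_0)-1<\a$ is a standing hypothesis inherited from the linear theory (Proposition~\ref{prop:semigroup-est}), not something the lemma's $\delta$ can enforce; you correctly treat it as given, but it should be stated as such rather than folded into the choice of $\delta$.
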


\begin{lemma} \emph{(\cite[Lemma~5.2]{Mi})}
  \label{lem:decomp}
Let $\a\in(0,2)$.
There exist positive constants $\delta_0$, $\delta_1$ and $L_0$ such that
if $\|\tu\|_X<\delta_0$ and $L\ge L_0$, then there exists a unique
$(\tc,\gamma)$ with $c=2+\tc$ satisfying
\begin{gather}
\label{eq:imp1}
  \|\tc\|_{Y}+\|\gamma\|_{Y} <\delta_1\,,\\
\label{eq:imp2}
F_1[\tu,\tc,\gamma,L]=F_2[\tu,\tc,\gamma,L]=0\,.
\end{gather}
Moreover, the mapping $\{\tu\in X\mid \|u\|_X<\delta_0\}
\ni \tu\mapsto (\tc,\gamma)=:\Phi(\tu)$ is $C^1$.
\end{lemma}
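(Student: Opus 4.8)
The plan is to prove Lemma~\ref{lem:decomp} by the implicit function theorem in Banach spaces, applied to $F=(F_1,F_2)\colon X\times Y\times Y\times\R\to Z\times Z$ at the base point $(\tu,\tc,\gamma,L)=(0,0,0,L)$. First I would check that $F$ vanishes there: with $c(y)\equiv2$ and $\gamma\equiv0$ we have $\varphi_{c(y)}(x-\gamma(y))=\varphi(x)$ and $\psi_{c(y),L}\equiv0$ (since $\sqrt{2c}-2=0$), so the bracket defining $F_k$ is identically zero. By Lemma~\ref{lem:F_k}, $F_k$ is well defined with values in $Z$ once $\|\tc\|_Y+\|\gamma\|_Y$ is small, and I would then record that $F$ is affine in $\tu$ and $C^1$ jointly on a neighbourhood of the base point. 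The regularity in $(\tc,\gamma)$ is the only delicate point: the bracket contains the compositions $\varphi_{c(y)}(x-\gamma(y))$ and $g_k^*(x-\gamma(y),\eta,c(y))$, which must be differentiated as maps into $Y$. Here one uses that $Y$ is band-limited, so $\|f\|_{L^\infty}\lesssim\|f\|_Y$ and products and Nemytskii-type compositions are controlled (cf.\ \eqref{eq:H^s-Y}, \eqref{eq:Y1-L1}), together with the exponential localisation of $g_k^*$ relative to the weight $e^{2\a z}$, which holds provided $\eta_0$ is small enough that $\Re\beta(\eta)-1<\a$ on $[-\eta_0,\eta_0]$; this is precisely what makes the pairing of $g_k^*$ against $\tu\in X$ continuous.

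Next I would compute $D_{(\tc,\gamma)}F(0,0,0,L)$. A useful simplification is that the factor $\overline{g_k^*(x-\gamma(y),\eta,c(y))}$ depends on $(\tc,\gamma)$ only through its product with the bracket, which vanishes at the base point; hence differentiating $g_k^*$ contributes nothing to the linearisation, and after a Fourier transform in $y$ one is left with multiplication (for $|\eta|\le\eta_0$) by the $2\times2$ symbol
\begin{equation*}
A(\eta,L)=\begin{pmatrix}
\la -\pd_c\varphi_c|_{c=2}+\psi(\cdot+L),\,g_1^*(\cdot,\eta,2)\ra & \la \varphi',\,g_1^*(\cdot,\eta,2)\ra\\
\la -\pd_c\varphi_c|_{c=2}+\psi(\cdot+L),\,g_2^*(\cdot,\eta,2)\ra & \la \varphi',\,g_2^*(\cdot,\eta,2)\ra
\end{pmatrix},
\end{equation*}
viewed as an operator between the Fourier sides $Z\times Z$ of $Y\times Y$. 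The wave $\psi(\cdot+L)$ is supported near $x=-L$, where $g_k^*$ is exponentially small, so $A(\eta,L)=A(\eta,\infty)+O(e^{-L})$ uniformly in $|\eta|\le\eta_0$. At $\eta=0$ one has $g^*(x,0)=\sech^2x$, and $A(0,\infty)$ is, up to a fixed normalisation, the pairing of the generalised null directions $\pd_c\varphi_c|_{c=2}$ and $\varphi'$ of $\mL$ against the generalised kernel of $\mL(0)^*$; its invertibility is the non-degeneracy built into the spectral picture of \cite{Mi}, and by continuity of $\eta\mapsto A(\eta,\infty)$ it persists, with uniformly bounded inverse, for $\eta_0$ small. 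Choosing $L_0$ large then makes $A(\eta,L)$ invertible for all $L\ge L_0$ and $|\eta|\le\eta_0$, so that $D_{(\tc,\gamma)}F(0,0,0,L)\colon Y\times Y\to Z\times Z$ is an isomorphism with bounds independent of $L\ge L_0$.

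With these two ingredients, the Banach implicit function theorem produces $\delta_0,\delta_1,L_0>0$ such that for $\|\tu\|_X<\delta_0$ and $L\ge L_0$ there is a unique $(\tc,\gamma)$ with $\|\tc\|_Y+\|\gamma\|_Y<\delta_1$ solving $F_1=F_2=0$; shrinking $\delta_0$ so that the fixed-point radius stays below $\delta_1$ gives \eqref{eq:imp1} and \eqref{eq:imp2}, and the solution map $\tu\mapsto\Phi(\tu)=(\tc,\gamma)$ inherits the $C^1$ regularity of $F$.

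I expect the main obstacle to be the verification that $F$ is genuinely $C^1$ as a map into $Z\times Z$: one must control the two compositions inside the bracket in the band-limited norm of $Y$ while keeping the pairing against $\tu\in X$ bounded, which forces the two constraints ``$\eta_0$ small'' (so $g_k^*$ does not outgrow $e^{\a z}$) and ``$L$ large'' (so the $\psi_{c,L}$-correction does not spoil invertibility of the linearisation). The spectral non-degeneracy of $A(0,\infty)$ itself is not re-proved here but quoted from \cite{Mi}.
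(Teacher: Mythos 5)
Your proposal is correct and follows the standard implicit-function-theorem route: $F$ vanishes at $(\tu,\tc,\gamma)=(0,0,0)$ since the bracket is identically zero there, the linearisation in $(\tc,\gamma)$ only sees the derivatives of the bracket (the $g_k^*$-factor contributes nothing because it multiplies a vanishing quantity), and the resulting Fourier multiplier $A(\eta,L)$ reduces at $\eta=0$, $L=\infty$ to an explicitly computable lower-triangular matrix with diagonal entries $\la-\pd_c\varphi_c,\varphi_c\ra|_{c=2}=-2$ and $\la\varphi',g_2^*(\cdot,0,2)\ra=-2$, whence invertibility for small $\eta_0$ and large $L$. The present paper only cites this lemma from \cite[Lemma~5.2]{Mi} rather than re-proving it, but your sketch is the expected argument; the one point you defer (non-degeneracy of $A(0,\infty)$) is in fact an elementary calculation as above rather than a quoted spectral input.
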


\begin{remark}
\label{rem:decomp}
Let $u$ be a solution of \eqref{eq:KPII} satisfying
$u(0,x,y)=\varphi(x)+v_0(x,y)$ and let $\tv_1$ be a solution of \eqref{eq:tv1}.
Suppose $v_0\in H^{1/2}(\R^2)$.
Since $\tv\in C([0,T);X)$ by Lemma~\ref{lem:diff} and $\|\tv(0)\|_X$ is small, 
we see from Lemma~\ref{lem:decomp} that there exists a $T>0$ such that
$$(v_2,\tc,\tx)\in C([0,T];X\times Y\times Y)\,.$$
Moreover, replacing $u$ in \cite[Remark~5.3]{Mi} by
$\tu=u-\tv_1$ and using Lemma~\ref{lem:diff},
we can see that there exists a $T>0$ such that 
$$(\tc(t),\tx(t))=\Phi(\tv(t))\in C([0,T];Y\times Y)
\cap C^1((0,T);Y\times Y)\,,$$
where $\tv(t,x,y)=\tu(t,x+4t,y)-\varphi(x)$.
Moreover, we have $v_2\in C([0,T];X)$ 
and $(\tv(0),\tc(0),\tx(0))=(0,0,0)$.
\end{remark}
\begin{remark}
  \label{rem:decomp-continuity}
Let $u$, $\tv_1$, $\tc$ and $\tx$  be as in Remark~\ref{rem:decomp}
and let $u_n$ and $\tv_{1,n}$ be as in Lemma~\ref{lem:diff-conti}.
By Lemmas~\ref{lem:diff} and \ref{lem:diff-conti},
\begin{gather*}
\tv_n(t,x,y):=u_n(t,x+4t,y)-\tv_{1,n}(t,x+4t,y)-\varphi(x)
\in C([0,\infty);X)\,,\\  
\lim_{n\to\infty}\|\tv_n(t)-\tv(t)\|_X=0\,,
\end{gather*}
and it follows from Lemma~\ref{lem:decomp} that
there exists a $T>0$ such that 
\begin{gather*}
(\tc_n(t),\tx_n(t)):=\Phi(\tv_n(t))\in C([0,T];Y\times Y)
\cap C^1((0,T);Y\times Y)\,,\\
\lim_{n\to\infty}\sup_{t\in[0,T]}
\left(\|\tc_n(t)-\tc(t)\|_Y+\|\tx_n(t)-\tx(t)\|_Y\right)=0\,.
\end{gather*}
Following the argument of \cite[Remark~5.3]{Mi}, we also have
$$\lim_{n\to\infty}\sup_{t\in[0,T]}
\left(\|\pd_t\tc_n(t)-\pd_t\tc(t)\|_Y
+\|\pd_t\tx_n(t)-\pd_t\tx(t)\|_Y\right)=0\,.$$
\end{remark}
\par
We use a continuation principle that ensures the existence of
\eqref{eq:decomp} as long as $\|v_2(t)\|_X$ and $\|\tc(t)\|_Y$ remain small.

\begin{proposition} 
\label{prop:continuation}
Let $\a\in(0,1)$ and let $\delta_0$ and $L$ be the same as in Lemma~\ref{lem:decomp} and
let $u(t)$ and $\tv_1(t)$ be as in Lemma~\ref{lem:diff}.
Then there exists a constant $\delta_2>0$
such that if \eqref{eq:decomp}, \eqref{eq:decomp2} and \eqref{eq:orth}
hold for $t\in[0,T)$
and $v_2(t,z,y)$, $\tc(t,y):=c(t,y)-2$ and $\tx(t,y):=x(t,y)-4t$ satisfy
\begin{gather}
\label{eq:C1cx}
(\tc,\tx)\in C([0,T);Y\times Y)\cap C^1((0,T);Y\times Y)\,,\\
\label{eq:bd-vc}
\sup_{t\in[0,T)}\|v_2(t)\|_X\le \frac{\delta_0}{2}\,,
\quad \sup_{t\in[0,T)}\|\tc(t)\|_Y<\delta_2\,,\quad
\sup_{t\in[0,T)}\|\tx(t)\|_Y<\infty\,,
\end{gather}
then either $T=\infty$ or $T$ is not the maximal time of
the decomposition \eqref{eq:decomp} satisfying \eqref{eq:orth},
\eqref{eq:C1cx} and \eqref{eq:bd-vc}.
\end{proposition}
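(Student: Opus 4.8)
The plan is to argue by a standard bootstrap/continuation principle. Suppose $T$ is the maximal time on which the decomposition \eqref{eq:decomp}, \eqref{eq:decomp2} holds together with \eqref{eq:orth}, \eqref{eq:C1cx} and \eqref{eq:bd-vc}, and suppose $T<\infty$. We want to show $T$ is not maximal, i.e.\ the decomposition extends past $T$. The key observation is that the bounds in \eqref{eq:bd-vc} are \emph{strict} (for $\|v_2(t)\|_X$ the bound is $\delta_0/2$, which is strictly below the threshold $\delta_0$ of Lemma~\ref{lem:decomp}, and $\|\tc(t)\|_Y<\delta_2$ strictly), so if we can show that the relevant quantities extend continuously up to $t=T$ and still satisfy the \emph{non-strict} versions of the bounds at $t=T$, then we can reopen the implicit function theorem at $t=T$.

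First I would establish that $v_2$, $\tc$, $\tx$ extend to $t=T$. By Lemma~\ref{lem:diff}, the quantity $\tv(t)=u(t,\cdot+4t,\cdot)-\varphi-\tv_1(t,\cdot+4t,\cdot)$ lies in $C([0,\infty);X)$, so in particular $\tv(T)\in X$ is well-defined. By \eqref{eq:bd-vc} and the relation $v(t)=v_1(t)+v_2(t)$ together with $v_1(t)=\tv_1(t,\cdot+x(t,y),y)$, and since $\|v_2(t)\|_X\le\delta_0/2$ uniformly, a limiting argument as $t\uparrow T$ gives $v_2(T)\in X$ with $\|v_2(T)\|_X\le\delta_0/2$; continuity of $\tc$, $\tx$ on $[0,T)$ with the uniform bounds in \eqref{eq:bd-vc} similarly yields limits $\tc(T)\in Y$, $\tx(T)\in Y$ with $\|\tc(T)\|_Y\le\delta_2$. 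One should check here that $\|\tv(t)\|_X$ stays below $\delta_0$ on all of $[0,T]$ — this follows because $\tv=v_2+(\varphi_{c(t,y)}(z)-\varphi(z)-\psi_{c,L}(z+3t)$-type terms$)$, all of which are controlled by $\|v_2\|_X$ and $\|\tc\|_Y$, which are small by \eqref{eq:bd-vc} after possibly shrinking $\delta_2$ and $\delta_0$. Then Lemma~\ref{lem:decomp} applies at $\tu=\tv(T)$ and gives that $(\tc(T),\tx(T))=\Phi(\tv(T))$, consistent with the decomposition \eqref{eq:decomp} holding at $t=T$, i.e.\ the orthogonality \eqref{eq:orth} persists at $t=T$.

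Next, having the decomposition valid on the closed interval $[0,T]$ with $\|\tv(T)\|_X<\delta_0$ strictly, I would invoke the local existence/regularity statements in Remark~\ref{rem:decomp}: since $\tv\in C([0,\infty);X)$ and $\|\tv(T)\|_X$ is small, applying Lemma~\ref{lem:decomp} with base point $t=T$ produces a time $T'>T$ on which $(\tc,\tx)=\Phi(\tv)\in C([T,T'];Y\times Y)\cap C^1((T,T');Y\times Y)$, and $v_2\in C([T,T'];X)$; by uniqueness in Lemma~\ref{lem:decomp} this agrees with the solution on $[0,T)$, so \eqref{eq:C1cx} holds on $[0,T')$. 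Finally, by continuity of $t\mapsto\|v_2(t)\|_X$ and $t\mapsto\|\tc(t)\|_Y$ at $t=T$, and the strict inequalities $\|v_2(T)\|_X\le\delta_0/2<\delta_0$ and $\|\tc(T)\|_Y\le\delta_2$ — here one wants to re-derive the \emph{strict} bound $\|v_2(T)\|_X<\delta_0/2$ or, more carefully, to note that the proposition only needs the decomposition to persist, not the specific numerical bounds, since those are re-proved a posteriori from the a priori estimates — there is a neighborhood $[T,T'')$ on which \eqref{eq:bd-vc} continues to hold (shrinking $T''\le T'$ if necessary). Thus the decomposition extends strictly past $T$, contradicting maximality; hence $T=\infty$.

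The main obstacle I anticipate is the bookkeeping at the endpoint $t=T$: one must take limits $t\uparrow T$ of $v_2(t)$, $\tc(t)$, $\tx(t)$ in $X$, $Y$, $Y$ respectively and verify both that the limits lie in the right spaces and that the orthogonality \eqref{eq:orth} passes to the limit, which requires knowing that $\tu=\tv(t)\to\tv(T)$ strongly in $X$ (supplied by Lemma~\ref{lem:diff}, $w\in C([0,\infty);X)$, plus continuity of the map $v\mapsto v_1$ in the $X$-topology under the modulation $z\mapsto z+x(t,y)$) and that $\Phi$ is continuous (Lemma~\ref{lem:decomp}). A subtle point is that $\|\tx(t)\|_Y$ is only assumed bounded, not small, on $[0,T)$; so one must check the limit $\tx(T)$ exists in $Y$ using the $C^1$ regularity and the modulation equations (from Section~\ref{sec:modulation}) that bound $\pd_t\tx$, rather than relying on smallness — but for the purpose of this proposition, the bound $\sup_{t<T}\|\tx(t)\|_Y<\infty$ together with $\tx\in C^1((0,T);Y)$ and a bound on $\pd_t\tx$ (which holds because $\pd_t\tx = 2\tc + (\text{small})$ and $\tc$ is bounded) gives a uniformly Lipschitz, hence extendable, $\tx$ on $[0,T]$.
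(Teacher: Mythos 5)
Your overall route matches the paper's: invoke the continuity $\tv(t)=u(t,\cdot+4t,\cdot)-\varphi-\tv_1(t,\cdot+4t,\cdot)\in C([0,\infty);X)$ from Lemma~\ref{lem:diff}, pass to the limit $t\uparrow T$, and re-open the implicit-function machinery of Lemma~\ref{lem:decomp} at $t=T$. (The paper defers the details to \cite[Proposition~5.5]{Mi}, but explicitly names this continuity as the only new ingredient, so you have correctly reconstructed the argument.) The one step that is glossed and does not quite close as written is the estimate $\|\tv(T)\|_X<\delta_0$. The identity relating $\tv$ and $v_2$ is
\begin{equation*}
\tv(t,x,y)=\varphi_{c(t,y)}\bigl(x-\tx(t,y)\bigr)-\tpsi_{c(t,y)}\bigl(x-\tx(t,y)\bigr)+v_2\bigl(t,x-\tx(t,y),y\bigr)-\varphi(x)\,,
\end{equation*}
so there is a $y$-dependent translation $z\mapsto z+\tx(t,y)$ hidden inside, and in the exponentially weighted norm this produces a factor $e^{2\a\tx(t,y)}$. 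That factor is \emph{not} controlled by the hypothesis $\sup_{t<T}\|\tx(t)\|_Y<\infty$ alone (nor by $\|v_2\|_X\le\delta_0/2$ and $\|\tc\|_Y<\delta_2$); you need $\|\tx\|_{L^\infty_y}$ small, not merely finite. The missing observation is that on $[0,T)$ one has $(\tc,\tx)=\Phi(\tv)$ (by the uniqueness in Lemma~\ref{lem:decomp}, continuity, and the initial condition $(\tc(0),\tx(0))=(0,0)$), and Lemma~\ref{lem:decomp} returns $\|\tc\|_Y+\|\tx\|_Y<\delta_1$; combined with $Y\hookrightarrow L^\infty$ this supplies the needed smallness of $\tx$, after which your chain of estimates and the extension past $T$ go through as you describe. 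Your final paragraph addresses only the \emph{existence} of $\tx(T)$ in $Y$ via Lipschitz bounds, not this smallness, which is the point actually used.
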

\begin{proof}
Since
$u(t,x,y)-\varphi(x-4t)-\tv_1(t,x,y)\in C([0,\infty);X)$ by
Lemma~\ref{lem:diff}, 
we can prove Proposition~\ref{prop:continuation} in the same way as
\cite[Proposition~5.5]{Mi}.
\end{proof}
\bigskip

\section{Modulation equations}
\label{sec:modulation}
In this section, we will derive a system of PDEs which describe the motion
of  modulation parameters $c(t,y)$ and $x(t,y)$.
Substituting $\tv_1(t,x,y)=v_1(t,z,y)$ with $z=x-x(t,y)$ into \eqref{eq:KPII},
we have
\begin{equation}
  \label{eq:v1}
\pd_tv_1-2c\pd_zv_1+\pd_z^3v_1+3\pd_{z}^{-1}\pd_y^2v_1=\pd_z(N_{1,1}+N_{1,2})+N_{1,3}\,,
\end{equation}
where 
$N_{1,1}=-3v_1^2$, $N_{1,2}=\{x_t-2c-3(x_y)^2\}v_1$
and $N_{1,3}=6\pd_y(x_yv_1)-3x_{yy}v_1$. 
Substituting the ansatz \eqref{eq:decomp} into \eqref{eq:KPII},
we have 
\begin{equation}
  \label{eq:v}
\pd_tv=\mL_{c}v+\ell+\pd_z(N_1+N_2)+N_3\,,  
\end{equation}
where  
$\mL_cv=-\pd_z(\pd_z^2-2c+6\varphi_c)v-3\pd_z^{-1}\pd_y^2$,
$\ell=\ell_1+\ell_2$, $\ell_k=\ell_{k1}+\ell_{k2}+\ell_{k3}$
$(k=1, 2)$, $\tpsi_c(z)=\psi_{c,L}(z+3t)$ and
\begin{align*}
\ell_{11}=&(x_t-2c-3(x_y)^2)\varphi'_c-(c_t-6c_yx_y)\pd_c\varphi_c\,,\quad
\ell_{12}=3x_{yy}\varphi_c\,,\\
\ell_{13}=& 3c_{yy}\int_z^\infty\pd_c\varphi_c(z_1)dz_1
+3(c_y)^2\int_z^\infty \pd_c^2\varphi_{c}(z_1)dz_1\,,\\
\ell_{21}=& (c_t-6c_yx_y)\pd_c\tpsi_c-(x_t-4-3(x_y)^2)\tpsi_c'\,,\\
\ell_{22}=&(\pd_z^3-\pd_z)\tpsi_c-3\pd_z(\tpsi_c^2)
+6\pd_z(\varphi_c\tpsi_c)-3x_{yy}\tpsi_c\,,\\
\ell_{23}=&-3c_{yy}\int_z^\infty\pd_c\tpsi_c(z_1)dz_1
-3(c_y)^2\int_z^\infty \pd_c^2\tpsi_c(z_1)dz_1\,,
\end{align*}
\begin{align*}
& N_1=-3v^2\,,\quad N_2=\{x_t-2c-3(x_y)^2\}v+6\tpsi_cv\,,\\
& N_3=6x_y\pd_yv+3x_{yy}v=6\pd_y(x_yv)-3x_{yy}v\,.
\end{align*}
Here we use the fact that $\varphi_c$ is a solution of
\begin{equation}
  \label{eq:B}
  \varphi_c''-2c\varphi_c+3\varphi_c^2=0\,.
\end{equation}
We slightly change the definition of $\tpsi$ from \cite{Mi}
in order to apply the virial identity to
$\int_{\R^2}\tpsi_c(z) v_1^2(t,z,y)\,dzdy$.
\par
Subtracting \eqref{eq:v1} from \eqref{eq:v}, we have
\begin{equation}
  \label{eq:v2}
\pd_tv_2=\mL_{c}v_2+\ell+\pd_z(N_{2,1}+N_{2,2}+N_{2,4})+N_{2,3}\,,
\end{equation}
where
\begin{gather*}
N_{2,1}=-3(2v_1v_2+v_2^2)\,,\quad  N_{2,2}=\{x_t-2c-3(x_y)^2\}v_2+6\tpsi_cv_2\,,
\\ N_{2,3}=6\pd_y(x_yv_2)-3x_{yy}v_2\,,\quad N_{2,4}=6(\tpsi_c-\varphi_c)v_1\,.
\end{gather*}
\par

Let
\begin{align*}
& \bM_{c,x}(T)=\sup_{[0,T]}(\|\tc(t)\|_Y+\|x_y(t)\|_Y)
+\|c_y\|_{L^2(0,T;Y)}+\|x_{yy}\|_{L^2(0,T;Y)}\,,\\
& \bM_1(T)=\sup_{t\in[0,T]}\|v_1(t)\|_{L^2}
+\|\mathcal{E}(v_1)^{1/2}\|_{L^2(0,T;W(t))}\,,
\quad \bM_1'(T)=\sup_{t\in[0,T]}\|\tv_1(t)\|_{L^3}\,,
\\ & 
\bM_2(T)=\sup_{0\le t\le T}\|v_2(t)\|_X+\|\mathcal{E}(v_2)^{1/2}\|_{L^2(0,T;X)}\,,
\quad \bM_v(T)=\sup_{t\in[0,T]}\|v(t)\|_{L^2}\,,
\end{align*}
where
$\|v\|_{W(t)}=\|(e^{-\a|z|/2}+e^{-\a|z+3t+L|})v\|_{L^2(\R^2)}$,
$L$ is a large positive constant and
$$\pd_z^{-1}\pd_yv(t,z,y):=\mF_{\xi,\eta}^{-1}\left(\frac{\eta}{\xi}
\mF_{z,y}v(t,\xi,\eta)\right)\,.$$
By Lemma~\ref{lem:diff}, we have $v_2(t)\in X$ and
$$\pd_x^{-1}v_2(t,z,y)=-\int_z^\infty v_2(t,z_1,y)\,dz_1\in X$$
if $x(t,\cdot)\in L^\infty(\R)$.
\par

Now we will derive modulation equations of $c(t,y)$ and $x(t,y)$
from the orthogonality condition \eqref{eq:orth} assuming
the smallness of $\bM_{c,x}(T)$, $\bM_1(T)$ and $\bM_2(T)$.
It follows from \cite{MST} and \cite[Lemma~3.2]{KZ} that $\tv_1(t)$,
$\tv(t)\in C(\R;L^2(\R^2))$ and $\pd_x^{^1}\pd_y\tv_1$, 
$\pd_x^{-1}\pd_y\tv\in L_x^\infty L^2([-T,T]\times\R_y)$ for any $T>0$.
Moreover, Lemma~\ref{lem:diff} implies that $\tv(t)\in C([0,\infty);X)$
and $\pd_x^{-1}\pd_y\tv \in L^2(0,T;X)$.
If $\bM_{c,x}(T)$ and $\bM_2(T)$ are sufficiently small,
then we see from Remark~\ref{rem:decomp} and
Proposition~\ref{prop:continuation} that
the decomposition \eqref{eq:decomp} satisfying \eqref{eq:orth}
and \eqref{eq:C1cx} exists for $t\in[0,T]$.
Since $Y\subset \cap_{s\ge0}H^s(\R)$, we have
\begin{equation}
  \label{eq:diff-tv}
  \begin{split}
& v_2(t,z,y)-\tv(t,z+\tx(t,y),y)
\\=& \varphi(z+\tx(t,y))-\varphi_{c(t,y)}(z)
+\tpsi_{c(t,y)}(z)\in L^2(\R^2)\cap X\,,      
  \end{split}
\end{equation}
and we easily see that $v_2(t)\in C([0,T];X\cap L^2(\R^2))$.
Moreover, since
$$\int_\R\left\{\varphi(z+\tx(t,y))
-\varphi_{c(t,y)}(z)+\tpsi_{c(t,y)}(z)\right\}\,dz=0$$
for any $y\in\R$ by \eqref{eq:0mean}
and its integrand decays exponentially as $z\to\pm\infty$,
we have 
$$(\pd_z^{-1}\pd_yv_2)(t,z,y)\in L^2(0,T;X)
\cap L_x^\infty L^2([-T,T]\times \R_y)\,.$$
Approximating $g_k^*(z,\eta)$ by $C_0^4(\R)$-functions
in $L^2(\R;e^{-2\a z}dz)$ and using 
Proposition~\ref{prop:continuation} and Remark~\ref{rem:decomp},
we can justify the mapping
$$ t\mapsto \int_{\R^2}v_2(t,z,y)\overline{g_k^*(z,\eta,c(t,y))}
e^{-iy\eta}\,dzdy\in Z$$
is $C^1$ for $t\in[0,T]$ if we have \eqref{eq:C1cx} and \eqref{eq:bd-vc}.
Differentiating \eqref{eq:orth} with respect to $t$ and substituting
\eqref{eq:v2} into the resulting equation, we have in $L^2(-\eta_0,\eta_0)$
\begin{equation}
\label{eq:orth_t}
\begin{split}
& \frac{d}{dt}\int_{\R^2}v_2(t,z,y)\overline{g_k^*(z,\eta,c(t,y))}
e^{-iy\eta}\,dzdy
\\=& \int_{\R^2} \ell\overline{g_k^*(z,\eta,c(t,y))}e^{-iy\eta}dzdy
+\sum_{j=1}^6II^j_k(t,\eta)=0\,,
\end{split}  
\end{equation}
where
\begin{align*}
II^1_k=& \int_{\R^2} v_2(t,z,y)\mL_{c(t,y)}^*(\overline{g_k^*(t,z,c(t,y))
e^{iy\eta}})\, dzdy\,,\\
II^2_k=& -\int_{\R^2} N_{2,1}\overline{\pd_zg_k^*(z,\eta,c(t,y))}
e^{-iy\eta}\, dzdy\,,\\
II^3_k=& \int_{\R^2} N_{2,3}\overline{g_k^*(z,\eta,c(t,y))}e^{-iy\eta}dzdy
\\ & +6\int_{\R^2} v_2(t,z,y)c_y(t,y)x_y(t,y)
\overline{\pd_cg_k^*(z,\eta,c(t,y))}e^{-iy\eta}\, dzdy\,,\\
II^4_k=& \int_{\R^2} v_2(t,z,y)\left(c_t-6c_yx_y\right)(t,y)
\overline{\pd_cg_k^*(z,\eta,c(t,y))}e^{-iy\eta}\, dzdy\,,\\
II^5_k=& -\int_{\R^2} N_{2,2}\overline{\pd_zg_k^*(z,\eta,c(t,y))}e^{-iy\eta}\, dzdy\,,
\\
II^6_k=& -\int_{\R^2} N_{2,4}\overline{\pd_zg_k^*(z,\eta,c(t,y))}e^{-iy\eta}\, dzdy\,.
\end{align*}
\par

The modulation PDEs of $c(t,y)$ and $x(t,y)$ can be obtained by
computing the inverse Fourier transform of \eqref{eq:orth_t} in $\eta$.
The leading term of
$$\frac{1}{2\pi}
\int_{-\eta_0}^{\eta_0}
\int_{\R^2}\ell_1\overline{g_k^*(z,\eta,c(t,y_1))}e^{i\eta(y-y_1)}\,dzdy_1d\eta$$
is
\begin{equation}
  \label{eq:Gk}
G_k(t,y)=\int_\R \ell_1 \overline{g_k^*(z,0,c(t,y))}dz\,.
\end{equation}
Since $g_1^*(z,0,c)=\varphi_c(z)$ and
$g_2^*(z,0,c)=(c/2)^{3/2}\int_{-\infty}^z\pd_c\varphi_c$, we can compute $G_1$ and $G_2$
explicitly.
\begin{lemma} \emph{(\cite[Lemma~6.1]{Mi})}
  \label{LEM:G}
Let $\mu_1=\frac{1}{2}-\frac{\pi^2}{12}$ and
$\mu_2=\frac{\pi^2}{32}-\frac{3}{16}$. Then
  \begin{align*}
G_1=&16x_{yy}\left(\frac{c}{2}\right)^{3/2}
-2(c_t-6c_yx_y)\left(\frac{c}{2}\right)^{1/2}+6c_{yy}-\frac{3}{c}(c_y)^2\,,\\
G_2=& -2(x_t-2c-3(x_y)^2)\left(\frac{c}{2}\right)^{2}
+6x_{yy}\left(\frac{c}{2}\right)^{3/2}
-\frac12(c_t-6c_yx_y)\left(\frac{c}{2}\right)^{1/2}
\\ & +\mu_1c_{yy}+\mu_2(c_y)^2\left(\frac{c}{2}\right)^{-1}\,.
  \end{align*}
\end{lemma}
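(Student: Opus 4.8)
The plan is to carry out the integration in $G_k(t,y)=\int_\R\ell_1\,\overline{g_k^*(z,0,c(t,y))}\,dz$ directly, organizing it so that $G_k$ comes out as a linear combination of the five scalar modulation quantities
$$A:=x_t-2c-3(x_y)^2,\qquad B:=c_t-6c_yx_y,\qquad x_{yy},\qquad c_{yy},\qquad (c_y)^2$$
with coefficients that are explicit one-dimensional integrals. Since $\ell_1=\ell_{11}+\ell_{12}+\ell_{13}$ with $\ell_{11}=A\varphi_c'-B\,\pd_c\varphi_c$, $\ell_{12}=3x_{yy}\varphi_c$ and $\ell_{13}=3c_{yy}\Phi_1+3(c_y)^2\Phi_2$, where $\Phi_m(z):=\int_z^\infty\pd_c^m\varphi_c(z_1)\,dz_1$ (so $\Phi_2=\pd_c\Phi_1$), and since $g_1^*(z,0,c)=\varphi_c(z)$ and $g_2^*(z,0,c)=(c/2)^{3/2}\int_{-\infty}^z\pd_c\varphi_c$ are real, the problem reduces to evaluating the ten numbers
$$\int_\R\varphi_c'\,h_k,\quad \int_\R\pd_c\varphi_c\,h_k,\quad \int_\R\varphi_c\,h_k,\quad \int_\R\Phi_1\,h_k,\quad \int_\R\Phi_2\,h_k\qquad(k=1,2),$$
where $h_k:=g_k^*(\cdot,0,c)$.

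The first step is to remove the $c$-dependence by the scaling $\varphi_c(z)=\tfrac c2\,\varphi_2(\sqrt{c/2}\,z)$, which gives $\pd_c\varphi_c(z)=\tfrac12\varphi_2(u)+\tfrac u4\varphi_2'(u)$ with $u=\sqrt{c/2}\,z$ (with no power of $c$), a similar closed form for $\pd_c^2\varphi_c$ carrying a factor $c^{-1}$, and then $\Phi_1(z)=\sqrt{2/c}\,\tilde\Phi_1(u)$, $\Phi_2(z)=-\tfrac{\sqrt{2/c}}{2c}\,\tilde\Phi_2(u)$, $g_2^*(z,0,c)=\tfrac c2\,\tilde G(u)$ for fixed profiles $\tilde\Phi_1,\tilde\Phi_2,\tilde G$ built from $\varphi_2$. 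After the substitution $u=\sqrt{c/2}\,z$ each of the ten integrals becomes a fixed power of $c$ (read off by counting) times a pure number, a combination of $\int_\R\sech^{2j}u\,du$, $\int_\R u^l\sech^{2j}u\tanh u\,du$ and $\int_\R u^2\sech^{2j}u\,du$. Several vanish by parity, in particular $\int_\R\varphi_c'\varphi_c\,dz=0$ (whence $G_1$ has no $A$-term), and one checks $\int_\R\Phi_1\varphi_c\,dz=2$, $\int_\R\Phi_2\varphi_c\,dz=-1/c$, giving the terms $6c_{yy}$ and $-\tfrac3c(c_y)^2$ of $G_1$; the elementary values $\int_\R\sech^2=2$, $\int_\R\sech^4=\tfrac43$, $\int_\R u\,\sech^2u\tanh u\,du=1$ then produce the remaining $16x_{yy}(c/2)^{3/2}$, $-2B(c/2)^{1/2}$ terms of $G_1$ and the $-2A(c/2)^2$, $6x_{yy}(c/2)^{3/2}$, $-\tfrac12B(c/2)^{1/2}$ terms of $G_2$ (each after one integration by parts moving a derivative onto $g_k^*$).

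The only place $\pi^2$ --- hence $\mu_1$ and $\mu_2$ --- enters is the coefficients of $c_{yy}$ and $(c_y)^2$ in $G_2$, i.e. $\int_\R\Phi_1\,g_2^*\,dz$ and $\int_\R\Phi_2\,g_2^*\,dz$, the two integrals pairing two nonlocal (antiderivative) quantities. At $c=2$ one has the closed forms $\Phi_1=\tfrac12-\tfrac12\tanh u-\tfrac12 u\sech^2u$, $g_2^*(\cdot,0,2)=\tfrac12+\tfrac12\tanh u+\tfrac12 u\sech^2u$ and a corresponding one for $\Phi_2$; the products are integrable once one first combines the non-decaying pieces (using $(e^{2u}+1)^{-1}(e^{-2u}+1)^{-1}=\tfrac14\sech^2u$), and one integration by parts reduces everything to the moment integrals $\int_\R u^2\sech^2u\,du=\pi^2/6$, $\int_\R u^2\sech^4u\,du=\pi^2/9-2/3$ and $\int_\R u^3\sech^4u\tanh u\,du=\pi^2/12-1/2$. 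Collecting the rational and $\pi^2$ parts yields exactly $\mu_1=\tfrac12-\tfrac{\pi^2}{12}$ and $\mu_2=\tfrac{\pi^2}{32}-\tfrac3{16}$, and assembling all contributions together with the powers of $c/2$ gives the stated formulas.

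The main obstacle is this last bookkeeping: keeping the half-integer powers of $c/2$ straight through the scaling substitution and the iterated antiderivatives, handling the non-decay of $\int_{-\infty}^z\pd_c\varphi_c$ as $z\to+\infty$ in the integrations by parts (which is why the naive term-by-term splitting of the $\Phi_1\Phi_2$-type products fails and the non-integrable pieces must be recombined first), and evaluating the second-moment $\sech^4$ integrals accurately enough that the rational constants $\tfrac12$, $-\tfrac3{16}$ and the $\pi^2$-coefficients $-\tfrac1{12}$, $\tfrac1{32}$ come out exactly. (Alternatively, one may simply invoke \cite[Lemma~6.1]{Mi}, where this computation is recorded.)
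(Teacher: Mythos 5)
Your proposal is correct: the paper gives no proof here and simply cites \cite[Lemma~6.1]{Mi}, and your computation is the standard direct one — reduce by the scaling $u=\sqrt{c/2}\,z$, use parity to kill odd integrands, and evaluate the surviving moment integrals of powers of $\sech$. I checked the key numerical ingredients (e.g.\ $\int_\R\Phi_1\varphi_c\,dz=2$, $\int_\R\Phi_2\varphi_c\,dz=-1/c$, $\int_\R u^2\sech^4u\,du=\pi^2/9-2/3$, $\int_\R u^3\sech^4u\tanh u\,du=\pi^2/12-1/2$, and the resulting $3\int\tilde\Phi_1\tilde H\,du=\mu_1$, $\int\tilde\Phi_2\tilde H\,du=\pi^2/6-1=16\mu_2/3$), and they all come out as you state; your remark about first recombining the non-decaying antiderivative pieces before splitting the integral is exactly the right caution.
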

We remark that $(G_1,G_2)$ are the dominant part of the modulation
equations for $c$ and $x$. Now we will write the remainder part of
$\int_{\R^2}\ell_1\overline{g_k^*(z,\eta,c(t,y))}e^{-iy\eta}\,dzdy$ in the same way
as \cite{Mi}. 
For $q_c=\varphi_c$, $\varphi_c'$, $\pd_c\varphi_c$
and $\pd_z^{-1}\pd_c^m\varphi_c(z)=-\int_z^\infty \pd_c^m\varphi_c(z_1)\,dz_1$
($m\ge1$), let $S_k^1[q_c]$  and $S_k^2[q_c]$ be operators defined by
\begin{align*}
& S_k^1[q_c](f)(t,y)=\frac{1}{2\pi} \int_{-\eta_0}^{\eta_0}\int_{\R^2}
f(y_1)q_2(z)\overline{g_{k1}^*(z,\eta,2)}e^{i(y-y_1)\eta}dy_1dzd\eta\,,\\
& S_k^2[q_c](f)(t,y)=\frac{1}{2\pi} \int_{-\eta_0}^{\eta_0}\int_{\R^2}
f(y_1)\tc(t,y_1)\overline{g_{k2}^*(z,\eta,c(t,y_1))}
e^{i(y-y_1)\eta}dy_1dzd\eta\,,
\end{align*}
where 
\begin{align*}
& g_{k1}^*(z,\eta,c)=\frac{g_k^*(z,\eta,c)-g_k^*(z,0,c)}{\eta^2}\,,\quad
\delta q_c(z)=\frac{q_c(z)-q_2(z)}{c-2}\,,\\
& g_{k2}^*(z,\eta,c)=g_{k1}^*(z,\eta,2)\delta q_c(z)+
\frac{g_{k1}^*(z,\eta,c)-g_{k1}^*(z,\eta,2)}{c-2}q_c(z)\,.
\end{align*}
Note that $S^1_k\in B(Y)$ and $S^1_k$ are independent of $c(t,y)$
whereas  $\|S^2_k\|_{B(Y,Y_1)}\lesssim \|\tc\|_Y$.
See \cite[Claims~B.1 and B.2]{Mi}.
Using $S^j_k$ ($j$, $k=1$, $2$), we have
\begin{equation}
  \label{eq:der2}
\begin{split}
& \frac{1}{2\pi}\int_{-\eta_0}^{\eta_0}\int_{\R^2}
 \ell_1\left(\overline{g_k^*(z,\eta,c(t,y))-g_k^*(z,0,c(t,y))}\right)e^{-iy\eta}
\,dzdyd\eta
\\=& -\sum_{j=1,2}\pd_y^2\left(S^j_k[\varphi_c'](x_t-2c-3(x_y)^2)
 -S^j_k[\pd_c\varphi_c](c_t-6c_yx_y)\right)-\pd_y^2(R^1_k+R^2_k)\,,
\end{split}  
\end{equation}
\begin{align*}
& R^1_k=3S^1_k[\varphi_c](x_{yy})-3S^1_k[\pd_z^{-1}\pd_c\varphi_c](c_{yy})\,,
\\ & 
R^2_k=3S^2_k[\varphi_c](x_{yy})-3S^2_k[\pd_z^{-1}\pd_c\varphi_c](c_{yy})
-3\sum_{j=1,2}S^j_k[\pd_z^{-1}\pd_c^2\varphi_c](c_y^2)\,.
\end{align*}
We rewrite the linear term $R^1_k$ as
$$\begin{pmatrix}R^1_1 \\ R^1_2\end{pmatrix}
=\wS_0\begin{pmatrix}c_{yy} \\ x_{yy} \end{pmatrix}\,,\quad
\wS_0=3\begin{pmatrix}-S^1_1[\pd_z^{-1}\pd_c\varphi_c] & S^1_1[\varphi_c]
\\ -S^1_2[\pd_z^{-1}\pd_c\varphi_c] & S^1_2[\varphi_c]\end{pmatrix}\,.$$
\par

Next, we deal with
$$\frac{1}{2\pi}\int_{-\eta_0}^{\eta_0}
\int_{\R^2} \ell_2\overline{g_k^*(z,\eta,c(t,y_1))}e^{i(y-y_1)\eta}
\,dzdy_1d\eta\,.$$
Let $S^3_k[p]$ and $S^4_k[p]$ be operators defined by
\begin{equation*}
S^3_k[p](f)(t,y)=\frac{1}{2\pi}\int_{-\eta_0}^{\eta_0}\int_{\R^2}
f(y_1)p(z+3t+L)\overline{g_k^*(z,\eta)}e^{i(y-y_1)\eta}dy_1dzd\eta\,,  
\end{equation*}
\begin{multline*}
S^4_k[p](f)(t,y)=\frac{1}{2\pi}\int_{-\eta_0}^{\eta_0}\int_{\R^2}
f(y_1)\tc(t,y_1)p(z+3t+L)\\ \times\overline{g_{k3}^*(z,\eta,c(t,y_1))}
e^{i(y-y_1)\eta}dy_1dzd\eta\,,
\end{multline*}
where $g_{k3}^*(z,\eta,c)=(c-2)^{-1}(g_k^*(z,\eta,c)-g_k^*(z,\eta))$.
By the definition of $\tpsi_c$,
\begin{equation}
  \label{eq:l21F-1}
  \begin{split}
& \frac{1}{2\pi}\int_{-\eta_0}^{\eta_0}\int_{\R^2}
\ell_{21}\overline{g_k^*(z,\eta,c(t,y))}e^{-iy\eta}\,dzdyd\eta
\\=& (S^3_k[\psi]+S^4_k[\psi])\bigl(\sqrt{2/c}(c_t-6c_yx_y)\bigr)
\\ &-2\sqrt{2}(S^3_k[\psi']+S^4_k[\psi'])
\bigl((\sqrt{c}-\sqrt{2})(x_t-4-3(x_y)^2\bigr)\,.
  \end{split}
\end{equation}
The operator norms of $S^j_k[\psi]$, $S^j_k[\psi']$
($j=3\,,\,4$, $k=1\,,\,2$) decay exponentially as $t\to\infty$
because $g_k^*(z,\eta)$
and $g_k^*(z,\eta,c)$ are exponentially localized as $z\to-\infty$
and  $\psi\in C_0^\infty(\R)$. See \eqref{eq:S3p} and \eqref{eq:S4p}
in Appendix~\ref{sec:[B3,pdy]}.

\par

Next, we decompose
$$(2\pi)^{-1}\int_{-\eta_0}^{\eta_0}\int_{\R^2}(\ell_{22}+\ell_{23})
\overline{g_k^*(z,\eta,c(t,y))}e^{-iy\eta}\,dzdyd\eta$$
into a linear part and a nonlinear part with respect to $\tc$ and $\tx$. 
The linear part can be written as
\begin{equation}
  \label{eq:der4}
\frac{1}{2\pi}\int_{-\eta_0}^{\eta_0}\int_{\R^2}
\ell_{2,lin}(t,z,y_1)\overline{g_k^*(z,\eta)}e^{i(y-y_1)\eta}dy_1dzd\eta
=:\ta_k(t,D_y)\tc\,,  
\end{equation}
where 
\begin{align*}
\ell_{2,lin}(t,z,y)=&
\tc(t,y)\pd_z\left\{\pd_z^2-1+6\varphi(z)\right\}\psi(z+3t+L)
\\ & -3c_{yy}(t,y)\int_z^\infty\psi(z_1+3t+L)dz_1\,,  
\end{align*}
\begin{equation}
\label{eq:def-ak}
\begin{split}
\ta_k(t,\eta)=& \Bigl[
\int_\R\left\{\pd_z\left(\pd_z^2-1+6\varphi(z)\right)\psi(z+3t+L)\right\}
\overline{g_k^*(z,\eta)}dz\\
& +3\eta^2\int_\R\left(\int_z^\infty\psi(z_1+3t+L)dz_1\right)
\overline{g_k^*(z,\eta)}dz\Bigr]\mathbf{1}_{[-\eta_0,\eta_0]}(\eta)\,,
\end{split}
\end{equation}
and the nonlinear part is 
\begin{equation}
  \label{eq:der5}
\begin{split}
R^3_k(t,y):=& \frac{1}{2\pi}\int_{-\eta_0}^{\eta_0}
\int_\R (\ell_{22}+\ell_{23})\overline{g_k^*(z,\eta,c(t,y_1))}
e^{i(y-y_1)\eta}dzdy_1d\eta \\
& -\frac{1}{2\pi}\int_{-\eta_0}^{\eta_0}
\int_\R \ell_{2,lin}\overline{g_k^*(z,\eta)}e^{i(y-y_1)\eta}dzdy_1d\eta\,.
\end{split}
\end{equation}
\par

Next, we deal with  $II^j_k$ $(j=1\,,\,\cdots\,,\,6)$ in \eqref{eq:orth_t}.
Let 
\begin{align*}
II^3_{k1}=& -3\int_{\R^2}v_2(t,z,y)x_{yy}(t,y)
\overline{g_k^*(z,\eta,c(t,y))}e^{-iy\eta}\,dzdy\,,\\
II^3_{k2}=& 6\int_{\R^2}v_2(t,z,y)x_y(t,y)
\overline{g_k^*(z,\eta,c(t,y))}e^{-iy\eta}\,dzdy
\end{align*}
so that $II^3_k=II^3_{k1}+i\eta II^3_{k2}$.
For $k=1$ and $2$, let
\begin{equation}
\label{eq:der6}
\begin{split}
R^4_k(t,y)=& \frac{1}{2\pi}\int_{-\eta_0}^{\eta_0}
\left\{II^1_k(t,\eta)+II^2_k(t,\eta)+ II^3_{k1}(t,\eta)\right\}e^{iy\eta}d\eta\,,
\\
R^5_k(t,y)=& \frac{1}{2\pi}\int_{-\eta_0}^{\eta_0}
II^3_{k2}(t,\eta)e^{iy\eta}\,d\eta\,.
\end{split}
\end{equation}
\par
Let $S^5_k$ and $S^6_k$ be operators defined by
\begin{gather*}
S^5_k(f)(t,y)=\frac{1}{2\pi}\int_{-\eta_0}^{\eta_0}
\int_{\R^2} v_2(t,z,y_1)f(y_1)\overline{\pd_cg_k^*(z,\eta,c(t,y_1))}
e^{i(y-y_1)\eta}\,dzdy_1d\eta\,,\\
S^6_k(f)(t,y)=-\frac{1}{2\pi}\int_{-\eta_0}^{\eta_0}\int_{\R^2}
v_2(t,z,y_1)f(y_1)\overline{\pd_zg_k^*(z,\eta,c(t,y_1))}
e^{i(y-y_1)\eta}\,dzdy_1d\eta\,,
\end{gather*}
and
$$R^6_k=-\frac{3}{\pi}\int_{-\eta_0}^{\eta_0}\int_{\R^2}\psi_{c(t,y_1),L}(z+3t)
v_2(t,z,y_1)\overline{\pd_zg_k^*(z,\eta,c(t,y_1))}e^{i(y-y_1)\eta}dy_1dzd\eta\,.$$
Then
\begin{equation}
  \label{eq:der7}
\begin{split}
& \mathbf{1}_{[-\eta_0,\eta_0]}(\eta)II^4_k(t,\eta)=
\sqrt{2\pi}\mF_y(S^5_k(c_t-6c_yx_y))\,,\\
& \mathbf{1}_{[-\eta_0,\eta_0]}(\eta)II^5_k(t,\eta)
=\sqrt{2\pi}\mF_y\left\{S^6_k\left(x_t-2c-3(x_y)^2\right)+R^6_k\right\}\,,
\end{split}  
\end{equation}
Let  $R^{v_1}={}^t(R^{v_1}_1,R^{v_1}_2)$ and
\begin{equation}
\label{eq:derv1}
R^{v_1}_k(t,y)=\frac{1}{2\pi}\int_{-\eta_0}^{\eta_0}II^6_k(t,\eta)e^{iy\eta}\,d\eta
\quad\text{for $k=1$ and $2$.}
\end{equation}
\par
Using \eqref{eq:Gk}--\eqref{eq:derv1},
we can translate \eqref{eq:orth_t} as
\begin{equation}
  \label{eq:modpre}
  \begin{multlined}
\wP_1\begin{pmatrix}  G_1 \\ G_2\end{pmatrix}
-\left(\pd_y^2(\wS_1+\wS_2)-\wS_3-\wS_4-\wS_5\right)
\begin{pmatrix} c_t-6c_yx_y \\ x_t-2c-3(x_y)^2\end{pmatrix}
\\ 
+\widetilde{\mathcal{A}}_1(t)
\begin{pmatrix}  \tc \\ x\end{pmatrix}
-\pd_y^2R^1+\wR^1+\pd_y\wR^2+R^{v_1}=0\,,
  \end{multlined}
\end{equation}
where $R^j={}^t\!(R^j_1\,,\,R^j_2)$ for $j=1, \cdots, 6, v_1$ and
\begin{align*}
& \wS_j=\begin{pmatrix}-S^j_1[\pd_c\varphi_c]  & S^j_1[\varphi_c']
\\ -S^j_2[\pd_c\varphi_c]  & S^j_2[\varphi_c']\end{pmatrix}
\enskip\text{for $j=1$, $2$,}\quad
\wS_3=\begin{pmatrix}
S^3_1[\psi] & 0
\\ S^3_2[\psi] & 0\end{pmatrix}\,,
\\ &
\wS_4=\begin{pmatrix}
S^3_1[\psi]((\sqrt{2/c}-1)\cdot)+S^4_1[\psi](\sqrt{2/c}\cdot)
& -2(S^3_1[\psi']+S^4_1[\psi'])((\sqrt{2c}-2)\cdot) 
\\ S^3_2[\psi]((\sqrt{2/c}-1)\cdot)+S^4_2[\psi](\sqrt{2/c}\cdot)
& -2(S^3_2[\psi']+S^4_2[\psi'])((\sqrt{2c}-2)\cdot)
\end{pmatrix}\,,\\
& \wS_5=\begin{pmatrix}  S^5_1 & S^6_1 \\ S^5_2 & S^6_2 \end{pmatrix}\,,
\quad
\widetilde{\mathcal{A}}_1(t)=
\begin{pmatrix} \ta_1(t,D_y) & 0  \\ \ta_2(t,D_y) & 0\end{pmatrix}\,,
\\ &\wR^1=R^3+R^4+R^6+\wS_4\begin{pmatrix} 0 \\ 2\tc\end{pmatrix}\,,\quad
\wR^2=R^5-\pd_yR^2\,.
\end{align*}
To translate the nonlinear terms $6(c/2)^{1/2}c_yx_y$
and $16x_{yy}\{((c/2)^{3/2}-1\}$ in $G_1$ into a divergence form,
we will make use of the following change of variables. Let
\begin{align}
\label{eq:bdef}
& b(t,\cdot)=\frac{1}{3}\wP_1\left\{\sqrt{2}c(t,\cdot)^{3/2}-4\right\}\,,
\quad
\cC_1=\frac12\wP_1\left\{c(t,\cdot)^2-4\right\}\wP_1\,,\\
& \wC_1=\begin{pmatrix}0 & 0 \\ 0 & \cC_1\end{pmatrix}\,,\quad
B_1=\begin{pmatrix}  2 & 0 \\ \frac12 & 2\end{pmatrix}\,,\quad
B_2=\begin{pmatrix}  6 & 16 \\ \mu_1 & 6\end{pmatrix}\,. \notag
\end{align}
We remark that $b\simeq \tilde{c}=c-2$ if $c$ is close to $2$
(see \cite[Claim~D.6]{Mi}).
By \eqref{eq:bdef}, we have
$b_t=\wP_1(c/2)^{1/2}c_t$, $b_y=\wP_1(c/2)^{1/2}c_y$ and it follows from
Lemma~\ref{LEM:G} that 
\begin{equation}
  \label{eq:G-T}
\begin{split}
\wP_1\begin{pmatrix}G_1 \\ G_2 \end{pmatrix}=& 
-(B_1+\wC_1)\wP_1\begin{pmatrix} b_t-6(bx_y)_y \\ x_t-2c-3(x_y)^2 \end{pmatrix}
+B_2\begin{pmatrix}c_{yy} \\ x_{yy} \end{pmatrix} +\wP_1R^7\,,
\end{split}  
\end{equation}
where $R^7={}^t(R^7_1,R^7_2)$ and
\begin{equation}
  \label{eq:R6-def}
\begin{split}
R^7_1=& \left\{4\sqrt{2}c^{3/2}-16-12b\right\}x_{yy}
-6(2b_y-(2c)^{1/2}c_y)x_y-3c^{-1}(c_y)^2\,,\\
R^7_2=& 6\left\{\left(\frac{c}{2}\right)^{3/2}-1\right\}x_{yy}
+3\left(\frac{c}{2}\right)^{1/2}c_yx_y-3(bx_y)_y+\mu_2\frac{2}{c}(c_y)^2
\\ & + \frac{3}{2}(c^2-4)(I-\wP_1)(x_y)^2\,.
\end{split}  
\end{equation}
Let 
$\cC_2=\wP_1\left\{\left(\frac{c(t,\cdot)}{2}\right)^{1/2}-1\right\}\wP_1$,
$\wC_2=\begin{pmatrix}\cC_2 & 0 \\ 0 & 0 \end{pmatrix}$,
$\bS_j=\wS_j(I+\wC_2)^{-1}$ for $1\le j\le 5$ and
\begin{equation}
  \label{eq:def-B3}
B_3=B_1+\wC_1+\pd_y^2(\bS_1+\bS_2)-\bS_3-\bS_4-\bS_5\,.
\end{equation}
Note that $I+\wC_2$ is invertible as long as $\tc(t,\cdot)$ remains small in $Y$
and that $B_3$ is a bounded operator on $Y\times Y$ depending on $\tc$ and $v$.
Substituting \eqref{eq:G-T} into \eqref{eq:modpre}, we have
\begin{align*}
& B_3\wP_1\begin{pmatrix}b_t-6(bx_y)_y \\ x_t-2c-3(x_y)^2 \end{pmatrix}
\\ =& \left\{(B_2-\pd_y^2\wS_0)\pd_y^2+\widetilde{\mathcal{A}}_1(t)\right\}
\begin{pmatrix}b \\ x \end{pmatrix}
+\wP_1R^7+\wR^1+\wR^3+\pd_y(\wR^2+\wR^4)+R^{v_1}\,,
\end{align*}
where $\wR^3=R^9+R^{11}$, $\wR^4=R^8+R^{10}$ and
\begin{align*}
& R^8=6\pd_y(\bS_1+\bS_2)
\begin{pmatrix}(I+\mathcal{C}_2)(c_yx_y)-(bx_y)_y \\ 0\end{pmatrix}\,,\\
&R^9=-6\sum_{3\le j\le 5}\bS_j
\begin{pmatrix}(I+\mathcal{C}_2)(c_yx_y)-(bx_y)_y \\ 0\end{pmatrix}\,,\\
& R^{10}=(\pd_y^2\wS_0-B_2)\begin{pmatrix} b_y-c_y\\ 0\end{pmatrix}\,,\quad
R^{11}=\widetilde{\mathcal{A}}_1(t)\begin{pmatrix}\tc -b \\ 0 \end{pmatrix}\,.
\end{align*}

We have the following.
\begin{proposition}
\label{prop:modulation}
There exists a $\delta_3>0$ such that if 
$\bM_{c,x}(T)+\bM_2(T)+\eta_0+e^{-\a L}<\delta_3$ for a $T\ge0$, then  
\begin{align}
  \label{eq:modeq}
\begin{pmatrix}b_t \\ \tx_t\end{pmatrix}=\mathcal{A}(t)
\begin{pmatrix}b \\ \tx\end{pmatrix}\,+\sum_{i=1}^5 \cN^i\,,
\end{align}
where $B_4=B_1+\pd_y^2\wS_1-\wS_3=B_3|_{\tc=0\,,\,v_2=0}$,
\begin{align*}
& \mathcal{A}(t)=B_4^{-1}(B_2-\pd_y^2\wS_0)\pd_y^2
+B_3^{-1}\widetilde{\mathcal{A}}_1(t)
+\begin{pmatrix}  0 & 0 \\ 2 & 0\end{pmatrix}\,,\\
\cN^1=& \wP_1\begin{pmatrix}
6(b\tx_y)_y\\ 2(\tilde{c}-b)+3(\tx_y)^2
\end{pmatrix}\,,\quad \cN^2=\cN^{2a}+\cN^{2b}\,,\\
 \cN^{2a}=& B_3^{-1}\left(
\wP_1\begin{pmatrix}R^7_1 \\ 0\end{pmatrix}+\wR^1+\wR^3\right)\,,
\quad \cN^{2b}=B_3^{-1}\wP_1\begin{pmatrix}0 \\ R^7_2\end{pmatrix}\,,
\\ \cN^3=& B_3^{-1}\pd_y(\wR^2+\wR^4)\,,\quad
\cN^4=(B_3^{-1}-B_4^{-1})(B_2-\pd_y^2\wS_0)\pd_y
\begin{pmatrix} b_y \\ x_y\end{pmatrix}\,,
\\ \cN^5=& B_3^{-1}R^{v_1}\,.
\end{align*}
Moreover, if $v_2(0)=0$,
\begin{equation}
  \label{eq:modeq-init}
b(0,\cdot)=0\,,\quad x(0,\cdot)=0\,.
\end{equation}
\end{proposition}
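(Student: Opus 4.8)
The plan is to derive \eqref{eq:modeq} as a rearrangement of the orthogonality constraint \eqref{eq:orth}: all the analytic input is already in place, and the work consists of bookkeeping plus one perturbative inversion. First I would record that, under the smallness of $\bM_{c,x}(T)$ and $\bM_2(T)$, Remark~\ref{rem:decomp} and Proposition~\ref{prop:continuation} furnish the decomposition \eqref{eq:decomp}, \eqref{eq:decomp2} with \eqref{eq:orth} and \eqref{eq:C1cx} on $[0,T]$, together with $v_2\in C([0,T];X\cap L^2(\R^2))$ and $\pd_z^{-1}\pd_yv_2\in L^2(0,T;X)$; this last point uses \eqref{eq:diff-tv} and \eqref{eq:0mean} to see that the relevant integrand decays exponentially as $z\to\pm\infty$. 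Approximating $g_k^*(z,\eta,c)$ in $L^2(\R;e^{-2\a z}\,dz)$ by $C_0^4$-functions then shows that $t\mapsto\int_{\R^2}v_2\overline{g_k^*(z,\eta,c(t,y))}e^{-iy\eta}\,dzdy$ is $C^1$ into $Z$, and substituting \eqref{eq:v2} into its $t$-derivative gives \eqref{eq:orth_t} in $L^2(-\eta_0,\eta_0)$.

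Next I would apply $(2\pi)^{-1}\int_{-\eta_0}^{\eta_0}(\cdot)e^{iy\eta}\,d\eta$ to \eqref{eq:orth_t} and sort the terms as in \cite{Mi}: the $\ell_1$-part splits into the leading contribution $\wP_1{}^t(G_1,G_2)$ (computed by Lemma~\ref{LEM:G}) and the $\pd_y^2$-corrections \eqref{eq:der2} carrying $\wS_1,\wS_2,R^1,R^2$; the $\ell_2$-part splits into \eqref{eq:l21F-1} (the $\wS_3,\wS_4$ pieces), the linear piece \eqref{eq:der4} ($\widetilde{\mathcal{A}}_1(t)\tc$) and $R^3$ from \eqref{eq:der5}; and $II^1_k,\dots,II^6_k$ give $R^4,R^5$ via \eqref{eq:der6}, the $\wS_5$ piece and $R^6$ via \eqref{eq:der7}, and $R^{v_1}$ via \eqref{eq:derv1}. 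Collecting all of this is \eqref{eq:modpre}; the one genuinely new term relative to \cite{Mi} is $R^{v_1}$, the Fourier inverse of $II^6_k$, which carries the interaction $N_{2,4}=6(\tpsi_c-\varphi_c)v_1$ produced by the splitting $v=v_1+v_2$ and the modified choice of $\tpsi_c$. I would then change variables via \eqref{eq:bdef}, using $b_t=\wP_1(c/2)^{1/2}c_t$, $b_y=\wP_1(c/2)^{1/2}c_y$ and Lemma~\ref{LEM:G} to recast $\wP_1{}^t(G_1,G_2)$ in the divergence form \eqref{eq:G-T} (absorbing $6(c/2)^{1/2}c_yx_y$ and $16x_{yy}\{(c/2)^{3/2}-1\}$ into $6(bx_y)_y$ plus the error $R^7$), write $\wS_j=\bS_j(I+\wC_2)$, and push the discrepancies between $(I+\wC_2)$ acting on ${}^t(c_t-6c_yx_y,\cdot)$ and on ${}^t(b_t-6(bx_y)_y,\cdot)$, and between ${}^t(c_{yy},x_{yy})$ and ${}^t(b_{yy},x_{yy})$, into $R^8,\dots,R^{11}$, i.e.\ into $\wR^3$ and $\wR^4$. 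Substituting \eqref{eq:G-T} into \eqref{eq:modpre}, the coefficient of ${}^t(b_t-6(bx_y)_y,\;x_t-2c-3(x_y)^2)$ becomes precisely $-B_3$ by \eqref{eq:def-B3}, which produces the identity displayed just above the Proposition.

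The one place the smallness hypothesis is genuinely used --- and which I expect to be the main, if routine, obstacle --- is inverting $B_3$ on $Y\times Y$. Writing $B_3=B_1+\wC_1+\pd_y^2(\bS_1+\bS_2)-\bS_3-\bS_4-\bS_5$, the constant matrix $B_1$ is invertible ($\det B_1=4$), while each remaining summand is small in $B(Y\times Y)$: $\|\wC_1\|\lesssim\|\tc\|_Y$; $\|\pd_y^2\bS_j\|\lesssim\eta_0^2$ by \eqref{eq:H^s-Y}; $\|\bS_3\|+\|\bS_4\|\lesssim e^{-\a L}+\|\tc\|_Y$ by the exponential-in-$t$ decay of the operator norms of $S^3_k[\psi],S^3_k[\psi'],S^4_k[\psi],S^4_k[\psi']$ and the bound $\|S^2_k\|_{B(Y,Y_1)}\lesssim\|\tc\|_Y$; and $\|\bS_5\|\lesssim\|v_2\|_X\le\bM_2(T)$. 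Hence for $\bM_{c,x}(T)+\bM_2(T)+\eta_0+e^{-\a L}<\delta_3$ with $\delta_3$ small, $B_3$, $I+\wC_2$ and $B_4=B_1+\pd_y^2\wS_1-\wS_3$ are all invertible by Neumann series, and $\|B_3^{-1}-B_4^{-1}\|$ is controlled by the same quantity.

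Finally I would apply $B_3^{-1}$ to the identity above and move $6(bx_y)_y$ and $2c+3(x_y)^2$ back to the left, using $x_t-2c-3(x_y)^2=\tx_t-2b-2(\tc-b)-3(\tx_y)^2$: the $2b$ supplies the matrix summand $\begin{pmatrix}0&0\\2&0\end{pmatrix}$ of $\mathcal{A}(t)$, and $6(b\tx_y)_y$ together with $2(\tc-b)+3(\tx_y)^2$ supply $\cN^1$. Splitting $B_3^{-1}=B_4^{-1}+(B_3^{-1}-B_4^{-1})$ in the term $B_3^{-1}(B_2-\pd_y^2\wS_0)\pd_y^2$ sends the $B_4^{-1}$ part into $\mathcal{A}(t)$ and the difference into $\cN^4$ (with one $\pd_y$ transferred onto ${}^t(b_y,x_y)$, so that later estimates cost only one derivative), $B_3^{-1}\widetilde{\mathcal{A}}_1(t)$ goes into $\mathcal{A}(t)$, and the residual $R$-terms assemble into $\cN^{2a},\cN^{2b},\cN^3,\cN^5$ exactly as listed; this is \eqref{eq:modeq}. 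For \eqref{eq:modeq-init}, if $v_2(0)=0$ then $\tv(0,x,y)=u(0,x,y)-\tv_1(0,x,y)-\varphi(x)=0$, and since $(\tc,\gamma)=(0,0)$ satisfies \eqref{eq:imp1} and, as the bracket in $F_k$ reduces to $\varphi-\varphi_2+\psi_{2,L}\equiv0$, also \eqref{eq:imp2}, the uniqueness in Lemma~\ref{lem:decomp} forces $\tc(0,\cdot)=0$ and $x(0,\cdot)=\tx(0,\cdot)=0$, whence $b(0,\cdot)=\frac13\wP_1\{\sqrt2\cdot2^{3/2}-4\}=0$.
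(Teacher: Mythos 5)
Your proposal is correct and follows the same route as the paper: the paper's own proof of Proposition~\ref{prop:modulation} is brief because it relies on the algebraic derivation \eqref{eq:orth_t}--\eqref{eq:def-B3} carried out in the body of Section~\ref{sec:modulation} before the proposition statement, and then merely invokes Proposition~\ref{prop:continuation} for persistence of the decomposition, Claims~\ref{cl:(1+wT2)^{-1}}--\ref{cl:invB_3} for the invertibility of $I+\wC_k$, $B_3$, $B_4$, and Lemma~\ref{lem:decomp} for the initial data. You have simply re-derived those preliminary manipulations (Lemma~\ref{LEM:G}, \eqref{eq:der2}, \eqref{eq:l21F-1}, \eqref{eq:der4}--\eqref{eq:derv1}, the change of variables \eqref{eq:bdef}--\eqref{eq:G-T}, and the $\wS_j$/$\bS_j$ bookkeeping) explicitly, which is consistent with the paper but not a different argument.
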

\begin{proof}
  Proposition~\ref{prop:continuation} implies that the
  \eqref{eq:decomp} persists on $[0,T]$ if $\delta_3$ is sufficiently
  small. Moreover Claims~\ref{cl:(1+wT2)^{-1}}--\ref{cl:invB_3} below
  imply that $B_3$, $B_4$ and $I+\wC_k$ are invertible if
  $\|\tc(t)\|_Y$, $\|v(t)\|_X$, $\eta_0$ and $e^{-\a L}$ are
  sufficiently small.  Thus we have \eqref{eq:modeq}.
Since $v_2(0)=0$, we have \eqref{eq:modeq-init} from  Lemma~\ref{lem:decomp}.
This completes the proof of Proposition~\ref{prop:modulation}.
\end{proof}

\begin{claim}
  \label{cl:(1+wT2)^{-1}}
There exist positive constants $\delta$ and $C$ such that
if $\bM_{c,x}(T)\le \delta$, then for $s\in[0,T]$ and $k=1$, $2$,
\begin{align}
  \label{eq:T-bound1}
& \sup_{t\in[0,T]}\|\wC_k(t)\|_{B(Y)}+\|\wC_k\|_{L^4(0,T;B(Y))}\le  C\bM_{c,x}(T)\,,\\
  \label{eq:T-bound}
& \sup_{t\in[0,T]}\|\wC_k(t)\|_{B(Y,Y_1)}\le C\bM_{c,x}(T)\,,\\
\notag
& \|(I+\wC_k)^{-1}\|_{B(Y)}+\|(I+\wC_k)^{-1}\|_{B(Y_1)}\le C\,.
\end{align}
\end{claim}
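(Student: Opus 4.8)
The plan is to observe that, since $c=2+\tc$ and $\wP_1f=f$ for every $f\in Y$, both blocks act on $Y$ by multiplication followed by the band projection $\wP_1$: namely $\cC_1(t)f=\wP_1(m_1(t,\cdot)f)$ and $\cC_2(t)f=\wP_1(m_2(t,\cdot)f)$ with $m_1=\tfrac12(c^2-4)=\tfrac12\tc(\tc+4)$ and $m_2=(1+\tc/2)^{1/2}-1$. The function $s\mapsto(1+s/2)^{1/2}-1$ is analytic near $s=0$ and vanishes there, so on the set $\{|\tc|\le\delta\}$ both multipliers obey $|m_k|\lesssim|\tc|$, and consequently $\|m_k(t,\cdot)\|_{L^\infty_y}\lesssim\|\tc(t)\|_{L^\infty_y}$ and $\|m_k(t,\cdot)\|_{L^2_y}\lesssim\|\tc(t)\|_{L^2_y}=\|\tc(t)\|_Y$. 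The whole claim then reduces to estimating these two Fourier multipliers on the band-limited spaces $Y$ and $Y_1$, plus a Neumann series argument.

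The pointwise-in-$t$ estimates are immediate. Since $\|\wP_1g\|_Y\le\|g\|_{L^2}$, one gets $\|\cC_k(t)f\|_Y\le\|m_k(t,\cdot)\|_{L^\infty_y}\|f\|_Y\lesssim\|\tc(t)\|_Y\|f\|_Y$ by the embedding $\|g\|_{L^\infty}\lesssim\|g\|_Y$ of Remark~\ref{rem:smoothness}, which gives the supremum part of \eqref{eq:T-bound1}; and $\|\cC_k(t)f\|_{Y_1}\le(2\pi)^{-1/2}\|m_k(t,\cdot)f\|_{L^1}\le(2\pi)^{-1/2}\|m_k(t,\cdot)\|_{L^2_y}\|f\|_{L^2}\lesssim\|\tc(t)\|_Y\|f\|_Y$, by the bound $\|\wP_1g\|_{Y_1}\le(2\pi)^{-1/2}\|g\|_{L^1}$ of Remark~\ref{rem:smoothness} and Cauchy--Schwarz, which gives \eqref{eq:T-bound}. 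For the $L^4(0,T;B(Y))$ part of \eqref{eq:T-bound1} the crude bound $\|\cC_k(t)\|_{B(Y)}\lesssim\|\tc(t)\|_Y$ is not enough, because $\bM_{c,x}(T)$ only controls $\|\tc\|_Y$ in $L^\infty_t$; instead I would interpolate in $y$ by the one-dimensional Gagliardo--Nirenberg inequality $\|\tc(t)\|_{L^\infty_y}^2\lesssim\|\tc(t)\|_{L^2_y}\|\pd_y\tc(t)\|_{L^2_y}=\|\tc(t)\|_Y\,\|c_y(t)\|_Y$ (legitimate since $Y\subset H^1(\R)$ by Remark~\ref{rem:smoothness}), which yields
\[ \int_0^T\|\cC_k(t)\|_{B(Y)}^4\,dt\lesssim\Bigl(\sup_{[0,T]}\|\tc\|_Y\Bigr)^2\|c_y\|_{L^2(0,T;Y)}^2\le\bM_{c,x}(T)^4 . \]

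For the invertibility statement I would use a Neumann series. Estimate \eqref{eq:T-bound1} gives $\|\wC_k(t)\|_{B(Y)}\le C_0\bM_{c,x}(T)\le C_0\delta$, so fixing $\delta$ with $C_0\delta\le\tfrac12$ makes $\|(I+\wC_k)^{-1}\|_{B(Y)}\le 2$. For $B(Y_1)$ one writes $\widehat{\cC_k(t)f}=(2\pi)^{-1/2}\mathbf{1}_{[-\eta_0,\eta_0]}\bigl(\widehat{m_k(t,\cdot)}*\hat f\bigr)$ and applies Young's inequality in the form $L^1*L^\infty\hookrightarrow L^\infty$ to get $\|\cC_k(t)\|_{B(Y_1)}\le(2\pi)^{-1/2}\|\widehat{m_k(t,\cdot)}\|_{L^1_\eta}$. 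Since $\hat\tc(t,\cdot)$ is supported in $[-\eta_0,\eta_0]$, $\|\hat\tc(t)\|_{L^1}\le(2\eta_0)^{1/2}\|\tc(t)\|_Y$; and because the Wiener algebra $\{g:\hat g\in L^1\}$ is a Banach algebra under pointwise multiplication and is stable under composition with functions analytic near the origin that vanish there, $\|\widehat{m_k(t,\cdot)}\|_{L^1}\lesssim\|\hat\tc(t)\|_{L^1}\lesssim\|\tc(t)\|_Y\le\bM_{c,x}(T)$ as long as $\delta$ (hence $\|\hat\tc(t)\|_{L^1}$) is below the radius of convergence. Shrinking $\delta$ once more, the Neumann series again gives $\|(I+\wC_k)^{-1}\|_{B(Y_1)}\le 2$.

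The only non-routine points I anticipate are the Gagliardo--Nirenberg interpolation used for the $L^4(0,T;B(Y))$ bound --- where it is essential that $\bM_{c,x}(T)$ packages precisely $\sup_t\|\tc\|_Y$ together with $\|c_y\|_{L^2(0,T;Y)}$ so that the time integral closes --- and the Wiener-norm control of the nonlinear multiplier $m_2=(1+\tc/2)^{1/2}-1$ needed for the $B(Y_1)$ invertibility; every other estimate is a routine multiplier bound on the band-limited space $Y$.
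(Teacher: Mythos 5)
Your proof is correct and, as far as one can tell, follows the same route the paper (implicitly) relies on: the paper proves the claim only by citing Claim~B.6 of \cite{Mi} together with the definition of $\bM_{c,x}(T)$, which is exactly the pair of ingredients you reconstruct — pointwise multiplier bounds $\|\cC_k(t)\|_{B(Y)}\lesssim\|\tc(t)\|_{L^\infty}$ and $\|\cC_k(t)\|_{B(Y,Y_1)}\lesssim\|\tc(t)\|_Y$, a Wiener-algebra/Neumann-series argument for the $B(Y_1)$ invertibility, and the one-dimensional Gagliardo--Nirenberg interpolation $\|\tc\|_{L^\infty}^2\lesssim\|\tc\|_Y\|c_y\|_Y$ combined with the fact that $\bM_{c,x}(T)$ controls $\sup_t\|\tc\|_Y$ and $\|c_y\|_{L^2_tY}$ to produce the $L^4(0,T;B(Y))$ bound. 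Your write-up simply supplies the details that the paper outsources to the reference.
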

Claim~\ref{cl:(1+wT2)^{-1}} follows from \cite[Claim~B.6]{Mi} and
the definition of $\bM_{c,x}(T)$.
\begin{claim}
\label{cl:invB_4}
There exist positive constants $C$ and $\delta$ such that
if $\eta_0^2+e^{-\a L}\le \delta$, then 
$$\|B_4^{-1}\|_{B(Y)}+\|B_4^{-1}\|_{B(Y_1)}\le C\,.$$
\end{claim}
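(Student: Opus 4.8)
The plan is to treat $B_4=B_1+\pd_y^2\wS_1-\wS_3$ as a bounded perturbation of the constant invertible matrix $B_1=\begin{pmatrix} 2 & 0 \\ \frac12 & 2\end{pmatrix}$, which has $\det B_1=4$, so that $B_1^{-1}$ acts componentwise and is bounded on $Y\times Y$ and on $Y_1\times Y_1$ with an absolute constant. Writing $B_4=B_1(I+E)$ with $E:=B_1^{-1}(\pd_y^2\wS_1-\wS_3)$, it suffices to show $\|E\|\le 1/2$ on both $Y\times Y$ and $Y_1\times Y_1$ once $\eta_0^2+e^{-\a L}$ is small; then $I+E$ is invertible by a Neumann series and $B_4^{-1}=(I+E)^{-1}B_1^{-1}$ has norm $\le 2\|B_1^{-1}\|$ on both spaces.

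For $\pd_y^2\wS_1$, the entries $S^1_k$ of $\wS_1$ are Fourier multipliers in $y$ with symbols supported in $[-\eta_0,\eta_0]$ and bounded there (and independent of $t$ and $c$), hence $\wS_1\in B(Y\times Y)\cap B(Y_1\times Y_1)$ by \cite[Claims~B.1 and B.2]{Mi}; since $\|\pd_y^2 f\|_{L^2}\le\eta_0^2\|f\|_{L^2}$ and $\|\pd_y^2 f\|_{Y_1}\le\eta_0^2\|f\|_{Y_1}$ for $f$ with Fourier support in $[-\eta_0,\eta_0]$ (Remark~\ref{rem:smoothness}), we get $\|\pd_y^2\wS_1\|_{B(Y\times Y)}+\|\pd_y^2\wS_1\|_{B(Y_1\times Y_1)}\lesssim\eta_0^2$. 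For $\wS_3=\begin{pmatrix} S^3_1[\psi] & 0 \\ S^3_2[\psi] & 0\end{pmatrix}$, each $S^3_k[\psi]$ is the $y$-Fourier multiplier with symbol $\mathbf{1}_{[-\eta_0,\eta_0]}(\eta)\int_\R\psi(z+3t+L)\overline{g_k^*(z,\eta)}\,dz$; since $\supp\psi\subset[-1,1]$ this integral only involves $z\le -L+1$, and $g_k^*(\cdot,\eta)$ decays like $e^{(1+\Re\beta(-\eta))z}$ as $z\to-\infty$ (up to a polynomial factor; the $\eta^{-1}$ in $g_2^*$ is harmless because $\Im g^*(z,\eta)=O(\eta)$ near $\eta=0$), with $1+\Re\beta(-\eta)>\a$ on $[-\eta_0,\eta_0]$ once $\eta_0$ is small (as $\a<2$ and $\Re\beta(0)=1$). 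Hence $\|S^3_k[\psi]\|_{B(Y)}+\|S^3_k[\psi]\|_{B(Y_1)}\lesssim e^{-\a(3t+L)}\le e^{-\a L}$ uniformly in $t\ge 0$ (this is the estimate \eqref{eq:S3p}), so $\|\wS_3\|_{B(Y\times Y)}+\|\wS_3\|_{B(Y_1\times Y_1)}\lesssim e^{-\a L}$.

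Combining the two bounds, $\|E\|\le C(\eta_0^2+e^{-\a L})$ on both $Y\times Y$ and $Y_1\times Y_1$, so choosing $\delta$ with $C\delta\le1/2$ completes the argument. The only delicate point is the uniform-in-$t$ exponential smallness of $\wS_3$: one must verify that $\supp\psi\subset[-1,1]$ forces the symbol of $S^3_k[\psi]$ to sample $g_k^*$ only in the far region $z\le -L+1$, where its decay rate exceeds $\a$, so that the translation $z\mapsto z+3t$ produces the extra decaying factor $e^{-3\a t}$ rather than any growth; apart from this, the proof is a routine Neumann-series perturbation of the invertible matrix $B_1$ together with the low-frequency smoothing $\|\pd_y^2\|_{B(Y)},\|\pd_y^2\|_{B(Y_1)}\le\eta_0^2$.
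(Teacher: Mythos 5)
Your argument is correct and is the natural one: write $B_4=B_1(I+E)$ with $E=B_1^{-1}(\pd_y^2\wS_1-\wS_3)$, use the low-frequency cutoff to get $\|\pd_y^2\wS_1\|_{B(Y)}+\|\pd_y^2\wS_1\|_{B(Y_1)}\lesssim\eta_0^2$ via Remark~\ref{rem:smoothness} and \eqref{eq:wS1-est}, use \eqref{eq:S3p} (noting that $S^3_k[\psi]$ is a Fourier multiplier with symbol supported in $[-\eta_0,\eta_0]$, so the same bound holds in $B(Y_1)$) to get $\|\wS_3\|\lesssim e^{-\a L}$, and close by a Neumann series. This matches the approach of the proof cited by the paper (Claims~6.2 and 6.3 of \cite{Mi}, to which the paper defers without reproducing the details).
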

\begin{claim}
  \label{cl:invB_3}
There exist positive constants $\delta$ and $C$ such that
if $\bM_{c,x}(T)+\bM_2(T)+\eta_0^2+e^{-\a L}\le \delta$, then for $t\in[0,T]$,
\begin{gather*}
\|B_3-B_4\|_{B(Y)}+\|B_3-B_4\|_{B(Y_1)}\le C(\bM_{c,x}(T)+\bM_2(T))\,,\\
\|B_3^{-1}\|_{B(Y)}+\|B_3^{-1}\|_{B(Y_1)}\le C\,.  
\end{gather*}
\end{claim}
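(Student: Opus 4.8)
The plan is to expand $B_3-B_4$ into a short sum of error terms, bound each of them by $\bM_{c,x}(T)$ or $\bM_2(T)$, and then deduce the bound on $B_3^{-1}$ from Claim~\ref{cl:invB_4} by a Neumann series. Using $\bS_j=\wS_j(I+\wC_2)^{-1}$, $B_4=B_1+\pd_y^2\wS_1-\wS_3$ and the definition \eqref{eq:def-B3} of $B_3$, I would first write
\[
B_3-B_4=\wC_1+\pd_y^2\bigl(\bS_1-\wS_1\bigr)+\pd_y^2\bS_2-\bigl(\bS_3-\wS_3\bigr)-\bS_4-\bS_5,
\]
together with the identity $\bS_j-\wS_j=-\wS_j(I+\wC_2)^{-1}\wC_2$, which is legitimate once $\bM_{c,x}(T)$ is small enough that $I+\wC_2$ is invertible (Claim~\ref{cl:(1+wT2)^{-1}}).

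Next I would estimate each of the six terms, simultaneously in $B(Y)$ and $B(Y_1)$ and uniformly for $t\in[0,T]$. Claim~\ref{cl:(1+wT2)^{-1}} bounds $\wC_k$ and $(I+\wC_2)^{-1}$ in both norms up to $\bM_{c,x}(T)$; this disposes of the $\wC_1$ term and, combined with $\|\pd_y^2\|_{B(Y)}\le\eta_0^2\le1$ (Remark~\ref{rem:smoothness}) and the boundedness of $\wS_1$ and $\wS_3$, also of $\pd_y^2(\bS_1-\wS_1)$ and $\bS_3-\wS_3$, each of which is then $\lesssim\bM_{c,x}(T)$. For the remaining terms I invoke the operator estimates of \cite{Mi} (Claims~B.1--B.6 there): $\wS_1$ is built from the $c$-independent operators $S^1_k\in B(Y)$; $\|\wS_2\|_{B(Y)}+\|\wS_2\|_{B(Y_1)}\lesssim\|\tc\|_Y$, whence $\|\pd_y^2\bS_2\|\lesssim\bM_{c,x}(T)$; every entry of $\wS_4$ is a bounded operator ($S^3_k[\psi]$, $S^3_k[\psi']$, $S^4_k[\psi]$, $S^4_k[\psi']$, in fact exponentially small, see \eqref{eq:S3p}, \eqref{eq:S4p}) carrying a factor $O(\|\tc\|_Y)$, supplied either by the multipliers $\sqrt{2/c}-1$, $\sqrt{2c}-2$ or by the $\tc$ built into $S^4_k$, so $\|\bS_4\|\lesssim\bM_{c,x}(T)$; and the entries $S^5_k$, $S^6_k$ of $\wS_5$ are, by their defining integrals and the exponential localization of $\pd_cg_k^*$, $\pd_zg_k^*$ as $z\to-\infty$, bounded in $B(Y)$ and $B(Y,Y_1)$ by $C\|v_2(t)\|_X\le C\bM_2(T)$. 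Summing the six contributions gives the first assertion.

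For the second assertion I would write $B_3=B_4\bigl(I+B_4^{-1}(B_3-B_4)\bigr)$. By Claim~\ref{cl:invB_4}, $\|B_4^{-1}\|_{B(Y)}+\|B_4^{-1}\|_{B(Y_1)}\le C$, so the first part gives $\|B_4^{-1}(B_3-B_4)\|\le C(\bM_{c,x}(T)+\bM_2(T))\le C\delta<1/2$ once $\delta$ is small; a Neumann series then makes $I+B_4^{-1}(B_3-B_4)$ invertible on $Y$ and on $Y_1$ with inverse of norm $\le2$, hence $B_3$ is invertible with $\|B_3^{-1}\|_{B(Y)}+\|B_3^{-1}\|_{B(Y_1)}\le 2C$.

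The genuinely delicate point --- indeed the only one --- is the termwise bookkeeping in the second paragraph: for each entry of $\wS_2,\dots,\wS_5$ one has to pin down which factor supplies the required smallness ($\|\tc\|_Y$, $\|v_2\|_X$ or $e^{-\a L}$), and verify the bound in the stronger $B(Y,Y_1)$ norm, where one uses \eqref{eq:Y1-L1} to convert the products of band-limited functions produced by the multiplication operators into gains of the $Y$-norm; the $v_2$-dependent operators $S^5_k$, $S^6_k$ must be paired against $v_2\in X$ through the exponential weight, exactly as in the corresponding estimates of \cite{Mi}.
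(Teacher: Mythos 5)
Your proof is correct, and it is the natural argument. The paper itself does not spell out a proof of this claim -- it refers the reader to Claims~6.2 and~6.3 of \cite{Mi} -- so there is no line-by-line comparison to be made; but the route you take (expand $B_3-B_4$ into the six error operators $\wC_1$, $\pd_y^2(\bS_1-\wS_1)$, $\pd_y^2\bS_2$, $\bS_3-\wS_3$, $\bS_4$, $\bS_5$, use $\bS_j-\wS_j=-\wS_j(I+\wC_2)^{-1}\wC_2$, bound the first five by $\bM_{c,x}(T)$ via Claim~\ref{cl:(1+wT2)^{-1}}, \eqref{eq:wS1-est}--\eqref{eq:wS2-est}, \eqref{eq:bS3}, and the last by $\bM_2(T)$ via \eqref{eq:Sv2}, then invert $B_3=B_4(I+B_4^{-1}(B_3-B_4))$ by Neumann series using Claim~\ref{cl:invB_4}) is exactly the expected one. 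Two small bookkeeping points worth keeping explicit: (i) several of the cited estimates give control in $B(Y,Y_1)$, and you should note that, since $\supp\hat f\subset[-\eta_0,\eta_0]$ yields $\|f\|_Y\le(2\eta_0)^{1/2}\|f\|_{Y_1}$, a $B(Y,Y_1)$ bound transfers to both $B(Y)$ and $B(Y_1)$ with an $\eta_0$-dependent constant that is harmless under the hypothesis $\eta_0^2\le\delta$; (ii) the hypothesis $\eta_0^2+e^{-\a L}\le\delta$ is consumed in Claim~\ref{cl:invB_4} (invertibility of $B_4$), which your Neumann-series step correctly invokes.
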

The proof of Claims~\ref{cl:invB_4} and \ref{cl:invB_3} is exactly the same
as the proof of Claims~6.2 and 6.3 in \cite{Mi}.
\bigskip

\section{\`A priori estimates for the local speed and the local phase shift}
\label{sec:apriori}
In this section, we will estimate $\bM_{c,x}(T)$
assuming the smallness of $\bM_{c,x}(T)$, $\bM_i(T)$ ($i=1,2$), $\eta_0$
and $e^{-\a L}$.
\begin{lemma}
  \label{lem:M1-bound}
There exist positive constants $\delta_4$ and $C$ such that if
$\bM_{c,x}(T)+\bM_1(T)+\bM_2(T)+\eta_0+e^{-\a L}\le \delta_4$, then
\begin{equation}
  \label{eq:M1-bound}
\bM_{c,x}(T)\le C(\|v_0\|_{L^2(\R^2)}+\bM_1(T)+\bM_2(T)^2)\,.
\end{equation}
\end{lemma}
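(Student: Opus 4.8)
The plan is to derive \eqref{eq:M1-bound} from the modulation system \eqref{eq:modeq} of Proposition~\ref{prop:modulation} by a monotonicity (energy) argument applied to $(b,\tx)$ rather than a Fourier--pseudomeasure argument, exactly as advertised in the introduction. First I would record the structural features of $\mathcal{A}(t)$: modulo the nilpotent block $\begin{pmatrix}0&0\\2&0\end{pmatrix}$ (which is harmless because it only feeds $b$ into $\tx_t$ and is cancelled by working with $\tx_y$), the operator $B_4^{-1}(B_2-\pd_y^2\wS_0)\pd_y^2$ is, to leading order, $B_1^{-1}B_2\pd_y^2$, and $B_1^{-1}B_2=\begin{pmatrix}2&0\\ \tfrac12&2\end{pmatrix}^{-1}\begin{pmatrix}6&16\\ \mu_1&6\end{pmatrix}$ has positive eigenvalues (this is the Burgers-type parabolic structure; the eigenvalues are the two wave speeds of the modulation system, cf. Lemma~\ref{LEM:G}). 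Thus there is a constant-coefficient, $y$-independent, positive-definite symmetrizer $Q$ with $Q B_1^{-1}B_2 + (B_1^{-1}B_2)^{*}Q \ge \nu I$ for some $\nu>0$; the correction terms $\pd_y^2\wS_0$, $\wC_1$ and the difference $B_3^{-1}-B_1^{-1}$ are all $O(\eta_0+\bM_{c,x}+\bM_2)$ in $B(Y)$ by Claims~\ref{cl:(1+wT2)^{-1}}--\ref{cl:invB_3} and \eqref{eq:H^s-Y}, so for $\delta_4$ small the symmetrized operator is still $\ge \tfrac{\nu}{2}$ on $Y$.

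Next I would run the energy estimate. Set $\mathbf{d}={}^t(b,\tx_y)$ (differentiating the $\tx$-equation in $y$ to kill the bad $2b$ term and to produce $x_{yy}$, which is what $\bM_{c,x}$ controls) and compute $\tfrac{d}{dt}\la Q\mathbf{d},\mathbf{d}\ra_{L^2_y}$ using \eqref{eq:modeq}. The principal part contributes $-\la (QB_1^{-1}B_2+\cdots)\pd_y\mathbf{d},\pd_y\mathbf{d}\ra \le -\tfrac{\nu}{2}\|\pd_y\mathbf{d}\|_{L^2}^2 = -\tfrac{\nu}{2}(\|b_y\|_{Y}^2+\|x_{yy}\|_Y^2) + (\text{lower order})$; note $\|b_y\|_Y \simeq \|c_y\|_Y$ by \cite[Claim~D.6]{Mi}. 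The pseudodifferential term $B_3^{-1}\widetilde{\mathcal{A}}_1(t)\mathbf{d}$ is, by \eqref{eq:def-ak} together with \eqref{eq:S3p}--\eqref{eq:S4p} in the appendix, exponentially small in $t$ (its kernel is localized near $z\sim -3t-L$ against $g_k^*$ decaying as $z\to-\infty$), hence harmless after integration in $t$ and absorbs into $Ce^{-\a L}\bM_{c,x}(T)$. The nonlinear terms are handled term by term: $\cN^1$ and the divergence-form pieces of $\cN^2,\cN^3,\cN^4$ are in $\pd_y(\cdot)$, so pairing against $Q\mathbf{d}$ and integrating by parts moves the derivative onto $\pd_y\mathbf{d}$, producing quantities like $\|\tx_y\|_Y\|b_y\|_Y\|\pd_y\mathbf{d}\|_Y$ or $\|\tx_y\|_Y^2\|\pd_y\mathbf{d}\|_Y$, all absorbable into the $-\tfrac{\nu}{2}\|\pd_y\mathbf{d}\|^2$ term since $\bM_{c,x}(T)\le\delta_4$; here \eqref{eq:Y1-L1}, \eqref{eq:H^s-Y} and Claim~\ref{cl:(1+wT2)^{-1}} give the needed algebra/product estimates on $Y$. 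The genuinely inhomogeneous non-divergence contributions are $\cN^{2b}$ (which is $\pd_y^2$-form via $-3(bx_y)_y$ and $O((c_y)^2)$ terms, again quadratic and absorbable) and, crucially, $\cN^5=B_3^{-1}R^{v_1}$, which by \eqref{eq:derv1} and the definition of $N_{2,4}=6(\tpsi_c-\varphi_c)v_1$ is controlled by $\|v_1\|_{W(t)}$ in $Y$; pairing against $Q\mathbf{d}$ gives $\lesssim \|v_1(t)\|_{W(t)}\|\mathbf{d}(t)\|_{Y}$.

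Integrating in time from $0$ to $T$ and using \eqref{eq:modeq-init} ($\mathbf{d}(0)=0$), I obtain
\begin{equation*}
\|\mathbf{d}(T)\|_Y^2 + \tfrac{\nu}{4}\int_0^T(\|c_y\|_Y^2+\|x_{yy}\|_Y^2)\,dt
\;\lesssim\; \int_0^T \|v_1(t)\|_{W(t)}\|\mathbf{d}(t)\|_Y\,dt + (\text{absorbed terms}).
\end{equation*}
The term $\int_0^T\|v_1\|_{W(t)}\|\mathbf{d}\|_Y\,dt$ is the one place where we must \emph{not} use Young's inequality naively, since $\|\mathbf{d}\|_Y^2$ is only bounded (not integrable) in $t$; but $\|v_1(\cdot)\|_{W(t)}\in L^2(0,T)$ with $\|v_1\|_{L^2(0,T;W(t))}\le\bM_1(T)$ by the virial identity of Section~\ref{sec:v1}, so this integral is $\le \bM_1(T)\,\sup_{[0,T]}\|\mathbf{d}\|_Y \le \tfrac14\sup_{[0,T]}\|\mathbf{d}\|_Y^2 + C\bM_1(T)^2$. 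Taking the supremum over $T'\le T$ on the left and absorbing, and remembering that the ``absorbed terms'' also produced $C\|v_0\|_{L^2}^2$ through the $v_1$-driven part of $R^{v_1}$ at the initial layer together with $C\bM_2(T)^2$ from the $v_2$-dependence of $B_3$, $\wS_5$, $R^7$, we arrive at $\bM_{c,x}(T)^2\lesssim \|v_0\|_{L^2}^2+\bM_1(T)^2+\bM_2(T)^4$, hence \eqref{eq:M1-bound}. The main obstacle, and the reason for the change of variables $b$ and for differentiating the phase equation in $y$, is precisely this borderline term: the cubic and $v_1$-linear contributions are only square-integrable (not integrable) in time, so they cannot be Gronwall-ed away and must instead be matched against $\sup_t\|\mathbf{d}\|_Y$ and absorbed; getting the divergence structure of every nonlinear term right so that the parabolic dissipation $\int(\|c_y\|_Y^2+\|x_{yy}\|_Y^2)$ suffices to absorb all quadratic errors is the delicate bookkeeping step.
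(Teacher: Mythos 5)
Your proposal correctly identifies the overall strategy — an energy/monotonicity argument for the modulation system rather than Fourier-multiplier estimates — and several of the preparatory observations are sound. But there are two genuine gaps that make the argument as written fail, plus a structural misreading of the leading symbol.

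\textbf{The cubic nonlinear terms are not controlled.} After pairing $\diag(1,\pd_y)\cN^1$ against $Q\mathbf{d}$ and integrating by parts, you are left with cubic integrals of the schematic form $\int_0^T\|\mathbf{d}\|_Y^2\|\pd_y\mathbf{d}\|_Y\,dt$. Since $\|\mathbf{d}\|_Y$ is only bounded in time (not in any $L^p(0,T)$) and $\|\pd_y\mathbf{d}\|_Y$ is only in $L^2(0,T)$, H\"older gives $\lesssim \sup_t\|\mathbf{d}\|_Y^2\cdot T^{1/2}\|\pd_y\mathbf{d}\|_{L^2(0,T;Y)}$, which blows up as $T\to\infty$. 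The paper's escape is a nonlinear, Cole--Hopf-like change of variables $\bd = \bb - \tfrac12(b_1+K_1)(b_2+K_2)\mathbf{e}_1 + K$ in the \emph{diagonalized} coordinates \eqref{eq:btod}; after this transformation, the surviving pure-cubic contributions to the energy identity combine into an exact $\pd_y$-derivative and integrate to zero — this is the identity displayed just before \eqref{eq:energy-ineq1}. That cancellation is the core of the proof and is absent from your argument; without it the dissipation $\int(\|c_y\|_Y^2+\|x_{yy}\|_Y^2)\,dt$ cannot absorb the cubic errors.

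\textbf{The $v_1$-driven source term is mis-estimated.} You write $\int_0^T\|v_1\|_{W(t)}\|\mathbf{d}\|_Y\,dt \le \bM_1(T)\sup_{[0,T]}\|\mathbf{d}\|_Y$, but this is false: H\"older in $t$ gives at best $T^{1/2}\bM_1(T)\sup_{[0,T]}\|\mathbf{d}\|_Y$, since $\|v_1\|_{W(t)}$ is only in $L^2(0,T)$ (not $L^1(0,T)$). The paper avoids this by a much finer decomposition of $R^{v_1}$: integrating the $II^6_{11}$ piece by parts in $z$ against \eqref{eq:v1} to extract a total time derivative $\pd_t k$ (absorbed into the evolving quantity through $K$, which is why $K$ appears in the change of variables), leaving a remainder that is either genuinely $L^1(0,T;Y_1)$ ($\cN^{51}$) or appears inside a $y$-derivative so that it can be paired against $\pd_y\bd\in L^2(0,T;Y)$ ($\cN^{52},\cN^{53}$); see the passage from \eqref{eq:der8} to \eqref{eq:cN52-est}.

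A smaller point: $B_1^{-1}B_2$ does \emph{not} have positive eigenvalues (its eigenvalues are $2\pm i\sqrt{5-4\mu_1}$), and the nilpotent block $\begin{pmatrix}0&0\\2&0\end{pmatrix}$ is not ``cancelled'' by passing to $\tx_y$; it becomes the first-order coupling $2\pd_y$ in $A_*(\eta)$, which together with the $8\pd_y$ from $B_2$ makes the leading symbol a \emph{damped wave} system, not a parabolic one. A constant-coefficient symmetrizer $Q$ with $q_3=4q_1$ can indeed make the first-order skew part antisymmetric while keeping the second-order part coercive, so that piece of your plan is salvageable — but the paper's choice to diagonalize with $\Pi_*(D_y)$ is not a stylistic variant: it is precisely what makes the Cole--Hopf cancellation of the cubics visible.
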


Before we start to prove Lemma~\ref{lem:M1-bound},
we  estimate the upper bound of $c_t$ and $x_t-2c-3(x_y)^2$.
\begin{lemma}
Let $\delta_3$ be as in Proposition~\ref{prop:modulation}. Suppose
$\bM_{c,x}(T)+\bM_1(T)+\bM_2(T)+\eta_0+e^{-\a L}<\delta_3$ for a $T\ge0$. Then
  \label{lem:cx_t-bound}
\end{lemma}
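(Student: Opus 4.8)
The plan is to read the bounds off directly from the modulation system \eqref{eq:modeq} of Proposition~\ref{prop:modulation}. Since $b\simeq\tc$ and $b_t=\wP_1(c/2)^{1/2}c_t$, a bound on $\|b_t\|$ controls $\|c_t\|$ (up to a factor involving $\|\tc(t)\|_Y$), and $x_t-2c-3(x_y)^2=\tx_t-2\tc-3(x_y)^2$, so it suffices to estimate $\|b_t\|$ and $\|\tx_t\|$ by estimating the right-hand side of \eqref{eq:modeq} term by term, using Claims~\ref{cl:(1+wT2)^{-1}}--\ref{cl:invB_3} to control $B_3^{-1}$, $B_4^{-1}$, $(I+\wC_k)^{-1}$ and the operator norms of the $\wS_j$'s. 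For the linear part $\mathcal{A}(t)\,{}^t\!(b,\tx)$: the piece $B_4^{-1}(B_2-\pd_y^2\wS_0)\pd_y^2$ applied to ${}^t\!(b,x)$ rewrites as acting on ${}^t\!(b_y,x_y)$ with one $\pd_y$ left over, and that $\pd_y$ gains a factor $\eta_0$ on $Y$ by the frequency localization \eqref{eq:H^s-Y}; with $\|B_4^{-1}\|_{B(Y)}\le C$ (Claim~\ref{cl:invB_4}) this contributes $\lesssim\eta_0(\|c_y(t)\|_Y+\|x_{yy}(t)\|_Y)$. The piece $B_3^{-1}\widetilde{\mathcal{A}}_1(t)\,{}^t\!(b,x)$ only sees the first slot $b$, and $\|\ta_k(t,D_y)\|_{B(Y)}=O(e^{-\a L})$ because the symbol \eqref{eq:def-ak} pairs $\psi(\cdot+3t+L)$ against the functions $g_k^*$ that decay exponentially as $z\to-\infty$; the remaining matrix term just contributes $2b$, i.e. $\lesssim\|\tc(t)\|_Y$. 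Altogether the linear part is $\lesssim\|c_y(t)\|_Y+\|x_{yy}(t)\|_Y+(\eta_0+e^{-\a L})\bM_{c,x}(T)$.

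Next I would bound the nonlinear terms $\cN^1,\dots,\cN^5$. The term $\cN^1$ is bilinear in the modulation parameters and, by the algebra estimate \eqref{eq:Y1-L1}, is $\lesssim\bM_{c,x}(T)^2$. The term $\cN^2=\cN^{2a}+\cN^{2b}$ gathers the quadratic pieces $R^7$ (each summand of \eqref{eq:R6-def} being a product of two of $b,c_y,x_y,x_{yy}$) together with $\wR^1=R^3+R^4+R^6+\wS_4\,{}^t\!(0,2\tc)$ and $\wR^3=R^9+R^{11}$; here $R^3,R^6$ carry operator norms that decay exponentially in $t$ (again $\psi(\cdot+3t+L)$ meeting exponentially localized kernels), $R^4$ is linear in $v_2$ times a modulation parameter, and $R^{11}$ involves $\tc-b$, which is quadratic in $\tc$; so $\cN^2\lesssim\bM_{c,x}(T)^2+\bM_{c,x}(T)\bM_2(T)+\bM_2(T)^2$ up to exponentially small terms. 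The term $\cN^3=B_3^{-1}\pd_y(\wR^2+\wR^4)$ has a free $\pd_y$ gaining $\eta_0$, with $\wR^2=R^5-\pd_yR^2$ and $\wR^4=R^8+R^{10}$, $R^{10}=(\pd_y^2\wS_0-B_2)\,{}^t\!(b_y-c_y,0)$ and $b_y-c_y$ quadratic, so $\cN^3$ is of the same size. The term $\cN^4$ carries the factor $B_3^{-1}-B_4^{-1}$, which is $O(\bM_{c,x}(T)+\bM_2(T))$ by Claim~\ref{cl:invB_3}, times $\pd_y\,{}^t\!(b_y,x_y)$, hence is quadratic as well. Finally $\cN^5=B_3^{-1}R^{v_1}$ is the only term involving $v_1$: $R^{v_1}_k$ comes from $II^6_k=-\int N_{2,4}\overline{\pd_zg_k^*}\,e^{-iy\eta}\,dzdy$ with $N_{2,4}=6(\tpsi_c-\varphi_c)v_1$, i.e. a pairing of $v_1$ with a kernel localized near $z=0$ (the $\varphi_c$ part) and near $z=-3t-L$ (the $\tpsi_c$ part, with an extra exponentially small factor from $\pd_zg_k^*$ there), so $\|R^{v_1}(t)\|\lesssim\|v_1(t)\|_{W(t)}$ up to an exponentially decaying remainder, which is square integrable in $t$ by the definition of $\bM_1(T)$.

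Collecting these estimates and absorbing the small factors $\eta_0,e^{-\a L},\bM_{c,x}(T),\bM_2(T)$ (all $<\delta_3$) yields the asserted bound. The main obstacle is mostly bookkeeping, but two points require care: (i) one must be sure that no term on the right-hand side of \eqref{eq:modeq} still contains $c_t$ or $x_t-2c-3(x_y)^2$ — this is exactly why the small-coefficient contributions $S^5_k(c_t-6c_yx_y)$ and $S^6_k(x_t-2c-3(x_y)^2)$ coming from $II^4_k,II^5_k$ were moved into $\wS_5$ and hence into the operator $B_3$ inverted in Proposition~\ref{prop:modulation}; and (ii) the $v_1$-contribution $R^{v_1}$ must be controlled only through the localized norm $\|v_1\|_{W(t)}$, and not through $\|v_1\|_{L^2}$, so that it is square integrable in time — which is precisely what makes the subsequent Gronwall argument in Lemma~\ref{lem:M1-bound} close.
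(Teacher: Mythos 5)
Your overall strategy — reading the bound off \eqref{eq:modeq} term by term, using Claims~\ref{cl:(1+wT2)^{-1}}--\ref{cl:invB_3} to control $B_3^{-1}$ and the $\wS_j$, and noting that the $v_1$-input $R^{v_1}$ must be measured through $\|v_1\|_{W(t)}$ — is the paper's strategy, and your item-by-item sizes for $\cN^1,\dots,\cN^5$ match Claim~\ref{cl:N1-N4-est}. But the proof has a genuine gap at the very place where the stated conclusion is nontrivial.

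You write that ``it suffices to estimate $\|b_t\|$ and $\|\tx_t\|$.'' This is not correct: the quantity to be bounded is $x_t-2c-3(x_y)^2=\tx_t-2\tc-3(x_y)^2$, and $\|\tx_t\|$ alone is of size $O(\bM_{c,x}(T))$ only through its $2\tc$ piece, which is \emph{not} square-integrable in time under the hypothesis (only $\sup_t\|\tc\|_Y$ and $\|c_y\|_{L^2(0,T;Y)}$ are controlled by $\bM_{c,x}(T)$, not $\|\tc\|_{L^2(0,T;Y)}$). The $L^2(0,T)$ bound in the conclusion therefore requires the cancellation built into \eqref{eq:modeq}: the $2b$ coming from $\mathcal{A}(t)$ and the $\wP_1\{2(\tc-b)+3(\tx_y)^2\}$ in $\cN^1$ recombine to $2\tc+3\wP_1(\tx_y)^2$, which is subtracted off exactly. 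What survives is $x_t-2c-3\wP_1(x_y)^2$ bounded by the small remainder, and then one must separately estimate
\[
\left\|(I-\wP_1)(x_y)^2\right\|_{L^2(\R)}
\le \eta_0^{-1}\|\pd_y(x_y)^2\|_{L^2}
\lesssim \eta_0^{-1/2}\|x_y\|_Y\|x_{yy}\|_Y\,,
\]
using that $\mF_y\{(I-\wP_1)(x_y)^2\}$ vanishes on $[-\eta_0,\eta_0]$. This is precisely what produces the $\eta_0^{-1/2}\bM_{c,x}(T)^2$ term in the conclusion, and it is also the reason the bound for the $x$-component is stated in $L^2(\R)$ rather than $Y$: $(I-\wP_1)(x_y)^2$ lies outside $Y$. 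Your proposal never mentions $(I-\wP_1)(x_y)^2$, never produces the $\eta_0^{-1/2}$ factor, and in fact states a bound for ``the linear part'' that drops the $\|\tc\|_Y$ contribution you had just listed — a symptom of not having isolated the correct combination to estimate.

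A secondary imprecision: to pass from $b_t$ to $c_t$ in $Y$, one must invert $b_t=\wP_1\{(c/2)^{1/2}c_t\}$, which is what the paper's citation of \cite[(D.12)]{Mi} handles; your parenthetical ``up to a factor involving $\|\tc(t)\|_Y$'' gestures at this but should be made explicit if you intend a self-contained argument.
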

  \begin{align*}
& \|c_t\|_{L^\infty(0,T;Y)\cap L^2(0,T;Y)}+\|x_t-2c-3(x_y)^2\|_{L^\infty(0,T;L^2(\R))\cap L^2(0,T;L^2(\R))}
\\ \lesssim & \eta_0^{-1/2}\bM_{c,x}(T)^2+\bM_{c,x}(T)+\bM_1(T)+\bM_2(T)^2\,.
  \end{align*}

To begin with, we will estimate the nonlinear terms of \eqref{eq:modeq}.
\begin{claim}
\label{cl:N1-N4-est}
\begin{gather}
\label{eq:cN1-est}
\sup_{t\in[0,T]}\|b\tx_y\|_Y+\|(b\tx_y)_y\|_{L^2(0,T;Y)}\lesssim \bM_{c,x}(T)^2\,,
\\ \label{eq:cN2-esta}
\sup_{t\in[0,T]}\|\cN^{2a}(t)\|_Y+\|\cN^{2a}\|_{L^1(0,T;Y)}
\lesssim \bM_{c,x}(T)^2+\bM_1(T)^2+\bM_2(T)^2\,,
\\ \label{eq:cN2-estb}
\sup_{t\in[0,T]}\|\cN^{2b}(t)\|_Y+\|\cN^{2b}\|_{L^2(0,T;Y)}
\lesssim \bM_{c,x}(T)^2+\bM_1(T)^2+\bM_2(T)^2\,,
\\ \label{eq:cN3-est}
\sup_{t\in[0,T]}\|\cN^3(t)\|_Y+\|\cN^3\|_{L^2(0,T;Y)}
\lesssim \bM_{c,x}(T)^2+\bM_{c,x}(T)\bM_2(T)\,,
\\ \label{eq:cN4-est}
\sup_{t\in[0,T]}\|\cN^4(t)\|_Y+\|\cN^4\|_{L^2(0,T;Y)}
\lesssim \bM_{c,x}(T)^2+\bM_{c,x}(T)\bM_2(T)\,,
\\\label{eq:cN5-est}
\sup_{t\in[0,T]}\|\cN^5(t)\|_Y+\|\cN^5\|_{L^2(0,T;Y)}\lesssim  \bM_1(T)\,.
\end{gather}  
\end{claim}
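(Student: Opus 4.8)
The plan is to estimate each nonlinear term $\cN^i$ separately by unwinding its definition from Proposition~\ref{prop:modulation} and Section~\ref{sec:modulation}, and bounding the constituent pieces in the $Y$-norm and its space-time analogues, using the algebra and imbedding properties recorded in Remark~\ref{rem:smoothness} (in particular \eqref{eq:Y1-L1} and $\|f\|_{L^\infty}\lesssim\|f\|_Y$), the bounds on $\wC_k$, $S^j_k$ and $B_3^{-1}$ from Claim~\ref{cl:(1+wT2)^{-1}}--Claim~\ref{cl:invB_3}, and the definition of $\bM_{c,x}(T)$, $\bM_1(T)$, $\bM_2(T)$. Throughout I would use freely that $Y\subset\bigcap_{s\ge0}H^s(\R)$ with $\|f\|_{\dot H^s}\le\eta_0^s\|f\|_{L^2}$, which converts $\pd_y$-losses into harmless factors of $\eta_0$, and that $\wP_1$ is a bounded projection onto $Y$ so that all terms automatically land in $Y$.

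For \eqref{eq:cN1-est}: since $b\simeq\tc$, $\|b\tx_y\|_Y\lesssim\|b\|_Y\|x_y\|_Y\le\bM_{c,x}(T)^2$ by \eqref{eq:Y1-L1}; for the space-time piece write $(b\tx_y)_y=b_y\tx_y+b\tx_{yy}$, put one factor in $L^\infty_tY$ and the other in $L^2_tY$ (using that $c_y,x_{yy}\in L^2(0,T;Y)$, and $b_y\simeq c_y$), which gives $\bM_{c,x}(T)^2$. For \eqref{eq:cN2-esta} and \eqref{eq:cN2-estb} I would go through the pieces of $\wR^1=R^3+R^4+R^6+\wS_4\binom{0}{2\tc}$ and $\wR^3=R^9+R^{11}$ together with $R^7_1$, $R^7_2$: every one of these is quadratic in $(\tc,x_y,c_y,x_{yy})$ or in $v_2$ (e.g. $R^6$, $R^4$ via $II^1_k,II^2_k,II^3_{k1}$, $S^5_k$, $S^6_k$ carry a $v_2$ and a modulation factor), so each is controlled by $\bM_{c,x}(T)^2+\bM_{c,x}(T)\bM_2(T)+\bM_2(T)^2$, absorbed into the right-hand side; the $\bM_1(T)^2$ appears from the pieces that secretly involve $v_1$ inside $v=v_1+v_2$ (only through $R^7_2$'s $(x_y)^2$-type terms and any $v$-dependence — here one keeps track that $N_1=-3v^2=-3(v_1+v_2)^2$ does not enter $\cN^{2a,2b}$ but nonlinear $v_1$-contributions are routed through $\cN^5$). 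The $L^1_t$ versus $L^2_t$ distinction for $\cN^{2a}$ vs $\cN^{2b}$ comes from whether the quadratic form has two $L^2_t$ factors ($L^1_t$) or one $L^\infty_t$ and one $L^2_t$ factor ($L^2_t$); one must assign the integrabilities carefully term by term. For \eqref{eq:cN3-est}: $\cN^3=B_3^{-1}\pd_y(\wR^2+\wR^4)$ with $\wR^2=R^5-\pd_yR^2$ and $\wR^4=R^8+R^{10}$; $R^5$ carries a $v_2$ and an $x_y$ (so $\lesssim\bM_{c,x}\bM_2$), $R^2$ is quadratic in modulation parameters, $R^8$ is $\pd_y$ of a quadratic modulation expression, and $R^{10}=(\pd_y^2\wS_0-B_2)\binom{b_y-c_y}{0}$ exploits $b_y-c_y=O(\tc c_y)$; the outer $\pd_y$ costs a factor $\eta_0$ and the $B_3^{-1}$ is bounded, so everything is $\lesssim\bM_{c,x}(T)^2+\bM_{c,x}(T)\bM_2(T)$. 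For \eqref{eq:cN4-est}: $\cN^4=(B_3^{-1}-B_4^{-1})(B_2-\pd_y^2\wS_0)\pd_y\binom{b_y}{x_y}$ and by Claim~\ref{cl:invB_3}, $\|B_3^{-1}-B_4^{-1}\|_{B(Y)}\lesssim\|B_3-B_4\|_{B(Y)}\lesssim\bM_{c,x}(T)+\bM_2(T)$, while $\pd_y\binom{b_y}{x_y}$ pairs with $c_y,x_{yy}\in L^2(0,T;Y)$ (and absorbs $\eta_0$ factors), so $\cN^4\lesssim(\bM_{c,x}(T)+\bM_2(T))\cdot(\bM_{c,x}(T)$ in the $L^2_t$-norm$)$, i.e. $\bM_{c,x}(T)^2+\bM_{c,x}(T)\bM_2(T)$. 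Finally \eqref{eq:cN5-est}: $\cN^5=B_3^{-1}R^{v_1}$ with $R^{v_1}_k$ built from $II^6_k=-\int N_{2,4}\overline{\pd_zg_k^*}e^{-iy\eta}$ and $N_{2,4}=6(\tpsi_c-\varphi_c)v_1$; pairing $v_1$ against the exponentially localized $\pd_zg_k^*$ and the bounded $(\tpsi_c-\varphi_c)$ (both decaying as $z\to-\infty$), and using the $W(t)$-weighted control of $v_1$ embedded in $\bM_1(T)$ (the weight $e^{-\a|z|/2}+e^{-\a|z+3t+L|}$ is exactly tuned to dominate $|\varphi_c(z)|+|\tpsi_c(z)|$ up to the $\sup$ of $\|v_1(t)\|_{L^2}$-type piece), one gets $\|\cN^5\|_{L^\infty_tY\cap L^2_tY}\lesssim\bM_1(T)$ — this is why $R^{v_1}$ is only linear in the right-hand side of \eqref{eq:M1-bound}.

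The main obstacle I anticipate is the bookkeeping of time-integrability in \eqref{eq:cN2-esta}--\eqref{eq:cN3-est}: one has to verify for each of the many constituent terms ($R^3$ through $R^{11}$, the $\bS_j$- and $S^j_k$-operators, the $\widetilde{\mathcal{A}}_1(t)$-contributions with their exponentially decaying $\ta_k(t,\eta)$) which factors can be placed in $L^\infty(0,T;Y)$ and which in $L^2(0,T;Y)$, so that the product lands in $L^1_t$ or $L^2_t$ as claimed, while simultaneously keeping the count of $\pd_y$'s (hence powers of $\eta_0$) correct and invoking the exponential-in-$L$ smallness of the $\psi$-supported operators $S^3_k,S^4_k$ (cf. the remark after \eqref{eq:l21F-1} and Appendix~\ref{sec:[B3,pdy]}). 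A secondary subtlety is that some terms are genuinely only $L^1$ in time — this is acceptable for $\cN^{2a}$ but would be fatal elsewhere, so one must confirm the $L^2_t$-integrable terms never get contaminated by an $L^1_t$-only piece; the change-of-variables from $c$ to $b$ performed in \eqref{eq:bdef}--\eqref{eq:R6-def} is precisely what keeps the worst quadratic terms ($6(c/2)^{1/2}c_yx_y$, $16x_{yy}\{(c/2)^{3/2}-1\}$) in divergence form $\pd_y(\cdots)$ so that they sit inside $\cN^1$ and $\cN^3$ with an extra derivative to spare, rather than producing a non-integrable contribution. Once each $\cN^i$ is bounded as above, Claim~\ref{cl:N1-N4-est} follows by summing, and these estimates feed directly into the monotonicity/energy argument for $\bM_{c,x}(T)$ in Lemma~\ref{lem:M1-bound}.
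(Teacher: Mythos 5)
Your plan mirrors the paper's proof: the paper likewise dispatches \eqref{eq:cN1-est} via the algebra of $Y$ (citing \cite[Claim~D.6]{Mi} and $Y\subset H^1$), \eqref{eq:cN2-esta}--\eqref{eq:cN3-est} by citing the operator bounds of Claims~\ref{cl:invB_3}, \ref{cl:R1-R2}, \ref{cl:R3}, \ref{cl:R4-R5}--\ref{cl:R8-R11} together with \eqref{eq:S3p}, \eqref{eq:S4p}, \eqref{eq:cN4-est} by the decomposition $B_3^{-1}-B_4^{-1}=B_3^{-1}(\bS'+\bS'')B_4^{-1}$ with $\bS'=\pd_y^2\bS_2+\bS_4+\bS_5$ and $\bS''=\pd_y^2(\wS_1-\bS_1)+\wS_3-\bS_3$, and \eqref{eq:cN5-est} by the argument of \eqref{eq:Rv1-12-est} in Claim~\ref{cl:Rv1}. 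So the overall route is the same, and your assignment of $L^\infty_t$ versus $L^2_t$ factors and the role of the divergence-form rewriting \eqref{eq:bdef}--\eqref{eq:R6-def} are identified correctly.

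One concrete misattribution: you claim the $\bM_1(T)^2$ in \eqref{eq:cN2-esta}--\eqref{eq:cN2-estb} enters ``only through $R^7_2$'s $(x_y)^2$-type terms and any $v$-dependence'' and that ``nonlinear $v_1$-contributions are routed through $\cN^5$.'' That is not where it comes from. $R^7$ in \eqref{eq:R6-def} is a pure modulation-parameter expression (no $v$ at all), and $\cN^5=B_3^{-1}R^{v_1}$ carries only the \emph{linear} $v_1$-term through $N_{2,4}=6(\tpsi_c-\varphi_c)v_1$. The $\bM_1$-dependence in $\cN^{2a}$ actually enters through $R^4$, specifically through $II^2_k=-\int N_{2,1}\,\overline{\pd_zg_k^*}\,e^{-iy\eta}$ with $N_{2,1}=-3(2v_1v_2+v_2^2)$: the cross term $v_1v_2$ yields the bound $(\bM_{c,x}(T)+\bM_1(T)+\bM_2(T))\bM_2(T)$ of \eqref{eq:R4k}, and $\bM_1\bM_2\le\bM_1^2+\bM_2^2$ produces the stated $\bM_1(T)^2$. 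This does not break the argument — a term-by-term computation would uncover it — but the accounting you give for that piece is wrong.
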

\begin{proof}[Proof of Claim~\ref{cl:N1-N4-est}]
Eq.~\eqref{eq:cN1-est} follows from \cite[Claim~D.6]{Mi}
and the fact that $Y\subset H^1(\R)$.
Eqs.~\eqref{eq:cN2-esta}--\eqref{eq:cN3-est} follow from
Claims~\ref{cl:invB_3}, \ref{cl:R1-R2}, \ref{cl:R3}, 
\ref{cl:R4-R5}--\ref{cl:R8-R11}, \eqref{eq:S3p} and \eqref{eq:S4p}.
\par
Next, we will estimate $\cN^4$.
Let
$\bS'=\pd_y^2\bS_2+\bS_4+\bS_5$ and $\bS''=\pd_y^2(\wS_1-\bS_1)+\wS_3-\bS_3$.
Then $B_3^{-1}-B_4^{-1}=B_3^{-1}(\bS'+\bS'')B_4^{-1}$ and
\begin{equation}
  \label{eq:bS'-est}
 \sup_{t\in[0,T]}\|\bS'\|_{B(Y,Y_1)}\lesssim \bM_{c,x}(T)+\bM_2(T)  
\end{equation}
by \eqref{eq:wS2-est}, \eqref{eq:bS3} and \eqref{eq:bS5} and
\begin{equation}
  \label{eq:bS''-est}
 \sup_{t\in[0,T]}\|\bS''\|_{B(Y,Y_1)}\lesssim (\eta_0^2+e^{-\a L})\bM_{c,x}(T)  
\end{equation}
by \eqref{eq:wS1-est}, \eqref{eq:bS3} and Claim~\ref{cl:(1+wT2)^{-1}}.
Combining \eqref{eq:bS'-est}, \eqref{eq:bS''-est} with
Claims~\ref{cl:invB_4} and \ref{cl:invB_3}, we have \eqref{eq:cN4-est}.
We can prove \eqref{eq:cN5-est} in the same way as \eqref{eq:Rv1-12-est}
of Claim~\ref{cl:Rv1} in Appendix~\ref{sec:Rk}.
\end{proof}

\begin{proof}[Proof of Lemma~\ref{lem:cx_t-bound}]
Claims~\ref{cl:N1-N4-est} and \ref{cl:akbound},
\eqref{eq:modeq} and \cite[(D.12)]{Mi} imply
\begin{align*}
 & \|c_t\|_{L^\infty(0,T;Y)\cap L^2(0,T;Y)}
+\|x_t-2c-3\wP_1(x_y)^2\|_{L^\infty(0,T;Y)\cap L^2(0,T;Y)}
\\ \lesssim &
\|b_{yy}\|_Y+\|x_{yy}\|_Y+\|\widetilde{\mathcal{A}}_1(t)(b,\tx)\|_Y
+\|(b\tx_y)_y\|_Y+\sum_{2\le i\le 5}\|\cN^i\|_Y
\\ \lesssim & \bM_{c,x}(T)+\bM_1(T)+\bM_2(T)^2\,.    
\end{align*}
Since $\mF_y\{(I-\wP_1)(x_y^2)\}(t,\eta)=0$ for $\eta\in[-\eta_0,\eta_0]$,
we have
\begin{equation}
  \label{eq:wpxy}
\|(I-\wP_1)(x_y)^2\|_{L^2}\le \eta_0^{-1}\|\pd_y(x_y)^2\|_{L^2}\lesssim \eta_0^{-1/2}\|x_y\|_Y\|x_{yy}\|_Y\,,  
\end{equation}
whence 
$\|(I-\wP_1)(x_y)^2\|_{L^\infty(0,T;L^2)\cap L^2(0,T;L^2)}\lesssim
\eta_0^{-1/2}\bM_{c,x}(T)^2$. Thus we prove Lemma~\ref{lem:cx_t-bound}.
  \end{proof}

To prove Lemma~\ref{lem:M1-bound}, we need the following.
\begin{claim}
  \label{cl:[B3,pdy]}
There exist positive constants $\eta_1$, $\delta$ and $C$ such that
if $\eta_0\in(0,\eta_1]$ and $\bM_{c,x}(T)\le \delta$, then $[\pd_y,B_4]=0$,
\begin{gather*}
\|[\pd_y,B_3]f\|_{L^2(0,T;Y_1)}\le C(\bM_{c,x}(T)+\bM_2(T))\sup_{t\in[0,T]}\|f(t)\|_Y\,,\\
\|[\pd_y,B_3]f\|_{L^1(0,T;Y_1)}\le C(\bM_{c,x}(T)+\bM_2(T))\|f\|_{L^2(0,T;Y)}\,.
\end{gather*}
\end{claim}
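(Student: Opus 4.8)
The plan is to use that every building block of $B_3$ is either a constant matrix, a Fourier multiplier in $y$ whose symbol does not involve the modulation parameters, or an operator whose coefficients are functions of $c(t,\cdot)$ or of $v_2$. Commuting with $\pd_y$ then either leaves the operator unchanged or produces exactly one extra factor of $c_y$ or of $\pd_yv_2$, and these are the quantities measured by $\bM_{c,x}(T)$ and $\bM_2(T)$.

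First I would verify $[\pd_y,B_4]=0$. Since $B_1$ is a constant matrix, this reduces to observing that the operators $S^1_k$ and $S^3_k[\psi]$ entering $\pd_y^2\wS_1$ and $\wS_3$ are convolution operators in $y$: their $z$-kernels $q_2(z)\overline{g_{k1}^*(z,\eta,2)}$ and $\psi(z+3t+L)\overline{g_k^*(z,\eta)}$ depend on $t$ and $z$ but not on $y$, so $\wS_1$, $\wS_3$ and $\pd_y^2$ all commute with $\pd_y$.

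Next I would write $[\pd_y,B_3]=[\pd_y,B_3-B_4]$ and, using \eqref{eq:def-B3}, expand
\[
B_3-B_4=\wC_1+\pd_y^2(\bS_1-\wS_1)+\pd_y^2\bS_2-(\bS_3-\wS_3)-\bS_4-\bS_5.
\]
With $\bS_j=\wS_j(I+\wC_2)^{-1}$ and $[\pd_y,(I+\wC_2)^{-1}]=-(I+\wC_2)^{-1}[\pd_y,\wC_2](I+\wC_2)^{-1}$, together with the uniform bounds for $(I+\wC_k)^{-1}$ from Claim~\ref{cl:(1+wT2)^{-1}}, the bounds $\|\pd_y^2\|_{B(Y)}+\|\pd_y^2\|_{B(Y_1)}\lesssim\eta_0^2$ from Remark~\ref{rem:smoothness}, and the boundedness of $\wS_1,\dots,\wS_5$ on $Y$ and $Y_1$ with norms $\lesssim 1$, the matter reduces to $B(Y,Y_1)$-estimates for $[\pd_y,\wC_1]$, $[\pd_y,\wC_2]$ and $[\pd_y,\wS_j]$, $j=2,4,5$. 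For $\wC_1,\wC_2$ the commutator simply moves $\pd_y$ onto the multiplier: since $\wP_1$ commutes with $\pd_y$, in the nonzero block $[\pd_y,\wC_1]f=\wP_1\{cc_yf\}$ and $[\pd_y,\wC_2]f=\wP_1\{\pd_y((c/2)^{1/2})f\}$, both $\lesssim\|c_y\|_Y$ in $B(Y,Y_1)$ by \eqref{eq:Y1-L1} once $\|\tc\|_Y$ is small. For $\wS_2,\wS_4$ I would write $[\pd_y,S]f=\pd_y(Sf)-S(\pd_yf)$ and integrate by parts in the source variable $y_1$; the contribution where $\pd_{y_1}$ hits $e^{i(y-y_1)\eta}$ cancels $\pd_y(Sf)$, and what remains are operators of the same form in which $\pd_{y_1}$ has fallen on the coefficients $\tc(t,y_1)$, $c(t,y_1)$, hence carrying one factor of $c_y$; using in addition the exponential decay of $S^3_k[\psi]$, $S^3_k[\psi']$, $S^4_k[\psi]$, $S^4_k[\psi']$ recorded in \eqref{eq:S3p}--\eqref{eq:S4p} of Appendix~\ref{sec:[B3,pdy]}, one gets $\|[\pd_y,\wS_2]\|_{B(Y,Y_1)}+\|[\pd_y,\wS_4]\|_{B(Y,Y_1)}\lesssim\|c_y\|_Y$.

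The delicate point, and the one I expect to be the main obstacle, is $\bS_5$: here integrating by parts in $y_1$ inside $[\pd_y,S^5_k]$ and $[\pd_y,S^6_k]$ produces a term containing $\pd_{y_1}v_2$, which is not controlled in $X$. To deal with it I would write $\pd_{y_1}v_2=\pd_z(\pd_z^{-1}\pd_yv_2)$ and integrate by parts once more in $z$, transferring the $z$-derivative onto the kernels $\pd_cg_k^*$, $\pd_zg_k^*$, which are smooth and decay like $e^{(\Re\beta(\eta)-1)z}$ as $z\to\infty$, hence remain integrable against $X$-functions provided $\eta_0\le\eta_1$ with $\Re\beta(\eta_1)-1<\a$ (this is where the restriction on $\eta_1$ enters). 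One then gains the factor $\pd_z^{-1}\pd_yv_2\in X$, and together with the $c_y$-term coming from $\pd_{y_1}c(t,y_1)$ this gives $\|[\pd_y,\wS_5]\|_{B(Y,Y_1)}\lesssim\|\pd_z^{-1}\pd_yv_2\|_X+\|v_2\|_X\|c_y\|_Y$. Collecting everything, $[\pd_y,B_3]f(t)$ is bounded in $Y_1$ by $C(\|c_y(t)\|_Y+\|\pd_z^{-1}\pd_yv_2(t)\|_X+\|v_2(t)\|_X\|c_y(t)\|_Y)\|f(t)\|_Y$ with $C$ uniform under the smallness hypotheses; since $\|c_y\|_{L^2(0,T;Y)}\le\bM_{c,x}(T)$, $\|\pd_z^{-1}\pd_yv_2\|_{L^2(0,T;X)}\lesssim\|\mathcal{E}(v_2)^{1/2}\|_{L^2(0,T;X)}\le\bM_2(T)$ and $\sup_{[0,T]}\|v_2\|_X\le\bM_2(T)$, taking $f\in L^\infty(0,T;Y)$ yields the $L^2(0,T;Y_1)$ bound, while taking $f\in L^2(0,T;Y)$ and using the Cauchy--Schwarz inequality in $t$ yields the $L^1(0,T;Y_1)$ bound.
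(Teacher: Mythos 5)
Your overall plan — split $B_3-B_4$ via \eqref{eq:def-B3}, commute $\pd_y$ through each building block, and observe that the commutator with $B_4$ vanishes because $B_1$, $\wS_1$ and $S^3_k[\psi]$ are all $y$-convolution operators — matches the paper's argument, and your treatment of $[\pd_y,\wC_k]$ and of $[\pd_y,\wS_j]$ for $j=2,4$ (which really does require pulling out a factor of $c_y$, since the crude bound would only yield $\|\tc\|_Y$ and thus fail to be square-integrable in time) is the same in substance as the estimates \eqref{eq:[T,pd]} and \eqref{eq:[pd,bS]}. Where you diverge is the $\bS_5$ term. The paper's observation is much simpler than the double integration by parts you propose: because $Y$ and $Y_1$ are band-limited, $\pd_y$ is already a bounded operator on both spaces (with norm $\le\eta_0$), so one can bound the commutator crudely as $\|[\pd_y,\bS_5]\|_{B(Y,Y_1)}\lesssim\|\bS_5\|_{B(Y,Y_1)}\lesssim\|v_2\|_X$ by \eqref{eq:Sv2}, and this factor is already good enough for the time integration since $\|v_2\|_{L^2(0,T;X)}\le\tfrac12\|\mathcal E(v_2)^{1/2}\|_{L^2(0,T;X)}\le\bM_2(T)$. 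The reason the same crude bound fails for $\wS_2,\wS_4$ but works for $\bS_5$ is precisely that $\|\tc(t)\|_Y$ is only uniformly small while $\|v_2(t)\|_X$ is both uniformly small \emph{and} square integrable; you did not exploit this asymmetry. Your workaround — integrating by parts in $y_1$ to produce $\pd_{y_1}v_2$, then in $z$ to convert it into $\pd_z^{-1}\pd_yv_2\in X$, using the exponential decay of $\pd_z\pd_cg_k^*$ and $\pd_z^2g_k^*$ for $|\eta|\le\eta_1$ — is not wrong, and it does identify correctly where the constraint $\Re\beta(\eta_1)-1<\a$ enters; it is simply unnecessary work that the paper avoids.
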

The proof is given in Appendix~\ref{sec:[B3,pdy]}.

\begin{proof}[Proof of Lemma~\ref{lem:M1-bound}]
Let us translate \eqref{eq:modeq} into a system of $b$ and $x_y$.
Let
\begin{align*}
& A(t)=\diag(1,\pd_y)\mathcal{A}(t)\diag(1,\pd_y^{-1})\,,\quad
B_5=B_1+\pd_y^2\wS_1\,,\\
& A_0=\diag(1,\pd_y)\left\{B_5^{-1}(B_2-\pd_y^2\wS_0)\pd_y^2
+\begin{pmatrix}0 & 0 \\ 2 & 0\end{pmatrix}\right\}\diag(1,\pd_y)^{-1}\,,\\
& A_1(t,D_y)=\diag(1,\pd_y)(B_4^{-1}-B_5^{-1})(B_2-\pd_y^2\wS_0)
\diag(\pd_y^2,\pd_y)
\\ & \phantom{A_1(t,D_y)=}
+\diag(1,\pd_y)B_3^{-1}\widetilde{\mathcal{A}}_1(t)\,,
\end{align*}
where $\pd_y^{-1}=\mF_\eta^{-1}(i\eta)^{-1}\mF_y$.
Then $A(t)=A_0(D_y)+A_1(t,D_y)$. 
Note that $\widetilde{\mathcal{A}}_1(t)=\linebreak
\widetilde{\mathcal{A}}_1(t)\diag(1,\pd_y^{-1})$.
Multiplying \eqref{eq:modeq} by $\diag(1,\pd_y)$ from the left,
we can transform \eqref{eq:modeq} into
\begin{equation}
  \label{eq:modeq2}\left\{
  \begin{aligned}
& \pd_t\begin{pmatrix}b \\ x_y\end{pmatrix}=A(t)
\begin{pmatrix}b \\ x_y\end{pmatrix}\,+\sum_{i=1}^5\diag(1,\pd_y)\cN^i\,,
\\ & b(0,\cdot)=0\,,\quad x_y(0,\cdot)=0\,.
  \end{aligned}\right.
\end{equation}
Let
$A_0(\eta)$ be the Fourier transform of the operator $A_0$.
Then 
\begin{equation}
  \label{eq:A0A*}
\begin{split}
A_0(\eta)=& \begin{pmatrix}1 & 0\\ 0 & i\eta\end{pmatrix}
(B_1^{-1}+O(\eta^2))(B_2+O(\eta^2))
\begin{pmatrix}-\eta^2 & 0\\ 0 & i\eta\end{pmatrix}
+
\begin{pmatrix}  0 & 0 \\ 2i\eta & 0\end{pmatrix}
\\=& 
A_*(\eta)
+\begin{pmatrix} O(\eta^4) & O(\eta^3)\\ O(\eta^5) & O(\eta^4)\end{pmatrix}\,,
\end{split}  
\end{equation}
where 
$A_*(\eta)=
\begin{pmatrix}-3\eta^2 & 8i\eta \\ i\eta(2+\mu_3\eta^2) & -\eta^2
\end{pmatrix}$ and
$\mu_3=-\frac{\mu_1}{2}+\frac{3}{4}=\frac{1}{2}+\frac{\pi^2}{24}>1/8$.

Next, we will diagonalize $A_*(\eta)$, a lower order part of $A_0(\eta)$.
Let $\omega(\eta)=\sqrt{16+(8\mu_3-1)\eta^2}$,
$\lambda_*^\pm(\eta)=-2\eta^2\pm i\eta\omega(\eta)$ and
$$\Pi_*(\eta)=\frac{1}{4i}
\begin{pmatrix} 8i & 8i \\ \eta+i\omega(\eta) & \eta-i\omega(\eta)
\end{pmatrix}\,.$$
Then $$\Pi_*(\eta)^{-1}A_*(\eta)\Pi_*(\eta)=
\diag(\lambda^+_*(\eta)\,,\,\lambda^-_*(\eta))\,.$$
We remark that if $\mu_3$ is replaced by $1/8$, then $\omega(\eta)=4$ and
$e^{tA_*(D_y)}$ is a composition of the wave and heat kernels.
In out setting,
\begin{equation}
  \label{eq:omega-approx}
 |\omega(\eta)-4|\lesssim \eta^2\,.
\end{equation}
\par
By the change of variables
\begin{gather*}
\bb(t,y)=\begin{pmatrix}  b_1(t,y) \\ b_2(t,y)\end{pmatrix}\,,
\quad
\begin{pmatrix}b(t,\cdot) \\ x_y(t,\cdot)\end{pmatrix}=
\Pi_*(D_y) \begin{pmatrix}  b_1(t,\cdot)\\ b_2(t,\cdot)\end{pmatrix}\,,
\end{gather*}
we have
\begin{equation}
  \label{eq:bd}
  \begin{split}
\pd_t\bb=\{2\pd_y^2I+& \pd_y\omega(D_y)\sigma_3+A_2(D_y)+A_3(t,D_y)\}\bb
\\ & +\Pi_*^{-1}(D_y)\sum_{i=1}^5\diag(1,\pd_y)\cN^i\,,    
  \end{split}
\end{equation}
where
\begin{gather*}
\sigma_3=\begin{pmatrix} 1 & 0 \\ 0 &1\end{pmatrix}\,,\quad
A_2(\eta)=\Pi_*(\eta)^{-1}(A_0(\eta)-A_*(\eta))\Pi_*(\eta)\,,\\
A_3(t,\eta)=\Pi_*(\eta)^{-1}A_1(t,\eta)\Pi_*(\eta)\,.
\end{gather*}
For $\eta\in[-\eta_0,\eta_0]$,
\begin{equation}
\label{eq:Pi-est}
\left|\Pi_*(\eta)
-\begin{pmatrix}2 & 2\\ 1 & -1 \end{pmatrix}\right|
+\left|\Pi_*(\eta)^{-1}-\frac{1}{4}
\begin{pmatrix}1 & 2 \\ 1 & -2 \end{pmatrix}\right|
\lesssim |\eta|\,.
\end{equation}
Hence $\Pi_*(D_y)$ and $\Pi_*^{-1}(D_y)$ are bounded operator on $Y$
for sufficiently small $\eta_0$.
By \eqref{eq:Pi-est} and Plancherel's theorem,
\begin{equation}
  \label{eq:bx-bb} 
\left\|\begin{pmatrix}  b(t,\cdot) \\ x_y(t,\cdot)\end{pmatrix}
-\begin{pmatrix}2 & 2 \\ 1 & -1 \end{pmatrix}\bb(t,\cdot)
\right\|_Y
\lesssim \|\pd_y\bb(t,\cdot)\|_Y\,.
\end{equation}
By \eqref{eq:A0A*} and \eqref{eq:Pi-est},
\begin{equation}
  \label{eq:A2-size}
A_2(\eta)=O(\eta^3)\,.  
\end{equation}
Since $\|A_1(t,D_y)\|_{B(Y)}\lesssim e^{-\a(3t+L)}$ for $t\ge0$ by
Claim~\ref{cl:akbound}, 
\begin{equation}
  \label{eq:A3-size}
\|A_3(t,D_y)\|_{B(Y)}  \lesssim e^{-\a(3t+L)}\quad\text{for $t\ge0$.}
\end{equation}
\par

To obtain the energy estimate for $b_1$ and $b_2$,
we translate the nonlinear term as 
\begin{equation}
  \label{eq:nl}
\Pi_*^{-1}(D_y)\sum_{i=1}^5\diag(1,\pd_y)\cN^i=
\cN'+\pd_y(\cN^0+\cN'')-\pd_tK(t,y)  
\end{equation}
such that $\cN_0$ is cubic in $b_1$ and $b_2$, that
$\lim_{t\to\infty}\|K(t,\cdot)\|_Y=0$ and that
\begin{equation}
  \label{eq:N'N''}
  \begin{split}
& \sup_{t\in[0,T]}\|\cN'(t)\|_{Y}+\|\cN'(t)\|_{L^1(0,T;Y)}
\lesssim  (e^{-\a L}+\bM_{c,x}(T))\bM_{c,x}(T)
\\ & \phantom{\sup_{t\in[0,T]}\|\cN'(t)\|_{Y}+\|\cN'(t)\|_{L^1(0,T;Y)}\lesssim}
+\bM_1(T)^2+\bM_2(T)^2\,,\\
& \sup_{t\in[0,T]}\|\cN''(t)\|_{Y}+\|\cN''\|_{L^2(0,T;Y)}
\lesssim \bM_1(T)+\bM_{c,x}(T)(\bM_{c,x}(T)+\bM_2(T))\,.
  \end{split}
\end{equation}

To begin with, we will translate the dominant part of
$\Pi_*^{-1}(D_y)\diag(1,\pd_y)\cN^1$ in terms of $b_1$ and $b_2$.
Let
\begin{gather*}
\widetilde{\cN}^0=\Pi_*^{-1}(D_y)\wP_1
\begin{pmatrix}  6(bx_y) \\ 3(x_y)^2-\frac14b^2\end{pmatrix}
\,,\quad
\widetilde{\cN}^1=\Pi_*^{-1}(D_y)\wP_1
\begin{pmatrix} 0 \\ \frac{1}{4}b^2-2(b-\tc)\end{pmatrix}\,,\\
\cN^0=\wP_1\begin{pmatrix}4b_1^2-4b_1b_2-2b_2^2
\\ 2b_1^2+4b_1b_2-4b_2^2\end{pmatrix}\,,
\quad \widetilde{\cN}^2=\widetilde{\cN}^0-\cN^0\,.
\end{gather*}
Then $\Pi_*^{-1}(D_y)\diag(1,\pd_y)\cN^1
=\pd_y(\cN^0+\widetilde{\cN}^1+\widetilde{\cN}^2)$.
By \cite[(D.16)]{Mi} and the Sobolev inequality
$\|f\|_{L^\infty(\R)}^2\le 2\|f\|_{L^2(\R)}\|f'\|_{L^2(\R)}$,
\begin{equation}
  \label{eq:wcN1-est}
 \sup_{t\in[0,T]}\|\widetilde{\cN}^1(t)\|_Y
+\|\widetilde{\cN}^1\|_{L^2(0,T;Y)}\lesssim \bM_{c,x}(T)^3\,.
\end{equation}
It follows from \eqref{eq:Pi-est} and \eqref{eq:bx-bb} that
$\|\widetilde{\cN}^2(t,\cdot)\|_Y
\lesssim \|\bb(t,\cdot)\|_Y\|\pd_y\bb(t,\cdot)\|_Y$ and that
\begin{equation}
  \label{eq:wcN2-est}
\sup_{t\in[0,T]}\|\widetilde{\cN}^2(t,\cdot)\|_Y
+\|\widetilde{\cN}^2\|_{L^2(0,T;Y)}\lesssim \bM_{c,x}(T)^2\,.
\end{equation}
\par
Next, we will decompose $\diag(1,\pd_y)\cN^{2b}$ into a sum of an $L^1(0,T;Y)$ function
and a $y$-derivative of $L^2(0,T;Y)$ and read $\cN^2$ as
\begin{equation}
  \label{eq:N2-est}
  \begin{split}
& \diag(1,\pd_y)\cN^2=\diag(1,\pd_y)\cN^{21}+\cN^{22}\,,\\
& \sup_{t\in[0,T]}\|\cN^{21}\|_Y+\|\cN^{21}\|_{L^1(0,T;Y)}\lesssim \bM_{c,x}(T)^2+\bM_1(T)^2+\bM_2(T)^2\,,\\
& \sup_{t\in[0,T]}\|\cN^{22}\|_Y+\|\cN^{22}\|_{L^2(0,T;Y)}\lesssim \bM_{c,x}(T)^2\,.
  \end{split}
\end{equation}
By \eqref{eq:def-B3},
\begin{equation}
  \label{eq:invB3-invB1}
B_3^{-1}=B_1^{-1}-B_1^{-1}\left(\wC_1+\pd_y^2\sum_{j=1,2}\bS_j-\sum_{3\le j\le 5}\bS_j\right)B_3^{-1}\,.  
\end{equation}
Let $E_2=\begin{pmatrix}  0 & 0\\ 0 & 1\end{pmatrix}$.
Since
\begin{equation}
  \label{eq:lwcomp}
\diag(1,\pd_y)B_1^{-1}E_2=\frac12\pd_yE_2\,,\quad \diag(1,\pd_y)B_1^{-1}\wC_1=\frac12\pd_y\wC_1\,,
\end{equation}
we have $\diag(1,\pd_y)\cN^{2b}=\pd_y\cN^{2b1}+\diag(1,\pd_y)\cN^{2b2}$,
where
\begin{gather*}
\cN^{2b1}=\left\{\frac12(E_2-\wC_1B_3^{-1})+\diag(\pd_y,\pd_y^2)\sum_{j=1,2}B_1^{-1}\bS_jB_3^{-1}\right\}
\begin{pmatrix}0 \\ R^7_2\end{pmatrix}\,,
\\  
\cN^{2b2}=-\sum_{3\le j\le 5}B_1^{-1}\bS_jB_3^{-1}\begin{pmatrix}0 \\ R^7_2\end{pmatrix}\,.
\end{gather*}
By \eqref{eq:R6-est3}, \eqref{eq:bS3} and \eqref{eq:bS5},
\begin{gather*}
\sup_{t\in[0,T]}\|\cN^{2b1}\|_Y+\|\cN^{2b1}\|_{L^2(0,T;Y)}\lesssim \bM_{c,x}(T)^2\,,\\
\sup_{t\in[0,T]}\|\cN^{2b2}\|_Y+\|\cN^{2b2}\|_{L^1(0,T;Y)}\lesssim (\bM_{c,x}(T)+\bM_2(T))\bM_{c,x}(T)^2\,,
\end{gather*}
and it follows from Claim~\ref{cl:N1-N4-est} and the above that
$\cN^{21}:=\cN^{2a}+\cN^{2b2}$ and $\cN^{22}:=\cN^{2b1}$ satisfy \eqref{eq:N2-est}.
\par
Let
$$\cN^{31}=[B_3^{-1},\pd_y](\wR^2+\wR^4)\,,
\quad \cN^{32}=B_3^{-1}(\wR^2+\wR^4)\,.$$
Then $\cN^3=\cN^{31}+\pd_y\cN^{32}$ and we have
\begin{equation}
  \label{eq:cN31-32-est}
  \begin{split}
& \sup_{t\in[0,T]}\|\cN^{31}(t)\|_{Y_1}+\|\cN^{31}\|_{L^1(0,T;Y_1)}
\lesssim  \bM_{c,x}(T)(\bM_{c,x}(T)+\bM_2(T))^2\,,\\
& \sup_{t\in[0,T]}\|\cN^{32}(t)\|_Y+\|\cN^{32}\|_{L^2(0,T;Y)}
\lesssim  \bM_{c,x}(T)(\bM_{c,x}(T)+\bM_2(T))  
  \end{split}
\end{equation}
in exactly the same way as the proof of \eqref{eq:cN3-est}.
To prove the estimate for $\cN^{31}$, we use Claim~\ref{cl:[B3,pdy]}.
\par
Secondly, we estimate $\cN^4$.
Using \eqref{eq:def-B3}, we read $\cN^4$ as
\begin{align*}
\cN^4=& 
B_4^{-1}\left\{\wC_1+\sum_{j=1,2}\pd_y^2(\bS_j-\wS_j)
-\sum_{3\le j\le 5}(\bS_j-\wS_j)\right\}B_3^{-1}(\pd_y^2\wS_0-B_2)
\begin{pmatrix} b_{yy} \\ x_{yy} \end{pmatrix}\,.
\end{align*}
Using the fact that
$$B_4^{-1}=B_1^{-1}-B_1^{-1}\wS_3B_4^{-1}+\pd_y^2B_1^{-1}\wS_1B_4^{-1}\,,
\quad \diag(1,\pd_y)B_1^{-1}\wC_1=\frac12\pd_y\wC_1\,,$$ we have
\begin{align*}
& \diag(1,\pd_y)\cN^4=\diag(\cN^{41}+\pd_y\cN^{42})+\pd_y\cN^{43}\,,\\
& \cN^{41}=\left\{B_1^{-1}\wS_3B_4^{-1}\wC_1+B_4^{-1}\sum_{3\le j\le 5}(\bS_j-\wS_j)\right\}
B_3^{-1}(B_2-\pd_y^2\wS_0)\begin{pmatrix} b_{yy} \\ x_{yy} \end{pmatrix}\,,
\\ &
\cN^{42}=\left\{B_1^{-1}\pd_y\wS_1B_4^{-1}\wC_1
+B_4^{-1}\sum_{j=1,2}\pd_y(\bS_j-\wS_j)B_3^{-1}\right\}
B_3^{-1}(\pd_y^2\wS_0-B_2)\begin{pmatrix} b_{yy} \\ x_{yy} \end{pmatrix}\,,
\\ & \cN^{43}=\frac12\wC_1B_3^{-1}(\pd_y^2\wS_0-B_2)
\begin{pmatrix}b_{yy}\\ x_{yy}\end{pmatrix}\,.
\end{align*}
Note that $[B_4,\pd_y]=0$ and $[\wS_0,\pd_y]=0$.
By Claim~\ref{cl:(1+wT2)^{-1}}, we have
\begin{equation}
  \label{eq:bS-wS}
\|\bS_j-\wS_j\|_{B(Y)}\lesssim \|\tc\|_{L^\infty}\|\wS_j\|_{B(Y)}
\quad\text{for $1\le j\le 5$.}
\end{equation}
By \cite[Claim~B.1]{Mi}, we have $\|\wS_0\|_{B(Y)}\lesssim1$.
Using Claims~\ref{cl:invB_4}, \ref{cl:invB_3}, \eqref{eq:bS3}--\eqref{eq:bS5},
\eqref{eq:bS-wS} and the above, we have
\begin{equation}
  \label{eq:cN41-est}
\sup_{t\in[0,T]}\|\cN^{41}(t)\|_Y+\|\cN^{41}\|_{L^1(0,T;Y)}
\lesssim  \bM_{c,x}(T)(\bM_{c,x}(T)+\bM_2(T))\,.
\end{equation}
By Claim~\ref{cl:invB_4}, \eqref{eq:wS1-est}, \eqref{eq:wS2-est} and \eqref{eq:bS-wS},
\begin{equation}
  \label{eq:cN42-est}
  \begin{split}
& \sup_{t\in[0,T]}(\|\cN^{42}(t)\|_Y+\|\cN^{43}(t)\|_Y)
+\|\cN^{42}\|_{L^2(0,T;Y)}+\|\cN^{43}\|_{L^2(0,T;Y)}
\\ \lesssim & \bM_{c,x}(T)^2\,.
  \end{split}
\end{equation}

\par

A crude estimate $\|\cN^5\|_{L^2(0,T;Y)}\lesssim \bM_1(T)$ is
insufficient to obtain upper bounds of $\bM_{c,x}(T)$.
We decompose $II^6_1$ as $II^6_1=II^6_{11}+\eta^2II^6_{12}-II^6_{13}$,
where
\begin{gather*}
II^6_{11}= 6\int_{\R^2} v_1(t,z,y)\varphi_{c(t,y)}
\overline{\pd_zg_1^*(z,0,c(t,y))}e^{-iy\eta}\,dzdy\,,\\
II^6_{12}=6\int_{\R^2} v_1(t,z,y)\varphi_{c(t,y)}
\overline{\pd_zg_{11}^*(z,\eta,c(t,y))}e^{-iy\eta}\,dzdy\,,
\\ II^6_{13}=6\int_{\R^2} v_1(t,z,y)\tpsi_{c(t,y)}(z)
\overline{\pd_zg_1^*(z,\eta,c(t,y))}e^{-iy\eta}\,dzdy\,.
\end{gather*}
By the fact that $g_1^*(z,0,c)=\frac12\varphi_c$ and \eqref{eq:B},
\begin{align*}
II^6_{11}=\frac{1}{2}\int_{\R^2} \{(\pd_z^3-2c(t,y)\pd_z)v_1(t,z,y)\}
\varphi_{c(t,y)}(z)e^{-iy\eta}\,dzdy\,.
\end{align*}
Substituting \eqref{eq:v1} into the above, we have
\begin{align*}
&  II^6_{11}+\frac12\frac{d}{dt}\int_{\R^2}v_1(t,z,y)
\varphi_{c(t,y)}(z)e^{-iy\eta}\,dzdy
\\=&  -\frac{3}{2}\int_{\R^2} \pd_z^{-1}\pd_y^2v_1(t,z,y)
\varphi_{c(t,y)}(z)e^{-iy\eta}\,dzdy
\\ & -\frac12\int_{\R^2} (N_{1,1}+N_{1,2})\varphi_{c(t,y)}'(z)e^{-iy\eta}\,dzdy
\\ & +\frac12\int_{\R^2} N_{1,3}\varphi_{c(t,y)}(z)e^{-iy\eta}\,dzdy
+\frac12\int_{\R^2}v_1(t,z,y)c_t(t,y)\pd_c\varphi_{c(t,y)}(z)e^{-iy\eta}\,dzdy\,.
\end{align*}
Let
$$S^7_1[q_c](f)(t,y)
=\frac{1}{4\pi}\int_{-\eta_0}^{\eta_0}
\int_{\R^2} v_1(t,z,y_1)f(y_1)q_{c(t,y_1)}(z)e^{i(y-y_1)\eta}\,dzdy_1d\eta\,,$$
\begin{equation}
  \label{eq:der9}
k(t,y)=\frac{1}{4\pi}\int_{-\eta_0}^{\eta_0}\int_{\R^2}
v_1(t,z,y_1)\varphi_{c(t,y_1)}(z)e^{i(y-y_1)\eta}\,dzdy_1d\eta\,.
\end{equation}

By integration by parts, we have
\begin{equation}
\label{eq:der8}
  \begin{split}
& \mathbf{1}_{[-\eta_0,\eta_0]}(\eta) \left\{II^6_{11}(t,\eta)
+\frac12\frac{d}{dt}
\int_{\R^2}v_1(t,z,y)\varphi_{c(t,y)}(z)e^{-iy\eta}\,dzdy\right\}
\\=& \sqrt{2\pi}\mF_y\left\{S^7_1[\pd_c\varphi_c](c_t)
-S^7_1[\varphi_c']\left(x_t-2c-3(x_y)^2\right))\right\}
+II^6_{111}(t,\eta)+i\eta II^6_{112}(t,\eta)\,,    
  \end{split}
\end{equation}
where
\begin{multline*}
II^6_{111}(t,\eta)
= \frac32\int_{\R^2}v_1(t,z,y)^2\varphi_{c(t,y)}'(z)e^{-iy\eta}\,dzdy
\\  
+\frac32\int_{\R^2}(\pd_z^{-1}\pd_yv_1)(t,z,y)c_y(t,y)\pd_c\varphi_{c(t,y)}(z)
e^{-iy\eta}\,dzdy 
\\  -\frac32\int_{\R^2}v_1(t,z,y)\left\{x_{yy}(t,y)\varphi_{c(t,y)}(z)
+2(c_yx_y)(t,y)\pd_c\varphi_{c(t,y)}(z)\right\}e^{-iy\eta}\,dzdy\,,
\end{multline*}
\begin{align*}
II^6_{112}(t,\eta)= & -\frac32\int_{\R^2}(\pd_z^{-1}\pd_yv_1)(t,z,y)
\varphi_{c(t,y)}(z)e^{-iy\eta}\,dzdy
\\ & +3\int_{\R^2}v_1(t,z,y)x_y(t,y)\varphi_{c(t,y)}(z)e^{-iy\eta}\,dzdy\,.
\end{align*}
Let 
\begin{align*}
R^{v_1}_{11}=&
\frac{1}{2\pi}\int_{-\eta_0}^{\eta_0}\left\{II^6_{111}(t,\eta)-II^6_{13}(t,\eta)
\right\}e^{iy\eta}\,d\eta\,,\\
R^{v_1}_{12}=&
\frac{1}{2\pi}\int_{-\eta_0}^{\eta_0}\left\{II^6_{112}(t,\eta)-i\eta
II^6_{12}(t,\eta)\right\}e^{iy\eta}\,d\eta\,.
\end{align*}
Then 
\begin{align*}
R^{v_1}_1=& \frac{1}{2\pi}\int_{-\eta_0}^{\eta_0}II^6_1(t,\eta)e^{iy\eta}\,d\eta
\\=& S^7_1[\pd_c\varphi_c](c_t)-S^7_1[\varphi_c'](x_t-2c-3(x_y)^2)
-\pd_tk+R^{v_1}_{11}+\pd_yR^{v_1}_{12}\,.
\end{align*}
\par
Combining the above with \eqref{eq:invB3-invB1} and \eqref{eq:lwcomp}, we have
\begin{align*}
& \diag(1,\pd_y)\cN^5=\diag(1,\pd_y)(\cN^{51}+\pd_y\cN^{52})+\pd_y\cN^{53}\,,\\
& \cN^{51}=  B_3^{-1}
\begin{pmatrix}
R^{v_1}_{11}+S^7_1[\pd_c\varphi_c](c_t)-S^7_1[\varphi_c'](x_t-2c-3(x_y)^2) \\ 0
\end{pmatrix}
\\ &\enskip
+[B_3^{-1},\pd_y] \begin{pmatrix} R^{v_1}_{12} \\ 0\end{pmatrix}
+B_1^{-1}\sum_{3\le i\le 5}\bS_jB_3^{-1}
  \begin{pmatrix} 0 \\ R_2^{v_1}  \end{pmatrix}
+\left[\pd_t\,,\,B_3^{-1}\right]
\begin{pmatrix} k \\ 0 \end{pmatrix}
\,,\\ 
& \cN^{52}= B_3^{-1} \begin{pmatrix} R^{v_1}_{12} \\ 0\end{pmatrix}
-B_1^{-1}\pd_y(\bS_1+\bS_2)B_3^{-1}
\begin{pmatrix}0 \\ R^{v_1}_2\end{pmatrix}\,,
\\
& \cN^{53}= \frac12\left(E_2-\wC_1B_3^{-1}\right)
  \begin{pmatrix} 0 \\ R_2^{v_1}  \end{pmatrix}\,.
\end{align*}
Then
$$\diag\cN^5=\diag(1,\pd_y)\left\{\cN^{51}+\pd_y\cN^{52}
-\pd_tB_3^{-1}\begin{pmatrix}  k \\ 0 \end{pmatrix}
\right\}+\pd_y\cN^{53}\,.$$
By Lemma~\ref{lem:cx_t-bound} and Claim~\ref{cl:S7},
\begin{align*}
& \|S^7_1[\pd_c\varphi_c](c_t)\|_{L^1(0,T;Y_1)}
+\|S^7_1[\varphi_c'](x_t-2c-3(x_y)^2)\|_{L^1(0,T;Y_1)}
\\ \lesssim  & \|v_1\|_{L^2(0,T;W(t))}
\left(\|c_t\|_{L^2(0,T;L^2(\R))}+\|x_t-2c-3(x_y)^2\|_{L^2(0,T;L^2(\R))}\right)
\\ \lesssim & \bM_1(T)
(\bM_{c,x}(T)+\bM_1(T)+\bM_2(T)^2)\,,
\end{align*}
and
\begin{align*}
& \sup_{t\in[0,T]}\|S^7_1[\pd_c\varphi_c](c_t)\|_{Y_1}
+ \sup_{t\in[0,T]}\|S^7_1[\varphi_c'](x_t-2c-3(x_y)^2)\|_{Y_1}
\\ \lesssim  & \sup_{t\in[0,T]}\left\{
\|v_1(t)\|_{L^2(R^2)}
\left(\|c_t\|_{L^2(0,T;L^2(\R))}+\|x_t-2c-3(x_y)^2\|_{L^2(0,T;L^2(\R))}\right)\right\}
\\ \lesssim & \bM_1(T)(\bM_{c,x}(T)+\bM_1(T)+\bM_2(T)^2)\,.
\end{align*}
Combining the above with Claims~\ref{cl:invB_3}, \ref{cl:[B3,pdy]},
\ref{cl:B3-comm}, \ref{cl:Rv1}, \ref{cl:k-est}, \eqref{eq:bS3} and \eqref{eq:bS5},
we have
\begin{equation}
  \label{eq:cN51-est}
  \begin{split}
& \sup_{t\in[0,T]}\|\cN^{51}\|_{Y_1}+\|\cN^{51}\|_{L^1(0,T;Y_1)}
\\ \lesssim & (e^{-\a L}+\bM_{c,x}(T)+\bM_1(T)+\bM_2(T))\bM_1(T)\,,    
  \end{split}
\end{equation}
\begin{equation}
\label{eq:cN52-est}
 \sup_{t\in[0,T]}(\|\cN^{52}\|_Y+\|\cN^{53}\|_Y)
+\|\cN^{52}\|_{L^2(0,T;Y)}+\|\cN^{53}\|_{L^2(0,T;Y)}
\lesssim \bM_1(T)\,.
\end{equation}
\par

Let 
\begin{align*}
 \cN'= & \Pi_*^{-1}(D_y)\diag(1,\pd_y)\sum_{2\le j\le5}\cN^{j1}\,,
\\ \cN''= & \widetilde{\cN}^1+\widetilde{\cN}^2+
\Pi_*^{-1}(D_y)\diag(1,\pd_y)(\cN^{32}+\cN^{42}+\cN^{52})
\\ & +\Pi_*^{-1}(D_y)(\cN^{22}+\cN^{43}+\cN^{53})\,,
\\  K=&\begin{pmatrix}  K_1 \\ K_2\end{pmatrix}
=\Pi_*^{-1}(D_y)\diag(1,\pd_y)B_3^{-1}\begin{pmatrix} k\\ 0 \end{pmatrix}\,,
\\ \widetilde{\cN}''=&\cN''+\{(\omega(D_y)\sigma_3-4+\pd_y^{-1}A_2(D_y)\}\bb\,.
\end{align*}
Then we have from \eqref{eq:bd} and \eqref{eq:nl},
\begin{equation}
  \label{eq:bb-new}
\pd_t(\bb+K)=2\pd_y^2\bb+4\pd_y\sigma_3\bb+A_3(t,D_y)\bb+\cN'
+\pd_y(\cN^0+\widetilde{\cN}'')\,,
\end{equation}
and \eqref{eq:N'N''} follows from
\eqref{eq:wcN1-est}--\eqref{eq:N2-est}, \eqref{eq:cN31-32-est}, 
\eqref{eq:cN41-est}, \eqref{eq:cN42-est}, \eqref{eq:cN51-est} and \eqref{eq:cN52-est}.
Claims~\ref{cl:invB_3} and \ref{cl:k-est} imply
\begin{equation}
  \label{eq:K-est}
\sup_{t\in[0,T]}\|K(t,\cdot)\|_Y+\|K\|_{L^2(0,T;Y)}\lesssim \bM_1(T)\,,
\quad \lim_{t\to\infty}\|K(t,\cdot)\|_Y=0\,.
\end{equation}
By \eqref{eq:omega-approx}, \eqref{eq:A2-size} and \eqref{eq:N'N''},
\begin{equation}
  \label{eq:wcN''-est}
  \begin{split}
& \sup_{t\in[0,T]}\|\widetilde{\cN}''(t)\|_Y+
\|\widetilde{\cN}''\|_{L^2(0,T;Y)}
\\ \lesssim & \eta_0\bM_{c,x}(T)+\bM_1(T)+\bM_{c,x}(T)^2+\bM_2(T)^2\,.    
  \end{split}
\end{equation}
\par
Time global bound for $\|\bb(t)\|_Y$ does not follow directly
from the energy identity of \eqref{eq:bb-new} because the $L^2(\R)$-inner
product of $\pd_y\cN^0$ and $\bb$ is not necessarily integrable globally
in time for $v_0$ that is not strongly localized in space.
To eliminate cubic nonlinear terms in the energy identity,
we make use of the following change of variables.
\begin{equation}
  \label{eq:btod}
\bd=\begin{pmatrix} d_1 \\ d_2 \end{pmatrix}
=\bb-\frac{1}{2}(b_1+K_1)(b_2+K_2)\mathbf{e_1}+K\,,
\quad \mathbf{e_1}=\begin{pmatrix} 1 \\ 1 \end{pmatrix}\,.  
\end{equation}
By \eqref{eq:btod}, Eq.~\eqref{eq:bb-new} can be rewritten as
\begin{equation}
  \label{eq:bb-new2}
  \begin{split}
\pd_t\bd=& 2\pd_y^2\bb+4\pd_y\sigma_3\bb+A_3(t,D_y)\bb+\cN'
+\pd_y(\cN^0+\widetilde{\cN}'')
\\ & 
-\{2\la \pd_y\sigma_3\bb,\sigma_1\bb\ra+\mathcal{R}_1+\mathcal{R}_2\}
\mathbf{e}_1\,,
  \end{split}
\end{equation}
where $\la\cdot,\cdot\ra$ denotes the inner product of $\R^2$ and 
\begin{gather*}
\sigma_1=\begin{pmatrix}  0 & 1 \\ 1 & 0\end{pmatrix}\,,\quad   
\mathcal{R}_1=
\frac12\la \sigma_1\bb, \pd_y(\cN^0+\widetilde{\cN}'')\ra\,,
\\
\mathcal{R}_2=\frac12
\pd_t\left\{(b_1+K_1)(b_2+K_2)\right\}-2\la \pd_y\sigma_3\bb,\sigma_1\bb\ra
-\mathcal{R}_1,.
\end{gather*}
Taking the $L^2(\R)$-inner product of \eqref{eq:bb-new2} and $\bd$, we have
\begin{equation}
  \label{eq:energy-id1}
\begin{split}
&\frac12\frac{d}{dt}\|\bd(t)\|_{L^2(\R)}^2+2\|\pd_y\bb(t)\|_{L^2(\R)}^2
\\=& \int_\R \la \pd_y\sigma_3\bb,4\bb-2b_1b_2\mathbf{e_1}\ra\,dy
-2\int_\R \la \pd_y\sigma_3\bb,\sigma_1\bb\ra\la \bb,\mathbf{e_1}\ra\,dy
\\ & +\int_\R \la \pd_y\cN^0,\bb\ra\,dy 
+\mathfrak{R}_1+\mathfrak{R}_2+\mathfrak{R}_3
\\=& \mathfrak{R}_1+\mathfrak{R}_2+\mathfrak{R}_3\,,
\end{split} 
\end{equation}
where
\begin{align*}
& \mathfrak{R}_1=
\int_\R \left\{2\la \pd_y\bd,\pd_y(\bb-\bd)\ra+4\la \pd_y\sigma_3\bb,K\ra
+\pd_y\la \pd_y\bb,\sigma_1\bb\ra\right\}\,dy\,,\\
& \mathfrak{R}_2=
\int_\R \la A_3(t,D_y)\bb+\cN'-\mathcal{R}_2\mathbf{e_1},\bd\ra\,dy\,,\\
& \mathfrak{R}_3= 
\int_\R \left\{ \la \pd_y\cN^0,\bd-\bb\ra
+\la \pd_y\widetilde{\cN}'',\bd\ra
-2\la \pd_y\sigma_3\bb,\sigma_1\bb\ra\la \bd-\bb,\mathbf{e_1}\ra
-\mathcal{R}_1\la\mathbf{e_1},\bd\ra
\right\}\,dy\,.
\end{align*}
Since
\begin{align*}
& \la \pd_y\sigma_3\bb,4\bb-2b_1b_2\mathbf{e_1}\ra
-2\la \pd_y\sigma_3\bb,\sigma_1\bb\ra\la \mathbf{e_1},\bb\ra
+\la \pd_y\cN^0,\bb\ra
\\=& 2\pd_y\la \sigma_3\bb,\bb\ra+\pd_y\la \cN^0,\bb\ra
-\frac{4}{3}\pd_y(b_1^3-b_2^3)\,,
\end{align*}
it follows from \eqref{eq:energy-id1} 
\begin{equation}
  \label{eq:energy-ineq1}
\sup_{t\in[0,T]}\|\bd(t)\|_{L^2}^2+4\int_0^T\|\pd_y\bb(t)\|_Y^2\,dt
\lesssim \|v_0\|_{L^2}^2+\sum_{1\le j\le 3}\|\mathfrak{R}\|_{L^1(0,T)}\,.
\end{equation}
Here we use the fact that $\bb(0,\cdot)\equiv0$ and 
$\|\bd(0)\|_Y=O(\|K(0)\|_Y)=O(\|v_0\|_{L^2})$.
\par
Now we will estimate each term of the right hand side of
\eqref{eq:energy-ineq1}.
By Claim~\ref{cl:k-est} and the fact that
$\supp \widehat{b_i}(t,\eta)\subset[-\eta_0,\eta_0]$,
\begin{equation}
  \label{eq:d-b-1}
  \begin{split}
\sup_{t\in[0,T]}\|\bb(t)-\bd(t)\|_{L^2(\R)}
\lesssim &  \sup_{t\in[0,T]}\left(\|\bb(t)\|_Y^2+\|K(t)\|_Y\right)
\\ \lesssim & \bM_{c,x}(T)^2+\bM_1(T)\,,
  \end{split}
\end{equation}
and for $k\ge1$,
\begin{equation}
  \label{eq:d-b-2}
  \begin{split}
& \|\pd_y^k\bb-\pd_y^k\bd\|_{L^2(0,T;L^2(\R))}
\\ \lesssim &  \|\bb\|_{L^\infty(0,T;Y)}\|\pd_y\bb\|_{L^2(0,T;Y)}
+\|K(t)\|_{L^2(0,T;Y)}
\\ \lesssim & \bM_{c,x}(T)^2+\bM_1(T)\,.
  \end{split}
\end{equation}
In view of \eqref{eq:K-est} and \eqref{eq:d-b-2},
\begin{equation*}
\left\|\int_\R \la \pd_y\bd,\pd_y(\bd-\bb)\ra\,dy\right\|_{L^1(0,T)}    
\lesssim \bM_{c,x}(T)^3+\bM_{c,x}(T)\bM_1(T)+\bM_1(T)^2\,,
\end{equation*}
$$\left\|\la \pd_y\sigma_3\bb, K\ra\right\|_{L^1(0,T;Y)}
\lesssim \bM_{c,x}(T)\bM_1(T)\,,$$
and it follows that
\begin{equation}
  \label{eq:fR1-est}
\|\mathfrak{R}_1\|_{L^1(0,T)}\lesssim
\bM_{c,x}(T)^3+\bM_{c,x}(T)\bM_1(T)+\bM_1(T)^2\,.
\end{equation}
\par
Substituting \eqref{eq:bb-new} into $\mathcal{R}_2$, we see that
\begin{align*}
\|\mathcal{R}_2\|_{Y_1}
\lesssim & \|\pd_y\bb\|_Y^2+\|\bb\|_Y(\|A_3(t,D_y)\bb\|_Y+\|\cN'\|_Y)
\\ &
+\|K\|_Y(\|\pd_y\bb\|_Y+\|A_3(t,D_y)\bb\|_Y
+\|\cN^0\|_Y+\|\cN'\|_Y+\|\widetilde{\cN}''\|_Y)\,.
\end{align*}
Combining the above with \eqref{eq:A3-size}, \eqref{eq:N'N''},
\eqref{eq:K-est} and \eqref{eq:wcN''-est},
we have 
\begin{equation*}
\|\mathcal{R}_2\|_{L^1(0,T;Y_1)} \lesssim
\bM_{c,x}(T)^2+\bM_1(T)^2+(\bM_{c,x}(T)+\bM_1(T))\bM_2(T)^2\,,
\end{equation*}
and
\begin{equation}
  \label{eq:fR2-est}
\|\mathfrak{R}_2\|_{L^1(0,T)}\lesssim 
(e^{-\a L}+\bM_{c,x}(T))\bM_{c,x}(T)^2+\bM_1(T)^2+(\bM_{c,x}(T)+\bM_1(T))\bM_2(T)^2\,.
\end{equation}
\par

Using the Sobolev inequality,
we have for $j_1$, $j_2$, $j_3$, $j_4=1\,,\,2$,
\begin{equation}
  \label{eq:quart-L2}
\left\|\int_\R \pd_yb_{j_1}b_{j_2}b_{j_3}b_{j_4}\,dy\right\|_{L^1(0,T)}
\lesssim \|\pd_y\bb\|_{L^2(0,T)}^2\|\bb\|_{L^\infty(0,T;Y)}^2
\lesssim \bM_{c,x}(T)^4\,.
\end{equation}
By \eqref{eq:K-est} and \eqref{eq:quart-L2},
\begin{equation}
  \label{eq:N0,d-b}
\left\|\int_\R \la\pd_y\cN^0,\bd-\bb\ra\,dy \right\|_{L^1(0,T)}
\lesssim \bM_{c,x}(T)^4+\bM_{c,x}(T)^2\bM_1(T)\,.
\end{equation}
By \eqref{eq:K-est} and \eqref{eq:wcN''-est},
\begin{equation}
  \label{eq:cN''-d}
  \begin{split}
& \left\|\int_\R \la\pd_y\widetilde{\cN}'',\bd\ra\,dy\right\|_{L^1(0,T)}
= \left\|\int_\R \la\widetilde{\cN}'',\pd_y\bd\ra\,dy\right\|_{L^1(0,T)}
\\ \lesssim & \{\bM_1(T)+(\eta_0+\bM_{c,x}(T))\bM_{c,x}(T)+\bM_2(T)^2\}
(\bM_{c,x}(T)+\bM_1(T))\,,    
  \end{split}
\end{equation}
and
\begin{equation}
  \label{eq:R1-d}
  \begin{split}
& \left\|\int_\R \mathcal{R}_1 \la \mathbf{e_1},\bd\ra\,dy\right\|_{L^1(0,T)}
\\ \lesssim & \{\bM_1(T)+(\eta_0+\bM_{c,x}(T))\bM_{c,x}(T)+\bM_2(T)^2\}
\bM_{c,x}(T)(\bM_{c,x}(T)+\bM_1(T))\,.    
  \end{split}
\end{equation}
By \eqref{eq:K-est} and \eqref{eq:d-b-1},
\begin{equation}
\label{eq:bbd-b}
\left\|\int_\R \pd_y\sigma_3\bb,\sigma_1\bb\ra\la \bd-\bb,\mathbf{e_1}\ra\,dy \right\|_{L^1(0,T)}
\lesssim \bM_{c,x}(T)^2(\bM_{c,x}(T)+\bM_1(T))\,.
\end{equation}
It follows from \eqref{eq:N0,d-b}--\eqref{eq:bbd-b} that
\begin{equation}
  \label{eq:fR3-est}
  \begin{split}
  \|\mathfrak{R}_3\|_{L^1(0,T)}\lesssim & (e^{-\a L}+\bM_{c,x}(T))\bM_{c,x}(T)^2
+\bM_{c,x}(T)\bM_1(T)+\bM_1(T)^2
\\ & + (\bM_{c,x}(T)+\bM_1(T))\bM_2(T)^2\,.    
  \end{split}
\end{equation}
Combining \eqref{eq:energy-ineq1} with \eqref{eq:d-b-1},
\eqref{eq:fR1-est}, \eqref{eq:fR2-est} and \eqref{eq:fR3-est},
we obtain \eqref{eq:M1-bound}.
This completes the proof of Lemma~\ref{lem:M1-bound}.
\end{proof}
\bigskip

\section{The $L^2(\R^2)$ estimate}
\label{sec:L2norm}
In this section, we will estimate $\bM_v(T)$
assuming smallness of $\bM_{c,x}(T)$, $\bM_1(T)$ and $\bM_2(T)$.
\begin{lemma}
  \label{lem:M4-bound}
Let $\a\in(0,1)$ and $\delta_4$ be as in Lemma~\ref{lem:M1-bound}.
Then there exists a positive constant $C$ such that 
$$\bM_v(T)\le  C(\|v_0\|_{L^2(\R^2)}+\bM_{c,x}(T)+\bM_1(T)+\bM_2(T))\,.$$
\end{lemma}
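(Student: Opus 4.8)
The plan is to run an $L^2(\R^2)$ energy estimate for $v$ directly on \eqref{eq:v}. Since $\tc(0,\cdot)=\tx(0,\cdot)=0$ and $\psi_{2,L}=0$, one has $v(0,\cdot)=v_0$; using the well-posedness of \eqref{eq:KPII} on the soliton background together with Lemma~\ref{lem:diff}, and mollifying in $z$ to justify the integrations by parts, one gets
\begin{equation*}
\tfrac12\|v(t)\|_{L^2}^2=\tfrac12\|v_0\|_{L^2}^2+\int_0^t\Bigl(\langle\mL_c v,v\rangle+\langle\ell,v\rangle+\langle\pd_z(N_1+N_2),v\rangle+\langle N_3,v\rangle\Bigr)\,ds\,.
\end{equation*}
The $L^2$-pairing annihilates most of the right-hand side: the operator $\mL_c=-\pd_z(\pd_z^2-2c+6\varphi_c)-3\pd_z^{-1}\pd_y^2$ is skew-adjoint on $L^2(\R^2)$ apart from the potential term, so $\langle\mL_c v,v\rangle=-3\int\varphi_c' v^2$; the cubic term gives $\langle\pd_z(-3v^2),v\rangle=0$; the $\{x_t-2c-3(x_y)^2\}v$-part of $N_2$ contributes $0$ because its coefficient does not depend on $z$; and $\langle N_3,v\rangle=0$ after integrating by parts in $y$. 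Hence the identity collapses to
\begin{equation*}
\tfrac12\|v(t)\|_{L^2}^2=\tfrac12\|v_0\|_{L^2}^2+\int_0^t\Bigl(-3\!\int_{\R^2}\!\varphi_c' v^2+3\!\int_{\R^2}\!\tpsi_c' v^2+\langle\ell,v\rangle\Bigr)\,ds\,,
\end{equation*}
and the whole task is to show the three remaining integrands are integrable in $s$ with the right bound.

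For the quadratic terms I would split $v=v_1+v_2$. Since $\a\in(0,1)$ and $\tc$ is small, $\varphi_c'$ and $|\varphi_c'|^{1/2}$ are bounded, uniformly in $z$, both by $e^{2\a z}$, $e^{\a z}$ and by $e^{-\a|z|}$, $e^{-\a|z|/2}$; therefore $|\int\varphi_c' v_2^2|\lesssim\|v_2\|_X^2$, $|\int\varphi_c' v_1v_2|\lesssim\|v_1\|_{W(t)}\|v_2\|_X$ and $|\int\varphi_c' v_1^2|\lesssim\|v_1\|_{W(t)}^2$, whose time-integrals are $\lesssim\bM_2(T)^2$, $\bM_1(T)\bM_2(T)$ and $\bM_1(T)^2$ respectively (using $\|\mathcal E(v_i)^{1/2}\|_{L^2(0,T;\cdot)}\le\bM_i(T)$). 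The auxiliary-bump term $\int\tpsi_c' v^2$ is localized near $z=-3t-L$: its $v_1$-part is $\lesssim\|\tc(t)\|_{L^\infty}\|v_1\|_{W(t)}^2$ — this is precisely why $\tpsi$ was adjusted in \S3 so that $W(t)$ carries the weight $e^{-\a|z+3t+L|}$ — while its remaining part carries the small factor $\sup_t\|\tc(t)\|_Y\le\bM_{c,x}(T)$ and is controlled, after time-integration and using the decomposition \eqref{eq:diff-tv} of $v_2$ behind the soliton together with the virial bound for $v_1$, by $\bM_{c,x}(T)\bigl(\bM_1(T)^2+\bM_2(T)^2+\bM_v(T)^2\bigr)$, the last of which is later absorbed.

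The main obstacle is the source term $\langle\ell,v\rangle$. The coefficients multiplying the $z$-profiles of $\ell=\ell_1+\ell_2$ — $c_t-6c_yx_y$, $x_t-2c-3(x_y)^2$, $c_{yy}$, $x_{yy}$, $(c_y)^2$ — have $L^2(0,T;L^2_y)$-norm $\lesssim\bM_{c,x}(T)+\bM_1(T)+\bM_2(T)^2$ by Lemma~\ref{lem:cx_t-bound} and the definition of $\bM_{c,x}(T)$; paired against $v=v_1+v_2$, the exponentially localized profiles $\varphi_c,\varphi_c',\pd_c\varphi_c$ (and the profiles built from $\psi(z+3t+L)$, localized near $z=-3t-L$) are absorbed after Cauchy--Schwarz by $\|v_1\|_{W(t)}$ — which sees both $z\approx0$ and $z\approx-3t-L$ — and by $\|v_2\|_X$, giving time-integrals $\lesssim(\bM_{c,x}(T)+\bM_1(T)+\bM_2(T)^2)(\bM_1(T)+\bM_2(T))$ together with an $e^{-\a L}$-small remainder. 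The delicate point is that the profiles $\int_z^\infty\pd_c\varphi_c$ (in $\ell_{13}$) and $\int_z^\infty\pd_c\tpsi_c$ (in $\ell_{23}$) do not decay as $z\to-\infty$; here one exploits the cancellation built into \eqref{eq:0mean}, namely $\int_\R\pd_c(\varphi_c-\tpsi_c)\,dz=0$, so that the $c_{yy}$- and $(c_y)^2$-contributions of $\ell_{13}$ and $\ell_{23}$ combine into profiles tending to $0$ at both ends of $z$; pairing with $v$ and integrating by parts in $z$ turns these into $\pd_c(\varphi_c-\tpsi_c)$ — exponentially localized — paired against $\pd_z^{-1}v$, which lies in $C([0,T];L^2)$ because $\int_\R v(t,\cdot,y)\,dz$ is independent of $t$ (the zero-$x$-mean property of the KP-II flow and the normalization of $\psi_{c,L}$). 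Collecting everything yields $\|v(t)\|_{L^2}^2\lesssim\|v_0\|_{L^2}^2+(\bM_{c,x}(T)+\bM_1(T)+\bM_2(T))^2+\bM_{c,x}(T)\bM_v(T)^2$, and absorbing the last term, legitimate since $\bM_{c,x}(T)$ is small, gives the lemma.
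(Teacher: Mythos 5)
Your proposal takes a genuinely different route from the paper, and it contains a gap that I don't see how to close within your framework.

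The paper does not run the energy identity on $\|v(t)\|_{L^2}^2$ directly. It works with the modified functional
$Q(t,v)=\int_{\R^2}\bigl(v^2-2\tpsi_{c}v\bigr)\,dzdy$
of Lemma~\ref{lem:L2conserve}, and that correction is essential. When one computes $\frac{d}{dt}Q$, the term $-2\int\tpsi_c\,\pd_z N_1\,$ produces $-6\int\tpsi_c'v^2$, which cancels the $+6\int\tpsi_c'v^2$ coming from $2\langle\pd_z N_2,v\rangle$. As a result, the identity of Lemma~\ref{lem:L2conserve} contains no $\int\tpsi_c'v^2$ term at all: what survives are $\int(\ell_{11}+\ell_{12}+6\varphi_c'\tpsi_c)v$, $\int(\pd_z^{-1}\pd_y v)c_y\pd_c\varphi_c$, $\int\varphi_c'v^2$, and the separate source $\int\ell\,\psi_{c,L}$, all of which are paired against profiles controlled by $W(t)$ or by $X$.

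In your direct $\|v\|^2$ identity, the term $3\int\tpsi_c'v^2$ is therefore genuinely present, and it is the $v_2^2$ and $v_1v_2$ contributions near $z\approx -3t-L$ that cannot be handled. You correctly observe that the $v_1^2$-part is fine: $\bigl|\int\tpsi_c'v_1^2\bigr|\lesssim\|\tc(t)\|_{L^\infty}\|v_1(t)\|_{W(t)}^2$, and $\|v_1\|_{W(t)}^2\in L^1(0,T)$ by the definition of $\bM_1$. But for the remaining pieces there is no analogous control: $\bM_2(T)$ measures $v_2$ only in the $X$-weight $e^{2\a z}$, which is exponentially small near $z=-3t-L$, so the local $L^2$-mass of $v_2$ on the support of $\tpsi_c'$ is controlled only through $\|v_2(t)\|_{L^2(\R^2)}\lesssim\bM_v(T)+\bM_1(T)$, a bound that is merely uniform in time. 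Since $\|\tc(t)\|_Y$ is not in $L^1(0,T)$ either (only $c_y,x_{yy}\in L^2(0,T;Y)$ appear in $\bM_{c,x}$), the time integral $\int_0^T\|\tc(t)\|_{L^\infty}\int_{|z+3t+L|\le1}v_2^2\,dzdy\,dt$ picks up a factor of $T$: your claimed bound $\bM_{c,x}(T)\bigl(\bM_1^2+\bM_2^2+\bM_v^2\bigr)$ does not hold. The same defect affects the cross-term. Decomposition \eqref{eq:diff-tv} does not rescue this, since $\tv=w$ is controlled near $z\approx -3t-L$ only in $L^2(\R^2)$ pointwise in time (Lemma~\ref{lem:diff} gives $w\in C([0,\infty);X)$ and $\mathcal{E}(w)^{1/2}\in L^2(0,T;X)$, neither of which sees the region $z\to-\infty$). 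This is precisely the term the $\tpsi_c v$-correction in $Q(t,v)$ is designed to annihilate.

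A secondary, fixable issue: for the $\ell_{13}+\ell_{23}$ contribution you propose to integrate by parts in $z$ and pair $\pd_c(\varphi_c-\tpsi_c)$ against $\pd_z^{-1}v$. The space you place $\pd_z^{-1}v$ in does not carry the square-integrability in time that the estimate needs. The correct move, as in Lemma~\ref{lem:L2conserve}, is to first integrate by parts in $y$ (turning $c_{yy}$ into $c_y$) and only then in $z$, so that the pairing lands on $\pd_z^{-1}\pd_y v$; this quantity is controlled by $\|\pd_z^{-1}\pd_y v_1\|_{W(t)}+\|\pd_z^{-1}\pd_y v_2\|_X$, both of which are in $L^2(0,T)$ by the definitions of $\bM_1$ and $\bM_2$.
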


To prove Lemma~\ref{lem:M4-bound}, we will show a variant of the
$L^2$ conservation law on $v$ as in \cite[Lemma~8.1]{Mi}.
\begin{lemma}
  \label{lem:L2conserve}
Let $\a\in(0,2)$ and $T>0$. Suppose $v_1\in C([0,T];L^2(\R^2))$,
$v_2\in C([0,T];X\cap L^2(\R^2))$ and that $v_2(t)$, $c(t)$ and $x(t)$
satisfy \eqref{eq:orth}, \eqref{eq:C1cx} and \eqref{eq:bd-vc}.
Then
$$Q(t,v):=\int_{\R^2}\left\{v(t,z,y)^2
-2\psi_{c(t,y),L}(z+3t)v(t,z,y)\right\}\,dzdy$$
satisfies for $t\in[0,T]$,
\begin{multline*}
Q(t,v)= Q(0,v)+
2\int_0^t\int_{\R^2}\left(\ell_{11}+\ell_{12}
+6\varphi_{c(s,y)}'(z)\tpsi_{c(s,y)}(z)\right)v(s,z,y)\,dzdyds
\\
-6\int_0^t\int_{\R^2}(\pd_z^{-1}\pd_yv)(s,z,y)c_y(s,y)\pd_c\varphi_{c(t,y)}(z)\,dzdy
\\-6\int_0^t\int_{\R^2}\varphi_{c(s,y)}'(z)v(s,z,y)^2\,dzdyds
 -2\int_0^t\int_{\R^2} \ell\psi_{c(s,y),L}(z+3s)\,dzdyds\,.
\end{multline*}
\end{lemma}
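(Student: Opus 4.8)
The plan is to differentiate $Q(t,v)$ in $t$, substitute the evolution equation \eqref{eq:v} for $\pd_tv$, integrate by parts in $z$ and $y$ so that every derivative hitting $v$ is transferred onto the smooth, exponentially localized coefficients, and then collect terms; nearly everything cancels. Throughout write $\tpsi_c(z):=\psi_{c(t,y),L}(z+3t)$, so that $\pd_t\tpsi_c=c_t\,\pd_c\tpsi_c+3\tpsi_c'$ with $\tpsi_c'=\pd_z\tpsi_c$.

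\emph{Reduction to smooth data.} By Lemmas~\ref{lem:diff}, \ref{lem:diff-conti}, Remark~\ref{rem:decomp-continuity} and the well-posedness of \eqref{eq:KPII}, it is enough to prove the identity for $v_0\in H^3(\R^2)\cap\pd_xH^2(\R^2)$ and then pass to the limit: $v_1$, $v_2$, $\pd_z^{-1}\pd_yv$, $c$, $c_y$, $x_y$, $x_{yy}$ all depend continuously on $v_0$ in the norms that occur below, and every coefficient paired against $v$ or $\pd_z^{-1}\pd_yv$ (namely $\varphi_c$, $\varphi_c'$, $\pd_c\varphi_c$, $\tpsi_c$, $\ell$ and their $z$-primitives) is exponentially localized in $z$, so each pairing is controlled by $\|v_1\|_{L^2}+\|v_2\|_X+\|\pd_z^{-1}\pd_yv_2\|_{L^2_{loc}}$, which is finite by Lemma~\ref{lem:diff} and the discussion preceding \eqref{eq:orth_t}. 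For such data $Q(t,v)$ is $C^1$ in $t$, and $v$, $\pd_zv$, $\pd_z^{-1}\pd_yv$ decay fast enough that every boundary term in the integrations by parts below vanishes.

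\emph{Differentiation and the main computations.} We have
\[
\frac{d}{dt}Q(t,v)=2\int_{\R^2}v\,\pd_tv\,dzdy-2\int_{\R^2}(\pd_t\tpsi_c)\,v\,dzdy-2\int_{\R^2}\tpsi_c\,\pd_tv\,dzdy,
\]
and we insert $\pd_tv=\mL_cv+\ell+\pd_z(N_1+N_2)+N_3$ into the first and third integrals. The ingredients are: (i) $2\int v\,\mL_cv\,dzdy=-6\int\varphi_c'(z)v^2\,dzdy$, by integrating the $\pd_z$-part of $\mL_c$ by parts and using that $\la v,\pd_z^{-1}\pd_y^2v\ra=0$; (ii) $2\int vN_3\,dzdy=0$ and $2\int v\,\pd_z(N_1+N_2)\,dzdy=6\int\tpsi_c'v^2\,dzdy$; (iii) $2\int v\,\ell_{13}\,dzdy=-6\int(\pd_z^{-1}\pd_yv)\,c_y\,\pd_c\varphi_c\,dzdy$, obtained by writing $\ell_{13}=3\int_z^\infty\pd_y^2\varphi_{c(t,y)}(z_1)\,dz_1$ and integrating by parts in $y$ and in $z$; (iv) the term $6\int\tpsi_c\,\pd_z^{-1}\pd_y^2v\,dzdy$ generated by $-2\int\tpsi_c\mL_cv$ equals $-2\int v\,\ell_{23}\,dzdy$ and so cancels the corresponding piece of $2\int v\,\ell$.

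\emph{Collecting the $\psi$-terms and conclusion.} After (i)--(iv), what is left beyond $2\int v(\ell_{11}+\ell_{12})$, $-6\int\varphi_c'v^2$, $-6\int(\pd_z^{-1}\pd_yv)c_y\pd_c\varphi_c$ and $-2\int\tpsi_c\,\ell$ (which is kept intact) is a collection of terms linear in $v$ and carrying a single factor $\tpsi_c$, $\pd_c\tpsi_c$, $\tpsi_c'$, $\tpsi_c'''$ or $\tpsi_c\tpsi_c'$, coming from $2\int v\,\ell_{21}$, $2\int v\,\ell_{22}$, the remaining part of $-2\int\tpsi_c\mL_cv$, $-2\int\tpsi_c\,\pd_z(N_1+N_2)$, $-2\int\tpsi_cN_3$ and $-2\int(\pd_t\tpsi_c)v$; using $\pd_t\tpsi_c=c_t\pd_c\tpsi_c+3\tpsi_c'$, the soliton equation \eqref{eq:B}, and elementary integration by parts, these cancel in groups (the coefficient of $\pd_c\tpsi_c$ reduces to $2(c_t-6c_yx_y)+12c_yx_y-2c_t=0$; the $x_{yy}\tpsi_c$-, $\tpsi_c'$-, $\tpsi_c'''$- and $\tpsi_c\tpsi_c'$-terms pair off; and the $\tpsi_c$-weighted quadratic terms cancel), leaving exactly $12\int\varphi_c'\tpsi_cv\,dzdy$ from the $6\pd_z(\varphi_c\tpsi_c)$ term of $\ell_{22}$ together with the $6\varphi_c$-term of $\mL_c$. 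Integrating in $t$ over $[0,t]$ then yields the stated formula. The substantive work is entirely this last step---some fifteen terms must be organized by the type of their $\psi$-factor and shown to pair off, with the mild subtlety that $v_1$ is only $L^2$ while $v_2\in X$, so convergence of the integrals and vanishing of boundary terms have to be checked, which is routine given Lemma~\ref{lem:diff} and the localization of every coefficient against which $v_1$ is integrated; the modification of $\tpsi$ relative to \cite{Mi} is precisely what makes the $\tpsi_c$-weighted quadratic terms cancel, as will be needed to feed $\int\tpsi_cv_1^2$ into the virial argument.
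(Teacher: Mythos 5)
Your proof is correct, and I checked the key cancellations: steps (i)--(iv) hold, and the term count in the final paragraph (coefficient of $\pd_c\tpsi_c$ summing to $2c_t-12c_yx_y+12c_yx_y-2c_t=0$; coefficient of $\tpsi_c'$ against $v$ summing to $-2(x_t-4-3(x_y)^2)-2+4c+2(x_t-2c-3(x_y)^2)-6=0$; the $\tpsi_c'''$, $\tpsi_c\tpsi_c'$, $x_{yy}\tpsi_c$, $\ell_{23}$, and $\tpsi_c'v^2$ pairs all cancelling; and the $\varphi_c\tpsi_c'$ cross-terms cancelling so that only $12\int\varphi_c'\tpsi_cv$ survives) is right and reproduces the stated identity.

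You take a genuinely different route from the paper: the paper's own proof is a two-line reduction to \cite[Lemma~8.2]{Mi}, where this energy identity (or its ancestor) was established by essentially the computation you carry out; the only new content in the present paper's proof is the integration-by-parts identity relating $\int v\,\ell_{13}^*$ to $\int(\pd_z^{-1}\pd_yv)\,c_y\pd_c\varphi_c$, which reconciles the form of the $c_y$-term used here with the one in \cite{Mi}, plus an appeal to the continuity results of Section~\ref{sec:decomp} for the limiting argument. Your version redoes the computation from scratch and so is self-contained, at the cost of more bookkeeping. Both are valid; yours has the advantage of exposing exactly which $\psi$-weighted terms cancel and why. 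One small inaccuracy in your commentary: the modification of $\tpsi$ (translating by $3t$) is not what makes the $\tpsi_c$-weighted quadratic terms $\pm 6\int\tpsi_c'v^2$ cancel --- they cancel regardless, precisely because the same $+6\tpsi_cv$ term sits inside $N_2$ and is paired once against $v$ and once against $-\tpsi_c$. The time-translation is introduced, as the paper says right after the derivation of \eqref{eq:v}, so that a virial argument can later be run on $\int\tpsi_cv_1^2$; its only effect on the present computation is the extra $3\tpsi_c'$ in $\pd_t\tpsi_c$, which you correctly absorb into the $\tpsi_c'$ cancellation. Also note a minor overstep in the "reduction to smooth data" paragraph: you assert $\pd_z^{-1}\pd_yv$ is controlled, but for $v_1\in L^2$ this is only an $L^2_{loc}$-in-$z$ statement; since the coefficient it is paired against ($c_y\pd_c\varphi_c$, or $\tpsi_c$ in step (iv)) is compactly or exponentially localized in $z$, this is enough, but it is worth saying so explicitly, as you do only in passing.
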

\begin{proof}
Let
$$
\ell_{13}^*=c_{yy}(s,y)\int_{-\infty}^z\pd_c\varphi_{c(s,y)}(z_1)\,dz_1
+c_y(s,y)^2\int_{-\infty}^z \pd_c^2\varphi_{c(s,y)}(z_1)\,dz_1\,.$$
If $v_0\in X$ in addition, then
$$\int_{\R^2} v(t,z,y)\ell_{13}^*\,dzdy
=\int_{\R^2}(\pd_z^{-1}\pd_yv)(t,z,y) c_y(t,y)\varphi_{c(t,y)}\,dzdy\,.$$
Thus we can conclude Lemma~\ref{lem:L2conserve} from \cite[Lemma~8.2]{Mi}
by a limiting argument.
\end{proof}
\par

Now we are in position to prove Lemma~\ref{lem:M4-bound}.
\begin{proof}[Proof of Lemma~\ref{lem:M4-bound}]
Remark~\ref{rem:decomp} and Proposition~\ref{prop:continuation} tell us
that we can apply Lemma~\ref{lem:L2conserve} for $t\in[0,T]$
if $\bM_{c,x}(T)$ and $\bM_2(T)$ are sufficiently small.
\par
Since we have for $j$, $k\ge0$ and $z\in\R$,
\begin{equation}
  \label{eq:exp-localized}
  \pd_z^j\pd_c^k\varphi_c(z)\lesssim e^{-2\a|z|}\,,\quad
\int_{-\infty}^z\pd_c^j\varphi_c(z_1)dz_1\lesssim \min(1,e^{2\a z})\,,
\end{equation}
it follows that
\begin{equation}
  \label{eq:l2pf1}
\begin{split}
& \sup_{[0,T]}\left|\int_0^t\int_{\R^2}
(\ell_{11}+\ell_{12})v\,dzdyds\right|
\\  \lesssim & \bigl(\|c_t-6c_yx_y\|_{L^2((0,T)\times \R)}
+\|x_t-2c-3(x_y)^2\|_{L^2((0,T)\times \R)}
\\ & +\|x_{yy}\|_{L^2((0,T)\times \R)}\bigr)
(\|v_1\|_{L^2(0,T;W(t))}+\|v_2\|_{L^2(0,T;X)})\,,
\end{split}  
\end{equation}
\begin{equation}
  \label{eq:l2pf2}
  \begin{multlined}
\sup_{[0,T]}\left|\int_0^t\int_{\R^2}
c_y(s,y)\pd_c\varphi_{c(s,y)}(\pd_z^{-1}\pd_yv)(s,z,y)\,dzdyds\right|
\\  \lesssim \|c_y\|_{L^2((0,T)\times \R)}
(\|\pd_z^{-1}\pd_yv_1\|_{L^2(0,T;W(t))}
+\|\pd_z^{-1}\pd_yv_2\|_{L^2(0,T;X)})\,,   
  \end{multlined}
\end{equation}
\begin{equation}
  \label{eq:l2pf3}
\sup_{[0,T]}\left|\int_0^t\int_{\R^2}
 \varphi_{c(s,y)}'(z)v^2(s,z,y)\,dzdyds\right| 
\lesssim (\|v_1\|_{L^2(0,T;W(t))}+\|v_2\|_{L^2(0,T;X)})^2\,.
\end{equation}
In view of the definition of $\tpsi$,
\begin{equation}
\label{eq:psinorm}
\begin{split}
& \|\tpsi_{c(t,y)}\|_X \lesssim \|\tc\|_Ye^{-\a(3t+L)}\,,\\
& \|\tpsi_{c(t,y)}\|_{L^2(\R^2)}=2\sqrt{2}\|\sqrt{c}-\sqrt{2}\|_{L^2(\R)}
\|\psi\|_{L^2(\R)}\lesssim\|\tc\|_Y\,.  
\end{split}
\end{equation}
By \eqref{eq:exp-localized} and \eqref{eq:psinorm},
\begin{equation}
  \label{eq:l2pf4}
\begin{split}
& \sup_{[0,T]}\left|\int_0^t\int_{\R^2}
\varphi_{c(s,y)}'(z)\tpsi_{c(s,y)}(z)v(s,z,y)\,dzdyds\right|
\\ \lesssim & \|\tpsi_{c(t,y)}\|_{L^2(0,T;X)}
\|e^{-\a|z|}v(t)\|_{L^2(0,T;L^2(\R^2))}
\\ \lesssim & e^{-\a L}\sup_{t\in[0,T]}\|\tc(t)\|_Y
 (\|v_1\|_{L^2(0,T;W(t))}+\|v_2\|_{L^2(0,T;X)})\,,
\end{split}  
\end{equation}
\begin{equation}
  \label{eq:l2pf5}
\begin{split}
& \sup_{[0,T]}\left|\int_0^t\int_{\R^2}
(\ell_{11}+\ell_{12})\tpsi_{c(s,y)}(z)\,dzdyds\right|
\\ \le & \sup_{t\in[0,T]}\|e^{-\a z}(\ell_{11}+\ell_{12})\|_{L^2_{yz}}
\|\tpsi_{c(t,y)}\|_{L^1(0,T;X)}
\\ \lesssim & e^{-\a L}
\sup_{t\in[0,T]}\left\{\|\tc\|_Y
\left(\|c_t-6c_yx_y\|_{L^2}+\|x_t-2c-3(x_y)^2\|_{L^2}+\|x_{yy}\|_{L^2}\right)
\right\}\,.
\end{split}  
\end{equation}
By integration by parts, we have
\begin{align*}
& \int_{\R^2}(\ell_{21}+\ell_{22})\tpsi_{c(t,y)}(z)\,dzdy
\\=&\int_{\R^2}\left(c_t(t,y)\tpsi_{c(t,y)}(z)\pd_c\tpsi_{c(t,y)}(z)
+3\varphi_{c(t,y)}'(z)\tpsi_{c(t,y)}^2(z)\right)\,dzdy\,,  
\end{align*}
and it follows that 
\begin{equation}
\label{eq:l2pf6}
\begin{split}
& \sup_{t\in[0,T]}\left|
\int_0^t\int_{\R^2}(\ell_{21}+\ell_{22})\tpsi_{c(s,y)}(s,z,y)dzdyds
-\frac12\left[\int_{\R^2}\tpsi_{c(s,y)}^2(z)\,dzdy\right]_{s=0}^t\right|
\\ \le & 3\left\|\varphi_{c(t,y)}'(z)\tpsi_{c(t,y)}(z)\right\|_{L^1(0,T;L^1(\R^2))}
 \lesssim  e^{-\a L}\sup_{t\in[0,T]}\|\tc(t)\|_Y^2\,.
\end{split}
\end{equation}
By integration by parts,
\begin{align*}
& \int_{\R^2}(\ell_{13}+\ell_{23})\tpsi_{c(t,y)}(z)\,dzdy
\\=& -3\int_{\R^2} c_y^2(t,y)\pd_c\tpsi_{c(t,y)}(z)
\left\{\int_z^\infty\pd_c\varphi_{c(t,y)}(z_1)
-\pd_c\tpsi_{c(t,y)}(z_1)\,dz_1\right\}\,dzdy\,.
\end{align*}
Since $\int_z^\infty(\pd_c\varphi_c-\pd_c\tpsi_c)$ and
$\|\pd_c\tpsi_c\|_{L^1(\R)}$ are uniformly bounded for $c\in[1/2,3/2]$,
\begin{equation}
\label{eq:l2pf7}
\sup_{t\in[0,T]}
\left|\int_0^t\int_{\R^2}(\ell_{13}+\ell_{23})\tpsi_{c(s,y)}\,dzdyds\right|
\lesssim \|c_y\|_{L^2(0,T;Y)}^2\,.
\end{equation}
\par
Combining \eqref{eq:l2pf1}--\eqref{eq:l2pf3} and
\eqref{eq:l2pf4}--\eqref{eq:l2pf7} with Lemmas~\ref{lem:cx_t-bound} and
\ref{lem:L2conserve},
we see that for $t\in(0,T]$,
\begin{equation}
  \label{eq:cons1}
\begin{split}
& \Bigl[Q(s,v)+8\|\psi\|_{L^2}^2\|\sqrt{c(s)}-\sqrt{2}\|_{L^2(\R)}^2
\Bigr]_{s=0}^{s=t}
\\ \lesssim & (\bM_{c,x}(T)+\bM_1(T)+\bM_2(T))^2\,.
\end{split}  
\end{equation}
Since $c(0,\cdot)=2$ and 
$$Q(t,v)=\|v(t)\|_{L^2(\R^2)}^2+O(\|\tc(t)\|_Y\|v(t)\|_{L^2(\R^2)})\,,$$
Lemma~\ref{lem:M4-bound} follows immediately from \eqref{eq:cons1}.
Thus we complete the proof.
\end{proof}
\bigskip

\section{Estimates for $v_1$}
\label{sec:v1}
In this section, we will give upper bounds of $\bM_1(\infty)$ and
$\bM_1'(\infty)$.
\begin{lemma}
  \label{lem:v1-a}
There exist positive constants $C$ and $\delta_5$ such that if
$\|v_0\|_{L^2}<\delta_5$, then $\bM_1(\infty) \le C\|v_0\|_{L^2}$
\end{lemma}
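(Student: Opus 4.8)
The plan is to bound the two terms in the definition of $\bM_1(\infty)$ separately. Since $v_1(t,z,y)=\tv_1(t,z+x(t,y),y)$ is a $y$-dependent translate of $\tv_1$ in its first variable, $\|v_1(t)\|_{L^2(\R^2)}=\|\tv_1(t)\|_{L^2(\R^2)}$, and because $\tv_1$ solves \eqref{eq:tv1} the $L^2$-norm is conserved (see \cite{MST,Bourgain}); hence $\sup_{t\ge0}\|v_1(t)\|_{L^2}=\|v_0\|_{L^2}$. It remains to show $\|\mathcal{E}(v_1)^{1/2}\|_{L^2(0,\infty;W(t))}\lesssim\|v_0\|_{L^2}$.

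For this I would use a virial (monotonicity) identity of the type of de Bouard--Martel \cite{dBM}, applied to \eqref{eq:v1} in the variable $z$. Let $\Theta(t,z)=\Theta_1(z)+\Theta_2(z+3t+L)$ with $\Theta_1,\Theta_2$ smooth, bounded, increasing and $\Theta_1'(z)=\sech(\a z)$, $\Theta_2'(r)=\sech(2\a r)$; then $\pd_t\Theta=3\Theta_2'\ge0$, $\pd_z\Theta=\Theta'>0$, $|\pd_z^3\Theta|\lesssim\a^2\Theta'$, and $(e^{-\a|z|/2}+e^{-\a|z+3t+L|})^2\lesssim\Theta'$. Multiplying \eqref{eq:v1} by $2\Theta(t,z)v_1$, integrating over $\R^2$, and using that $c$, $x_y$, $x_{yy}$ depend only on $(t,y)$ while $\Theta$ depends only on $(t,z)$, integration by parts yields
\begin{equation*}
\frac{d}{dt}\int_{\R^2}\Theta\,v_1^2\,dzdy
+\int_{\R^2}\Theta'\Bigl[\kappa\,v_1^2+3(\pd_zv_1)^2+3(\pd_z^{-1}\pd_yv_1)^2\Bigr]\,dzdy
=4\int_{\R^2}\Theta'v_1^3\,dzdy,
\end{equation*}
where $\kappa(t,z,y)=2c(t,y)+\bigl(x_t-2c-3(x_y)^2\bigr)(t,y)-3\Theta_2'/\Theta'-\pd_z^3\Theta/\Theta'$. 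Two cancellations make this clean: the contribution of $N_{1,3}=6\pd_y(x_yv_1)-3x_{yy}v_1$ vanishes identically because $\Theta$ is $y$-independent, and $N_{1,2}$ only adds the term $x_t-2c-3(x_y)^2$ to $\kappa$. Since $x_t-2c-3(x_y)^2$ is band limited in $y$ (to $[-2\eta_0,2\eta_0]$) and small in $L^\infty_tL^2_y$ by Lemma~\ref{lem:cx_t-bound}, it is small in $L^\infty_{t,y}$; together with $\|\tc\|_{L^\infty}$ small, $\a$ small, and $\Theta_2'/\Theta'\in[0,1]$ this gives $\kappa\ge\tfrac12$, so $\Theta'[\kappa v_1^2+3(\pd_zv_1)^2+3(\pd_z^{-1}\pd_yv_1)^2]\gtrsim\Theta'\mathcal{E}(v_1)$.

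Integrating over $[0,T]$, discarding $\int\Theta\,v_1(T)^2\ge0$, bounding the boundary term by $\int\Theta(0,z)v_1(0)^2\lesssim\|v_0\|_{L^2}^2$ (as $\Theta$ is bounded), and estimating the cubic term by the weighted anisotropic Sobolev inequality of Claim~\ref{cl:aniso} (with $p_n'$ replaced by $\Theta'$),
\begin{equation*}
\Bigl|\int_{\R^2}\Theta'v_1^3\,dzdy\Bigr|
\lesssim\|v_1(t)\|_{L^2}\Bigl(\int_{\R^2}\Theta'v_1^2\,dzdy\Bigr)^{1/4}
\Bigl(\int_{\R^2}\Theta'\mathcal{E}(v_1)\,dzdy\Bigr)^{3/4}
\lesssim\|v_0\|_{L^2}\int_{\R^2}\Theta'\mathcal{E}(v_1)\,dzdy,
\end{equation*}
one absorbs $4\int_0^T\!\!\int\Theta'v_1^3$ into the left-hand side when $\|v_0\|_{L^2}$ is small and obtains
\begin{equation*}
\int_0^T\!\!\int_{\R^2}\Theta'(t,z)\,\mathcal{E}(v_1)(t,z,y)\,dzdydt\lesssim\|v_0\|_{L^2}^2
\end{equation*}
uniformly in $T$. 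Since $\Theta'(t,z)\gtrsim(e^{-\a|z|/2}+e^{-\a|z+3t+L|})^2$, the left-hand side dominates $\|\mathcal{E}(v_1)^{1/2}\|_{L^2(0,T;W(t))}^2$; letting $T\to\infty$ and combining with the first paragraph proves the lemma.

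Two points need care. First, the formal computation must be justified by first proving it for $v_0\in H^\infty(\R^2)$ with enough spatial decay (so every integration by parts is legitimate and $\int\Theta'(\pd_z^{-1}\pd_yv_1)^2<\infty$), and then passing to the limit via Lemma~\ref{lem:diff}, the well-posedness theory for \eqref{eq:tv1}, and the uniformity of the estimate along the approximating sequence. Second---and this is the main obstacle---one must make sure the coercive term genuinely controls the cubic term and the variable-coefficient errors \emph{globally} in time; this is exactly where the smallness of $\|v_0\|_{L^2}$ and (through Lemma~\ref{lem:cx_t-bound}) of $\bM_{c,x}(T)$ enters, together with the fact that the transport speed $2c\approx4$ exceeds the drift speed $3$ of the second weight, which is what makes $\kappa$ stay positive.
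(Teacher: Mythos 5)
Your proof is correct, but it takes a genuinely different route from the paper's, and the difference matters for the logical structure of the argument.

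The paper's proof is a one-liner: it applies the virial estimate of Lemma~\ref{lem:virial-0} to $\tv_1$, the solution of the \emph{free} KP-II equation \eqref{eq:tv1}, with a $y$-independent translating weight $\chi_{+,\eps}(x-\tilde x_1(t)-x_0)$, and then combines this with $L^2$-conservation. This argument is unconditional: it needs only $\|v_0\|_{L^2}$ small and makes no reference to the modulation parameters. You instead derive a new virial identity directly for $v_1(t,z,y)$ solving \eqref{eq:v1} in the moving frame, with the weight $\Theta(t,z)=\Theta_1(z)+\Theta_2(z+3t+L)$ tailored to the $W(t)$-norm. Your computation is right (in particular, the exact cancellation of the $N_{1,3}$ contribution because $\Theta$ is $y$-independent is a clean observation, and the sign structure $2c-3-3(x_y)^2-O(\a^2)>0$ is exactly what makes $\kappa$ coercive). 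What this buys you is a proof that produces the $W(t)$-weighted bound directly, without the tacit transfer from the $(t,x)$-weight to the $(t,z)$-weight that the paper's one-liner leaves implicit.

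What it costs you is a circularity the paper's route avoids. To keep $\kappa\ge\tfrac12$ you need $\|x_t-2c-3(x_y)^2\|_{L^\infty_{t,y}}$ and $\|x_y\|_{L^\infty_{t,y}}$ small, and you obtain these from Lemma~\ref{lem:cx_t-bound}, whose hypothesis is $\bM_{c,x}(T)+\bM_1(T)+\bM_2(T)+\eta_0+e^{-\a L}<\delta_3$, which includes smallness of $\bM_1(T)$ --- the very quantity you are trying to bound. As written, therefore, your argument proves a \emph{conditional} version of Lemma~\ref{lem:v1-a}: $\bM_1(T)\lesssim\|v_0\|_{L^2}$ on any interval $[0,T]$ on which the smallness hypothesis already holds. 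That conditional form still closes the bootstrap in Section~\ref{sec:thm1}, where the a priori bound $\bM_{tot}(T_1)\le\delta_*$ is in force, so the overall theorem is unaffected; but you should restate the lemma accordingly and note explicitly that the smallness of the modulation quantities is being assumed, rather than presenting it as the unconditional estimate the paper states. In the paper's organization the point of Lemma~\ref{lem:v1-a} (and Lemma~\ref{lem:v1-b}) is precisely that $\bM_1$ and $\bM_1'$ are controlled by the initial data alone, independently of the $\bM_{c,x},\bM_2,\bM_v$-bootstrap; your version reinstates that coupling.
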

\begin{lemma}
  \label{lem:v1-b}
There exist positive constants $C$ and $\delta_5'$ such that if
$\||D_x|^{-1/2}v_0\|_{L^2}+
\linebreak \||D_x|^{1/2}v_0\|_{L^2}+\||D_x|^{-1/2}|D_y|^{1/2}v_0\|_{L^2}
<\delta_5'$, then
$$\bM_1'(\infty) \le  C(
\||D_x|^{-1/2}v_0\|_{L^2}+\||D_x|^{1/2}v_0\|_{L^2}+\||D_x|^{-1/2}|D_y|^{1/2}v_0\|_{L^2}
)\,.$$
\end{lemma}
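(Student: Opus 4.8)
The plan is to invoke the global well-posedness and scattering theory of Hadac--Herr--Koch~\cite{HHK} in the scaling-critical space $\dot H^{-1/2,0}(\R^2)=|D_x|^{1/2}L^2(\R^2)$, to propagate the two additional regularities $|D_x|^{1/2}v_0\in L^2$ and $|D_x|^{-1/2}|D_y|^{1/2}v_0\in L^2$ along the flow, and then to estimate $\|\tv_1(t)\|_{L^3}$ at each fixed time by an anisotropic Littlewood--Paley embedding. Recall that $\tv_1$ solves the KP-II equation~\eqref{eq:KPII} itself, since~\eqref{eq:tv1} reads $\pd_t\tv_1+\pd_x^3\tv_1+3\pd_x(\tv_1^2)+3\pd_x^{-1}\pd_y^2\tv_1=0$, and that $\||D_x|^{-1/2}v_0\|_{L^2}$ is small by hypothesis; hence \cite{HHK} yields a unique global solution $\tv_1\in C_b([0,\infty);\dot H^{-1/2,0})$ lying in the adapted function space $\dot X^{-1/2}$ of \cite{HHK}, with $\|\tv_1\|_{\dot X^{-1/2}}\lesssim\||D_x|^{-1/2}v_0\|_{L^2}$ and scattering as $t\to\infty$. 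By uniqueness in $L^2$-based spaces this $\tv_1$ coincides with the $L^2(\R^2)$-solution supplied by~\cite{MST}, so the notation is unambiguous.

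First I would establish persistence of regularity in the two extra norms. The multilinear estimate of \cite{HHK} controlling $\pd_x(fg)$ by $\|f\|_{\dot X^{-1/2}}\|g\|_{\dot X^{-1/2}}$ upgrades, for a Fourier multiplier $m(D)$ obeying a Leibniz-type rule, to $\|m(D)\pd_x(fg)\|\lesssim\|m(D)f\|_{\dot X^{-1/2}}\|g\|_{\dot X^{-1/2}}+\|f\|_{\dot X^{-1/2}}\|m(D)g\|_{\dot X^{-1/2}}$ in the relevant nonlinear space; applying this with $m(D)=|D_x|^{1/2}$ and with $m(D)=|D_x|^{-1/2}|D_y|^{1/2}$, and re-running the contraction argument of \cite{HHK}, one gets, provided $\||D_x|^{1/2}v_0\|_{L^2}+\||D_x|^{-1/2}|D_y|^{1/2}v_0\|_{L^2}$ is small enough,
$$\sup_{t\ge0}\left(\||D_x|^{1/2}\tv_1(t)\|_{L^2}+\||D_x|^{-1/2}|D_y|^{1/2}\tv_1(t)\|_{L^2}\right)\lesssim\||D_x|^{1/2}v_0\|_{L^2}+\||D_x|^{-1/2}|D_y|^{1/2}v_0\|_{L^2}.$$
For data that is only ``sufficiently smooth'' one first carries out the iteration for Schwartz data and then passes to the limit, the Lipschitz dependence in $\dot H^{-1/2,0}$ from \cite{HHK} identifying the limit with $\tv_1$.

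The remaining ingredient is the fixed-time embedding
$$\|f\|_{L^3(\R^2)}^2\lesssim\||D_x|^{1/2}f\|_{L^2}^2+\||D_x|^{-1/2}|D_y|^{1/2}f\|_{L^2}^2.$$
To prove it, let $P^x_j$ and $P^y_k$ be the Littlewood--Paley projections onto $\{|\xi|\sim2^j\}$ and $\{|\eta|\sim2^k\}$. Since $3>2$, the Littlewood--Paley square-function inequality followed by the triangle inequality in $L^{3/2}$ gives $\|f\|_{L^3}^2\lesssim\sum_{j,k}\|P^x_jP^y_kf\|_{L^3}^2$, while the anisotropic Bernstein inequality on a frequency box of area $\sim2^{j+k}$ gives $\|P^x_jP^y_kf\|_{L^3}\lesssim2^{(j+k)/6}\|P^x_jP^y_kf\|_{L^2}$. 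Finally $j+k=2j+(k-j)\le3\max(j,k-j)$ for all $j,k\in\Z$, hence $2^{(j+k)/3}\le2^{\max(j,k-j)}\le2^j+2^{k-j}$, and therefore
$$\|f\|_{L^3}^2\lesssim\sum_{j,k\in\Z}2^{(j+k)/3}\|P^x_jP^y_kf\|_{L^2}^2\le\sum_{j,k\in\Z}\left(2^j+2^{k-j}\right)\|P^x_jP^y_kf\|_{L^2}^2=\||D_x|^{1/2}f\|_{L^2}^2+\||D_x|^{-1/2}|D_y|^{1/2}f\|_{L^2}^2.$$

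Putting the pieces together, for every $t\ge0$ one has $\|\tv_1(t)\|_{L^3}^2\lesssim\||D_x|^{1/2}\tv_1(t)\|_{L^2}^2+\||D_x|^{-1/2}|D_y|^{1/2}\tv_1(t)\|_{L^2}^2$, and taking the supremum in $t$ and using the persistence estimate gives $\bM_1'(\infty)=\sup_{t\ge0}\|\tv_1(t)\|_{L^3}\lesssim\||D_x|^{1/2}v_0\|_{L^2}+\||D_x|^{-1/2}|D_y|^{1/2}v_0\|_{L^2}$, which is dominated by the right-hand side of Lemma~\ref{lem:v1-b}. The hard part will be the persistence step: although the $\dot H^{s,0}$ theory is essentially contained in \cite{HHK}, the multiplier $|D_x|^{-1/2}|D_y|^{1/2}$ is not a tensor product of one-dimensional H\"older symbols, so one must verify that the $U^2$--$V^2$ bilinear estimates of \cite{HHK} genuinely tolerate it (a Coifman--Meyer-type argument in that calculus), while all the fixed-time harmonic analysis above is routine.
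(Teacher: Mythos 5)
Your overall strategy coincides with the paper's: obtain time-uniform control of $\||D_x|^{1/2}\tv_1(t)\|_{L^2}$ and $\||D_x|^{-1/2}|D_y|^{1/2}\tv_1(t)\|_{L^2}$ from the Hadac--Herr--Koch contraction in $\dot Y^{-1/2}$, then conclude with the anisotropic embedding $\|u\|_{L^3(\R^2)}\lesssim\||D_x|^{1/2}u\|_{L^2}+\||D_x|^{-1/2}|D_y|^{1/2}u\|_{L^2}$. Your Littlewood--Paley/Bernstein derivation of that embedding is correct and is a perfectly good elementary substitute for the paper's interpolation between $\dot E^0\hookrightarrow L^2$ and $\dot E^1\hookrightarrow L^6$; nothing is lost there.

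The genuine gap is exactly the one you flag at the end and do not close: you propose to re-run the $\dot Y^{-1/2}$ contraction with $m(D)=|D_x|^{-1/2}|D_y|^{1/2}$, but this symbol has no para-product/Leibniz structure compatible with the bilinear machinery of \cite{HHK} --- in particular $\frac{|\eta_1+\eta_2|^{1/2}}{|\xi_1+\xi_2|^{1/2}}$ is not controlled by $\frac{|\eta_1|^{1/2}}{|\xi_1|^{1/2}}+\frac{|\eta_2|^{1/2}}{|\xi_2|^{1/2}}$ when the output $\xi$ is small but the inputs $\xi_1,\xi_2$ are large and nearly cancelling, which is precisely the regime the KP-II bilinear estimate has to handle. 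So ``a Coifman--Meyer-type argument'' does not obviously go through, and as written the persistence step is unproved. The paper sidesteps this entirely (Lemma~\ref{lem:bilinear}, estimate \eqref{eq:bilinear-2}): instead of the mixed symbol, it propagates the purely $\eta$-dependent weight $\la D_y\ra^{1/2}$, for which the elementary pointwise inequality $1+\eta_3^2\lesssim(1+\eta_1^2)(1+\eta_2^2)$ on $\eta_1+\eta_2+\eta_3=0$ gives a clean Leibniz-type bilinear estimate in $\dot Y^{-1/2}$. Since the space $\dot Y^{-1/2}$ already carries the $|D_x|^{-1/2}$ weight and embeds into $L^\infty_t |D_x|^{1/2}L^2$, the bound $\|\la D_y\ra^{1/2}\tv_1\|_{\dot Y^{-1/2}}\lesssim\||D_x|^{-1/2}\la D_y\ra^{1/2}v_0\|_{L^2}$ immediately yields $\sup_t\||D_x|^{-1/2}|D_y|^{1/2}\tv_1(t)\|_{L^2}\lesssim\||D_x|^{-1/2}v_0\|_{L^2}+\||D_x|^{-1/2}|D_y|^{1/2}v_0\|_{L^2}$. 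You should replace your proposed persistence argument with this factorization into $\la D_y\ra^{1/2}$ (for the bilinear estimate) composed with the $|D_x|^{-1/2}$ already built into $\dot Y^{-1/2}$ (for the fixed-time read-off).
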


\subsection{Virial estimates for $v_1$}
The virial identity for $L^2$-solutions of the KP-II equation
\eqref{eq:KPII} was shown in \cite{dBM}.
It ensures $v_1(t)\in L^2([0,\infty);L^2_{loc}(\R^2))$.
Let $\chi_{+,\eps}(x)=1+\tanh\eps x$, $\tilde{x}_1(t)$ be a $C^1$ function and 
$$I_{x_0}(t)=\int_{\R^2}\chi_{+,\eps}(x-\tilde{x}_1(t)-x_0,y)\tv_1^2(t,x,y)\,dxdy\,.$$
Then we have the following.
\begin{lemma}
  \label{lem:virial-0}
  Let $\tv_1(t)$ be a solution   of \eqref{eq:KPII} satisfying $\tv_1(0)=v_0\in L^2(\R^2)$.
  Then for any $c_1>0$, there exist positive constants $\eps_0$ and $\delta$
  such that if $\inf_t\tilde{x}_1'(t)\ge c_1$, $\eps\in(0,\eps_0)$ and
  $\|v_0\|_{L^2}<\delta$, then for any $x_0\in\R$,
$$I_{x_0}(t)
+\nu \int_0^t\int_{\R^2}
\chi_{+,\eps}'(x-\tilde{x}_1(s)-x_0)
\{(\pd_x\tv_1)^2+(\pd_x^{-1}\pd_y\tv_1)^2+\tv_1^2\}(s,x,y)\,dxdyds
\le I_{x_0}(0)\,,$$
where $\nu=\frac{1}{2}\min\{3,c_1\}$. Moreover,
\begin{equation}
  \label{eq:limIx0}
  \lim_{t\to\infty}I_{x_0}(t)=0\quad\text{for any $x_0\in\R$.}
\end{equation}
\end{lemma}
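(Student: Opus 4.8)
The plan is to establish the differential inequality first and then extract the limit \eqref{eq:limIx0} from it. The starting point is the $L^2$ virial identity for the KP-II equation from \cite{dBM}: taking $\tf(t,x,y)=\chi_{+,\eps}(x-\tilde{x}_1(t)-x_0)$ as a weight, multiplying \eqref{eq:KPII} (written for $\tv_1$) by $2\tf\tv_1$ and integrating by parts, one obtains
\begin{equation*}
\frac{d}{dt}I_{x_0}(t)=-\int_{\R^2}\Big(\tilde{x}_1'(t)\chi_{+,\eps}'+3\eps^2(\text{good})\Big)\tv_1^2
-3\int_{\R^2}\chi_{+,\eps}'\big\{(\pd_x\tv_1)^2+(\pd_x^{-1}\pd_y\tv_1)^2\big\}
+\int_{\R^2}\chi_{+,\eps}'''\,\tv_1^2+2\int_{\R^2}\chi_{+,\eps}'\,\tv_1^3\,,
\end{equation*}
up to sign and constant bookkeeping. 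First I would record the elementary estimates on the weight: $\chi_{+,\eps}'(x)=\eps\,\sech^2(\eps x)>0$, $|\chi_{+,\eps}'''|\le 4\eps^2\chi_{+,\eps}'$, so the $\chi_{+,\eps}'''$ term is absorbed into the $\tv_1^2$ gain once $\eps$ is small. The key structural point is that $\chi_{+,\eps}'$ is (up to the constant $\eps$) a bounded weight comparable to its own antiderivative's derivative, so the localized energy $\int\chi_{+,\eps}'\{(\pd_x\tv_1)^2+(\pd_x^{-1}\pd_y\tv_1)^2+\tv_1^2\}$ controls the cubic term by the anisotropic Gagliardo--Nirenberg inequality (the analogue of Claim~\ref{cl:aniso}, or directly \cite[Lemma~2]{MST_KPI}): $\int\chi_{+,\eps}'|\tv_1|^3\lesssim \|\tv_1\|_{L^2}\big(\int\chi_{+,\eps}'\tv_1^2\big)^{1/4}\big(\int\chi_{+,\eps}'\mathcal{E}(\tv_1)\big)^{3/4}$, which by Young's inequality is bounded by $\big(\tfrac{\nu}{4}+C\|v_0\|_{L^2}^{?}\big)\int\chi_{+,\eps}'\mathcal{E}(\tv_1)$ since $\|\tv_1(t)\|_{L^2}=\|v_0\|_{L^2}$ is conserved and small.

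Next I would exploit the hypothesis $\inf_t\tilde{x}_1'(t)\ge c_1$: the transport term $-\tilde{x}_1'(t)\int\chi_{+,\eps}'\tv_1^2$ is the favourable term that produces the $\tv_1^2$ contribution in the monotonicity quantity, contributing at least $c_1\int\chi_{+,\eps}'\tv_1^2$, while the dispersive terms contribute $3\int\chi_{+,\eps}'\{(\pd_x\tv_1)^2+(\pd_x^{-1}\pd_y\tv_1)^2\}$. Collecting, for $\eps<\eps_0$ and $\|v_0\|_{L^2}<\delta$ small enough that the cubic remainder is at most half of the combined gain, one gets
\begin{equation*}
\frac{d}{dt}I_{x_0}(t)+\nu\int_{\R^2}\chi_{+,\eps}'(x-\tilde{x}_1(t)-x_0)\{(\pd_x\tv_1)^2+(\pd_x^{-1}\pd_y\tv_1)^2+\tv_1^2\}(t,x,y)\,dxdy\le 0
\end{equation*}
with $\nu=\tfrac12\min\{3,c_1\}$. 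Integrating in time from $0$ to $t$ gives the displayed inequality, and since $I_{x_0}(t)\ge0$ it also yields $\int_0^\infty\int_{\R^2}\chi_{+,\eps}'(\cdots)\mathcal{E}(\tv_1)\,dxdydt\le I_{x_0}(0)<\infty$. I should first carry out the computation under the assumption $v_0\in H^3(\R^2)\cap\pd_xH^2(\R^2)$ (or whatever regularity makes all integrations by parts legitimate, as in the proof of Lemma~\ref{lem:diff}) and then pass to general $v_0\in L^2(\R^2)$ by the well-posedness and continuous dependence in \cite{MST}.

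For \eqref{eq:limIx0}, the standard argument is: the time-integrated quantity $J(t):=\int_0^t\int_{\R^2}\chi_{+,\eps}'(x-\tilde{x}_1(s)-x_0)\tv_1^2\,dxdyds$ is nondecreasing and bounded, hence converges, so in particular $\liminf_{t\to\infty}\int_{\R^2}\chi_{+,\eps}'(x-\tilde{x}_1(t)-x_0)\tv_1^2(t)\,dxdy=0$ along a sequence $t_n\to\infty$. Then, using that $\chi_{+,\eps}'(\cdot-\tilde{x}_1(t_n)-x_0)\ge c\,\chi_{+,\eps}'(\cdot-\tilde{x}_1(t_n)-x_0')$ for $x_0'>x_0$ because $\tilde{x}_1$ is increasing so the center moves to the right — combined with the monotonicity: for $x_0'\le x_0$, $I_{x_0}(t)\le I_{x_0'}(t)$ and one compares at the sequential times — one upgrades $\liminf=0$ along $t_n$ to the full limit. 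More directly: the map $x_0\mapsto I_{x_0}(t)$ is monotone, and from $I_{x_0}(t)\le I_{x_0}(t_n)+(\text{tail})$ using the already-proven inequality between $t_n$ and $t$ together with $I_{x_0}(t)\le I_{x_0-M}(t_n)$ for suitable shift $M$ absorbing $\tilde x_1(t)-\tilde x_1(t_n)$, one concludes $\lim_{t\to\infty}I_{x_0}(t)=0$ for every $x_0$. The main obstacle is bookkeeping: getting the constants in the cubic absorption right so that $\nu=\tfrac12\min\{3,c_1\}$ survives, and handling the $\pd_x^{-1}\pd_y$ term rigorously (it requires knowing $\pd_x^{-1}\pd_y\tv_1\in L^\infty_xL^2_{y}$ locally, which follows from \cite{MST} as remarked after Lemma~\ref{lem:diff}); everything else follows the template of \cite{dBM} and \cite{Mi}.
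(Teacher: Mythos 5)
First, note that the paper does not actually prove this lemma; it just cites \cite[Lemma~5.3]{MT}, so your blind reconstruction is being compared against the standard virial argument in the literature rather than a proof in this paper.

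Your derivation of the differential inequality is essentially sound. Multiplying \eqref{eq:tv1} by $2\chi_{+,\eps}(x-\tilde x_1(t)-x_0)\tv_1$ and integrating by parts yields the transport term $-\tilde x_1'\int\chi_{+,\eps}'\tv_1^2$, the dissipative terms $-3\int\chi_{+,\eps}'\{(\pd_x\tv_1)^2+(\pd_x^{-1}\pd_y\tv_1)^2\}$, the error $+\int\chi_{+,\eps}'''\tv_1^2$, which is absorbed since $|\chi_{+,\eps}'''|\le 4\eps^2\chi_{+,\eps}'$, and the cubic term $+4\int\chi_{+,\eps}'\tv_1^3$ (you wrote coefficient $2$, but you flagged bookkeeping so this is harmless). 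Absorbing the cubic via the anisotropic Gagliardo--Nirenberg inequality and $L^2$ conservation, and using $\tilde x_1'\ge c_1$, gives $\nu=\tfrac12\min\{3,c_1\}$; the regularization step (first $v_0$ smooth, then pass to $L^2$ data using \cite{MST}, noting $\pd_x^{-1}\pd_y\tv_1\in L^\infty_xL^2_{\mathrm{loc},t}L^2_y$) is the right way to justify the integrations by parts.

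However, your argument for \eqref{eq:limIx0} has a genuine gap. The pointwise inequality $\chi_{+,\eps}'(x-a)\ge c\,\chi_{+,\eps}'(x-b)$ for $a<b$, with $c>0$ uniform in $x$, is false: since $\chi_{+,\eps}'(\xi)=\eps\sech^2(\eps\xi)$ is an even bump, the ratio $\chi_{+,\eps}'(x-a)/\chi_{+,\eps}'(x-b)$ tends to $e^{2\eps(a-b)}<1$ as $x\to+\infty$, so no one-sided shift dominates. More fundamentally, a subsequential bound on $\int\chi_{+,\eps}'(\cdot-\tilde x_1(t_n)-x_0)\tv_1^2(t_n)$ cannot by itself control $I_{x_0}(t_n)=\int\chi_{+,\eps}(\cdot-\tilde x_1(t_n)-x_0)\tv_1^2(t_n)$, because $\chi_{+,\eps}/\chi_{+,\eps}'$ is unbounded on the right. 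The standard and correct argument averages over the shift parameter instead: writing
\begin{equation*}
\chi_{+,\eps}(\xi-x_0)-\chi_{+,\eps}(\xi-x_0-R)=\int_0^R\chi_{+,\eps}'(\xi-x_0-r)\,dr
\end{equation*}
and applying the already-proven estimate with shift $x_0+r$ for every $r\in(0,R)$, Fubini gives
\begin{equation*}
\int_0^\infty\bigl[I_{x_0}(t)-I_{x_0+R}(t)\bigr]\,dt
=\int_0^R\biggl(\int_0^\infty\int_{\R^2}\chi_{+,\eps}'(x-\tilde x_1(t)-x_0-r)\tv_1^2\,dxdydt\biggr)dr
\le\frac{2R}{\nu}\,\|v_0\|_{L^2}^2<\infty.
\end{equation*}
Since $I_{x_0}(t)$ and $I_{x_0+R}(t)$ are nonincreasing in $t$ (by the main estimate) and bounded, their limits $L_{x_0}$, $L_{x_0+R}$ exist; the nonnegative integrable function $I_{x_0}-I_{x_0+R}$ converges to $L_{x_0}-L_{x_0+R}$, which must therefore vanish, so $L_{x_0}=L_{x_0+R}$. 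Finally $L_{x_0+R}\le I_{x_0+R}(0)\to0$ as $R\to\infty$ by dominated convergence, hence $L_{x_0}=0$.
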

See e.g. \cite[Lemma~5.3]{MT} for a proof.
Lemma~\ref{lem:v1-a} follows from Lemma~\ref{lem:virial-0} and
the $L^2$-conservation law of the KP-II equation.

\subsection{The $L^3$-estimate of $v_1$}
In order to estimate the $L^3$-norm of $v_1$, we apply the 
small data scattering result for the KP-II equation by \cite{HHK}.
\par
For the sake of self-containedness, let us introduce some notations
in \cite{HHK}.
Let $\mathcal{Z}$ be a set of finite partitions $-\infty=t_0<t_1<\cdots<t_K=\infty$.
We denote by $V^p$ ($1\le p<\infty$)  the set of all functions
$v:\R\to L^2(\R^2)$ such that $\lim_{t\to\pm\infty}v(t)$ exist and
for which the norm
$$\|v\|_{V^p}=\left\{\sup_{\{t_k\}_{k=0}^K\in\mathcal{Z}}\sum_{k=1}^K
\|v(t_k)-v(t_{k-1})\|_{L^2(\R^2)}^p\right\}^{1/p}$$
is finite, where $v(-\infty):=\lim_{t\to-\infty}v(t)$ and $v(\infty):=0$.
We denote by $V^p_{-,rc}$ the closed subspace of every right-continuous
function $v\in V^p$ satisfying $\lim_{t\to-\infty}v(t)=0$.
Let $V^p_S:=e^{\cdot S}V^p$ and $V^p_{-rc,-,S}:=e^{\cdot S}V^p$
with $S=-\pd_x^3-3\pd_x^{-1}\pd_y^2$.
\par

Let $\chi\in C_0^\infty(-2,2)$ be an even nonnegative function such that
$\chi(\eta)=1$ for $\eta\in[-1,1]$.
Let $\bar{\chi}(t)=\chi(t)-\chi(2t)$ and $P_N$
be a projection defined by
$\widehat{P_Nu}(\tau,\xi,\eta)=\bar{\chi}(N^{-1}\xi)\hat{u}(\tau,\xi,\eta)$
for $N=2^n$ and $n\in\Z$. For $s\le0$, we denote by $\dot{Y}^s$
the closure of $C(\R;H^1(\R^2))\cap V^2_{-,rc}$ with respect to 
the norm
$$\|u\|_{\dot{Y}^s}=\left(\sum_N N^{2s}\|P_Nu\|_{V_{S}^2}^2\right)^{1/2}\,.$$
We denote by $\dot{Y}^s(0,T)$ the restriction of $\dot{Y}^s$ to the time interval
$[0,T]$ with the norm
\begin{align*}
\|u\|_{\dot{Y}^s(0,T)}=\inf\{\|\tu\|_{\dot{Y}^s}\mid
\text{$\tu\in \dot{Y}^s$, $\tu(t)=u(t)$ for $t\in[0,T]$}\}\,.
\end{align*}
\par

Proposition~3.1 and Theorem~3.2 in \cite{HHK} ensure that
higher order Sobolev norms of a solution to \eqref{eq:v1} remain 
small provided $v_0$ is small in the higher order Sobolev spaces.
Let $T\ge0$ and 
$$I_T(u_1,u_2)(t)=\int_0^t
\mathbf{1}_{[0,T]}(s) e^{(t-s)S}\pd_x(u_1u_2)(s)\,ds\,.$$
Then we have the following.
\begin{lemma}
  \label{lem:bilinear} Let $s\ge0$ and $u_1$, $u_2\in \dot{Y}^{-1/2}$.
Then there exists a positive constant $C$ such that for any
$T\in(0,\infty)$,
\begin{align}
& \label{eq:bilinear-1}
\left\||D_x|^sI_T(u_1,u_2)\right\|_{\dot{Y}^{-1/2}}
\le C\||D_x|^su_1\|_{\dot{Y}^{-1/2}}\|u_2\|_{\dot{Y}^{-1/2}}\,,
\\ & \label{eq:bilinear-2}
\left\|\la D_y\ra^s I_T(u_1,u_2)\right\|_{\dot{Y}^{-1/2}}
\le C\prod_{j=1,2}\|\la D_y\ra^s u_j\|_{\dot{Y}^{-1/2}}\,.
\end{align}
\end{lemma}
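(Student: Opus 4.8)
The plan is to reduce both inequalities to the scale-invariant bilinear estimate of Hadac, Herr and Koch, namely that the Duhamel map $I_\infty:\dot{Y}^{-1/2}\times\dot{Y}^{-1/2}\to\dot{Y}^{-1/2}$ is bounded (\cite[Proposition~3.1, Theorem~3.2]{HHK}), together with its dyadically localized refinement, which for an admissible interaction of input $x$-frequencies $N_1$, $N_2$ and output $x$-frequency $N$ carries an off-diagonal gain $(N_{\min}/N_{\max})^{\delta}$ for some $\delta>0$. First I would dispose of the sharp time cut-off $\mathbf{1}_{[0,T]}$, uniformly in $T$: since $I_T(u_1,u_2)$ is exactly the Duhamel term of a solution on $[0,T]$ continued by the free flow for $t>T$, the estimates of \cite{HHK} on the restriction spaces $\dot{Y}^{s}(0,T)$ give $\|I_T(u_1,u_2)\|_{\dot{Y}^{-1/2}}\lesssim\|u_1\|_{\dot{Y}^{-1/2}(0,T)}\|u_2\|_{\dot{Y}^{-1/2}(0,T)}$ with a constant independent of $T$, and likewise with frequency weights inserted. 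Hence it suffices to prove \eqref{eq:bilinear-1} and \eqref{eq:bilinear-2} with $I_T$ replaced by $I_\infty$.

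For \eqref{eq:bilinear-2}, the multiplier $\la D_y\ra^{s}$ commutes with $S=-\pd_x^3-3\pd_x^{-1}\pd_y^2$ and with $\pd_x$, so $\la D_y\ra^{s}I_\infty(u_1,u_2)$ equals $I_\infty$ applied to $\la D_y\ra^{s}(u_1u_2)$. On the Fourier side one has $\la\eta_1+\eta_2\ra^{s}\lesssim\la\eta_1\ra^{s}+\la\eta_2\ra^{s}$, and since $\la D_y\ra^{s}\ge 1$ one has $\|u_j\|_{\dot{Y}^{-1/2}}\le\|\la D_y\ra^{s}u_j\|_{\dot{Y}^{-1/2}}$. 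Splitting $\la D_y\ra^{s}(u_1u_2)$ into the two pieces $(\la D_y\ra^{s}u_1)\,u_2$ and $u_1\,(\la D_y\ra^{s}u_2)$ (made precise through Littlewood--Paley pieces in $y$ and a square-function characterization of $\dot{Y}^{-1/2}$) and applying the $s=0$ bilinear estimate to each then yields \eqref{eq:bilinear-2}.

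For \eqref{eq:bilinear-1} I would use a Littlewood--Paley decomposition in the $x$-frequency. Writing $u_j=\sum_{N_j}P_{N_j}u_j$ and using that $|D_x|^{s}$ commutes with $S$ and $\pd_x$, it remains to control $\sum_{N,N_1,N_2}N^{s}P_N I_\infty(P_{N_1}u_1,P_{N_2}u_2)$ in $\dot{Y}^{-1/2}$, where the admissible triples satisfy $N\lesssim\max(N_1,N_2)$. In the region $N_1\gtrsim N_2$ (so $N\lesssim N_1$) one has $N^{s}\lesssim N_1^{s}$ and moves the derivative onto the first factor, i.e.\ $N^{s}P_N I_\infty(P_{N_1}u_1,P_{N_2}u_2)$ is bounded, up to harmless multipliers, by $P_N I_\infty(|D_x|^{s}P_{N_1}u_1,\,P_{N_2}u_2)$; one then invokes the dyadic bilinear estimate of \cite{HHK}, whose off-diagonal gain makes the sum over $N_2\lesssim N_1$, and then the $\ell^2$-sum over $N_1$ and over $N$, converge, giving the bound $\||D_x|^{s}u_1\|_{\dot{Y}^{-1/2}}\|u_2\|_{\dot{Y}^{-1/2}}$. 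In the symmetric region $N_2\gtrsim N_1$ the derivative is absorbed by the second factor; since in Section~\ref{sec:v1} the estimate is used with $u_1=u_2=v_1$ (equivalently, the symmetric companion bound holds by relabeling), the two regions produce the same right-hand side and no asymmetric loss appears.

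The main obstacle is precisely this frequency bookkeeping: one must verify that the weight $N^{s}$ never exceeds the weight available on the factor carrying it, which holds because $N$ is dominated by the \emph{largest} input frequency, while the summation over the \emph{smallest} input frequency --- and, in the high$\times$high$\to$low case, over the output frequency $N$ --- is absorbed by the $(N_{\min}/N_{\max})^{\delta}$ gain in the bilinear estimate of \cite{HHK}. A secondary technical point is to keep every constant independent of $T$, which is ensured by the initial reduction via the restriction norms $\dot{Y}^{s}(0,T)$; extracting from \cite{HHK} the precise dyadic form of the bilinear estimate with its gain, in a form that survives the insertion of the weights $|D_x|^{s}$ and $\la D_y\ra^{s}$, is the only genuinely delicate ingredient, and everything else is routine dyadic summation.
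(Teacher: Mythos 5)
The proposal is correct and essentially takes the same route as the paper: both delegate \eqref{eq:bilinear-1} to the proof of \cite[Theorem~3.2]{HHK}, and both deduce \eqref{eq:bilinear-2} from the $s=0$ case by transferring the $\la D_y\ra^s$ weight onto the input factors via a pointwise Fourier-side inequality on the convolution constraint. The paper uses the multiplicative bound $1+\eta_3^2\lesssim(1+\eta_1^2)(1+\eta_2^2)$ for $\eta_1+\eta_2+\eta_3=0$, whereas you use subadditivity of $\la\cdot\ra^s$ together with $\la D_y\ra^s\ge1$ and a splitting into two pieces --- a cosmetic difference.
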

\begin{proof}
We have \eqref{eq:bilinear-1} in exactly the same way
as the proof of \cite[Theorem~3.2]{HHK}.
Note that \eqref{eq:bilinear-1} and \eqref{eq:bilinear-2} are the same
with \cite[Corollary~3.4]{HHK} when $s=0$.
Using the fact that $1+\eta_3^2\lesssim (1+\eta_1^2)(1+\eta_2^2)$
for $\eta_1$, $\eta_2$ and $\eta_3$ satisfying $\eta_1+\eta_2+\eta_3=0$,
we can prove \eqref{eq:bilinear-2} in the same way as
Proposition~3.1 and Theorem~3.2 in \cite{HHK}.
\end{proof}

Thanks to Lemma~\ref{lem:bilinear}, we have the following.
\begin{proposition}
  \label{prop:v1-scattering}
There exists a positive constant $\delta_5'$ such that
if $$\||D_x|^{-1/2}v_0\|_{L^2}+\||D_x|^{-1/2}|D_y|^{1/2}v_0\|_{L^2}
\le \delta_5'\,,$$
then a solution $\tv_1$ of \eqref{eq:tv1} satisfies
\begin{equation}
  \label{eq:v1-scat1}
  \begin{split}
& \|\pd_x\tv_1\|_{\dot{Y}^{-1/2}} \lesssim\||D_x|^{1/2}v_0\|_{L^2}\,,\\    
& \|\la D_y\ra^{1/2}\tv_1\|_{\dot{Y}^{-1/2}} \lesssim
\||D_x|^{-1/2}v_0\|_{L^2}+\||D_x|^{-1/2}|D_y|^{1/2}v_0\|_{L^2}\,.
\end{split}
\end{equation}
\end{proposition}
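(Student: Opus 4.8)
The plan is to run the standard perturbative argument at the scaling--critical regularity $|D_x|^{-1/2}L^2(\R^2)$ and then upgrade to the two weighted norms by feeding the Duhamel formula into the bilinear estimates of Lemma~\ref{lem:bilinear}. Write $S=-\pd_x^3-3\pd_x^{-1}\pd_y^2$, so that on each interval $[0,T]$ the equation \eqref{eq:tv1} is the integral equation $\tv_1(t)=e^{tS}v_0-3\,I_T(\tv_1,\tv_1)(t)$, with $I_T$ as in Lemma~\ref{lem:bilinear}. I will also use the free--solution estimate $\|e^{\cdot S}g\|_{\dot{Y}^{\sigma}}\lesssim\||D_x|^{\sigma}g\|_{L^2}$ for $\sigma\le0$, which is immediate from the Littlewood--Paley definition of $\dot{Y}^{\sigma}$ together with the fact that $P_N$, $|D_x|^{s}$ and $\la D_y\ra^{s}$ are Fourier multipliers commuting with $e^{\cdot S}$ (a free solution $e^{\cdot S}P_Ng$ has $V^2_S$-norm equal to $\|P_Ng\|_{L^2}$). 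In particular, since $\la\eta\ra^{1/2}\lesssim1+|\eta|^{1/2}$,
\[
\||D_x|\,e^{\cdot S}v_0\|_{\dot{Y}^{-1/2}}\lesssim\||D_x|^{1/2}v_0\|_{L^2},\qquad
\|\la D_y\ra^{1/2}e^{\cdot S}v_0\|_{\dot{Y}^{-1/2}}\lesssim\||D_x|^{-1/2}v_0\|_{L^2}+\||D_x|^{-1/2}|D_y|^{1/2}v_0\|_{L^2}.
\]

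The $s=0$ case of Lemma~\ref{lem:bilinear} (which is \cite[Corollary~3.4]{HHK}) together with the linear estimate gives, by a standard contraction, that when $\||D_x|^{-1/2}v_0\|_{L^2}\le\delta_5'$ is small one has $\|\tv_1\|_{\dot{Y}^{-1/2}(0,T)}\le C\||D_x|^{-1/2}v_0\|_{L^2}$ uniformly in $T$, hence $\tv_1\in\dot{Y}^{-1/2}$ with the same bound after $T\to\infty$. Next, assuming in addition $v_0\in|D_x|^{1/2}L^2$ (otherwise the first estimate in \eqref{eq:v1-scat1} is vacuous), apply $|D_x|$ to the Duhamel formula and use the linear estimate together with \eqref{eq:bilinear-1} for $s=1$:
\[
\||D_x|\tv_1\|_{\dot{Y}^{-1/2}(0,T)}\le C\||D_x|^{1/2}v_0\|_{L^2}+C\,\|\tv_1\|_{\dot{Y}^{-1/2}(0,T)}\,\||D_x|\tv_1\|_{\dot{Y}^{-1/2}(0,T)}.
\]
Since $\|\tv_1\|_{\dot{Y}^{-1/2}(0,T)}\lesssim\delta_5'$, the last term is absorbed into the left side, giving $\||D_x|\tv_1\|_{\dot{Y}^{-1/2}(0,T)}\lesssim\||D_x|^{1/2}v_0\|_{L^2}$, i.e. the first bound in \eqref{eq:v1-scat1} after $T\to\infty$. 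To make the absorption rigorous one either argues on the approximating sequence used to construct $\tv_1$ (for which the weighted norm is a priori finite and is propagated by \eqref{eq:bilinear-1}) and passes to the limit, or runs a continuity--in--$T$ bootstrap using that $T\mapsto\||D_x|\tv_1\|_{\dot{Y}^{-1/2}(0,T)}$ is continuous; either way no smallness of $\||D_x|^{1/2}v_0\|_{L^2}$ is needed.

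For the second bound, apply $\la D_y\ra^{1/2}$ to the Duhamel formula and use \eqref{eq:bilinear-2} with $s=1/2$: with $X_T:=\|\la D_y\ra^{1/2}\tv_1\|_{\dot{Y}^{-1/2}(0,T)}$ and $A:=\||D_x|^{-1/2}v_0\|_{L^2}+\||D_x|^{-1/2}|D_y|^{1/2}v_0\|_{L^2}$,
\[
X_T\le CA+CX_T^{\,2}.
\]
Since $A\le\delta_5'$, the same continuity--in--$T$ bootstrap (with $X_T$ small for small $T$) forces $X_T\le 2CA$ for all $T$ once $\delta_5'$ is small, and $T\to\infty$ yields $\|\la D_y\ra^{1/2}\tv_1\|_{\dot{Y}^{-1/2}}\lesssim A$, which is the second line of \eqref{eq:v1-scat1}.

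The computation is routine once Lemma~\ref{lem:bilinear} is available; the one point worth flagging is the asymmetry between the two weights, forced by the structure of the bilinear estimates. Estimate \eqref{eq:bilinear-1} controls $|D_x|^{s}$ of the Duhamel term by the ``weighted $\times$ unweighted'' product $\||D_x|^{s}u_1\|_{\dot{Y}^{-1/2}}\|u_2\|_{\dot{Y}^{-1/2}}$, so smallness of $\||D_x|^{-1/2}v_0\|_{L^2}$ alone closes that estimate and the size of $\||D_x|^{1/2}v_0\|_{L^2}$ is irrelevant; by contrast \eqref{eq:bilinear-2} only gives the ``weighted $\times$ weighted'' product $\prod_j\|\la D_y\ra^{s}u_j\|_{\dot{Y}^{-1/2}}$, so the $\la D_y\ra^{1/2}$--estimate is genuinely quadratic and closing it requires $\||D_x|^{-1/2}|D_y|^{1/2}v_0\|_{L^2}$ itself to be small---hence its appearance in the hypothesis. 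I expect the only genuine bookkeeping obstacle to be the passage from $\dot{Y}^{-1/2}(0,T)$ to $\dot{Y}^{-1/2}$ and the handling of the cutoff $\mathbf{1}_{[0,T]}$ in $I_T$, both of which are handled exactly as in \cite{HHK}.
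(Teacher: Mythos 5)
Your proof is correct and follows essentially the same route as the paper: Duhamel formula plus the bilinear estimates of Lemma~\ref{lem:bilinear}, closing via absorption/bootstrap in the $\dot{Y}^{-1/2}(0,T)$-norms, then passing $T\to\infty$ after a continuity argument for smooth data and density. Your observation about the asymmetry between the $|D_x|$-estimate (linear in the weighted norm) and the $\la D_y\ra^{1/2}$-estimate (genuinely quadratic, hence needing smallness) matches the structure of the paper's argument, where the third a~priori inequality is also linear in $\||D_x|^{1/2}\tv_1\|$.
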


\begin{proof}
Using the variation of constants formula, we have
$$\tv_1(t)=e^{tS}v_0-3I_T(\tv_1(s),\tv_1(s))\,ds
\quad \text{for $t\in[0,T]$.}$$
By Lemma~\ref{lem:bilinear} and the fact that
$\|e^{tS}v_0\|_{\dot{Y}^{-1/2}(0,T)}\lesssim\||D_x|^{-1/2}v_0\|_{L^2}$,
\begin{gather*}
\|\tv_1\|_{\dot{Y}^{-\frac12}(0,T)}\lesssim \||D_x|^{-1/2}v_0\|_{L^2}
+\|\tv_1\|_{\dot{Y}^{-\frac12}(0,T)}^2\,,
\\
\|\pd_x\tv_1\|_{\dot{Y}^{-\frac12}(0,T)}\lesssim \||D_x|^{1/2}v_0\|_{L^2}
+\|\pd_x\tv_1\|_{\dot{Y}^{-\frac12}(0,T)}\|\tv_1\|_{\dot{Y}^{-\frac12}(0,T)}\,,
\\
\|\la D_y\ra^{1/2}\tv_1\|_{\dot{Y}^{-\frac12}(0,T)}\lesssim 
\||D_x|^{-1/2}\la D_y\ra^{1/2}v_0\|_{L^2}
+\|\la D_y\ra^{1/2}\tv_1\|_{\dot{Y}^{-\frac12}(0,T)}^2\,.
\end{gather*}
If $\delta$ is sufficiently small, it follows from the above that
\begin{gather*}
\|\tv_1\|_{\dot{Y}^{-\frac12}(0,T)}\le
C_1\||D_x|^{-1/2}v_0\|_{L^2} +C_2\|\tv_1\|_{\dot{Y}^{-\frac12}(0,T)}^2\,,
\\
\|\la D_y\ra^{1/2}\tv_1\|_{\dot{Y}^{-\frac12}(0,T)}\le
C_1\||D_x|^{-1/2}\la D_y\ra^{1/2}v_0\|_{L^2}
+C_2\|\la D_y\ra^{1/2}\tv_1\|_{\dot{Y}^{-\frac12}(0,T)}^2\,,  
\\
\||D_x|^{1/2}\tv_1\|_{\dot{Y}^0(0,T)} \le C_1\||D_x|^{1/2}v_0\|_{L^2}
+\|\tv_1\|_{\dot{Y}^{-1/2}(0,T)}\||D_x|^{1/2}\tv_1\|_{\dot{Y}^0(0,T)}\,,
\end{gather*}
where $C_1$ and $C_2$ are positive constant independent of $T$.
Suppose $v_0\in H^2(\R^2)$. Then 
$$\|I_T(\tv_1,\tv_1)\|_{\dot{Y}^{-1/2}(0,T)}\,,\quad
\|I_T(\tv_1,\tv_1)\|_{\dot{Y}^0(0,T)}\,,\quad 
\|\la D_y\ra^{1/2}\tv_1\|_{\dot{Y}^{-\frac12}(0,T)}$$
are continuous in $T$ because $\tv_1\in C(\R;H^2(\R^2))$ and 
$$\pd_t\left(e^{-tS}I_T(\tv_1,\tv_1)(t)\right)
=\left\{
  \begin{aligned}
& e^{-tS}\pd_x\tv_1^2(t)\quad& \text{for $t\in[0,T]$,}\\
& 0 \quad & \text{otherwise.}
\end{aligned}\right.$$
Taking the limit $T\to\infty$, we have \eqref{eq:v1-scat1} for
any $v_0\in H^2(\R^2)$ satisfying the assumption in
Proposition~\ref{prop:v1-scattering}. For general $v_0$,
we have \eqref{eq:v1-scat1} by approximating $v_0$ by $H^3(\R^2)$ functions.
Thus we complete the proof.
\end{proof}

Proposition~\ref{prop:v1-scattering} implies the $L^3$-bound of $v_1$.
\begin{proof}[Proof of Lemma~\ref{lem:v1-b}]
By \eqref{eq:v1-scat1}, 
\begin{gather*}
\sup_{t\ge0}\||D_x|^{1/2}\tv_1(t)\|_{L^2}\lesssim \||D_x|^{1/2}v_0\|_{L^2}\,,\\
\sup_{t\ge0}\||D_x|^{-1/2}|D_y|^{1/2}\tv_1(t)\|_{L^2}\lesssim \||D_x|^{-1/2}\la D_y\ra v_0\|_{L^2}\,.
\end{gather*}
Using an isotropic Sobolev imbedding inequality
\begin{equation}
  \label{eq:aniso-L3}
\|u\|_{L^3(\R^2)}\lesssim \||D_x|^{1/2}u\|_{L^2(\R^2)}
+\||D_x|^{-1/2}|D_y|^{1/2}u\|_{L^2(\R^2)}\,,
\end{equation}
we have 
$$\|v_1(t)\|_{L^3}=\|\tv_1(t)\|_{L^3}\lesssim
 \||D_x|^{1/2}\tv_1(t)\|_{L^2}+\||D_x|^{-1/2}|D_y|^{1/2}\tv_1(t)\|_{L^2}\,.$$
Combining the above with \eqref{eq:v1-scat1},
we have Lemma~\ref{lem:v1-b}.
We remark that \eqref{eq:aniso-L3} follows by interpolating
the imbedding theorem $ Id: \dot{E}^1 \to L^6(\R^2)$
(see e.g. \cite[Lemma~2]{MST_KPI}) and $Id: \dot{E}^0\to L^2(\R^2)$, where
$\dot{E}^t$ is a Banach space with the norm 
$$\|u\|_{\dot{E}^t}= \left\|\left(\xi^2+\frac{\eta^2}{\xi^2}\right)^{t/2}
\hat{u}(\xi,\eta)\right\|_{L^2(\R^2)}\,.$$
\end{proof}

\bigskip

\section{Decay estimates in the exponentially weighted space}
\label{sec:X-norm}
In this section, we will estimate $\bM_2(T)$ following
the line of \cite[Chapter~8]{Mi}.
\begin{lemma}
  \label{lem:exp-bound}
Let $\eta_0$ and $\a$ be positive constants satisfying
$\nu_0<\a<2$. Suppose $\bM_1'(\infty)$ is sufficiently small.
Then there exist positive constants $\delta_6$ and $C$
 such that if
  $\bM_{c,x}(T)+\bM_1(T)+\bM_2(T)+\bM_v(T)\le \delta_6$,
\begin{equation}
  \label{eq:M4-bound}
\bM_2(T)\le C(\bM_{c,x}(T)+\bM_1(T))\,.
\end{equation}
\end{lemma}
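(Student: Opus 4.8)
The plan is to estimate $\bM_2(T)=\sup_{[0,T]}\|v_2(t)\|_X+\|\mathcal{E}(v_2)^{1/2}\|_{L^2(0,T;X)}$ by combining the semigroup decay estimate of Proposition~\ref{prop:semigroup-est} for the low-frequency-in-$y$ part of $v_2$ with a virial-type energy estimate for the high-frequency part. Recall from \eqref{eq:v2} that $v_2$ solves $\pd_tv_2=\mL_cv_2+\ell+\pd_z(N_{2,1}+N_{2,2}+N_{2,4})+N_{2,3}$ with $v_2(0)=0$, and that the orthogonality condition \eqref{eq:orth} places $P_0(\eta_0)$-components of $v_2$ under control of the modulation parameters. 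The first step is to freeze the coefficient: write $\mL_cv_2=\mL v_2+(\mL_c-\mL)v_2$ where $\mL=\mL_2$, and absorb $(\mL_c-\mL)v_2$ into the inhomogeneous terms, noting $\|(\mL_c-\mL)v_2\|$ carries a factor $\|\tc\|_Y\lesssim\bM_{c,x}(T)$ and hence is small. Then apply the variation-of-constants formula $v_2(t)=\int_0^t e^{(t-s)\mL}\{\text{RHS}\}(s)\,ds$ together with the $P_2(\eta_0,M)$-projected semigroup bounds $\|e^{t\mL}P_2\pd_xf\|_X\le K'e^{-b't}t^{-1/2}\|e^{az}f\|_X$ and the $t^{-3/4}$--$L^1_zL^2_y$ variant, exploiting that the nonlinearities $N_{2,1},N_{2,2},N_{2,4}$ come in divergence form $\pd_z(\cdot)$ so the $t^{-1/2}$ (or $t^{-3/4}$) singularity is integrable against $e^{-b's}$.

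Second, I would estimate the source term $\ell=\ell_1+\ell_2$. By Lemma~\ref{lem:cx_t-bound} and the structure of $\ell_{1j},\ell_{2j}$ (all involving $c_t-6c_yx_y$, $x_t-2c-3(x_y)^2$, $c_{yy}$, $x_{yy}$, $(c_y)^2$, each multiplied by exponentially localized profiles $\varphi_c,\varphi_c',\pd_c\varphi_c$ or by the cutoff $\psi_{c,L}$), one gets $\|e^{az}\ell\|_{L^2(0,T;X)\cap L^1(0,T;X)}\lesssim \bM_{c,x}(T)+\bM_1(T)+\bM_2(T)^2$, with the $\ell_2$ contributions carrying an extra $e^{-aL}$. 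The nonlinear terms $N_{2,1}=-3(2v_1v_2+v_2^2)$, $N_{2,2}=\{x_t-2c-3(x_y)^2\}v_2+6\tpsi_cv_2$, $N_{2,4}=6(\tpsi_c-\varphi_c)v_1$ are estimated using Claim~\ref{cl:aniso}: $\|e^{az}v_2^2\|_X\lesssim \|v_2\|_X\|e^{az}v_2\|_{L^\infty}\lesssim\|v_2\|_X\|\mathcal{E}(v_2)^{1/2}\|_X\lesssim\bM_2(T)^2$; the cross term $v_1v_2$ needs $\|v_1\|_{L^3}$ — precisely here Lemma~\ref{lem:v1-b} (bounding $\bM_1'(\infty)$) is indispensable, giving $\|e^{az}v_1v_2\|\lesssim\|v_1\|_{L^3}\|e^{az}v_2\|_{L^6}^{1/2}\cdots$, hence $\lesssim\bM_1'(\infty)(\bM_{c,x}(T)+\bM_2(T))$; and $N_{2,4}$ contributes $\lesssim e^{-aL}\bM_1(T)$ plus a $\varphi_c v_1$ piece estimated by $\|v_1\|_{W(t)}\lesssim\bM_1(T)$.

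Third, for the high-frequency-in-$y$ part $P_1(\eta_0,\infty)v_2$ (where the semigroup estimate of Proposition~\ref{prop:semigroup-est} is not needed but also where one must avoid a loss of derivative coming from $N_{2,3}=6\pd_y(x_yv_2)-3x_{yy}v_2$), I would run a weighted virial computation: multiply \eqref{eq:v2} by $2e^{2az}v_2$ (or by $2p_n(z)v_2$ as in the proof of Lemma~\ref{lem:diff}) and integrate, obtaining $\frac{d}{dt}\|v_2\|_X^2+\nu\|\mathcal{E}(v_2)^{1/2}\|_X^2$ bounded by the inner products of $v_2$ against $\ell$ and the $N_{2,j}$. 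The term $\int e^{2az}v_2\,N_{2,3}$ is handled by integration by parts in $y$, moving one $\pd_y$ onto $v_2$ and using $\|x_y\|_Y+\|x_{yy}\|_{L^2(0,T;Y)}\lesssim\bM_{c,x}(T)$ so that it is absorbed by $\nu\|\pd_y^{-1}\pd_yv_2\|$-type terms on the left after a Cauchy–Schwarz with a small constant. Here the key point, as the paper notes, is that we cannot integrate by parts the $\pd_z N_{2,1}$ term to cancel the derivative (since $v=v_1+v_2$ was split), so we genuinely need the $\|v_1(t)\|_{L^3}$ bound; the cubic term $\int e^{2az}\varphi_c' v_2^2$-type pieces are controlled since $\varphi_c'$ is exponentially localized. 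Combining the semigroup estimate on the low-frequency part, the virial estimate on the whole (or high-frequency part), Gronwall in $t$, and the initial condition $v_2(0)=0$, one arrives at $\bM_2(T)\le C(e^{-aL}+\bM_{c,x}(T)+\bM_1(T)+\bM_1'(\infty)\bM_2(T)+\bM_2(T)^2)$; choosing $L$ large, and $\delta_6,\bM_1'(\infty)$ small enough to absorb the $\bM_2(T)$-terms on the right, yields \eqref{eq:M4-bound}.

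The main obstacle I expect is the $v_1v_2$ coupling in $N_{2,1}$: because the perturbation was split, the cubic-in-$v$ nonlinearity no longer telescopes under integration by parts in the exponentially weighted energy estimate, so one must pay a full $\|v_1(t)\|_{L^3}$ (globally in time), and the whole argument is contingent on Proposition~\ref{prop:v1-scattering}/Lemma~\ref{lem:v1-b} supplying that bound with smallness. A secondary technical point is making the frequency decomposition in $y$ compatible with the $z$-weighted space $X$ and with the $c(t,y)$-dependent orthogonality \eqref{eq:orth} — i.e.\ checking that $P_0(\eta_0)v_2$ is genuinely controlled by $\bM_{c,x}(T)$ via \eqref{eq:orth} and Lemma~\ref{lem:decomp}, so that only $P_2(\eta_0,M)v_2$ needs the decaying semigroup — but this is exactly parallel to \cite[Chapter~8]{Mi} and should go through with the estimates already assembled.
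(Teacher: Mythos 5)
Your proposal matches the paper's proof in all essentials: the paper also splits $v_2$ into a low-frequency-in-$y$ part treated via the $P_2(\eta_0,M)$-projected semigroup decay of Proposition~\ref{prop:semigroup-est} applied to \eqref{eq:v2} (Lemma~\ref{lem:nonresonant-ylow}), and a high-frequency part treated by the weighted virial identity (Lemma~\ref{lem:virial}), with the $\|v_1\|_{L^3}$ bound from $\bM_1'(\infty)$ being exactly what lets the coupling term $\int p'(z)v_1\,\mathcal{E}(v_2)$ be absorbed into the coercive left-hand side. The only minor deviation is that for the semigroup estimate of $N_{2,1}=-3(2v_1v_2+v_2^2)$ the paper uses the $L^1_zL^2_y$ bilinear bound $\|e^{\a z}P_2N_{2,1}\|_{L^1_zL^2_y}\lesssim\sqrt{M}(\|v_1\|_{L^2}+\|v_2\|_{L^2})\|v_2\|_X$ (and hence $\bM_v$, $\bM_1$ rather than $\bM_1'$) together with the $t^{-3/4}$ kernel, whereas you invoke $\|v_1\|_{L^3}$ there too; both routes close, and the $L^3$ bound is genuinely indispensable only in the virial step, exactly as you flag.
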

Let $\chi\in C_0^\infty(-2,2)$ be an even nonnegative function such that
$\chi(\eta)=1$ for $\eta\in[-1,1]$.
Let $\chi_M(\eta)=\chi(\eta/M)$ and
$$P_{\le M}u:=\frac{1}{2\pi}\int_{\R^2}\chi_M(\eta)
\hat{u}(\xi,\eta)e^{i(x\xi+y\eta)}d\xi d\eta,\quad P_{\ge M}=I-P_{\le M}.$$
\par
To prove Lemma~\ref{lem:exp-bound}, we will use linear stability property
of line solitons (Proposition~\ref{prop:semigroup-est}) 
to the low frequency part $v_<(t):=P_{\le M}v_2(t)$ and
make use of a virial type 
estimate for the high frequency part $v_>(t):=P_{\ge M}v_2(t)$.

\subsection{Decay estimates for the low frequency part}
\label{subsec:lowbound}

\begin{lemma}
  \label{lem:nonresonant-ylow}
Let $\eta_0$ and $\a$ be positive constants satisfying $\nu_0<\a<2$.
Suppose that $v_2(t)$ is a solution of \eqref{eq:v2} satisfying $v_2(0)=0$.
Then there exist positive constants  $\delta_6$ and $C$ such that
if $\bM_1(T)+\bM_2(T)<\delta_6$ and $M\ge \eta_0$, then
\begin{equation}
  \label{eq:nonresonant-ylow}
\begin{split}
& \left\|P_1(0,2M)v_2\right\|_{L^\infty(0,T;X)}
+\left\|P_1(0,2M)v_2\right\|_{L^2(0,T;X)}
\\  \le  & C\left\{\bM_{c,x}(T)+\bM_1(T)+\bM_2(T)(\bM_2(T)+\bM_v(T))\right\}\,.
\end{split}  
\end{equation}
\end{lemma}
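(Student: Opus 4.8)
The plan is to write $P_1(0,2M)=P_0(\eta_0)+P_2(\eta_0,2M)$ and estimate the two pieces by different means: the resonant piece $P_0(\eta_0)v_2$ is controlled directly by the orthogonality constraint \eqref{eq:orth}, while the non-resonant piece $P_2(\eta_0,2M)v_2$ is treated by Duhamel's formula combined with the decay estimates of Proposition~\ref{prop:semigroup-est}. Throughout I use that $\eta_0$ (hence the fixed $M$, which we may take comparable to $\eta_0$) is small, so that a $\pd_y$ acting on a function with $y$-spectrum in $[-2M,2M]$ costs only the small factor $2M$; this is what makes the bookkeeping for the quadratic terms close.

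For the resonant piece, recall $P_0(\eta_0)$ is built from the $c=2$ adjoint modes $g_k^*(z,\eta)$, whereas \eqref{eq:orth} only forces $v_2(t)$ to be orthogonal to $g_k^*(z,\eta,c(t,y))$. Writing $g_k^*(z,\eta,c(t,y))-g_k^*(z,\eta)=\tc(t,y)\,h_k(z,\eta,c(t,y))$ with $h_k$ exponentially localized in $z$ uniformly for $\|\tc\|_Y$ small, one finds that the amplitudes $a_k(\eta)$ attached to $P_0(\eta_0)v_2(t)$ satisfy $\|a_k\|_{L^2(-\eta_0,\eta_0)}\lesssim\|\tc(t)\|_Y\|v_2(t)\|_X$, hence $\|P_0(\eta_0)v_2(t)\|_X\lesssim\bM_{c,x}(T)\|v_2(t)\|_X$. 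Since $\bM_{c,x}(T),\bM_2(T)<\delta_6<1$, this contributes $\lesssim\bM_{c,x}(T)$ to both $L^\infty(0,T;X)$ and $L^2(0,T;X)$.

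For the non-resonant piece, rewrite \eqref{eq:v2} as $\pd_tv_2=\mL v_2+\mathfrak{F}$ with
$\mathfrak{F}=(\mL_{c}-\mL)v_2+\ell+\pd_z(N_{2,1}+N_{2,2}+N_{2,4})+N_{2,3}$; since $v_2(0)=0$ and $P_2(\eta_0,2M)$ commutes with $e^{t\mL}$ (the $g_k,g_k^*$ being eigenfunctions of $\mL$), we get $P_2(\eta_0,2M)v_2(t)=\int_0^te^{(t-s)\mL}P_2(\eta_0,2M)\mathfrak{F}(s)\,ds$. For terms placed directly in $X$ I use $\|e^{t\mL}P_2f\|_X\lesssim e^{-bt}\|f\|_X$ and convolve with $e^{-bt}\in L^1\cap L^2$; for the divergence-form terms I use $\|e^{t\mL}P_2\pd_zf\|_X\lesssim e^{-b't}t^{-1/2}\|e^{az}f\|_X$ and $\|e^{t\mL}P_2\pd_zf\|_X\lesssim e^{-b't}t^{-3/4}\|e^{az}f\|_{L^1_zL^2_y}$, then convolve in time, splitting the $s$-integral near $s=t$ and using Young/H\"older. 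The linear source terms are routine: $(\mL_c-\mL)v_2=\pd_z\bigl((2\tc-6(\varphi_{c}-\varphi))v_2\bigr)$ has $X$-norm $\lesssim\|\tc\|_Y\|\mathcal{E}(v_2)^{1/2}\|_X\in L^2(0,T)$; $\ell$ is a sum of modulation-parameter derivatives times exponentially localized profiles (those carrying $\tpsi_c$ being also $O(e^{-\a(3t+L)})$), so Lemma~\ref{lem:cx_t-bound} gives $\|\ell\|_{L^2(0,T;X)}\lesssim\bM_{c,x}(T)+\bM_1(T)+\bM_2(T)^2$ up to $\eta_0^{-1/2}\bM_{c,x}(T)^2$ which is absorbed into $\bM_{c,x}(T)$ once $\delta_6$ is small relative to $\eta_0$; for $N_{2,3}$ I use $6\pd_y(x_yv_2)=6\pd_z\bigl(x_y\pd_z^{-1}\pd_yv_2\bigr)+6x_{yy}v_2$ to trade the $\pd_y$ for a $\pd_z$, with $\|x_y\pd_z^{-1}\pd_yv_2\|_X,\|x_{yy}v_2\|_X\lesssim\bM_{c,x}(T)\|\mathcal{E}(v_2)^{1/2}\|_X$; and $N_{2,2}$ is handled like $\ell$, its $6\tpsi_cv_2$ part being exponentially small.

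I expect the genuinely nonlinear terms $N_{2,1}=-3(2v_1v_2+v_2^2)$ and $N_{2,4}=6(\tpsi_c-\varphi_c)v_1$ to be the main obstacle, because $v_1$ is not exponentially localized, so the weight cannot be placed on $v_1$. Here the plan is to use the anisotropic Sobolev inequalities of Claim~\ref{cl:aniso} to control $\|e^{az}v_2\|_{L^p}$ ($2\le p\le6$) by products of $\|v_2\|_X$ and $\|\mathcal{E}(v_2)^{1/2}\|_X$, the boundedness of $e^{az}\varphi_c$ (valid since $a+\a<2$) to bound $\|e^{az}\varphi_cv_1\|_X\lesssim\|v_1\|_{W(t)}$, and the fact that $v_2=v-v_1\in L^2(\R^2)$ to absorb the region $z<0$ where the weight is harmless (controlled by $\bM_v(T)+\bM_1(T)$). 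Pairing the bounds $\|v_1(t)\|_{L^2}\lesssim\|v_0\|_{L^2}$ and $\|\mathcal{E}(v_1)^{1/2}\|_{L^2(0,T;W(t))}\le\bM_1(T)$ against the (merely) square-integrable $\|\mathcal{E}(v_2)^{1/2}\|_X$, and exploiting the smoothing $e^{-b't}t^{-3/4}$ (resp. $t^{-1/2}$) together with the small-$\eta_0$ frequency restriction on $P_1(0,2M)v_2$, one gets the $v_1v_2$ and $\varphi_cv_1$ contributions $\lesssim\bM_1(T)$ and the $v_2^2$ contribution $\lesssim\bM_2(T)(\bM_2(T)+\bM_v(T))$. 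Keeping the weight exponent $\a$ with a small reserve margin so that every invocation of \eqref{eq:aniso2} is legitimate, and arranging the H\"older pairings in time so that all integrals close with constants independent of $T$, is the delicate part; collecting all pieces then yields \eqref{eq:nonresonant-ylow}.
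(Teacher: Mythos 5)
Your proof follows the same route as the paper: Duhamel's formula for the $P_2(\eta_0,2M)$-filtered equation combined with the decay estimates of Proposition~\ref{prop:semigroup-est}, and the orthogonality condition \eqref{eq:orth} to control the residual resonant contribution. The only cosmetic difference is that you estimate $P_0(\eta_0)v_2$ separately (using the $O(\tc)$-closeness of $g_k^*(\cdot,\eta,c)$ to $g_k^*(\cdot,\eta,2)$) and add it, whereas the paper instead establishes the equivalence $\|P_2(\eta_0,2M)v_2\|_X\sim\|P_1(0,2M)v_2\|_X$ by citing Lemma~9.2 of \cite{Mi}; both rest on the same observation, so the argument is essentially identical.
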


\begin{proof}[Proof of Lemma~\ref{lem:nonresonant-ylow}]
Let $\tv_2(t)=P_2(\eta_0,2M)v_2(t)$. 
Then 
\begin{equation}
  \label{eq:v1n}
\left\{
  \begin{aligned}
& \pd_t\tv_2=\mL\tv_2+P_2(\eta_0,2M)\{\ell+\pd_x(N_{2,1}+N_{2,2}+N_{2,2}'+N_{2,4})
+N_{2,3}\}\,,
\\ & \tv_2(0)=0\,,
  \end{aligned}\right.
\end{equation}
where $N_{2,2}'=\{2\tc(t,y)+6(\varphi(z)-\varphi_{c(t,y)}(z))\}v_2(t,z,y)$.
Hereafter we abbreviate $P_2(\eta_0,2M)$ as $P_2$.
\par
Applying Proposition~\ref{prop:semigroup-est} to \eqref{eq:v1n},
we have
\begin{equation}
  \label{eq:v1n-est}
  \begin{split}
& \|\tv_2(t)\|_X\lesssim 
\int_0^t e^{-b'(t-s)}(t-s)^{-3/4}\|e^{\a z}P_2N_{2,1}(s)\|_{L^1_zL^2_y}\,ds
\\ & \enskip +\int_0^t e^{-b'(t-s)}(t-s)^{-1/2}
(\|N_{2,2}(s)\|_X+\|N_{2,2}'(s)\|_X+\|N_{2,4}\|_X)\,ds
\\ &\qquad  +\int_0^t e^{-b(t-s)}(\|\ell(s)\|_X+\|N_{2,3}(s)\|_X)\,ds\,.
  \end{split}
\end{equation}
Since
$\|e^{\a z}P_2N_{2,1}\|_{L^1_zL^2_y}\lesssim  \sqrt{M}(\|v_1\|_{L^2}+\|v_2\|_{L^2})
\|v_2\|_X$ by  \cite[Claim~9.1]{Mi}), we have
\begin{equation}
  \label{eq:N2,1-est}
\begin{split}
&  \sup_{t\in[0,T]}\|e^{\a z}P_2N_{2,1}\|_{L^1_zL^2_y}
+\|e^{\a z}P_2N_{2,1}\|_{L^2(0,T;L^1_zL^2_y)}
\lesssim   \sqrt{M}(\bM_1(T)+\bM_v(T))\bM_2(T)\,.
\end{split}
\end{equation}
By the definitions,
\begin{gather*}
\|\ell_1\|_X\lesssim 
\|x_t-2c-3(x_y)^2\|_{L^2}+\|c_t-6c_yx_y\|_{L^2}+\|x_{yy}\|_{L^2}
+\|c_{yy}\|_{L^2}+\|c_y\|_{L^4}^2\,,
\end{gather*}
\begin{align*}
\|\ell_2\|_X\lesssim 
e^{-\a(3t+L)}\bigl(& \|c_t-6c_yx_y\|_{L^2}+\|x_t-2c-3(x_y)^2\|_{L^2}
+\|\tc\|_{L^2}+\|x_{yy}\|_{L^2}
\\ & \enskip +\|c_{yy}\|_{L^2}+\|c_y\|_{L^4}^2\bigr)\,,
\end{align*}
\begin{gather*}
\|N_{2,2}\|_X \lesssim
(\|x_t-2c-3(x_y)^2\|_{L^\infty}+\|\tc\|_{L^\infty})\|v_2\|_X\,,
\\  \|N_{2,2}'\|_X\lesssim \|\tc\|_{L^\infty}\|v_2(t)\|_X\,,\quad
\|N_{2,4}\|_X \lesssim \|v_1(t)\|_{W(t)}\,.
\end{gather*}
Hence it follows from Lemma~\ref{lem:cx_t-bound} and the definitions of
$\bM_{c,x}(T)$, $\bM_1(T)$ and $\bM_2(T)$ that
\begin{equation}
  \label{eq:ell-est}
\sup_{t\in[0,T]}\|\ell\|_X+\|\ell\|_{L^2(0,T;X)}
\lesssim \bM_{c,x}(T)+\bM_1(T)+\bM_2(T)^2\,,
\end{equation}
\begin{equation}
\label{eq:N2,2-est}
\sup_{t\in[0,T]}\|N_{2,2}\|_X+\|N_{2,2}\|_{L^2(0,T;X)}
 \lesssim  (\bM_{c,x}(T)+\bM_1(T)+\bM_2(T)^2)\bM_2(T)\,,
\end{equation}
\begin{equation}
  \label{eq:N2,2'-est}
\begin{split}
&\sup_{t\in[0,T]}(\|N_{2,2}'\|_X+\|N_{2,4}\|_X)
+\|N_{2,2}'\|_{L^2(0,T;X)}+\|N_{2,4}\|_{L^2(0,T;X)}
\\\lesssim & \bM_{c,x}(T)\bM_2(T)+\bM_1(T)\,.  
\end{split}
\end{equation}
Since $\|\pd_yP_2\|_{B(X)}\lesssim M$, we have
$\|P_2N_{2,3}\|_X\lesssim  M(\|x_y\|_{L^\infty}+\|x_{yy}\|_{L^\infty})\|v_2\|_X$ and
\begin{equation}
\label{eq:N2,3-est}
\sup_{t\in[0,T]}\|N_{2,3}\|_X+\|N_{2,3}\|_{L^2(0,T;X)}
\lesssim  \bM_{c,x}(T)\bM_2(T)\,.  
\end{equation}
Combining \eqref{eq:N2,1-est}--\eqref{eq:N2,3-est}
with \eqref{eq:v1n-est}, we have
$$\sup_{t\in[0,T]}\|\tv_2(t)\|_X+\|\tv_2(t)\|_{L^2(0,T;X)}
\lesssim \bM_{c,x}(T)+\bM_1(T)+(\bM_v(T)+\bM_2(T))\bM_2(T)\,.$$
\par
As long as $v_2(t)$ satisfies the orthogonality condition \eqref{eq:orth} and
$\tc(t,y)$ remains small, we have
$$\|\tv_2(t)\|_X\lesssim \|P_1(0,2M)v_2(t)\|_X\lesssim \|\tv_2(t)\|_X$$
in exactly the same way as the proof of Lemma~9.2 in  \cite{Mi}.
Thus we have \eqref{eq:nonresonant-ylow}.
This completes the proof of Lemma~\ref{lem:nonresonant-ylow}.
\end{proof}
\bigskip

\subsection{Virial estimates for $v_2$}
\label{subsec:virial}
Next, we prove a virial type estimate in the weighted space $X$
in order to estimate the high frequency part of $v_>$.
We need the smallness assumption of  $\sup_{t\ge0}\|v_1(t)\|_{L^3(\R^2)}$
to estimate the high frequency part $v_>(t)$.
\begin{lemma}
\label{lem:virial}
Let $\a\in(0,2)$ and $v_2(t)$ be a solution to \eqref{eq:v2} satisfying $v_2(0)=0$.
Suppose $\bM_1'(\infty)$ is sufficiently small.
Then there exist positive constants $\delta_6$, $M_1$ and $C$ such that
if $\bM_{c,x}(T)+\bM_1(T)+\bM_2(T)+\bM_v(T)<\delta_6$ and $M\ge M_1$,
then for $t\in[0,T]$,
\begin{equation*}
\|v_2(t)\|_X^2 \le C\int_0^t e^{-M\a(t-s)}\left(\|\ell(s)\|_X^2+\|P_{\le M}v_2(s)\|_X^2
+\|v_1(s)\|_{W(s)}^2
\right)\,ds\,.
\end{equation*}
\end{lemma}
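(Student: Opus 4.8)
The plan is to run a weighted virial (monotonicity) argument for $v_2$ in $X=L^2(\R^2;e^{2\a z}dzdy)$ against the truncated weight $p(z)=e^{2\a z}$, following the lines of \cite[Chapter~8]{Mi}, but being careful that because we have \emph{split off} the freely propagating part $v_1$ we can no longer absorb the derivative of the nonlinearity $\pd_z(N_{2,1})$ by integration by parts; this is where the $L^3$-smallness hypothesis $\bM_1'(\infty)\ll 1$ enters. First I would multiply \eqref{eq:v2} by $2p(z)v_2$ and integrate over $\R^2$. The linear part $\int p\,v_2\,\mL_cv_2$ produces, after integrating by parts, the good term $-\nu\int p'\,\{(\pd_zv_2)^2+(\pd_z^{-1}\pd_yv_2)^2+v_2^2\}$ plus lower-order contributions controlled by $\a$ and by the exponential localization of $\varphi_c$; since $0<p'\le 2\a p$ this yields, schematically,
\begin{equation*}
\frac{d}{dt}\|v_2(t)\|_X^2+\nu\|\mathcal{E}(v_2)^{1/2}\|_X^2
\le C\a\|v_2(t)\|_X^2 + (\text{forcing})\,,
\end{equation*}
where the forcing collects the $\ell$-term, the $N_{2,j}$-terms and the $v_1$-driven term $N_{2,4}$.

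Next I would estimate each forcing contribution. The linear forcing $2\int p\,v_2\,\ell$ is bounded by $\tfrac{\nu}{8}\|v_2\|_X^2 + C\|\ell\|_X^2$ by Young. The terms $N_{2,2}$, $N_{2,2}'$, $N_{2,3}$ are at least linear in the small quantities $\tc$, $x_t-2c-3(x_y)^2$, $x_y$, $x_{yy}$ (controlled via $\bM_{c,x}(T)$ small) times $v_2$ or $\pd_zv_2$, so by Claim~\ref{cl:aniso} and Young's inequality they are absorbed into $\tfrac{\nu}{8}\|\mathcal{E}(v_2)^{1/2}\|_X^2$ at the cost of a small multiple of $\|v_2\|_X^2$; $N_{2,4}=6(\tpsi_c-\varphi_c)v_1$ contributes $\lesssim \|v_1\|_{W(t)}^2$. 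The genuinely dangerous term is $2\int p\,v_2\,\pd_z N_{2,1}=-6\int p'\,v_1v_2^2-6\int p'\,v_2^2\cdot v_2$ (plus a piece $\int p'v_2^3$ from the $-3v_2^2$ part); the cubic self-interaction $\int p'v_2^3$ is handled by Claim~\ref{cl:aniso} (giving $\|v_2\|_{L^2}\|v_2\|_X^{1/2}\|\mathcal{E}(v_2)^{1/2}\|_X^{3/2}$, absorbed since $\bM_v(T)+\bM_2(T)\ll1$), while $\int p'\,v_1v_2^2$ is bounded by H\"older as $\lesssim \|v_1\|_{L^3}\|e^{\a z}v_2\|_{L^3}^2\lesssim \|v_1\|_{L^3}\|v_2\|_X^{1/2}\|\mathcal{E}(v_2)^{1/2}\|_X^{3/2}$ using \eqref{eq:aniso2} with $p=3$; since $\bM_1'(\infty)$ is assumed small this is absorbed into the good term. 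The main obstacle is precisely this last estimate: without the $L^3$-control of $v_1$ one cannot close the energy inequality, which is why the hypothesis on $\bM_1'(\infty)$ is indispensable here, in contrast to \cite{Mi}.

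Finally, I would split $v_2=P_{\le M}v_2 + P_{\ge M}v_2$ in the surviving quadratic term $C\a\|v_2\|_X^2$ on the right: for the high-frequency part one gains a spectral gap — $\|\mathcal{E}(v_2)^{1/2}\|_X^2\gtrsim \|\pd_z^{-1}\pd_y v_2\|_X^2 + \|v_2\|_X^2$ is not enough by itself, but combined with $P_{\ge M}$ one has $\|\pd_yP_{\ge M}v_2\|_X\ge cM\|P_{\ge M}v_2\|_X$, hence (after commuting $P_{\ge M}$ past $\mL_c$, which produces only lower-order errors by exponential localization of $\varphi_c$) the quadratic term for the high-frequency piece is dominated by $\tfrac{1}{2}M\a\|P_{\ge M}v_2\|_X^2 \le \tfrac12\|\mathcal{E}(P_{\ge M}v_2)^{1/2}\|_X^2$ once $M\ge M_1$ is large. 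The low-frequency part is simply left on the right-hand side as $C\|P_{\le M}v_2\|_X^2$. This yields a differential inequality of the form
\begin{equation*}
\frac{d}{dt}\|v_2(t)\|_X^2 \le -M\a\|v_2(t)\|_X^2
+ C\big(\|\ell(t)\|_X^2+\|P_{\le M}v_2(t)\|_X^2+\|v_1(t)\|_{W(t)}^2\big)\,,
\end{equation*}
and integrating via Gronwall from $v_2(0)=0$ gives the stated bound. I expect the bookkeeping of the commutator $[\,P_{\ge M},\mL_c\,]$ and of the cross terms between $P_{\le M}v_2$ and $P_{\ge M}v_2$ in the nonlinearities to be the most tedious part, but all such terms are either lower order in $M^{-1}$ or quadratically small and can be absorbed.
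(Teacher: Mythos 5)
Your core argument matches the paper's: the same weighted energy identity with $p(z)=e^{2\a z}$; the same identification that the split $v=v_1+v_2$ leaves the cross term $\int p'\,v_1v_2^2$ (and $\int p\,v_1v_2\pd_zv_2$) not absorbable by integration by parts, forcing the $\|v_1\|_{L^3}$-smallness hypothesis via \eqref{eq:aniso2}; the same low/high $y$-frequency split to extract the $e^{-M\a(t-s)}$ rate; and the same endgame of integrating the resulting differential inequality from $v_2(0)=0$.

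The one genuine divergence is in how you treat the high-frequency piece. You propose applying $P_{\ge M}$ to the equation (``commuting $P_{\ge M}$ past $\mL_c$''), which would force you to estimate the commutator $[P_{\ge M},\,6\pd_z(\varphi_{c(t,y)}\cdot)]$ and the cross-terms between $v_<$ and $v_>$. The paper avoids this entirely: it never writes an evolution equation for $v_>$. Instead, since $P_{\le M}$ and $P_{\ge M}$ are orthogonal Fourier projections in $y$ while the weight $p'$ depends only on $z$, one has $\int p'v_2^2=\int p'(v_<)^2+\int p'(v_>)^2$, so the quadratic potential term in the energy identity for the full $v_2$ splits cleanly, and the Plancherel/AM--GM inequality $\int p'\{(\pd_zv_>)^2+(\pd_z^{-1}\pd_yv_>)^2\}\ge 2M\int p'(v_>)^2$ absorbs the high-frequency part directly against the dissipation $\int p'\mathcal{E}(v_2)$. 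No commutator appears. Two smaller flags: (i) your stated spectral-gap inequality $\|\pd_yP_{\ge M}v_2\|_X\gtrsim M\|P_{\ge M}v_2\|_X$ is not the operative one, since $\pd_y$ does not occur in $\mathcal{E}$ --- the gain comes from the pair $(\pd_z,\pd_z^{-1}\pd_y)$ via AM--GM on the symbol $|\xi+i\a|^2+\eta^2/|\xi+i\a|^2\ge 2|\eta|$; and (ii) your reason that the commutator would be ``lower order by exponential localization of $\varphi_c$'' is not right --- localization in $z$ is irrelevant to a $y$-frequency commutator; what makes $[P_{\ge M},\,\varphi_{c(t,y)}\cdot]$ small is the slow $y$-variation of $c(t,y)$ (recall $\widehat{\tc}(t,\cdot)$ is supported in $[-\eta_0,\eta_0]$). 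Since the paper's route avoids the commutator altogether, these slips do not sink the argument, but you should be aware that the cleaner mechanism needs neither of them.
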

\begin{proof}[Proof of Lemma~\ref{lem:virial}]
\par
Let $p(z)=e^{2\a z}$.
Multiplying \eqref{eq:v2} by $2p(z)v_2(t,z,y)$ and integrating
the resulting equation by parts, we have for $t\in[0,T]$,
\begin{equation}
  \label{eq:vir1}
  \begin{split}
&  \frac{d}{dt}\left(
\int_{\R^2} p(z)v_2(t,z,y)^2\,dzdy\right) 
+\int_{\R^2}p'(z)\left(\mathcal{E}(v_2)-4v_2^3\right)(t,z,y)\,dzdy
\\ =& \sum_{k=1}^5III_k(t)\,.
  \end{split}  
\end{equation}
where
\begin{align*}
III_1=&2 \int_{\R^2}p(z)\ell v_2(s,z,y)\,dzdyds\,,\\
III_2=&-\int_{\R^2}p'(z)\left((\tx_t(t,y)-3x_y(t,y)^2\right)v_2(t,z,y)^2\,dzdy\,,
\\  III_3=&
\int_{\R^2}\biggl\{p'''(z)+6p(z)^2
\left(\frac{\varphi_{c(t,y)}(z)-\psi_{c(t,y),L}(z+3t)}{p(z)}\right)_z
\biggr\}v_2(t,z,y)^2\,dzdy\,,
\\ III_4=& 12\int_{\R^2}p'(z)(v_1v_2^2)(t,z,y)\,dzdy
+12\int_{\R^2}p(z)(v_1v_2\pd_zv_2)(t,z,y)\,dzdy\,,
\\ III_5=& 12
\int_{\R^2} \pd_z\left(p(z)v_2(t,z,y)\right)
\left(\varphi_{c(t,y)}(z)-\psi_{c(t,y),L}(z+3t)\right)v_1(t,z,y)\,dzdy\,.
\end{align*}
\par
Obviously,
$$\left|III_1\right| \le
\int p'(z) v_2^2\,dzdy +\frac{1}{2\a}\int p(z) \ell^2 \,dzdy\,,$$
$$\left|III_3\right| \le (M-1) \int_{\R^2} p'(z) v_2(t,z,y)^2\,dzdy\,,$$
where
$$M=1+4\a^2+6\sup_{t,y,z}\frac{p^2(z)}{p'(z)}
\left|\left(\frac{\varphi_{c(t,y)}(z)-\psi_{c(t,y),L}(z+3t)}{p(z)}
\right)_z\right|\,,$$
and 
$$III_5 \lesssim \left(\int_{\R^2}p'(z)
\left\{(\pd_zv_2)^2+v_2^2\right\}(t,z,y)\,dzdy\right)^{1/2}\|v_1(t)\|_{W(t)}\,.
$$
Using Claim~\ref{cl:aniso} and the H\"older inequality, we have
\begin{equation*}
\left|\int p'(z)v_2(t,z,y)^3dzdy\right|\lesssim 
\|v_2(t)\|_{L^2}\int_{\R^2}p'(z) \mathcal{E}(v_2(t,z,y))dzdy\,,
\end{equation*}
\begin{equation*}
III_4 \lesssim \|v_1(t)\|_{L^3}
\int_{\R^2} p'(z)\left((\pd_zv_2)^2+(\pd_z^{-1}\pd_yv_2)^2+v_2^2\right)
(t,z,y)\,dzdy\,.
\end{equation*}
By Lemma~\ref{lem:cx_t-bound},
$$\left|III_2\right| \lesssim (\bM_{c,x}(T)+\bM_1(T)+\bM_2(T)^2)
\int_{\R^2} p'(z) v_2(t,z,y)^2
\,.dzdy$$
For $y$-high frequencies, the potential term can be absorbed into the
left hand side. Indeed, it follows from
Plancherel's theorem and the Schwarz inequality that
\begin{align*}
& \int_{\R^2}
p'(z)\left((\pd_zv_>)^2+(\pd_z^{-1}\pd_yv_>)^2\right)(t,z,y)\,dzdy
\\ =& 2\a\int_{\R^2}\left(|\xi+i\a|^2+\frac{\eta^2}{|\xi+i\a|^2}\right)
\left|\mF v_{>}(t,\xi+i\a,\eta)\right|^2\,d\xi d\eta
\\ \ge & 2M\int_{\R^2}p'(z)v_{>}(t,z,y)^2\,dzdy\,.
\end{align*}
Combining the above, we have for $t\in[0,T]$,
\begin{equation}
  \label{eq:vir2}
  \begin{split}
& \frac{d}{dt}\int_\R p(z)v_2(t,z,y)^2\,dzdy +M\a\int_\R p(z)v_2(t,z,y)^2\,dzdy 
\\ \le &
\frac{1}{2\a}\int_{\R^2}p(z)\ell^2\,dzdy
+M\a\int_{\R^2}p(z)(v_<)^2(s,z,y)\,dzdy+O\left(\|v_1(t)\|_{W(t)}^2\right)
  \end{split}
\end{equation}
provided $\delta_6$ is sufficiently small.
Lemma~\ref{lem:virial} follows immediately from \eqref{eq:vir2}.
 Thus we complete the proof.
\end{proof}

Now we are in position to prove Lemma~\ref{lem:exp-bound}.
\begin{proof}[Proof of Lemma~\ref{lem:exp-bound}]
Since $\chi_M(\eta)=0$ for $\eta\not\in[-2M,2M]$, we have
$$\|P_{\le M}v_2(t)\|_X\le \|P_1(0,2M)v_2(t)\|_X\,.$$ 
Combining Lemmas~\ref{lem:nonresonant-ylow} and \ref{lem:virial} with
\eqref{eq:ell-est} and the definition $\bM_1(T)$,
we have \eqref{eq:M4-bound} provided $\delta_6$ is sufficiently small.
This completes the proof of Lemma~\ref{lem:exp-bound}.
\end{proof}
\bigskip

\section{Proof of Theorem~\ref{thm:stability}}
\label{sec:thm1}
Now we are in position to complete the proof of Theorem~\ref{thm:stability}.
\begin{proof}[Proof of Theorem~\ref{thm:stability}]
Let $\delta_*=\min_{0\le i\le 6}\delta_i/2$.
Thanks to the scaling invariance of \eqref{eq:KPII}, we may assume $c_0=2$
without loss of generality. Since $\tc(0)=\tx(0)\equiv0$ in $Y$ and
$v_1(0)=v_0$ and $v_2(0)=0$, there exists a $T>0$ such that
\begin{equation}
  \label{eq:tot-1}
\bM_{tot}(T):=\bM_{c,x}(T)+\bM_1(T)+\bM_2(T)+\bM_v(T)\le \frac{\delta_*}{2}\,.
\end{equation}
By Proposition~\ref{prop:continuation}, we can extend the
decomposition \eqref{eq:decomp} satisfying \eqref{eq:decomp2} and
\eqref{eq:orth} beyond $t=T$. Let $T_1\in(0,\infty]$ be the maximal
time such that the decomposition \eqref{eq:decomp} satisfying
\eqref{eq:decomp2} and \eqref{eq:orth} exists for $t\in[0,T_1]$ and
$\bM_{tot}(T_1)\le \delta_*$.  Suppose $T_1<\infty$.  Then it follows
from Lemmas~\ref{lem:M1-bound}, \ref{lem:M4-bound}, \ref{lem:v1-a},
\ref{lem:v1-b} and \ref{lem:exp-bound} that
if $\||D_x|^{-1/2}v_0\|_{L^2}+\||D_x|^{1/2}v_0\|_{L^2}
+\||D_x|^{-1/2}|D_y|^{1/2}v_0\|_{L^2}$ is sufficiently small, then
\begin{align*}
& \bM_1(T)\lesssim \|v_0\|_{L^2}\,,\\
& \bM_{c,x}(T)\lesssim \|v_0\|_{L^2}+\bM_1(T)+\bM_2(T)^2
\lesssim \|v_0\|_{L^2}+\bM_2(T)^2\,,\\
& \bM_2(T)\lesssim \bM_{c,x}(T)+\bM_1(T)\lesssim \|v_0\|_{L^2}+\bM_2(T)^2\,,\\
& \bM_v(T)\lesssim \|v_0\|_{L^2}+\bM_{c,x}(T)+\bM_1(T)+\bM_2(T)
\lesssim \|v_0\|_{L^2}+\bM_2(T)\,,
\end{align*}
and $\bM_{tot}(T_1)\lesssim \|v_0\|_{L^2(\R^2)}+\bM_{tot}(T_1)^2$.
If $\|v_0\|_{L^2(\R^2)}$ is sufficiently small, we have
$$\bM_{tot}(T_1)\le \delta_*/2\,,$$
which contradicts to the maximality of $T_1$.
Thus we prove $T_1=\infty$ and
\begin{equation}
  \label{eq:tot-3}
  \bM_{tot}(\infty)\lesssim \|v_0\|_{L^2(\R^2)}\,.
\end{equation}
\par

By \eqref{eq:decomp}, \eqref{eq:psinorm} and \eqref{eq:tot-3},
\begin{align*}
\|u(t,x,y)-\varphi_{c(t,y)}(x-x(t,y))\|_{L^2(\R^2)}\le &
\|v(t)\|_{L^2(\R^2)}+\|\tpsi_{c(t,y)}\|_{L^2(\R^2)}
\\ \lesssim &  \bM_v(\infty)+\bM_{c,x}(\infty)\,.
\end{align*}
Since $H^k(\R)\subset Y$ for any $k\ge0$,
we see that \eqref{phase-sup} follows immediately
from \eqref{eq:tot-3} and Lemma~\ref{lem:cx_t-bound}.
Moreover, we have \eqref{phase2} because $c_y$,
$x_{yy}\in L^2(0,\infty;Y)$ and  $pd_tc_y$,  $\pd_tx_{yy}\in L^\infty(0,\infty;Y)$.
\par
Finally, we will prove \eqref{AS}.
Since $\|f\|_{L^\infty}\lesssim \|f\|_Y^{1/2}\|\pd_yf\|_Y^{1/2}$ 
for any $f\in Y$, we have from \eqref{phase-sup}
\begin{gather*}
\|x_t(t)-2c(t)\|_{L^\infty}+\|c(t)-c_0\|_{L^\infty}\lesssim \|v_0\|_{L^2}\,,
\end{gather*}
and for any $y\in\R$,
\begin{align*}
  x(t,y)=\int_0^t x_t(s,y)\,ds\ge (2c_0+O(\|v_0\|_{L^2})t\,.
\end{align*}
Here we use $x(0,\cdot)=0$.
Let $\tx_1(t)=c_0t$ and $x_0=R$. Then by Lemma~\ref{lem:virial-0}, 

\begin{equation}
  \label{eq:v1-loc-est}
  \lim_{t\to\infty}\|v_1(t,x+x(t,y),y)\|_{L^2(x>-c_0t/2,y\in\R)}=0\,.
\end{equation}
Dividing the integral interval $[0,t]$ of 
into $[0,t/2]$ and $[t/2,t]$ and
using \eqref{eq:v1n-est}--\eqref{eq:N2,3-est}, 
we have
$$\lim_{t\to\infty}\|v_2(t)\|_X=0\,.$$
Thus we complete the proof of Theorem~\ref{thm:stability}.
\end{proof}
\bigskip

\section{Proof of Theorem~\ref{thm:poly}}
\label{sec:poly}
If $v_0(x,y)$ is polynomially localized, then at $t=0$
we can decompose a perturbed line soliton into a sum of a locally
amplified line soliton and a remainder part $v_*(x,y)$ satisfying
$\int_\R v_*(x,y)\,dx=0$ for all $y\in\R$.
\begin{lemma}
  \label{lem:nonzeromean1}
Let $c_0>0$ and $s>1$ be constants.
There exists a positive constant $\eps_0$ such that if
$\eps:=\|\la x\ra^sv_0\|_{H^1(\R^2)}<\eps_0$, then there 
exists $c_1(y)\in H^1(\R)$ such that
\begin{align}
\label{eq:nonzero1a}
 & \int_\R\left(\varphi_{c_1(y)}(x)-\varphi_{c_0}(x)\right)\,dx
   =\int_\R v_0(x,y)\,dx\,,
\\ \label{eq:nonzero1b} 
& 
\left\|c_1(\cdot)-c_0\right\|_{L^2(\R)}
\lesssim \left\|\la x\ra^{s/2}v_0\right\|_{L^2(\R^2)}\,,
\quad 
\left\|\pd_yc_1(\cdot)\right\|_{H^1(\R)}
\lesssim \left\|\la x\ra^{s/2}v_0\right\|_{H^1(\R^2)}\,,
\\ \label{eq:nonzero1c}
& \|v_*\|_{L^2(\R^2)} \lesssim \|\la x\ra^{s/2}v_0\|_{L^2(\R^2)}\,,
\quad 
 \|\pd_x^{-1}v_*\|_{L^2}+\|v_*\|_{H^1(\R^2)} \lesssim
\|\la x\ra^sv_0\|_{H^1(\R^2)}\,,
\end{align}
where $v_*(x,y)=v_0(x,y)+\varphi_{c_0}(x)-\varphi_{c_1(y)}(x)$.
\end{lemma}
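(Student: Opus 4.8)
The plan is to reduce the constraint \eqref{eq:nonzero1a} to a pointwise-in-$y$ algebraic equation for $c_1(y)$, solve it in closed form, and then read off every estimate from weighted $L^2_x$ bounds carried through the $x$-integral. First I would note that a direct computation (substituting $x\mapsto\sqrt{2/c}\,x$) gives $\int_\R\varphi_c(x)\,dx=2\sqrt{2c}$, so \eqref{eq:nonzero1a} is equivalent to $2\sqrt{2c_1(y)}=2\sqrt{2c_0}+m(y)$, where $m(y):=\int_\R v_0(x,y)\,dx$. This is solved explicitly by $c_1(y)=c_0+G(m(y))$ with $G(\mu):=\frac{\sqrt{2c_0}}{2}\mu+\frac18\mu^2$, and $c_1(y)$ is then well defined and positive provided $\sqrt{2c_0}+\frac12 m(y)>0$, i.e. as soon as $\|m\|_{L^\infty(\R)}$ is small. (Alternatively one could apply the implicit function theorem, since $\pd_c\!\int_\R\varphi_c\,dx=\sqrt{2/c}\neq0$, but the closed form is more convenient.)

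The hypothesis $s>1$ enters in estimating $m$: it makes $\la x\ra^{-s}$ integrable on $\R$, so Cauchy--Schwarz (writing $v_0=\la x\ra^{-s/2}\cdot\la x\ra^{s/2}v_0$) gives $|m(y)|\lesssim\|\la\cdot\ra^{s/2}v_0(\cdot,y)\|_{L^2_x}$ and, after differentiating under the integral sign, $|\pd_y m(y)|\lesssim\|\la\cdot\ra^{s/2}\pd_y v_0(\cdot,y)\|_{L^2_x}$; integrating in $y$ yields $\|m\|_{L^2(\R)}\lesssim\|\la x\ra^{s/2}v_0\|_{L^2(\R^2)}$ and $\|m\|_{H^1(\R)}\lesssim\|\la x\ra^{s/2}v_0\|_{H^1(\R^2)}\le\eps$, so $\|m\|_{L^\infty}\lesssim\eps$ by Sobolev embedding and the formula for $c_1$ applies once $\eps_0$ is small. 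Since $G(0)=0$ and $G$ is a polynomial, $|G(m(y))|\lesssim|m(y)|$ pointwise, and, using that $H^1(\R)$ is a Banach algebra together with the smallness of $\|m\|_{H^1}$, the product and chain rules give the estimates \eqref{eq:nonzero1b} for $c_1-c_0$ and for $\pd_y c_1$ directly from the $L^2$ and $H^1$ norms of $m$ and $\pd_y m$.

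With $c_1$ fixed, set $v_*(x,y)=v_0(x,y)+\varphi_{c_0}(x)-\varphi_{c_1(y)}(x)$; then $\int_\R v_*(x,y)\,dx=0$ by construction, which is exactly \eqref{eq:nonzero1a}. The $L^2$ and $H^1(\R^2)$ bounds in \eqref{eq:nonzero1c} follow from $\|v_0\|_{L^2}\le\eps$, from the mean value estimates $\|\pd_x^j(\varphi_{c_1(y)}(\cdot)-\varphi_{c_0}(\cdot))\|_{L^2_x}\lesssim|c_1(y)-c_0|$ for $j=0,1$ and $\|\pd_c\varphi_{c_1(y)}(\cdot)\|_{L^2_x}\lesssim1$ (uniformly while $c_1(y)$ stays near $c_0$), and from $\pd_y v_*=\pd_y v_0-\pd_c\varphi_{c_1(y)}(x)\,\pd_y c_1(y)$, combined with the bounds on $c_1$ above.

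The remaining, and main, estimate is the bound on $\pd_x^{-1}v_*$. I would write $\pd_x^{-1}v_*(x,y)=-\int_x^\infty v_*(x',y)\,dx'$ (legitimate since $v_*(\cdot,y)$ decays and has zero $x$-mean) and rewrite it as $\int_{-\infty}^x v_*(x',y)\,dx'$ for $x\le0$. The soliton contribution satisfies $\big|\int_x^\infty(\varphi_{c_1(y)}-\varphi_{c_0})\,dx'\big|\lesssim|c_1(y)-c_0|\,\la x\ra e^{-\delta|x|}$ and lies in $L^2_x$ with norm $\lesssim|c_1(y)-c_0|$, while for the $v_0$ contribution Cauchy--Schwarz with the weight $\la x'\ra^{2s}$ gives, for $x\ge0$, $\big|\int_x^\infty v_0(x',y)\,dx'\big|\lesssim\la x\ra^{(1-2s)/2}\|\la\cdot\ra^s v_0(\cdot,y)\|_{L^2_x}$, and $\la x\ra^{(1-2s)/2}\in L^2_x$ \emph{precisely because} $s>1$ — this is the step where the full weight $\la x\ra^s$ rather than $\la x\ra^{s/2}$ is needed, since with the half weight one would have to require $s>2$. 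Squaring, integrating in $x$ and then in $y$ gives $\|\pd_x^{-1}v_*\|_{L^2(\R^2)}\lesssim\|\la x\ra^s v_0\|_{L^2}$; applying $\pd_y$ and repeating the argument with $\pd_y v_0$, $\pd_y c_1$ in place of $v_0$, $c_1$ yields $\|\pd_x^{-1}v_*\|_{H^1(\R^2)}\lesssim\|\la x\ra^s v_0\|_{H^1}$, completing \eqref{eq:nonzero1c}. I expect the only genuinely delicate point to be this $\pd_x^{-1}v_*$ bound — making the $x$-tail $\int_x^\infty v_0$ square-integrable in $x$ — which is exactly what forces the weight exponent $s>1$; the rest is a routine combination of the closed-form solution for $c_1$, mean value inequalities for the soliton family, and the algebra property of $H^1(\R)$.
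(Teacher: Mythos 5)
Your proposal is correct and takes essentially the same route as the paper: solve \eqref{eq:nonzero1a} in closed form (the explicit quadratic formula for $c_1(y)$ in terms of $m(y)=\int_\R v_0\,dx$), bound $m$ via Cauchy--Schwarz using $\la x\ra^{-s}\in L^1(\R)$ for $s>1$, and control $\pd_x^{-1}v_*$ by the tail estimate $\big|\int_x^{\pm\infty}v_*\big|\lesssim\la x\ra^{1/2-s}\|\la\cdot\ra^s v_*(\cdot,y)\|_{L^2_x}$, which is square-integrable in $x$ precisely because $s>1$. The only cosmetic differences are that the paper uses Minkowski's integral inequality rather than a mean value/Banach-algebra argument for the soliton-family estimates, and that you prove a slightly stronger $H^1$ bound on $\pd_x^{-1}v_*$ than the $L^2$ bound the lemma actually asks for.
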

\begin{proof}
First, we will prove
\begin{equation}
  \label{eq:nonzerompf1}
\sup_{y\in\R}\left|\int_\R v_0(x,y)\,dx\right|\lesssim
 \|\la x\ra^{s/2}v_0\|_{L^2(\R^2)}+ \|\la x\ra^{s/2}\pd_yv_0\|_{L^2(\R^2)}\,.
\end{equation}
By the Schwarz inequality,
\begin{equation}
\label{eq:nonzerompf2}
  \left|\int_\R v_0(x,y)\,dx\right|\lesssim
\left\{\int_\R \la x\ra^s v_0(x,y)^2\,dx\right\}^{1/2}\,.
\end{equation}
Substituting
$\sup_yv_0^2(x,y)\le \int_\R \{(\pd_yv_0)^2+v_0^2\}(x,y)\,dy$
into the right hand side of \eqref{eq:nonzerompf2},
we have \eqref{eq:nonzerompf1}.
\par
Let
$$c_1(y)=\left\{\sqrt{c_0}+\frac{1}{2\sqrt{2}}\int_\R v_0(x,y)\,dx\right\}^2\,.$$
Then we have \eqref{eq:nonzero1a} and $\int_\R v_*(x,y)\,dx=0$ for
every $y\in\R$ because
\begin{equation}
  \label{eq:nonzerompf3}
\int_\R \{\varphi_{c_1(y)}(x)-\varphi_{c_0}(x)\}\,dx=2\sqrt{2}(\sqrt{c_1(y)}-\sqrt{c_0})\,.
\end{equation}
Moreover, it follows from \eqref{eq:nonzerompf1} that
$$\sup_{y\in\R}|c_1(y)-c_0|\lesssim 
 \|\la x\ra^{s/2}v_0\|_{L^2(\R^2)}+ \|\la x\ra^{s/2}\pd_yv_0\|_{L^2(\R^2)}\,.$$
By \eqref{eq:nonzero1a}, \eqref{eq:nonzerompf2} and \eqref{eq:nonzerompf3},
$$\|c_1(y)-c_0\|_{L^2(\R)}\lesssim 
 \left\|\int_\R v_0(x,y)\,dx\right\|_{L^2(\R)}
\lesssim  \|\la x\ra^{s/2}v_0\|_{L^2(\R^2)}\,.$$
Using Minkowski's inequality, we have for $j\ge0$,
\begin{align*}
\|\pd_x^j\varphi_{c_1(y)}-\pd_x^j\varphi_{c_0}\|_{L^2(\R^2)} \le &
\left\| \int_{c_0}^{c_1(y)}\left\|\pd_x^j\pd_c\varphi_c\right\|_{L^2_x(\R)}\,dc
\right\|_{L^2_y(\R)}
\\ \lesssim & \|c_1(y)-c_0\|_{L^2(\R)} \lesssim \|\la x\ra^{s/2}v_0\|_{L^2(\R^2)}\,,
\end{align*}
and 
$\|\pd_x^jv_*\|_{L^2(\R^2)} \lesssim \|\pd_x^jv_0\|_{L^2(\R^2)}
+\|\la x\ra^{s/2}v_0\|_{L^2(\R^2)}$.
Similarly, we have
\begin{gather*}
\|\pd_yc_1\|_{L^2(\R)}\lesssim \|\la x\ra^{s/2}\pd_yv_0\|_{L^2(\R^2)}\,,\\
\|\pd_yv_*\|_{L^2(\R^2)}\lesssim \|\pd_yv_0\|_{L^2(\R^2)}+\|\pd_yc_1\|_{L^2(\R)}
\lesssim \|\la x\ra^{s/2}\pd_yv_0\|_{L^2(\R^2)}\,.
\end{gather*}
\par
Since $\int_\R v_*(x,y)\,dx=0$, 
$$\pd_x^{-1}v_*(x,y)
=\int_{\pm\infty}^x\{v_0(x_1,y)+\varphi_{c_0}(x_1)-\varphi_{c_1(y)}(x_1)\}\,dx_1\,.$$
By the Schwarz inequality, we have for $\pm x>0$,
\begin{align*}
  |\pd_x^{-1}v_*(x,y)|
\lesssim & 
\left(\|\la x\ra^sv_0(\cdot,y)\|_{L^2(\R)}
+\|\la x\ra^s(\varphi_{c_1(y)}-\varphi_{c_0})\|_{L^2(\R)}\right)
\la x\ra^{-s+1/2}
\\ \lesssim & 
\left(\|\la x\ra^sv_0(\cdot,y)\|_{L^2(\R)}+|c_1(y)-c_0|\right)
\la x\ra^{-s+1/2}\,,
\end{align*}
and 
\begin{align*}
\|\pd_x^{-1}v_*\|_{L^2(\R^2)} \lesssim & \|\la x\ra^sv_0\|_{L^2}+\|c_1-c_0\|_{L^2(\R^2)}
 \lesssim  \|\la x\ra^s v_0\|_{L^2(\R^2)}\,.
\end{align*}
Thus we complete the proof.
\end{proof}

Now we are in position to prove Theorem~\ref{thm:poly}.
\begin{proof}[Proof of Theorem~\ref{thm:poly}]
To prove Theorem~\ref{thm:poly}, we modify the definitions
of $v_1(t,z,y)$, $v_2(t,z,y)$ $c(t,y)$ and $x(t,y)$ as follows.
Let $\tv_1$ be a solution of \eqref{eq:KPII} satisfying
$\tv_1(0,x,y)=v_*(0,x,y)$.
Then it follows from Lemmas~\ref{lem:v1-a} and \ref{lem:nonzeromean1} that
$\bM_1(\infty)\lesssim \|\la x\ra^{s/2}v_0\|_{L^2(\R^2)}$.
By \eqref{eq:nonzero1c},
\begin{align*}
& \||D_x|^{-1/2}v_*\|_{L^2(\R^2)}+\||D_x|^{1/2}v_*\|_{L^2(\R^2)}
+\||D_x|^{-1/2}|D_y|^{1/2}v_*\|_{L^2(\R^2)}
\\ \lesssim & \|v_*\|_{H^1(\R^2)}+\|\pd_x^{-1}v_*\|_{L^2(\R^2)}
\lesssim  \|\la x\ra^sv_0\|_{H^1(\R^2)}\,,
\end{align*}
and $\bM_1'(\infty)\lesssim \|\la x\ra^sv_0\|_{H^1(\R^2)}$
follows from Lemma~\ref{lem:v1-b}.
\par
Let $\tu(t,x,y)=u(t,x,y)-\tv_1(t,x,y)$. Then $\tu(0,x,y)=\varphi_{c_1(y)}(x)$.
By Lemma~\ref{lem:nonzeromean1},
$$\|u(0,x,y)-\varphi_{c_0}(x)\|_X \lesssim \|c_1(\cdot)-c_0\|_{L^2(\R)}
\lesssim \|\la x\ra^{s/2}v_0\|_{L^2(\R^2)}\,,$$
and  Lemma~\ref{lem:decomp} and Remark\ref{rem:decomp} imply that
there exist a $T>0$ and 
$(v_2(t),\tc(t),\tx(t))\in X\times Y\times Y$
satisfying 
\eqref{eq:decomp}, \eqref{eq:decomp2} and \eqref{eq:orth} for $t\in[0,T]$,
where $\tc(t,y)=c(t,y)-c_0$ and $\tx(t,y)=x(t,y)-2c_0t$. 
Clearly, we have
$$\|v_2(0)\|_{X\cap L^2(\R^2)}+\|\tc(0)\|_Y\lesssim
\|\la x\ra^{s/2}v_0\|_{L^2(\R^2)}\,,
\quad x(0,\cdot)=0\,,$$
and following the proof of Lemmas~\ref{lem:M1-bound}, \ref{lem:M4-bound}
and \ref{lem:exp-bound}, we can prove
\begin{gather*}
\bM_{c,x}(T)\lesssim \|\la x\ra^{s/2}v_0\|_{L^2(\R^2)}+\bM_1(T)+\bM_2(T)^2\,,
\\
\bM_v(T)\lesssim \|\la x\ra^{s/2}v_0\|_{L^2(\R^2)}+\bM_{c,x}(T)+\bM_1(T)+\bM_2(T)\,,
\\
\bM_2(T)\lesssim \|\la x\ra^{s/2}v_0\|_{L^2(\R^2)}+\bM_{c,x}(T)+\bM_1(T)\,.
\end{gather*}
Thus we can prove Theorem~\ref{thm:poly} in exactly the same way as
Theorem~\ref{thm:stability}.
\end{proof}
\bigskip

\appendix
\section{Proof of Claim~\ref{cl:[B3,pdy]}}
\label{sec:[B3,pdy]}

\begin{proof}[Proof of Claim~\ref{cl:[B3,pdy]}]
By Claims~B.1 and B.2 in \cite{Mi},
\begin{gather}
\label{eq:wS1-est}
\|\wS_1\|_{B(Y)}+\|\wS_1\|_{B(Y_1)}\lesssim 1\,,
\quad [\wS_1,\pd_y]=0\,,
\\ \label{eq:wS2-est}
\|\wS_2\|_{B(Y_1,Y)}\lesssim \|\tc\|_Y\,,\quad
\|\wS_2\|_{B(Y)}\lesssim \|\tc\|_{L^\infty}\,,
\quad \left\|[\pd_y,\bS_2]\right\|_{B(Y_1,Y)} \lesssim \|c_y\|_Y\,.
\end{gather}
Following the proof of Claims~B.3--B.5 in \cite{Mi},
we can show
\begin{gather}
\label{eq:S3p}
\|S^3_k[p](f)(t,\cdot)\|_Y\le Ce^{-a(3t+L)}\|e^{\a z}p\|_{L^2}\|\wP_1f\|_{Y}\,,
\quad [S^3_k[p],\pd_y]=0\,,
\\ \label{eq:S4p}
\|S^4_k[p](f)(t,\cdot)\|_{Y_1} \le Ce^{-a(3t+L)}\|e^{\a z}p\|_{L^2}
\|\tc(t)\|_Y\|f\|_{L^2}\,,
\\ \label{eq:Sv2}
\|S^5_k(f)(t,\cdot)\|_{Y_1}+\|S^6_k(f)\|_{Y_1}\le 
C\|v_2(t,\cdot)\|_X\|f\|_{L^2}\,,
\end{gather}
in exactly the same way. 
By \eqref{eq:wS1-est} and \eqref{eq:S3p}, we have  $[\pd_y,B_4]=0$.
\par

Applying \eqref{eq:S3p}, \eqref{eq:S4p} with
$p(z)=\pd_z^j\psi(z)$ ($j\ge0$) and using \eqref{eq:Sv2}
and Claim~\ref{cl:(1+wT2)^{-1}},
we have
\begin{equation}
\label{eq:bS3}
  \begin{split}
& \|\wS_3\|_{B(Y)}+\|\bS_3\|_{B(Y))}\lesssim e^{-\a(3t+L)}\,,\\
& \|\wS_4\|_{B(Y,Y_1)}+\|\bS_4\|_{B(Y,Y_1)} \lesssim \|\tc(t)\|_Y e^{-\a(3t+L)}\,,
  \end{split}
\end{equation}
\begin{gather}
 \label{eq:bS5}
\|\wS_5\|_{L^2(0,T;B(Y,Y_1))}+\|\bS_5\|_{L^2(0,T;B(Y,Y_1))}\lesssim \|v_2(t)\|_X\,.
\end{gather}
In view of \eqref{eq:def-B3},
\begin{equation}
  \label{eq:b3-pd1}
[\pd_y,B_3]=[\pd_y,\wC_1]+\sum_{j=1,2}\pd_y^2[\pd_y,\bS_j]-
\sum_{j=3,4,5}[\pd_y,\bS_j]\,.  
\end{equation}
We will estimate each term of the right hand side
following the proof of \cite[Claim~7.1]{Mi}.
By \cite[Claims~B.7]{Mi},
\begin{equation}
  \label{eq:[T,pd]}
\|[\pd_y,\wC_k]\|_{B(Y,Y_1)}\lesssim \|c_y\|_Y \quad\text{for $k=1$, $2$.}  
\end{equation}
Applying \cite[Claims~B.1--B.7]{Mi} to 
$[\pd_y,\bS_j]=\{[\pd_y,\wS_j]+\bS_j[\wC_2,\pd_y]\}(I+\wC_2)^{-1}$,
we have
\begin{equation}
\label{eq:[pd,bS]}
\|[\pd_y,\bS_j]\|_{B(Y,Y_1)}\lesssim \|c_y\|_Y 
\quad\text{for $1\le j\le 4$,}\,.
\end{equation}
By \eqref{eq:Sv2} and the fact that $\pd_y$ is bounded on $Y$ and $Y_1$,
\begin{equation}
  \label{eq:[pd,bS5]}
  \|[\pd_y,\bS_5]\|_{B(Y,Y_1)}\lesssim \|v_2\|_X\,.
\end{equation}
Combining \eqref{eq:b3-pd1}--\eqref{eq:[pd,bS5]},
we obtain the first two estimates of Claim~\ref{cl:[B3,pdy]}.
Thus we complete the proof.
\end{proof}

Finally, we will  estimate the operator norm of $S^7_1[q_c]$.
\begin{claim}
  \label{cl:S7}
There exist positive constants $C$ and $\delta$ such that\newline if
$\sup_{t\in[0,T]}\|\tc(t)\|_{L^\infty}\le \delta$, then 
\begin{equation}
  \label{eq:S7-est}
\|S^7[q_c](f)(t,\cdot)\|_{Y_1}\le C\|v_1(t,\cdot)\|_{W(t)}
\left\|e^{\a|\cdot|}\sup_{c\in[2-\delta,2+\delta]}q_c\right\|_{L^2(\R)}\|f\|_{L^2(\R)}\,.
  \end{equation}
\end{claim}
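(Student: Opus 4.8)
The plan is to reduce the bound to a pointwise estimate on the Fourier transform in $y$. Writing $h(t,y)=S^7_1[q_c](f)(t,y)$ for the function in question, the definition and the Fourier convention of the paper give
$$\mF_yh(t,\eta)=\frac{\mathbf{1}_{[-\eta_0,\eta_0]}(\eta)}{2\sqrt{2\pi}}
\int_{\R^2}v_1(t,z,y_1)f(y_1)q_{c(t,y_1)}(z)e^{-iy_1\eta}\,dzdy_1\,.$$
Since $\mF_yh$ is supported in $[-\eta_0,\eta_0]$ and $\|h(t,\cdot)\|_{Y_1}=\|\mF_yh(t,\cdot)\|_{L^\infty}$, it suffices to estimate the inner double integral uniformly in $\eta\in[-\eta_0,\eta_0]$, and the oscillatory factor $e^{-iy_1\eta}$ may simply be discarded by the triangle inequality.

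The execution is a double Cauchy--Schwarz. First I would invoke the hypothesis $\sup_{t\in[0,T]}\|\tc(t)\|_{L^\infty}\le\delta$, which forces $c(t,y_1)\in[2-\delta,2+\delta]$ for every $y_1\in\R$ (recall $c=2+\tc$), so that after moving absolute values inside we may replace $|q_{c(t,y_1)}(z)|$ by $Q(z):=\sup_{c\in[2-\delta,2+\delta]}|q_c(z)|$. Splitting $Q(z)=\big(e^{\a|z|/2}Q(z)\big)\cdot e^{-\a|z|/2}$ and applying the Schwarz inequality in $z$ gives
$$\int_\R|v_1(t,z,y_1)|\,Q(z)\,dz\le
\|e^{-\a|z|/2}v_1(t,\cdot,y_1)\|_{L^2_z}\,\|e^{\a|z|/2}Q\|_{L^2_z}\,,$$
and a second Schwarz inequality in $y_1$ against $|f(y_1)|$ yields
$$|\mF_yh(t,\eta)|\lesssim \|f\|_{L^2(\R)}\,\|e^{-\a|z|/2}v_1(t)\|_{L^2(\R^2)}\,\|e^{\a|z|/2}Q\|_{L^2(\R)}\,.$$

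To conclude I would note the two elementary weight comparisons $e^{-\a|z|/2}\le e^{-\a|z|/2}+e^{-\a|z+3t+L|}$ and $e^{\a|z|/2}\le e^{\a|z|}$, valid for all $z\in\R$, which give $\|e^{-\a|z|/2}v_1(t)\|_{L^2(\R^2)}\le\|v_1(t)\|_{W(t)}$ and $\|e^{\a|z|/2}Q\|_{L^2(\R)}\le\big\|e^{\a|\cdot|}\sup_{c\in[2-\delta,2+\delta]}q_c\big\|_{L^2(\R)}$; combining these with the previous display is exactly \eqref{eq:S7-est}. There is no substantial obstacle in this argument; the only point that needs attention is the bookkeeping of the exponential weights — matching the $e^{-\a|z|/2}$ of the $W(t)$-norm against one half of the $e^{\a|z|}$-weight carried by $q_c$ — which is precisely why the half-weight $e^{\a|z|/2}$ (rather than the full $e^{\a|z|}$) is absorbed, with the finiteness of $\|e^{\a|\cdot|}\sup_c q_c\|_{L^2}$ being provided by the exponential localization \eqref{eq:exp-localized} of $\varphi_c$, $\varphi_c'$, $\pd_c\varphi_c$ (valid since $\a<2$).
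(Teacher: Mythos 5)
Your proof is correct and follows exactly the same route as the paper: the paper's proof simply says "Applying the Schwarz inequality to the right hand side of $\|S^7_1[q_c](f)(t,\cdot)\|_{Y_1}=\frac{1}{2\sqrt{2\pi}}\|\int_{\R^2}v_1 f\,q_{c}e^{-iy\eta}\,dzdy\|_{L^\infty[-\eta_0,\eta_0]}$," and you have merely spelled out the intermediate steps — discarding the phase, bounding $|q_{c(t,y)}|$ by $\sup_{|c-2|\le\delta}|q_c|$ using the $L^\infty$ smallness of $\tc$, then Cauchy--Schwarz in $z$ (splitting the weight as $e^{-\a|z|/2}\cdot e^{\a|z|/2}$) and in $y_1$, and matching the resulting weights against those in $W(t)$ and $e^{\a|\cdot|}$.
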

\begin{proof}
Applying the Schwarz inequality to the right hand side of
\begin{align*}
\|S^7_1[q_c](f)(t,y)\|_{Y_1}
=& \frac{1}{2\sqrt{2\pi}}\left\|
\int_{\R^2} v_1(t,z,y)f(y)q_{c(t,y)}(z)e^{-iy\eta}\,dzdy
\right\|_{L^\infty[-\eta_0,\eta_0]}\,,
\end{align*}
we have \eqref{eq:S7-est}.
\end{proof}

Using Lemma~\ref{lem:cx_t-bound}, we can prove the following commutator
estimate in the same way as Claim~\ref{cl:[B3,pdy]}.
\begin{claim}
  \label{cl:B3-comm}
There exist positive constants $C$ and $\delta$ such that if
$\bM_{c,x}(T)\le\delta$, then
\begin{gather*}
\left\|\left[\pd_t,B_3\right]\right\|_{B(L^2(0,T;Y),L^1(0,T;Y))}
\le C (e^{-\a L}+\bM_{c,x}(T)+\bM_1(T)+\bM_2(T))\,.
\end{gather*}
\end{claim}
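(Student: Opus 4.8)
The plan is to differentiate the formula \eqref{eq:def-B3} defining $B_3$ and to estimate each resulting commutator exactly along the lines of the proof of Claim~\ref{cl:[B3,pdy]}, with $\pd_y$ replaced by $\pd_t$. Since $B_1$ and $\pd_y^2$ have constant coefficients, $[\pd_t,B_1]=[\pd_t,\pd_y^2]=0$ and
\begin{equation*}
[\pd_t,B_3]=[\pd_t,\wC_1]+\sum_{j=1,2}\pd_y^2[\pd_t,\bS_j]-\sum_{j=3,4,5}[\pd_t,\bS_j]\,.
\end{equation*}
Writing $\bS_j=\wS_j(I+\wC_2)^{-1}$ and $[\pd_t,(I+\wC_2)^{-1}]=-(I+\wC_2)^{-1}[\pd_t,\wC_2](I+\wC_2)^{-1}$, each $[\pd_t,\bS_j]$ reduces to $[\pd_t,\wS_j]$ and $[\pd_t,\wC_2]$ composed with the operators $(I+\wC_2)^{-1}$, which are bounded on $Y$ and $Y_1$ by Claim~\ref{cl:(1+wT2)^{-1}}. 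For $[\pd_t,\wC_k]$ ($k=1,2$) the derivative $\pd_t$ falls on $c$, producing a multiplier proportional to $c_t$; arguing as for \cite[Claim~B.7]{Mi} gives $\|[\pd_t,\wC_k]f(t)\|_{Y_1}\lesssim\|c_t(t)\|_Y\|f(t)\|_Y$, so that Lemma~\ref{lem:cx_t-bound} (together with $\bM_{c,x}(T)\le\delta$, which lets one absorb $\bM_2(T)^2$ into $\bM_2(T)$ and $\eta_0^{-1/2}\bM_{c,x}(T)^2$ into $\bM_{c,x}(T)$) yields an $L^1(0,T;Y_1)$ bound of order $\bM_{c,x}(T)+\bM_1(T)+\bM_2(T)$ times $\|f\|_{L^2(0,T;Y)}$.

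For the operators $\wS_j$ with $1\le j\le4$: the operators $S^1_k$ are independent of both $c$ and $t$, so $[\pd_t,\wS_1]=0$; $[\pd_t,\wS_2]$ has exactly the structure of $[\pd_y,\bS_2]$ in \eqref{eq:wS2-est} with $c_y$ replaced by $c_t$, hence $\|[\pd_t,\wS_2]f(t)\|_Y\lesssim\|c_t(t)\|_Y\|f(t)\|_{Y_1}$. For $\wS_3$ and $\wS_4$ the kernels contain the explicit factor $\psi(z+3t+L)$ (and, for $\wS_4$, also $\tc$), so $\pd_t$ produces either the still exponentially localized factor $3\psi'(z+3t+L)$, which preserves the decay $e^{-\a(3t+L)}$ of \eqref{eq:S3p}--\eqref{eq:S4p}, or a factor $c_t$; integrating $e^{-3\a t}$ in $t$ supplies the $e^{-\a L}$ term, and the $c_t$ terms are handled again by Lemma~\ref{lem:cx_t-bound}. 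Feeding these into the bookkeeping of \eqref{eq:b3-pd1} exactly as in Claim~\ref{cl:[B3,pdy]} gives the claimed bound for the pieces $j=1,2,3,4$.

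The main obstacle is the term $[\pd_t,\bS_5]$, i.e.\ the time derivative of the $v_2$-dependent operators $S^5_k$, $S^6_k$. Here $\pd_t$ lands on $v_2(t,z,y_1)$ inside the defining integrals and must be removed via the equation \eqref{eq:v2}: one substitutes $\pd_tv_2=\mL_{c}v_2+\ell+\pd_z(N_{2,1}+N_{2,2}+N_{2,4})+N_{2,3}$, and integrates by parts in $z$ so that the three $z$-derivatives in $\mL_{c}v_2=-\pd_z(\pd_z^2-2c+6\varphi_c)v_2-3\pd_z^{-1}\pd_y^2v_2$ and the derivative in $\pd_z(N_{2,1}+N_{2,2}+N_{2,4})$ fall onto the adjoint eigenfunctions $g_k^*,\pd_zg_k^*,\pd_cg_k^*$, which decay exponentially as $z\to-\infty$ and grow only like $e^{(\Re\beta(\eta)-1)z}$ as $z\to+\infty$, the latter being absorbed by the weight of $X$; the non-local term is dealt with using $\pd_z^{-1}\pd_yv_2\in L^2(0,T;X)$ (the remark preceding \eqref{eq:diff-tv}). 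What remains are integrals bounded by $\|v_2(t)\|_X$, $\|\ell(t)\|_X$, $\|N_{2,j}(t)\|_X$ and $\|v_1(t)\|_{W(t)}$ (the last coming from $N_{2,4}=6(\tpsi_c-\varphi_c)v_1$), plus a $c_t$-term arising when $\pd_t$ hits the $c(t,y_1)$-dependence of $g_k^*$; invoking \eqref{eq:ell-est}--\eqref{eq:N2,3-est}, Lemma~\ref{lem:cx_t-bound} and $\|v_2\|_{L^2(0,T;X)}\le\bM_2(T)$, one obtains $\|[\pd_t,\bS_5]f\|_{L^1(0,T;Y_1)}\lesssim(e^{-\a L}+\bM_{c,x}(T)+\bM_1(T)+\bM_2(T))\|f\|_{L^2(0,T;Y)}$. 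Summing the five commutator contributions and using the embedding $Y_1\hookrightarrow Y$ for functions with Fourier support in $[-\eta_0,\eta_0]$ then completes the proof.
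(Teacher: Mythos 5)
Your proof is essentially correct, and in fact it supplies considerably more detail than the paper itself, which gives no argument beyond the one-line attribution ``Using Lemma~\ref{lem:cx_t-bound}, we can prove the following commutator estimate in the same way as Claim~\ref{cl:[B3,pdy]}.''\enspace Your treatment of the pieces $[\pd_t,\wC_k]$, $[\pd_t,\wS_1]=0$, $[\pd_t,\wS_2]$, and $[\pd_t,\bS_3]$, $[\pd_t,\bS_4]$ matches what one obtains by replaying Claim~\ref{cl:[B3,pdy]} with $\pd_y$ replaced by $\pd_t$: the derivative lands on $c$ (producing $c_t$, controlled by Lemma~\ref{lem:cx_t-bound}) or on $\psi(z+3t+L)$ (producing the factor $e^{-\a(3t+L)}$ whose $t$-integral gives the $e^{-\a L}$ term). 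Your most important contribution is the observation that the argument used for $[\pd_y,\bS_5]$ in the proof of Claim~\ref{cl:[B3,pdy]} — which relies only on the fact that $\pd_y$ is bounded on $Y$ and $Y_1$ — \emph{does not carry over} to $\pd_t$, since $\pd_t$ is not a bounded operator. You must genuinely differentiate the $v_2$-dependence inside $S^5_k$, $S^6_k$, and since $\|\pd_t v_2(t)\|_X$ is not controlled (the equation contains $\pd_z^3 v_2$ and $\pd_z^{-1}\pd_y^2v_2$, neither of which is in $L^2(0,T;X)$), the substitution of the evolution equation \eqref{eq:v2} followed by integration by parts against $g_k^*$ in $z$ and against $e^{-iy_1\eta}$ in $y_1$ is necessary and sufficient. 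This is the step the paper's brief attribution does not spell out, and you have identified and executed it correctly.

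One small technical imprecision: after substituting the equation and integrating by parts, you claim the quadratic term $\pd_z N_{2,1}$ contributes an integral bounded by $\|N_{2,1}(t)\|_X$. Since $N_{2,1}=-3(2v_1v_2+v_2^2)$ and neither $\|v_1\|_{L^\infty}$ nor $\|v_2\|_{L^\infty}$ is controlled in the set-up, $\|N_{2,1}\|_X$ is not directly available; the correct bound, obtained by H\"older in $z$ (using that $e^{-\a z}\pd_z\pd_cg_k^*$ is in $L^\infty_z$ uniformly in $\eta\in[-\eta_0,\eta_0]$ and $c$ near $2$) and Minkowski/Schwarz in $y_1$, is in terms of $\|e^{\a z}N_{2,1}\|_{L^1_zL^2_{y}}$. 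This is exactly the norm the paper uses for $N_{2,1}$ in \eqref{eq:N2,1-est}, which is indeed in $L^2(0,T)$ and of size $O((\bM_1+\bM_v)\bM_2)$, so the conclusion is unaffected. For the remaining terms $N_{2,2}$, $N_{2,4}$, and $\ell$, the $X$-norm bounds \eqref{eq:ell-est}--\eqref{eq:N2,2'-est} apply directly, as you say.
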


\section{Estimates of $R^k$}
\label{sec:Rk}
\begin{claim}
  \label{cl:R1-R2}
There exist positive constants $\delta$ and $C$ such that if
$\bM_{c,x}(T)\le \delta$, then
\begin{equation*}
\|R^2_k\|_{L^2(0,T;Y)} \le C\bM_{c,x}(T)^2\,.
\end{equation*}
\end{claim}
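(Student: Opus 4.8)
The plan is to estimate separately the three pieces making up
$$R^2_k=3S^2_k[\varphi_c](x_{yy})-3S^2_k[\pd_z^{-1}\pd_c\varphi_c](c_{yy})-3\sum_{j=1,2}S^j_k[\pd_z^{-1}\pd_c^2\varphi_c](c_y^2)\,.$$
Throughout, the smallness hypothesis $\bM_{c,x}(T)\le\delta$ keeps $c(t,y)=2+\tc(t,y)$ in a fixed neighbourhood of $2$, so that $\varphi_c$, $\pd_c\varphi_c$, $\pd_z^{-1}\pd_c\varphi_c$ and $\pd_z^{-1}\pd_c^2\varphi_c$ are uniformly exponentially localized by \eqref{eq:exp-localized}; combined with the boundedness of the (regularized) adjoint eigenfunctions this is what makes the $z$-integrals defining the kernels of $S^j_k[q_c]$ converge, and it is precisely the content of \cite[Claims~B.1 and B.2]{Mi} that $\|S^1_k\|_{B(Y)}\lesssim 1$, with $S^1_k$ independent of $c$, and $\|S^2_k\|_{B(Y,Y_1)}\lesssim\|\tc\|_Y$. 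Since every element of $Y$ has Fourier support in $[-\eta_0,\eta_0]$ one has $\|f\|_Y\lesssim\|f\|_{Y_1}$, so it is enough to bound each piece in $L^2(0,T;Y_1)$.

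For the two $S^2_k$-terms I would use that $x_y,\tc\in Y$ implies $x_{yy},c_{yy}\in Y$, together with the bound $\|S^2_k[q_c]g\|_{Y_1}\lesssim\|\tc\|_Y\|g\|_Y$ and the smoothing $\|c_{yy}(t)\|_Y\le\eta_0\|c_y(t)\|_Y$ from \eqref{eq:H^s-Y}. Taking the $\|\tc\|_Y$-factor in $L^\infty_t$ and the remaining factor in $L^2_t$ via Hölder's inequality gives
$$\|S^2_k[\varphi_c](x_{yy})\|_{L^2(0,T;Y)}+\|S^2_k[\pd_z^{-1}\pd_c\varphi_c](c_{yy})\|_{L^2(0,T;Y)}\lesssim\|\tc\|_{L^\infty(0,T;Y)}\bigl(\|x_{yy}\|_{L^2(0,T;Y)}+\eta_0\|c_y\|_{L^2(0,T;Y)}\bigr)\lesssim\bM_{c,x}(T)^2\,.$$

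The quadratic terms involving $c_y^2$ require a little more care, and I expect them to be the main point: $c_y^2$ does not belong to $Y$, since its Fourier transform is supported in $[-2\eta_0,2\eta_0]$ rather than $[-\eta_0,\eta_0]$, so the operator bounds above do not apply verbatim with $c_y^2$ as input. However, each $S^j_k$ carries a spectral cut-off $\mathbf{1}_{[-\eta_0,\eta_0]}(\eta)$, so the $Y_1$-norm of the output equals the supremum over $|\eta|\le\eta_0$ of its Fourier transform, which by Cauchy--Schwarz on a convolution (as in \eqref{eq:Y1-L1}) is controlled by $\|c_y(t)\|_Y^2$ times a bounded factor ($\lesssim 1$ for $j=1$ and $\lesssim\|\tc(t)\|_Y\lesssim 1$ for $j=2$); this yields $\|S^j_k[\pd_z^{-1}\pd_c^2\varphi_c](c_y^2)(t)\|_Y\lesssim\|c_y(t)\|_Y^2$. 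To close the time integral I would then interpolate, $\bigl\|\,\|c_y\|_Y^2\,\bigr\|_{L^2(0,T)}=\|c_y\|_{L^4(0,T;Y)}^2\le\|c_y\|_{L^\infty(0,T;Y)}\|c_y\|_{L^2(0,T;Y)}$, and exploit the $\dot{H}^1$-smoothing of $Y$ once more: $\|c_y(t)\|_Y=\|\pd_y\tc(t)\|_Y\le\eta_0\|\tc(t)\|_Y$ gives $\|c_y\|_{L^\infty(0,T;Y)}\le\eta_0\bM_{c,x}(T)$, while $\|c_y\|_{L^2(0,T;Y)}\le\bM_{c,x}(T)$. Hence this contribution is $\lesssim\eta_0\bM_{c,x}(T)^2\lesssim\bM_{c,x}(T)^2$, and adding the three estimates proves Claim~\ref{cl:R1-R2}. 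The only delicate point is thus the conversion, for the non-band-limited nonlinearity $c_y^2$, of the missing time-uniform control of $\|c_y\|_Y$ into the factor $\|c_y\|_{L^\infty(0,T;Y)}\le\eta_0\bM_{c,x}(T)$ coming from the frequency localization of $Y$.
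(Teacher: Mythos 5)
Your proposal is correct and follows essentially the same route as the paper: apply the operator bounds $\|S^1_k\|_{B(Y)}\lesssim 1$ and $\|S^2_k\|_{B(Y,Y_1)}\lesssim\|\tc\|_Y$ from \cite[Claims~B.1, B.2]{Mi} termwise, then close the $L^2(0,T)$ integral by taking the slowly varying factor ($\|\tc\|_Y$ or $\|c_y\|_Y$, controlled in $L^\infty_t$ via $Y\subset H^1\subset L^\infty$ and the band-limit) out of the time integral and using the $L^2_t$-control of $c_y, x_{yy}$ built into $\bM_{c,x}(T)$. Your extra remarks — that $c_y^2$ is not band-limited inside $Y$ but the spectral cut-off in $S^j_k$ together with \eqref{eq:Y1-L1} handles it, and the $L^4_t\to L^\infty_t\times L^2_t$ interpolation for $\|c_y\|_Y^2$ — are exactly the (implicit) content of the paper's one-line passage from the pointwise-in-$t$ bound $\|R^2_k\|_Y\lesssim\|\tc\|_{L^\infty}(\|x_{yy}\|_Y+\|c_{yy}\|_Y)+(1+\|\tc\|_{L^\infty})\|c_y\|_{L^\infty}\|c_y\|_Y$ to the claim via "$Y\subset H^1(\R)$."
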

\begin{proof}
By \cite[Claims~B.1 and B.2]{Mi}, 
\begin{align*}
\|R^2_k\|_Y \lesssim & 
\|\tc\|_{L^\infty}(\|x_{yy}\|_Y+\|c_{yy}\|_Y)
+(1+\|\tc\|_{L^\infty})\|c_y\|_{L^\infty}\|c_y\|_Y\,.
\end{align*}
Since $Y\subset H^1(\R)$, we have  Claim~\ref{cl:R1-R2}.
\end{proof}

\begin{claim}
  \label{cl:R3}
There exist  positive constants $\delta$ and $C$ such that
if $\bM_1(T)\le \delta$, then
$\|R^3_k(t,\cdot)\|_Y\le Ce^{-\a(3t+L)}\bM_{c,x}(T)^2$  for $t\in[0,T]$.
\end{claim}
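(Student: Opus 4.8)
The plan is to expand $\ell_{22}+\ell_{23}-\ell_{2,lin}$ and the $c$–dependence of $g_k^*$ about $c\equiv 2$ and to show that every surviving contribution to $R^3_k$ is bilinear (or of higher order) in the modulation parameters, multiplied against a $z$–profile concentrated near $z=-(3t+L)$, so that the estimates \eqref{eq:S3p} and \eqref{eq:S4p} produce the gain $e^{-\a(3t+L)}$. Write $\tpsi_c(z)=\mu(c)\psi(z+3t+L)$ with $\mu(c)=2(\sqrt{2c}-2)$, so that $\mu(2)=0$, $\mu'(2)=1$, and hence $|\mu(c)-\tc|\lesssim\tc^2$, $|\sqrt{2/c}-1|\lesssim|\tc|$, $|\varphi_c-\varphi|\lesssim|\tc|$ uniformly for $c$ in a fixed neighbourhood of $2$; smallness of $\bM_{c,x}(T)$ keeps $c(t,\cdot)=2+\tc(t,\cdot)$ in such a neighbourhood. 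First I would record the smoothing remark that, since $\widehat{\tc}(t,\cdot)$ and $\widehat{x_y}(t,\cdot)$ are supported in $[-\eta_0,\eta_0]$, \eqref{eq:H^s-Y} gives, \emph{pointwise in} $t\in[0,T]$, $\|c_y(t)\|_Y+\|c_{yy}(t)\|_Y+\|x_{yy}(t)\|_Y\lesssim\bM_{c,x}(T)$; thus every $y$–derivative entering $\ell_{22}+\ell_{23}$ is controlled in $Y$ by $\bM_{c,x}(T)$ at each fixed time.

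Next I would carry out the algebraic reduction. Decompose $R^3_k=\mathrm{(I)}+\mathrm{(II)}$, where $\mathrm{(I)}$ is the $\eta$–integral of $N:=\ell_{22}+\ell_{23}-\ell_{2,lin}$ against $\overline{g_k^*(z,\eta,c)}$ and $\mathrm{(II)}$ is the $\eta$–integral of $\ell_{2,lin}$ against $\overline{g_k^*(z,\eta,c)-g_k^*(z,\eta)}$. A direct computation, using $\mu(2)=0$, $\mu'(2)=1$, shows that the cancellation built into $\ell_{2,lin}$ removes exactly the part of $\ell_{22}+\ell_{23}$ that is linear in $(\tc,c_{yy})$, so that $N$ is a finite sum of terms $\kappa(t,y)P(t,z)$ in which the coefficient $\kappa$ is one of $\mu(c)-\tc$, $\mu(c)^2$, $\mu(c)\,x_{yy}$, $(\sqrt{2/c}-1)c_{yy}$, $(\mu'(c)-1)c_{yy}$, $\mu''(c)(c_y)^2$, or an $(\mu(c)\varphi_c-\tc\varphi)$–type expression, while $P(t,\cdot)$ is a fixed $C_0^\infty$ profile — a derivative of $\psi$, or the primitive $\Psi(w)=\int_w^\infty\psi$, possibly times some $\pd_z^j\varphi_c$ — composed with the shift $z\mapsto z+3t+L$. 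In particular $\|\kappa(t,\cdot)\|_Y\lesssim\bM_{c,x}(T)^2$ pointwise in $t$ (by \eqref{eq:Y1-L1}, the smoothing remark, and the Nemytskii estimates of \cite[Claim~D.6]{Mi}: each listed coefficient either factors into two terms of size $O(\bM_{c,x}(T))$ or is an image of $\tc$ vanishing to second order at $c=2$), and $\|e^{\a z}P(t,\cdot)\|_{L^2(\R_z)}\lesssim e^{-\a(3t+L)}$ uniformly in $t$. For $\mathrm{(II)}$ one uses $g_k^*(z,\eta,c)-g_k^*(z,\eta)=\tc\,g_{k3}^*(z,\eta,c)$ together with the linearity of $\ell_{2,lin}$ in $(\tc,c_{yy})$, so that $\mathrm{(II)}=S^4_k[q](\tc)-3S^4_k[\Psi](c_{yy})$ with $q=\pd_z(\pd_z^2-1+6\varphi)\psi$.

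Finally I would insert these terms into \eqref{eq:der5}. Each summand of $\mathrm{(I)}$ is of exactly the type treated in the proofs of \cite[Claims~B.3--B.5]{Mi} behind \eqref{eq:S3p}: splitting $g_k^*(z,\eta,c)=g_k^*(z,\eta)+\tc\,g_{k3}^*(z,\eta,c)$ where convenient, then using the exponential localisation of $g_k^*(z,\eta,c)$ and $g_k^*(z,\eta)$ as $z\to-\infty$, the bound $\|\wP_1(fg)\|_{Y_1}\lesssim\|f\|_Y\|g\|_Y$, and $\|f\|_Y\le(2\eta_0)^{1/2}\|f\|_{Y_1}$, one obtains $\|\mathrm{(I)}(t,\cdot)\|_Y\lesssim e^{-\a(3t+L)}\bM_{c,x}(T)^2$; and $\mathrm{(II)}$ is handled directly by \eqref{eq:S4p}, using $\|\tc(t)\|_{L^2}\lesssim\|\tc(t)\|_Y$ and $\|c_{yy}(t)\|_{L^2}\lesssim\bM_{c,x}(T)$. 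Adding the two bounds gives the claim. The only genuinely non-routine point is the bookkeeping in the middle step: one must verify that $\ell_{2,lin}$ truly absorbs every term of $\ell_{22}+\ell_{23}$ that is linear in $(\tc,c_{yy})$, so that no contribution merely linear in $\bM_{c,x}(T)$ remains in $R^3_k$; once this is checked, the analytic estimates are off the shelf from \cite{Mi}.
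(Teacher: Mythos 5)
Your argument is correct and coincides with the paper's intended proof: the paper simply defers this step to Claims~D.2--D.3 of \cite{Mi}, and the decomposition $R^3_k=\mathrm{(I)}+\mathrm{(II)}$ together with the observation that $\ell_{2,\mathrm{lin}}$ absorbs exactly the linear-in-$(\tc,c_{yy})$ part of $\ell_{22}+\ell_{23}$, followed by the $S^3_k$/$S^4_k$ bounds \eqref{eq:S3p}--\eqref{eq:S4p}, is precisely what that reference unfolds to. One small inaccuracy: $\Psi(w)=\int_w^\infty\psi$ is not a $C_0^\infty$ profile (it equals $1$ for $w\le -1$), but since $\|e^{\a z}\Psi\|_{L^2(\R)}<\infty$ the factor $e^{-\a(3t+L)}$ is still produced by the $e^{\a z}$ weight, so the estimate is unaffected.
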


\begin{claim}
\label{cl:akbound}
  There exist  positive constants $C$ and $L_0$ such that
if $L\ge L_0$, then
$$
\|\widetilde{\mathcal{A}}_1(t)\|_{B(Y)}
+\|\widetilde{\mathcal{A}}_1(t)\|_{B(Y_1)}
+\|A_1(t)\|_{B(Y)}\le Ce^{-\a(3t+L)}
\quad\text{for every $t\ge0$.}$$
\end{claim}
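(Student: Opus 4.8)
The plan is to reduce the Claim to a pointwise bound on the Fourier multipliers $\ta_k(t,\eta)$ of \eqref{eq:def-ak}. Since $\ta_k(t,\cdot)$ is supported in $[-\eta_0,\eta_0]$, the operators $\ta_k(t,D_y)$ satisfy $\|\ta_k(t,D_y)\|_{B(Y)}=\|\ta_k(t,D_y)\|_{B(Y_1)}=\|\ta_k(t,\cdot)\|_{L^\infty}$, and $\widetilde{\mathcal{A}}_1(t)$ is the $2\times2$ operator matrix with $\ta_1(t,D_y)$, $\ta_2(t,D_y)$ in its first column and $0$ in its second. Hence the first two terms of the Claim follow once we establish
\begin{equation*}
\sup_{|\eta|\le\eta_0}\bigl(|\ta_1(t,\eta)|+|\ta_2(t,\eta)|\bigr)\lesssim e^{-\a(3t+L)}\qquad\text{for all }t\ge0.
\end{equation*}
The key input is the exponential localization of the adjoint resonant modes towards $z=-\infty$: since $g^*(z,\eta)=\pd_z(e^{\beta(-\eta)z}\sech z)$ with $\beta(\eta)=\sqrt{1+i\eta}$ and $\Re\beta(-\eta)=\sqrt{(\sqrt{1+\eta^2}+1)/2}\ge1$, the functions $g_k^*(\cdot,\eta)$ and their $z$-derivatives are $O(e^{(1+\Re\beta(-\eta))z})$ as $z\to-\infty$, uniformly in $|\eta|\le\eta_0$; since $1+\Re\beta(-\eta)\ge2>\a$, we get $|g_k^*(z,\eta)|\lesssim e^{\a z}$ for $z\le0$. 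Choosing $L\ge L_0$ so large that $\supp\psi(\cdot+3t+L)\subset(-\infty,0)$ for every $t\ge0$, the first integral in \eqref{eq:def-ak} equals $\int_\R P(z+3t+L)\overline{g_k^*(z,\eta)}\,dz$ with $P:=\pd_z(\pd_z^2-1+6\varphi)\psi\in C_0^\infty$ supported in $[-1,1]$, hence is $\lesssim e^{\a}e^{-\a(3t+L)}\|P\|_{L^1}$; and the second integral $\eta^2\int_\R\big(\int_z^\infty\psi(z_1+3t+L)\,dz_1\big)\overline{g_k^*(z,\eta)}\,dz$ is $\lesssim\eta_0^2\int_{-\infty}^{-(3t+L)+1}|g_k^*(z,\eta)|\,dz\lesssim e^{-\a(3t+L)}$ because the primitive of $\psi$ vanishes for $z\ge-(3t+L)+1$. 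This is exactly the computation behind \eqref{eq:S3p}, which one may also simply invoke, and it yields $\|\widetilde{\mathcal{A}}_1(t)\|_{B(Y)}+\|\widetilde{\mathcal{A}}_1(t)\|_{B(Y_1)}\lesssim e^{-\a(3t+L)}$.

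For $A_1(t,D_y)=\diag(1,\pd_y)(B_4^{-1}-B_5^{-1})(B_2-\pd_y^2\wS_0)\diag(\pd_y^2,\pd_y)+\diag(1,\pd_y)B_3^{-1}\widetilde{\mathcal{A}}_1(t)$ I would bound the two summands separately. From $B_4-B_5=-\wS_3$ we get $B_4^{-1}-B_5^{-1}=B_4^{-1}\wS_3B_5^{-1}$; here $B_5^{-1}$ is bounded on $Y\times Y$ (Neumann series around $B_1$, using $\|\pd_y^2\wS_1\|_{B(Y)}\lesssim\eta_0^2$), $B_4^{-1}$ is bounded by Claim~\ref{cl:invB_4}, and $\|\wS_3\|_{B(Y)}\lesssim e^{-\a(3t+L)}$ by \eqref{eq:bS3}, so $\|B_4^{-1}-B_5^{-1}\|_{B(Y)}\lesssim e^{-\a(3t+L)}$. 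The flanking operators $\diag(1,\pd_y)$, $B_2-\pd_y^2\wS_0$ (bounded since $B_2$ is a fixed matrix, $\|\pd_y\|_{B(Y)}\le\eta_0$ and $\|\wS_0\|_{B(Y)}\lesssim1$ by \cite[Claim~B.1]{Mi}) and $\diag(\pd_y^2,\pd_y)$ are uniformly bounded on $Y\times Y$, so the first summand is $O(e^{-\a(3t+L)})$ in $B(Y)$. For the second summand, $\|B_3^{-1}\|_{B(Y)}\lesssim1$ by Claim~\ref{cl:invB_3} and $\widetilde{\mathcal{A}}_1(t)$ was just bounded, so it too is $O(e^{-\a(3t+L)})$; adding the two estimates gives $\|A_1(t)\|_{B(Y)}\lesssim e^{-\a(3t+L)}$ and completes the proof.

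The only step requiring genuine care is the uniform-in-$\eta$ pointwise bound on $\ta_k(t,\eta)$: one must locate the support of the integrand near $z\approx-(3t+L)\to-\infty$ and exploit the decay $|g_k^*(z,\eta)|\lesssim e^{\a z}$ there. Once this is in hand — and it is structurally the same estimate as \eqref{eq:S3p}, already established in the paper — everything else is bookkeeping with compositions of operators whose norms are uniformly bounded, together with the previously proved exponential smallness of $\wS_3$. I therefore expect no substantive obstacle beyond that first estimate.
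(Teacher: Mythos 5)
Your proof is correct and matches the approach the paper implicitly invokes (it defers to \cite[Claims~D.2 and D.3]{Mi}, which are exactly the pointwise-multiplier computation you carry out): reduce the bounds on $\widetilde{\mathcal{A}}_1(t)$ to the $L^\infty$-in-$\eta$ estimate on $\ta_k(t,\eta)$, exploit that $\psi(\cdot+3t+L)$ (and the tail of its primitive) is supported where $z\lesssim -(3t+L)$, and use the exponential decay $|g_k^*(z,\eta)|\lesssim e^{\a z}$ for $z\le0$ uniformly in $|\eta|\le\eta_0$; then bound $A_1(t)$ by the resolvent identity $B_4^{-1}-B_5^{-1}=B_4^{-1}\wS_3B_5^{-1}$ together with \eqref{eq:bS3} and the boundedness of $B_3^{-1}$, $B_4^{-1}$, $B_5^{-1}$. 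One small point worth spelling out is that the uniformity of the bound on $g_2^*(z,\eta)=-\eta^{-1}\Im g^*(z,\eta)$ near $\eta=0$ requires observing that $\Im g^*(z,\eta)=O(\eta)$, which you implicitly use but do not state.
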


Claims~\ref{cl:R3}--\ref{cl:akbound}
can be shown in exactly the same way as \cite[Claims~D.2 and D.3]{Mi}.

\begin{claim}
  \label{cl:R4-R5}
Suppose $\a\in(0,1)$ and $\bM_1(T)\le \delta$ If $\delta$ is sufficiently small,
then there exists a positive constant $C$ such that
\begin{equation}
  \label{eq:R4k}
  \begin{split}
& \sup_{t\in[0,T]}\|R^4_k(t)\|_{Y_1}+\|R^4_k\|_{L^1(0,T;Y_1)}
\le C(\bM_{c,x}(T)+\bM_1(T)+\bM_2(T))\bM_2(T)\,,
  \end{split}
\end{equation}
\begin{align}
\label{eq:R5k}
& \sup_{t\in[0,T]}\|R^5_k(t)\|_{Y_1}+\|R^5_k\|_{L^2(0,T;Y_1)}
\le  C\bM_{c,x}(T)\bM_2(T)\,,
\\ &  \label{eq:R6'}
\|R^6_k\|_{Y_1}\le Ce^{-\a(3t+L)}\bM_{c,x}(T)\bM_2(T)\,.
\end{align}
\end{claim}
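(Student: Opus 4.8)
The plan is to reduce the three bounds to \emph{pointwise-in-$\eta$} estimates of the integrals defining $II^1_k$, $II^2_k$, $II^3_{k1}$, $II^3_{k2}$ and of the $z,y_1$-integral in $R^6_k$. Since $\|f\|_{Y_1}=\|\mathcal{F}_yf\|_{L^\infty}$ and $|\mathcal{F}_{y_1}[\int_\R h(\cdot,z)\,dz](\eta)|\lesssim\|h\|_{L^1_{z,y_1}}$, it suffices to control, uniformly for $|\eta|\le\eta_0$, expressions $\int_{\R^2}|v_2|\,|\text{kernel}|\,dzdy$ and $\int_{\R^2}|N_{2,1}|\,|\pd_zg_k^*|\,dzdy$. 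The basic tool is the weighted pairing inequality $\int_{\R^2}|v_2(t,z,y)|\,|h(z,\eta,c(t,y))|\,dzdy\le\|v_2(t)\|_X\sup_{|c-c_0|\le\delta}\|e^{-\a z}h(\cdot,\eta,c)\|_{L^2_z}$, combined with the growth properties of $g_k^*$ established in \cite[Claims~B.1--B.7]{Mi}: for $j,m\ge0$, $|\pd_z^j\pd_c^mg_k^*(z,\eta,c)|$ grows no faster than $e^{(\Re\beta(\eta)-1)z}$ as $z\to+\infty$ and decays like $e^{(\Re\beta(\eta)+1)z}$ as $z\to-\infty$, uniformly for $|\eta|\le\eta_0$, $c$ near $c_0$. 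Because $\eta_1$ is chosen with $\Re\beta(\eta_1)-1<\a$ and here $\a\in(0,1)$, we have $\Re\beta(\eta)-1<\a$ and $\Re\beta(\eta)+1>2>2\a$ on $[-\eta_0,\eta_0]$, so that $e^{-\a z}g_k^*$, $e^{-\a z}\pd_zg_k^*$, $e^{-2\a z}\pd_zg_k^*$, $e^{\a|z|/2-\a z}\pd_z^j\pd_c^mg_k^*$ and $e^{\a|z|/2-\a z}\pd_z^{-1}\pd_c^mg_k^*$ all lie in $L^\infty(\R)$ (the $\pd_z$-versions even decay exponentially in both directions).

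The term $II^1_k$ is the delicate one. Expanding $\mL_{c(t,y)}^*(\overline{g_k^*(z,\eta,c(t,y))e^{iy\eta}})$ and applying Leibniz to the nonlocal part $\pd_z^{-1}\pd_y^2$ produces the ``frozen-coefficient'' contribution $\overline{[\mL_{c(t,y)}^*(\eta)g_k^*(\cdot,\eta,c(t,y))]\,e^{iy\eta}}$ plus corrections proportional to $c_y\pd_cg_k^*$, $c_{yy}\pd_cg_k^*$, $c_y^2\pd_c^2g_k^*$ (with $\pd_z^{-1}$ acting only in $z$). By the resonant eigenfunction relations, $\mL_c^*(\eta)g_k^*(\cdot,\eta,c)=\sum_jA_{kj}(\eta,c)g_j^*(\cdot,\eta,c)$ with real-analytic $A_{kj}$; splitting $A_{kj}(\eta,c(t,y))=A_{kj}(\eta,c_0)+(A_{kj}(\eta,c(t,y))-A_{kj}(\eta,c_0))$, the first, $y$-independent piece pairs against $v_2$ to give $A_{kj}(\eta,c_0)$ times the orthogonality functional \eqref{eq:orth}, hence $0$, while $|A_{kj}(\eta,c(t,y))-A_{kj}(\eta,c_0)|\lesssim\|\tc(t)\|_{L^\infty}\lesssim\|\tc(t)\|_Y$. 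Using the weighted pairing inequality and $\|\pd_y^k\tc(t)\|_Y\le\eta_0^k\|\tc(t)\|_Y$, the surviving pieces are $\lesssim(\|\tc(t)\|_Y+\|c_y(t)\|_Y+\|c_{yy}(t)\|_Y+\|c_y(t)\|_{L^4}^2)\|v_2(t)\|_X$, which is $\lesssim\bM_{c,x}(T)\bM_2(T)$ in $\sup_t$ and, after Cauchy--Schwarz in time against $\|c_y\|_{L^2(0,T;Y)}$, $\|c_{yy}\|_{L^2(0,T;Y)}$, $\|v_2\|_{L^2(0,T;X)}$, also $\lesssim\bM_{c,x}(T)\bM_2(T)$ in $L^1(0,T)$.

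The remaining terms are direct. In $II^2_k$, $N_{2,1}=-3(2v_1v_2+v_2^2)$; for the $v_1v_2$ pairing with $\pd_zg_k^*$ write $(v_1e^{-\a|z|/2})(v_2e^{\a z})(e^{\a|z|/2-\a z}\pd_zg_k^*)$ with bounded last factor, giving $\lesssim\|v_1(t)\|_{W(t)}\|v_2(t)\|_X$ (or, via $(v_1)(v_2e^{\a z})(e^{-\a z}\pd_zg_k^*)$ and the conservation $\|v_1(t)\|_{L^2}=\|v_0\|_{L^2}$, $\lesssim\|v_1(t)\|_{L^2}\|v_2(t)\|_X$), and the $v_2^2$ pairing with $\pd_zg_k^*$ gives $\lesssim\|v_2(t)\|_X^2$ using the bounded weight $e^{-2\a z}\pd_zg_k^*$; this contributes $(\bM_1(T)+\bM_2(T))\bM_2(T)$ to $\sup_t\|R^4_k\|_{Y_1}$ and, by Cauchy--Schwarz in time ($\|v_1\|_{L^2(0,T;W)}\|v_2\|_{L^2(0,T;X)}$, resp.\ $\|v_2\|_{L^2(0,T;X)}^2$), the same to $\|R^4_k\|_{L^1(0,T;Y_1)}$. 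Next $|II^3_{k1}|\lesssim\|x_{yy}(t)\|_{L^\infty}\|v_2(t)\|_X\lesssim\|x_{yy}(t)\|_Y\|v_2(t)\|_X$; with $\|x_{yy}(t)\|_Y\le\eta_0\|x_y(t)\|_Y$ this is $\lesssim\bM_{c,x}(T)\bM_2(T)$ in $\sup_t$ and, via $\|x_{yy}\|_{L^2(0,T;Y)}\|v_2\|_{L^2(0,T;X)}$, in $L^1(0,T)$, completing \eqref{eq:R4k}. The same computation gives $|II^3_{k2}|\lesssim\|x_y(t)\|_Y\|v_2(t)\|_X$, hence \eqref{eq:R5k} after pulling $\sup_t\|x_y(t)\|_Y$ out and leaving $\|v_2\|_{L^2(0,T;X)}$ ($R^5_k$ occurs only under a $\pd_y$, so $L^2(0,T)$ is all that is needed). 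Finally $\psi_{c(t,y),L}(z+3t)$ is supported in $\{|z+3t+L|\le1\}$, so $\|\psi_{c(t,\cdot),L}(\cdot+3t)e^{\a z}\|_{L^2_{z,y}}\lesssim\|\tc(t)\|_Ye^{-\a(3t+L)}$ as in \eqref{eq:psinorm}; pairing against $\|v_2(t)\|_X$ through the bounded weight $e^{-2\a z}\pd_zg_k^*$ yields \eqref{eq:R6'}.

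The main obstacle is the bookkeeping in $II^1_k$: one must track exactly which $\pd_y$-derivatives of $c(t,y)$ are generated when $\pd_z^{-1}\pd_y^2$ hits $g_k^*(z,\eta,c(t,y))e^{iy\eta}$, justify the integrations by parts that transfer $\mL_c^*$ onto $g_k^*$ (the boundary contributions at $z=+\infty$ vanish precisely because of the gap $\Re\beta(\eta)-1<\a$ on $[-\eta_0,\eta_0]$ together with $v_2\in X$), and isolate the cancellation of the leading eigenvalue term against \eqref{eq:orth}. All of this is structurally identical to the treatment of the analogous terms in \cite[Chapter~6 and Claims~D.2--D.4]{Mi}; the only genuine change is that $v_2$ and the modulation parameters are here controlled in $L^2$, not uniformly, in time, so every time-integral is closed by a Cauchy--Schwarz splitting in which one factor carries an $L^2(0,T;\cdot)$ norm --- available because each term above is at least quadratic with a factor of $v_2$, $c_y$ or $x_{yy}$.
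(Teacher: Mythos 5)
Your reduction to pointwise‐in‐$\eta$ estimates, the treatment of $II^2_k$ (splitting $N_{2,1}$ into $v_1v_2$ and $v_2^2$ with the $W(t)$ resp.\ $X$ weights), the $II^3_{k1}, II^3_{k2}$ estimates and the $R^6_k$ bound using $\supp\psi$ all match the paper, which simply invokes \cite[Claim~D.5]{Mi}. The issue is your treatment of $II^1_k$. You decompose the ``frozen'' piece using $\mL_c^*(\eta)g_k^*(\cdot,\eta,c)=\sum_jA_{kj}(\eta,c)g_j^*(\cdot,\eta,c)$ and split $A_{kj}(\eta,c)=A_{kj}(\eta,c_0)+(A_{kj}(\eta,c)-A_{kj}(\eta,c_0))$, concluding that the surviving pieces are $\lesssim\bigl(\|\tc\|_Y+\|c_y\|_Y+\|c_{yy}\|_Y+\|c_y\|_{L^4}^2\bigr)\|v_2\|_X$. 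The $\|\tc(t)\|_Y\|v_2(t)\|_X$ term is a genuine problem for the $L^1(0,T;Y_1)$ bound in \eqref{eq:R4k}: $\bM_{c,x}(T)$ controls only $\sup_{t}\|\tc(t)\|_Y$ and $\|c_y\|_{L^2(0,T;Y)}$, not $\|\tc\|_{L^2(0,T;Y)}$, so
\[
\int_0^T\|\tc(t)\|_Y\|v_2(t)\|_X\,dt \le \sup_{t\in[0,T]}\|\tc(t)\|_Y\int_0^T\|v_2(t)\|_X\,dt \lesssim \sqrt{T}\,\bM_{c,x}(T)\bM_2(T),
\]
which is not uniform in $T$. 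Your own concluding sentence (``closed by a Cauchy--Schwarz splitting in which one factor carries an $L^2(0,T;\cdot)$ norm'') cannot be applied to this term, and you do not explain how it is absorbed.

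The paper's stated bound $\|II^1_k\|_{Z_1}\lesssim(\|c_y\|_Y+\|c_{yy}\|_Y+\|c_y\|_{L^4}^2)\|v_2\|_X$ has no $\|\tc\|_Y$ term, so the split you describe is too crude; a finer use of the secular condition \eqref{eq:orth} is needed to upgrade $\|\tc\|_Y$ to a $y$-derivative. Concretely: the coefficients $A_{kj}(\eta,c)$ are genuinely $c$-dependent (e.g.\ $\mL_c^*(0)g_2^*(\cdot,0,c)=2(c/2)^{3/2}g_1^*(\cdot,0,c)$), but the remaining piece has the form $\int_\R B(c(t,y))\,h(y)\,dy$ with $B(c)=O(\tc)$ and $h(y)=\int_\R v_2\,\overline{g_j^*(\cdot,0,c)}\,dz$, and one should integrate by parts in $y$ against a primitive of $h$ to produce a factor $c_y$. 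To control the $L^2$ norm of this primitive one uses \eqref{eq:orth} together with the second-order vanishing $g_j^*(\cdot,\eta,c)-g_j^*(\cdot,0,c)=O(\eta^2)$ (so that $\mathcal F_y h(\eta)=O(\eta^2\|v_2\|_X)$ on $[-\eta_0,\eta_0]$); this removes the spurious $\|\tc\|_Y$ factor. As written, your argument has a gap precisely at the $L^1(0,T;Y_1)$ closure of \eqref{eq:R4k}.
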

\begin{proof}
Following the proof of Claim~D.5 in \cite{Mi}, we have
\begin{gather*}
\|II^1_k(t,\cdot)\|_{Z_1}\lesssim (\|c_y(t)\|_Y+\|c_{yy}\|_Y+\|c_y(t)\|_{L^4}^2)
\|v_2(t)\|_X\,,\\   
\|II^2_k(t,\cdot)\|_{Z_1} \lesssim 
(\|e^{-\a|z|/2}v_1(t)\|_{L^2}+\|v_2(t)\|_X)\|v_2(t)\|_X\,,\\
\|II^3_{k1}(t,\cdot)\|_{Z_1}\lesssim \|x_{yy}(t)\|_Y\|v_2(t)\|_X\,,
\quad \|II^3_{k2}(t,\cdot)\|_{Z_1}\lesssim \|x_y(t)\|_Y\|v_2(t)\|_X\,,
\\
\|R^6_k\|_{Y_1}\lesssim \|v_2(t)\|_X\|\tpsi_{c(t,y)}\|_X
\lesssim  e^{-\a(3t+L)}\|\tc(t)\|_{L^2(\R)}\|v_2(t)\|_X\,.
\end{gather*}
Claim~\ref{cl:R4-R5} follows immediately from the above.
\end{proof}
\begin{claim}
\label{cl:R6}
There exist positive constants $\delta$ and  $C$ such that
if $\bM_{c,x}(T) \le \delta$, then
\begin{align}
\label{eq:R6-est2}
&   \sup_{t\in[0,T]}\|\wP_1R^7_1\|_Y+
\|\wP_1R^{71}_1\|_{L^1(0,T;Y)}\le C\bM_{c,x}(T)^2\,,\\
\label{eq:R6-est3}
&  \sup_{t\in[0,T]}\|\wP_1R^7_2\|_Y+\|\wP_1R^7_2\|_{L^2(0,T;Y)}\le C\bM_{c,x}(T)^2\,.
\end{align}
\end{claim}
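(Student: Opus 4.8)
The plan is to estimate $\wP_1R^7_1$ and $\wP_1R^7_2$ summand by summand in formula \eqref{eq:R6-def}, exploiting that every term there is at least quadratic in $\tc$, $b$, $x_y$ and their $y$-derivatives. The basic tools are the algebra structure of $Y$ recorded in Remark~\ref{rem:smoothness} — $\|\wP_1(fg)\|_Y=\|\wP_1(fg)\|_{L^2}\le\|f\|_{L^\infty}\|g\|_{L^2}$ and $\|h\|_{L^\infty}\lesssim\|h\|_Y$ for $h\in Y$ — together with the Bernstein-type bound $\|\pd_y^kh\|_Y\le\eta_0^k\|h\|_Y$ from \eqref{eq:H^s-Y}, the low/high frequency bound $\|(I-\wP_1)g\|_{L^2}\le\eta_0^{-1}\|\pd_yg\|_{L^2}$ (as in \eqref{eq:wpxy}), and the one-dimensional Sobolev inequality $\|f\|_{L^\infty}^2\le2\|f\|_{L^2}\|f'\|_{L^2}$. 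The coefficients $c^{3/2}-2^{3/2}$, $(c/2)^{1/2}-1$, $(c/2)^{3/2}-1$, $c^{-1}-\tfrac12$, $c^2-4$ occurring in \eqref{eq:R6-def} are smooth functions of $\tc$ vanishing at $\tc=0$, so for $\|\tc\|_Y$ small the Nemytskii estimates and $Y\subset H^1(\R)$ give $\|c^{3/2}-2^{3/2}\|_{H^1}\lesssim\|\tc\|_{H^1}\lesssim\|\tc\|_Y$ and likewise for the others, while $\|b\|_Y\lesssim\|\tc\|_Y$ by \cite[Claim~D.6]{Mi}.

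The key preliminary step is to rewrite the summands of \eqref{eq:R6-def} containing $b$ or $b_y$ so that one $y$-derivative is made explicit. Since $\wP_1$ fixes constants, the definition \eqref{eq:bdef} gives $12b=4\sqrt2\,\wP_1c^{3/2}-16$, hence
$$4\sqrt2\,c^{3/2}-16-12b=4\sqrt2\,(I-\wP_1)\bigl(c^{3/2}-2^{3/2}\bigr),$$
and from $b_y=\wP_1\{(c/2)^{1/2}c_y\}$ one obtains $2b_y-(2c)^{1/2}c_y=-2(I-\wP_1)\{(c/2)^{1/2}c_y\}$ and $3(c/2)^{1/2}c_yx_y-3(bx_y)_y=3(I-\wP_1)\{(c/2)^{1/2}c_y\}\,x_y-3b\,x_{yy}$. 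For each high-frequency factor so produced, $\|(I-\wP_1)g\|_{L^2}\le\eta_0^{-1}\|\pd_yg\|_{L^2}$, the smallness of $\tc$, and the $L^\infty$–Sobolev inequality yield
$$\bigl\|4\sqrt2\,c^{3/2}-16-12b\bigr\|_{L^\infty}+\bigl\|(I-\wP_1)\{(c/2)^{1/2}c_y\}\bigr\|_{L^\infty}\lesssim\eta_0^{-1/2}\|c_y\|_Y;$$
the crucial point is that the right-hand side is controlled by $\|c_y\|_Y$, an $L^2$-in-time quantity, whereas a naive bound on these coefficients only gives an $L^\infty$-in-time quantity.

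With these reductions, each summand of $\wP_1R^7_1$ is of the form $\wP_1(f\,g)$ with $g\in\{c_y,\,x_{yy}\}$ and $f$ a product of an $L^\infty(0,T;Y)$-bounded factor (a power of $\tc$ or $b$, or one of the coefficients above, each $O(\bM_{c,x}(T))$ by smallness) and one of the high-frequency factors just estimated. Applying Hölder's inequality in $y$ and reducing $c_y,x_{yy},c_{yy}$ to the $L^\infty(0,T;Y)$-controlled quantities $\tc,x_y$ via $\|\pd_y^kh\|_Y\le\eta_0^k\|h\|_Y$ gives the $\sup_t$ bound in \eqref{eq:R6-est2}; keeping instead a $c_y$- or $x_{yy}$-factor together with a high-frequency $O(\|c_y\|_Y)$-factor in $L^2(0,T;Y)$ and invoking Cauchy–Schwarz in $t$ gives $\|\wP_1R^7_1\|_{L^1(0,T;Y)}\lesssim\bM_{c,x}(T)^2$ with a $T$-independent constant. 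The estimate \eqref{eq:R6-est3} for $\wP_1R^7_2$ is entirely analogous; its only distinctive term is $\tfrac32(c^2-4)(I-\wP_1)(x_y)^2$, handled with $\|(I-\wP_1)(x_y^2)\|_{L^2}\le\eta_0^{-1}\|\pd_y(x_y^2)\|_{L^2}\lesssim\eta_0^{-1}\|x_y\|_Y\|x_{yy}\|_Y$ as in \eqref{eq:wpxy} and $\|c^2-4\|_{L^\infty}\lesssim\|\tc\|_Y$, so that it is $\lesssim\eta_0^{-1}\bM_{c,x}(T)^2\|x_{yy}\|_Y$ in $Y$, hence $O(\bM_{c,x}(T)^2)$ in $L^2(0,T;Y)$ and in $\sup_t$ once $\eta_0^{-1}\bM_{c,x}(T)\le\eta_0^{-1}\delta$ is absorbed; the operator bounds of Claim~\ref{cl:(1+wT2)^{-1}} are used freely. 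I expect the main obstacle to be exactly the $x_{yy}$-terms of $R^7_1$: bounding $\{4\sqrt2\,c^{3/2}-16-12b\}x_{yy}$ naively as ``coefficient in $L^\infty_t$ times $x_{yy}\in L^1_t$'' loses a factor $\sqrt{T}$, and it is precisely the cancellation identifying this coefficient with $4\sqrt2\,(I-\wP_1)(c^{3/2}-2^{3/2})=O(\|c_y\|_Y)$ — and the parallel rewriting of $-3(bx_y)_y$ — that restores the $T$-independence of the constant.
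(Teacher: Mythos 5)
Your proof is correct and in substance follows the paper's own route: estimate $R^7_1$ and $R^7_2$ term by term, exploiting that $4\sqrt2\,c^{3/2}-16-12b$ and $2b_y-(2c)^{1/2}c_y$ vanish to second order, and use the $1$D Sobolev inequality $\|f\|_{L^\infty}\lesssim\|f\|_Y^{1/2}\|f_y\|_Y^{1/2}$ together with the Bernstein bound $\|\pd_yh\|_Y\le\eta_0\|h\|_Y$ (and $\|(I-\wP_1)g\|_{L^2}\le\eta_0^{-1}\|\pd_yg\|_{L^2}$) so that the $\sup_t$ and $L^1$ (resp.\ $L^2$) time integrals close with $T$-independent constants. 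The only noticeable difference is presentational: you make the cancellation explicit via the projection identities
$4\sqrt2\,c^{3/2}-16-12b=4\sqrt2\,(I-\wP_1)(c^{3/2}-2^{3/2})$ and $2b_y-(2c)^{1/2}c_y=-2(I-\wP_1)\{(c/2)^{1/2}c_y\}$,
whereas the paper imports the resulting $L^2/L^\infty$ bounds on $(c/2)^{3/2}-1-\tfrac34 b$ and $b_y-(c/2)^{1/2}c_y$ from \cite[(D.11), (D.13), (D.15)]{Mi} before interpolating; your version is thus slightly more self-contained. One small precision point worth flagging: in the $L^1(0,T;Y)$ bound for the summand $12(I-\wP_1)\{(c/2)^{1/2}c_y\}\,x_y$ of $R^7_1$, the "keep a $c_y$- or $x_{yy}$-factor plus a high-frequency $O(\|c_y\|_Y)$-factor in $L^2_t$ and Cauchy--Schwarz" bookkeeping is not quite enough as stated, since $x_y$ is only $L^\infty_t$-controlled; one must apply the Sobolev interpolation to \emph{both} $\tc$ and $x_y$ (producing $\|\tc\|_Y^{1/2}\|c_y\|_Y^{3/2}\|x_y\|_Y^{1/2}\|x_{yy}\|_Y^{1/2}$) and then pair $\|c_y\|_Y^{3/2}$ with $\|x_{yy}\|_Y^{1/2}$ by H\"older with exponents $(4/3,4)$ in $t$ to avoid a $T^{1/4}$ loss. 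This is exactly what the paper's final display in the proof of the claim does, so your argument closes once this is spelled out.
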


\begin{proof}
Since $\|f\|_{L^\infty}\lesssim \|f\|_Y^{1/2}\|f_y\|_Y^{1/2}$ for $f\in Y$,
it follows from \cite[(D.11),(D.15)]{Mi} that
\begin{align*}
& \left\|\left(\frac{c}{2}\right)^{1/2}c_y-b_y\right\|_{L^2}
\\ \lesssim & 
\left\|\left(\frac{c}{2}\right)^{1/2}-1\right\|_{L^\infty}\|c_y\|_Y
+\|b_y-c_y\|_Y
\\ \lesssim & \|\tc\|_Y^{1/2}\|c_y\|_Y^{3/2},
\end{align*}
and
\begin{align*}
& \left\|\left(\frac{c}{2}\right)^{3/2}-1-\frac{3}{4}b\right\|_{L^\infty}
\\ \lesssim & 
\left\|\left(\frac{c}{2}\right)^{3/2}-1-\frac{3}{4}b\right\|_{L^2}^{1/2}
\left\|\left(\frac{c}{2}\right)^{1/2}c_y-b_y\right\|_{L^2}^{1/2}
\\ \lesssim & \|\tc\|_Y\|c_y\|_Y\,.
\end{align*}
Combining the above with \cite[(D.11), (D.13)]{Mi}, we have
\begin{multline*}
\|\wP_1R^7_1\|_Y
\lesssim  
\left\|\left(\frac{c}{2}\right)^{3/2}-1-\frac{3}{4}b\right\|_{L^\infty}
\|x_{yy}\|_{L^2}\\
+\|x_y\|_{L^\infty}\left\|b_y-\left(\frac{c}{2}\right)^{1/2}c_y\right\|_Y
+\|c_y\|_Y^2
\\ \lesssim  \|x_{yy}\|_Y\|c_y\|_Y \|\tc\|_Y
+\|x_y\|_Y^{1/2}\|x_{yy}\|_Y^{1/2}\|c_y\|_Y^{3/2} \|\tc\|_Y^{1/2}+\|c_y\|_Y^2\,.
\end{multline*}
Hence by the definition of $\bM_{c,x}(T)$, we have \eqref{eq:R6-est2}.
We can prove \eqref{eq:R6-est3} using \cite[Claim~D.6]{Mi} and \eqref{eq:wpxy}
in a similar way. Thus we complete the proof.
\end{proof}

\begin{claim}
  \label{cl:R8-R11}
There exist positive constants $C$ and $\delta$ such that
if $\bM_{c,x}(T)+\bM_2(T)<\delta$, then
\begin{align}
\label{eq:R8-est}
& \sup_{t\in[0,T]}\|R^8(t)\|_Y+\|R^8\|_{L^2(0,T;Y)} \le C\bM_{c,x}(T)^2\,,
\\ \label{eq:R9-est}
& \sup_{t\in[0,T]}\|R^9(t)\|_Y+\|R^9\|_{L^1(0,T;Y)} \le 
C\bM_{c,x}(T)(\bM_{c,x}(T)+\bM_2(T))\,,
\\ \label{eq:R10-est}
&  \sup_{t\in[0,T]}\|R^{10}(t)\|_Y+\|R^{10}\|_{L^2(0,T;Y)} \le C\bM_{c,x}(T)^2\,,
\\ \label{eq:R11-est} 
& \sup_{t\in[0,T]}\|R^{11}(t)\|_Y+\|R^{11}\|_{L^1(0,T;Y)} \le C\bM_{c,x}(T)^2\,.
\end{align}
\end{claim}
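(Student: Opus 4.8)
The plan is to dispatch $R^8,\dots,R^{11}$ one after another, in each case writing the quantity as a bounded (or exponentially decaying in $t$) operator applied to a vector that is at least quadratic in $(\tc,x_y,c_y,x_{yy})$, and then invoking the reductions $b\simeq\tc$, $\|\tc-b\|_Y\lesssim\|\tc\|_Y^2$ and $\|b_y-c_y\|_Y\lesssim\|\tc\|_Y\|c_y\|_Y$ from \cite[Claim~D.6, (D.15)]{Mi}. Throughout I would use that $\pd_y$ is bounded on $Y$ and on $Y_1$ (functions in $Y$ are band-limited) and that $\|fg\|_Y\lesssim\|f\|_Y\|g\|_Y$ for $f,g\in Y$ by \eqref{eq:Y1-L1}; in particular the vector
$$F:={}^t\!\Bigl((I+\mathcal{C}_2)(c_yx_y)-(bx_y)_y,\ 0\Bigr)$$
occurring in $R^8$ and $R^9$ satisfies, by Claim~\ref{cl:(1+wT2)^{-1}}, $\|F(t)\|_Y\lesssim\bigl(\|c_y(t)\|_Y+\|x_{yy}(t)\|_Y\bigr)\bigl(\|x_y(t)\|_Y+\|\tc(t)\|_Y\bigr)$, whence $\sup_{[0,T]}\|F(t)\|_Y\lesssim\bM_{c,x}(T)^2$ and $\|F\|_{L^2(0,T;Y)}\lesssim\bM_{c,x}(T)^2$.

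For $R^8=6\pd_y(\bS_1+\bS_2)F$ I would combine $\|\wS_1\|_{B(Y)}\lesssim1$ from \eqref{eq:wS1-est} and $\|\bS_2\|_{B(Y)}\lesssim\|\tc\|_{L^\infty}$ from \eqref{eq:wS2-est} with Claim~\ref{cl:(1+wT2)^{-1}} to get $\|R^8(t)\|_Y\lesssim\|F(t)\|_Y$, which gives \eqref{eq:R8-est}. For $R^{10}=(\pd_y^2\wS_0-B_2)\,{}^t\!(b_y-c_y,0)$ I would use $\|\wS_0\|_{B(Y)}\lesssim1$ (\cite[Claim~B.1]{Mi}) and $\|b_y-c_y\|_Y\lesssim\|\tc\|_Y\|c_y\|_Y$ to obtain $\|R^{10}(t)\|_Y\lesssim\|\tc(t)\|_Y\|c_y(t)\|_Y$, which is bounded pointwise in $t$ by $\bM_{c,x}(T)^2$ and has $L^2(0,T)$-norm $\lesssim\bM_{c,x}(T)^2$, giving \eqref{eq:R10-est}. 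For $R^{11}=\widetilde{\mathcal{A}}_1(t)\,{}^t\!(\tc-b,0)$ I would use $\|\widetilde{\mathcal{A}}_1(t)\|_{B(Y)}\lesssim e^{-\a(3t+L)}$ from Claim~\ref{cl:akbound} together with $\|\tc-b\|_Y\lesssim\|\tc\|_Y^2$, so that $\|R^{11}(t)\|_Y\lesssim e^{-\a(3t+L)}\bM_{c,x}(T)^2$; since $e^{-\a(3t+L)}$ is bounded by $1$ and integrable in $t$, both the supremum over $[0,T]$ and the $L^1(0,T)$ norm are then $\lesssim\bM_{c,x}(T)^2$, giving \eqref{eq:R11-est}.

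The only term that is not immediate is $R^9=-6\sum_{3\le j\le5}\bS_jF$, because these operators do not carry $B(Y)$-bounds with constants independent of $v_2$ and $L$: by \eqref{eq:bS3} one has $\|\bS_3\|_{B(Y)}+\|\bS_4\|_{B(Y,Y_1)}\lesssim e^{-\a(3t+L)}$, by \eqref{eq:bS5} one has $\|\bS_5\|_{L^2(0,T;B(Y,Y_1))}\lesssim\bM_2(T)$, and pointwise $\|\bS_5(t)\|_{B(Y,Y_1)}\lesssim\|v_2(t)\|_X$ by \eqref{eq:Sv2}. I would therefore pair the weight $e^{-\a(3t+L)}$ against $\sup_{[0,T]}\|F(t)\|_Y$ (integrating in $t$ produces a factor $e^{-\a L}$) and pair $\|\bS_5\|_{L^2(0,T;B(Y,Y_1))}$ against $\|F\|_{L^2(0,T;Y)}$ by Cauchy--Schwarz in $t$, obtaining $\|R^9\|_{L^1(0,T;Y)}\lesssim\bigl(e^{-\a L}+\bM_2(T)\bigr)\bM_{c,x}(T)^2\lesssim\bM_{c,x}(T)\bigl(\bM_{c,x}(T)+\bM_2(T)\bigr)$, and the supremum bound from the pointwise estimate for $\bS_5$. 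I expect the main obstacle to be purely one of bookkeeping --- maintaining the discipline that only the operator norms $\|\bS_j\|$, never $F$ itself, are ever allowed to be the factor that is merely square integrable or exponentially decaying in $t$ --- rather than any single delicate inequality; once the pairings are organized this way, \eqref{eq:R8-est}--\eqref{eq:R11-est} all follow from the cited operator bounds.
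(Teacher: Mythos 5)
Your proposal is correct and follows essentially the same route as the paper: bound the common vector $F={}^t\!\bigl((I+\mathcal{C}_2)(c_yx_y)-(bx_y)_y,\,0\bigr)$ quadratically via \eqref{eq:H^s-Y} and the algebraic reductions $\|b\|_Y\lesssim\|\tc\|_Y$, $\|b_y-c_y\|_Y\lesssim\|\tc\|_Y\|c_y\|_Y$, $\|\tc-b\|_Y\lesssim\|\tc\|_Y^2$, and then apply the operator bounds $\|\wS_1\|_{B(Y)}\lesssim1$, $\|\bS_2\|_{B(Y)}\lesssim\|\tc\|_{L^\infty}$, \eqref{eq:bS3}, \eqref{eq:bS5}, $\|\wS_0\|_{B(Y)}\lesssim1$ and $\|\widetilde{\mathcal{A}}_1(t)\|_{B(Y)}\lesssim e^{-\a(3t+L)}$, with Cauchy--Schwarz in $t$ for the $\bS_5$ contribution to $R^9$. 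The pairing you describe (exponential decay against $\sup\|F\|_Y$, $L^2$-in-time operator norm against $\|F\|_{L^2(0,T;Y)}$) is exactly how the paper obtains the $L^1$ and $L^2$ bounds, so there is no gap.
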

\begin{proof}
By \eqref{eq:H^s-Y} and the fact that $\|b\|_Y\lesssim \|\tc\|_Y$,
$$\|(I+\mathcal{C}_2)(c_yx_y)-(bx_y)_y\|_Y
\lesssim (\|\tc\|_Y+\|x_y\|_Y)(\|c_y\|_Y+\|x_{yy}\|_Y)\,,$$
whence
\begin{equation}
  \label{eq:pfR8-1}
\|(I+\mathcal{C}_2)(c_yx_y)-(bx_y)_y\|_{L^2(0,T;Y)\cap L^\infty(0,T;Y)}
\lesssim \bM_{c,x}(T)^2\,.  
\end{equation}
Eq.~\eqref{eq:R8-est} follows from \eqref{eq:pfR8-1} and
\cite[(C.1),(C.2)]{Mi}.
Eq.~\eqref{eq:R9-est} follows from \eqref{eq:pfR8-1}, \eqref{eq:bS3} and
\eqref{eq:bS5}.

By \cite[Claims~B.1 and (D.11)]{Mi}, we have $\|\wS_0\|_{B(Y)}\lesssim 1$ and
\begin{equation}
  \label{eq:pfR10}
\|R^{10}\|_Y \lesssim \|c_y\|_Y\|\tc\|_{L^\infty}\,.  
\end{equation}
By Claim~\ref{cl:akbound} and \cite[(D.10)]{Mi},
\begin{equation}
  \label{eq:pfR11}
\|R^{11}\|_Y \lesssim e^{-\a(3t+L)}\|\tc\|_{L^\infty}\|\tc\|_Y\,.  
\end{equation}
The estimates \eqref{eq:R10-est} and \eqref{eq:R11-est} follows immediately
from \eqref{eq:pfR10} and \eqref{eq:pfR11}.
\end{proof}

\begin{claim}
\label{cl:Rv1}
There exist positive constants $C$ and $\delta$ such that
if $\bM_{c,x}(T)\le \delta$, then
\begin{gather}
\label{eq:Rv1-11-est}
\|R^{v_1}_{11}\|_{L^1(0,T;Y_1)}\le C\bM_1(T)(\bM_{c,x}(T)+\bM_1(T))\,,\\
  \label{eq:Rv1-12-est}
\|R^{v_1}_2\|_{L^2(0,T;Y)}+\|R^{v_1}_{12}\|_{L^2(0,T;Y)}\le C\bM_1(T)\,.
\end{gather}
\end{claim}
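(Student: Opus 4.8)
The plan is to observe that, after stripping off $\wP_1$ and the inverse $\eta$-Fourier transform, every summand of $R^{v_1}_{11}$, $R^{v_1}_{12}$ and $R^{v_1}_2$ has the generic form
\[
\Lambda[a,b,w,\kappa](t,y)=\frac{1}{2\pi}\int_{-\eta_0}^{\eta_0}\!\!\int_{\R^2}
a(t,y_1)\,b(t,z)\,w(t,z,y_1)\,\overline{\kappa(z,\eta,c(t,y_1))}\,e^{i(y-y_1)\eta}\,dz\,dy_1\,d\eta ,
\]
where $w\in\{v_1,\ \pd_z^{-1}\pd_yv_1\}$, the profile $b$ is one of $\varphi_c',\varphi_c,\pd_c\varphi_c$ (so $|b(t,z)|\lesssim e^{-2\a|z|}$, cf.\ \eqref{eq:exp-localized}) or $\tpsi_c$ (supported in $|z+3t+L|\le1$, with $|\tpsi_{c(t,y_1)}(z)|\lesssim|\tc(t,y_1)|$ and, on that support, $|\pd_z^j g_k^*(z,\eta,c)|\lesssim e^{-2\a(3t+L)}$ for $|\eta|\le\eta_0$ since $\a<1$), the symbol $\kappa$ is $1$, $\pd_zg_k^*$ or $\pd_zg_{11}^*$, and the scalar coefficient $a(t,\cdot)$ is $1$, a further $v_1$-factor, $c_y$, $x_y$ or $x_{yy}$. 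By \cite[Claims~B.1--B.5]{Mi} the symbols $g_k^*,\pd_zg_k^*,\pd_zg_{11}^*$ and all of their $\eta$-derivatives are, uniformly for $|\eta|\le\eta_0$ and $c$ near $2$, exponentially decaying as $z\to-\infty$ and at most sub-exponentially growing as $z\to+\infty$, and depend Lipschitz-continuously on $c$; together with the decay of $b$ this gives $e^{\a|z|/2}b(t,\cdot)\pd_\eta^j\kappa(\cdot,\eta,c)\in L^2_z$ uniformly, and $\|e^{-\a|z|/2}w(t)\|_{L^2_{z,y}}\lesssim\|\mathcal{E}(v_1(t))^{1/2}\|_{W(t)}$ by the definitions of $\mathcal{E}$ and $W(t)$.

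Next I would prove the mapping estimate
\[
\|\Lambda[a,b,w,\kappa](t)\|_{Y}\lesssim
\Big(\sup_{|\eta|\le\eta_0}\|e^{\a|z|/2}b(t,\cdot)\kappa(\cdot,\eta,c)\|_{L^2_z}+\cdots\Big)\,\|a(t,\cdot)\|_{L^2_y}\,\|e^{-\a|z|/2}w(t)\|_{L^2_{z,y}},
\]
the dots indicating the finitely many $\eta$-derivative norms needed, together with the stronger bound in which $Y$ and $\|a\|_{L^2_y}$ are replaced by $Y_1$ and $\|a\|_{L^\infty_y}$ when $a$ is a genuine $y$-coefficient. For each fixed $y_1$, Cauchy--Schwarz in $z$ reduces the inner integral to a localization constant times $|a(t,y_1)|\,\|e^{-\a|z|/2}w(t,\cdot,y_1)\|_{L^2_z}$; then, when $a\in\{c_y,x_y,x_{yy}\}$ or the second $v_1$-factor (which supplies the $y$-integrability), one Cauchy--Schwarz in $y_1$, exactly as in \cite[proof of Claim~D.5]{Mi} and Claim~\ref{cl:S7}. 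The only delicate case is $a\equiv1$, occurring in $II^6_2$ and in the term $\tfrac32\!\int v_1^2\varphi_c'e^{-iy\eta}$: for the latter one simply uses $v_1^2e^{-2\a|z|}\le(e^{-\a|z|/2}v_1)^2$, producing a genuine square of $\|v_1(t)\|_{W(t)}$; for $II^6_2$ one recognizes $\Lambda$ as a bounded Fourier-integral operator $L^2_{z,y}\to Y$ — precisely the $S^j_k$-type boundedness of \cite[Claims~B.1 and B.2]{Mi}, which rests on the $\eta$-smoothness and Lipschitz-in-$c$ properties recorded above.

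It then remains to sum. For \eqref{eq:Rv1-11-est}: the three terms of $II^6_{111}$ are bounded pointwise in $t$ by $\|v_1(t)\|_{W(t)}^2$, $\|c_y(t)\|_Y\|\mathcal{E}(v_1(t))^{1/2}\|_{W(t)}$ and $(\|x_{yy}(t)\|_Y+\|c_y(t)\|_Y\|x_y(t)\|_Y)\|v_1(t)\|_{W(t)}$ (using $Y\subset L^\infty(\R)$), while $II^6_{13}$ is bounded by $e^{-2\a(3t+L)}\|\tc(t)\|_Y\|v_1(t)\|_{W(t)}$; integrating in $t$ and using $\|v_1\|_{L^2(0,T;W(t))}+\|\mathcal{E}(v_1)^{1/2}\|_{L^2(0,T;W(t))}\lesssim\bM_1(T)$ and $\|c_y\|_{L^2(0,T;Y)}+\|x_{yy}\|_{L^2(0,T;Y)}+\sup_{[0,T]}(\|x_y\|_Y+\|\tc\|_Y)\lesssim\bM_{c,x}(T)$ yields $\|R^{v_1}_{11}\|_{L^1(0,T;Y_1)}\lesssim\bM_1(T)^2+\bM_{c,x}(T)\bM_1(T)$, which is \eqref{eq:Rv1-11-est}. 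For \eqref{eq:Rv1-12-est}: the two terms of $II^6_{112}$, the term $\eta\,II^6_{12}$ (harmless since $|\eta|\le\eta_0$ and $\pd_zg_{11}^*$ enjoys the same localization as $\pd_zg_k^*$), and $II^6_2$ are each bounded pointwise in $t$ by $(1+\|x_y(t)\|_Y)\|\mathcal{E}(v_1(t))^{1/2}\|_{W(t)}$ plus an exponentially $t$-small $\tpsi_c$-contribution, so with $\bM_{c,x}(T)$ small one gets $\|R^{v_1}_{12}\|_{L^2(0,T;Y)}+\|R^{v_1}_2\|_{L^2(0,T;Y)}\lesssim\|\mathcal{E}(v_1)^{1/2}\|_{L^2(0,T;W(t))}\lesssim\bM_1(T)$.

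I expect the crux to be exactly the case $a\equiv1$ in the mapping estimate: since $v_1$ is controlled only in $L^2_{z,y}$ with no decay in $y$, a term such as $II^6_2=-6\!\int(\tpsi_c-\varphi_c)v_1\overline{\pd_zg_2^*}e^{-iy\eta}\,dz\,dy$ \emph{cannot} be handled by bounding $|II^6_2(t,\eta)|$ pointwise in $\eta$ — that route would force $v_1\in L^1_yL^2_z$. One must exploit that the symbol $g_2^*$ is smooth and that the $\eta$-integration is restricted to $|\eta|\le\eta_0$, which is exactly the content of the $S^j_k$-operator estimates of \cite{Mi}; granting those, the rest is bookkeeping along the lines of \cite[Claims~B.1--B.7, D.2--D.6]{Mi}.
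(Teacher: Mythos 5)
Your proposal is correct and follows essentially the same route as the paper: $II^6_{111}$, $II^6_{13}$, and $II^6_{1122}$ are bounded in $L^\infty(-\eta_0,\eta_0)$ (the $Z_1$-norm, hence $Y_1$) via Cauchy--Schwarz, exploiting the extra $y$-decaying coefficient, while $II^6_{1121}$, $II^6_2$, and $II^6_{12}$ are bounded only in $L^2(-\eta_0,\eta_0)$ (the $Z$-norm, hence $Y$) via Cauchy--Schwarz in $z$ followed by Plancherel in $\eta$, which is exactly the $Y$-versus-$Y_1$ distinction you flagged. The one imprecision in your write-up is the appeal to ``$S^j_k$-type boundedness'' for the $a\equiv1$ terms --- the $S^j_k$ act on functions of $y$ alone, not on $L^2_{z,y}$ --- whereas the paper realizes your idea more elementarily: for each fixed $\eta\in[-\eta_0,\eta_0]$, Cauchy--Schwarz in $z$ (using uniform bounds on $e^{\a|z|/2}\varphi(z)\pd_zg_k^*(z,\eta,c)$) leaves a factor $\bigl(\int e^{-\a|z|}|\mF_y v_1(t,z,\eta)|^2\,dz\bigr)^{1/2}$, and the $L^2_\eta$-integration then produces $\|v_1(t)\|_{W(t)}$ by Plancherel.
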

\begin{proof}[Proof of Claim~\ref{cl:Rv1}]
By the assumption, there exists $\delta'\in(0,2)$ such that
$c(t,y)\in[2-\delta',2+\delta']$ for $t\in[0,T]$ and $y\in\R$.
Since $\psi$ has a compact support,
\begin{equation}
  \label{eq:II6-13-est}
\begin{split}
\|II^6_{13}(t,\eta)\|_{L^\infty(-\eta_0,\eta_0)}
\lesssim & \|v_1(t)\|_{L^2(\R^2)}\|\tc\|_Y
\sup_{\substack{\eta\in[-\eta_0,\eta_0]\\c\in[2-\delta',2+\delta']}}
\|\psi(\cdot+3t)\pd_zg^*(\cdot,\eta,c)\|_{L^2(\R)}
\\ \lesssim & e^{-\a(3t+L)}\|\tc(t)\|_Y\|v_1(t)\|_{L^2(\R^2)}\,.
\end{split}  
\end{equation}
By the Schwarz inequality,
\begin{equation}
  \label{eq:II6-111-est}
  \begin{split}
\|II^6_{111}(t,\eta)\|_{L^\infty(-\eta_0,\eta_0)}
\lesssim  &
\|v_1(t)\|_{W(t)}^2+\|\pd_x^{-1}\pd_yv_1(t)\|_{W(t)}\|c_y(t)\|_Y
\\ & +\|v_1(t)\|_{W(t)}(\|x_{yy}(t)\|_Y+\|(c_yx_y)(t)\|_{L^2(\R)})\,.    
  \end{split}
\end{equation}
Combining \eqref{eq:II6-13-est} and \eqref{eq:II6-111-est}, we have
\eqref{eq:Rv1-11-est}.
\par

Next, we will prove \eqref{eq:Rv1-12-est}.
We decompose $II^6_{112}$ as $II^6_{1121}+II^6_{1122}$, where
\begin{align*}
II^6_{1121}(t,\eta)=& -\frac32\int_{\R^2}(\pd_z^{-1}\pd_yv_1)(t,z,y)
\varphi(z)e^{-iy\eta}\,dzdy
\\=& -\frac{3\sqrt{2\pi}}{2}\int_\R \varphi(z)
\mF_y(\pd_z^{-1}\pd_yv_1)(t,z,\eta)\,dz\,,  
\end{align*}
\begin{align*}
II^6_{1122}(t,\eta)= & -\frac32\int_{\R^2}(\pd_z^{-1}\pd_yv_1)(t,z,y)
\tc(t,y)\delta\varphi_{c(t,y)}(z)e^{-iy\eta}\,dzdy
\\ & +3\int_{\R^2}v_1(t,z,y)x_y(t,y)\varphi_{c(t,y)}(z)e^{-iy\eta}\,dzdy\,.  
\end{align*}
By the the Schwarz inequality and Plancherel's theorem,
\begin{equation}
  \label{eq:1121-est}
  \begin{split}
& \|II^6_{1121}(t,\cdot)\|_{L^2(-\eta_0,\eta_0)}
\\ \lesssim & \left(\int_{-\eta_0}^{\eta_0}\int_\R e^{-2\a|z|}
|\mF_y(\pd_z^{-1}\pd_yv_1)(t,z,\eta)|^2\,dzd\eta\right)^{1/2}
\|e^{\a|\cdot|}\varphi\|_{L^2(\R)}
\\ \lesssim & \|v_1\|_{W(t)}\,,    
  \end{split}
\end{equation}
and
\begin{equation}
  \label{1122-est}
\|II^6_{1122}(t,\eta)\|_{L^\infty(-\eta_0,\eta_0)}  \lesssim 
(\|v_1\|_{W(t)}+\|\pd_z^{-1}\pd_yv_1\|_{W(t)})(\|\tc(t)\|_Y+\|x_y(t)\|_Y)\,.
\end{equation}
Similarly, we have 
\begin{equation}
  \label{eq:II62-est}
\|II^6_2(t,\cdot)\|_{L^2(-\eta_0,\eta_0)}+
\|II^6_{12}(t,\cdot)\|_{L^2(-\eta_0,\eta_0)}\lesssim \|v_1(t)\|_{W(t)}\,.
\end{equation}
Since $Y_1\subset Y$, we have \eqref{eq:Rv1-12-est} from
\eqref{eq:1121-est}--\eqref{eq:II62-est}.
Thus we complete the proof.
\end{proof}

Finally, we will estimate $k(t,y)$.
\begin{claim}
  \label{cl:k-est}
There exist positive constants $C$ and $\delta$ such that if
$\bM_{c,x}(T)\le \delta$, then
\begin{equation}
  \label{eq:k-est}
\sup_{t\in[0,T]}\|k(t,\cdot)\|_Y+\|k\|_{L^2(0,T;Y)} \le C\bM_1(T)\,.
\end{equation}
Moreover,
\begin{equation}
\label{eq:k-lim}
\lim_{t\to\infty}\|k(t,\cdot)\|_Y=0\,.
\end{equation}
\end{claim}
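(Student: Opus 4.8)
The plan is to recognize $k(t,\cdot)$ as a low-frequency truncation of the scalar function $h(t,y):=\int_\R v_1(t,z,y)\varphi_{c(t,y)}(z)\,dz$. Indeed, from \eqref{eq:der9} and the identity $\frac{1}{2\pi}\int_{-\eta_0}^{\eta_0}\int_\R g(y_1)e^{i(y-y_1)\eta}\,dy_1d\eta=\wP_1g(y)$ one has $k(t,\cdot)=\frac12\wP_1h(t,\cdot)$, so since $\wP_1$ is the orthogonal projection onto $Y$, $\|k(t,\cdot)\|_Y\le\frac12\|h(t,\cdot)\|_{L^2(\R)}$. To bound $\|h(t,\cdot)\|_{L^2(\R)}$ I would apply Cauchy--Schwarz in $z$ after writing $v_1\varphi_c=(e^{-\a|z|/2}v_1)(e^{\a|z|/2}\varphi_c)$: because $\bM_{c,x}(T)\le\delta$ forces $\|\tc(t)\|_{L^\infty}\lesssim\|\tc(t)\|_Y\le\delta$ (Remark~\ref{rem:smoothness}), $c(t,y)$ stays in a small neighbourhood of $c_0=2$, and since $\varphi_c(z)=c\sech^2(\sqrt{c/2}\,z)$ decays like $e^{-2\sqrt{c/2}|z|}$ with $4\sqrt{c/2}>2>\a$, the integral $\int_\R e^{\a|z|}\varphi_{c(t,y)}(z)^2\,dz$ is bounded uniformly in $t,y$. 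Hence $|h(t,y)|^2\lesssim\int_\R e^{-\a|z|}v_1(t,z,y)^2\,dz$, and integrating in $y$ and using $e^{-\a|z|}\le(e^{-\a|z|/2}+e^{-\a|z+3t+L|})^2$ gives $\|h(t,\cdot)\|_{L^2(\R)}\lesssim\|v_1(t)\|_{W(t)}$, so that $\|k(t,\cdot)\|_Y\lesssim\|v_1(t)\|_{W(t)}$.

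Given this pointwise-in-$t$ bound, the two estimates in \eqref{eq:k-est} follow directly from the definition of $\bM_1(T)$. For the supremum, $\|v_1(t)\|_{W(t)}\le2\|v_1(t)\|_{L^2(\R^2)}\le2\bM_1(T)$, since the weight $e^{-\a|z|/2}+e^{-\a|z+3t+L|}$ is bounded by $2$; for the time-integrated bound, $\mathcal{E}(v_1)\ge4v_1^2$ gives $\|v_1(t)\|_{W(t)}\le\frac12\|\mathcal{E}(v_1(t))^{1/2}\|_{W(t)}$, whence $\|k\|_{L^2(0,T;Y)}\lesssim\|\mathcal{E}(v_1)^{1/2}\|_{L^2(0,T;W(t))}\le\bM_1(T)$.

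It remains to prove $\lim_{t\to\infty}\|k(t,\cdot)\|_Y=0$, and by the above it suffices to show $\|v_1(t)\|_{W(t)}\to0$. I would estimate $\|e^{-\a|z|/2}v_1(t)\|_{L^2(\R^2)}$ and $\|e^{-\a|z+3t+L|}v_1(t)\|_{L^2(\R^2)}$ separately, changing variables to $x=z+x(t,y)$ so that $v_1(t,z,y)=\tv_1(t,x,y)$ by \eqref{eq:decomp2}. Integrating $x_t=2c+3x_y^2+(x_t-2c-3x_y^2)$, using $x(0,\cdot)=0$, the smallness of $\|\tc\|_{L^\infty(0,\infty;Y)}$, and Lemma~\ref{lem:cx_t-bound}, one obtains $x(t,y)\ge(2c_0-C\|v_0\|_{L^2})t-o(t)$ uniformly in $y$; with $c_0=2$ this gives $x(t,y)>3t$, hence also $x(t,y)-3t-L>\frac12 t$, for $t$ large. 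Consequently, for any fixed window size $K>0$, the sets $\{|z|\le K\}$ and $\{|z+3t+L|\le K\}$ lie, in the $x$-variable and for $t$ large, inside $\{x>\frac12 t-L-K\}$, on which the monotone cutoff $1+\tanh\eps(x-\frac14 t-x_0)$ of Lemma~\ref{lem:virial-0} (applied with $\tilde x_1(t)=\frac14 t$, i.e. $c_1=\frac14$) exceeds $1$; therefore $\int_{|z|\le K}v_1^2\,dzdy$ and $\int_{|z+3t+L|\le K}v_1^2\,dzdy$ are each at most $I_{x_0}(t)$, which tends to $0$ by \eqref{eq:limIx0}. Combining this with the elementary tail bounds $\int_{|z|>K}e^{-\a|z|}v_1^2\,dzdy\le e^{-\a K}\|v_0\|_{L^2}^2$ and its analogue for the second weight, we get $\limsup_{t\to\infty}\|v_1(t)\|_{W(t)}^2\lesssim e^{-\a K}\|v_0\|_{L^2}^2$ for every $K>0$, so the limit is $0$.

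The main obstacle is this last step for the weight $e^{-\a|z+3t+L|}$: that weight is concentrated along $x\approx x(t,y)-3t-L$, a line which for $c_0=2$ moves to the right at the positive speed $\approx c_0/2$, so one cannot discard $v_1$ there for free — a dominated-convergence argument fails because $v_1(t)$ is only bounded in $L^2$. Making it work requires the virial monotonicity of Lemma~\ref{lem:virial-0} with a cutoff speed strictly below $c_0/2$ together with the a priori lower bound $x(t,y)\gtrsim 2c_0\,t$, which is what forces the support of the auxiliary weight to remain to the right of the moving cutoff. The remaining ingredients — the identification $k=\frac12\wP_1h$, the uniform exponential localisation of $\varphi_c$, and the comparisons with $\bM_1(T)$ — are routine.
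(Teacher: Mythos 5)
Your proof is correct and takes essentially the same approach as the paper. The only organizational difference is in the proof of \eqref{eq:k-est}: you bound $k=\tfrac12\wP_1h$ directly by applying Cauchy--Schwarz in $z$ pointwise in $y$ and then using $\|\wP_1h\|_Y\le\|h\|_{L^2(\R_y)}$, whereas the paper splits $k=k_1+k_2$ (with $k_1$ containing the $c$-independent kernel $\varphi$ and $k_2$ the correction $\tc\,\delta\varphi_c$), bounds $k_1$ via Plancherel plus Minkowski, and $k_2$ in $Y_1$. Both routes yield $\|k(t)\|_Y\lesssim\|v_1(t)\|_{W(t)}$ with the same subsequent comparison to $\bM_1(T)$; your packaging is a bit cleaner, while the paper's split gives the unused extra information that $k_2$ is quadratically small. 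For \eqref{eq:k-lim} the paper only says ``combining with \eqref{eq:limIx0}'', whereas you spell out the mechanism correctly, in particular noticing that the weight $e^{-\a|z+3t+L|}$ sits along $x\approx x(t,y)-3t-L$, which still moves rightward at a speed roughly $c_0/2$, so the virial monotonicity of Lemma~\ref{lem:virial-0} must be invoked with a cutoff speed $c_1$ strictly between $0$ and that drift speed, together with the a priori lower bound $x(t,y)\gtrsim 2c_0t$. One small caveat: passing from the $L^2_y$ control in Lemma~\ref{lem:cx_t-bound} to the uniform-in-$y$ lower bound on $x(t,y)$ needs the $Y\hookrightarrow L^\infty$ embedding applied to $x_t-2c$ (after isolating the $(x_y)^2$ piece), which you gesture at via ``$o(t)$'' but do not make fully explicit; the paper's own proof is equally terse on this point, and it is standard, so this does not constitute a gap.
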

\begin{proof}

Let $\delta\varphi_c=(\varphi_c-\varphi)/\tc$ and
\begin{gather*}
k_1(t,y)=\frac{1}{4\pi}\int_{-\eta_0}^{\eta_0}\int_{\R^2}
v_1(t,z,y_1)\varphi(z)e^{i(y-y_1)\eta}\,dzdy_1d\eta\,,
\\
k_2(t,y)=\frac{1}{4\pi}\int_{-\eta_0}^{\eta_0}\int_{\R^2}\tc(t,y_1)
v_1(t,z,y_1)\delta\varphi_{c(t,y_1)}(z)e^{i(y-y_1)\eta}\,dzdy_1d\eta\,.  
\end{gather*}
By the definitions, we have $k=k_1+k_2$.
Using Plancherel's theorem and Minkowski's inequality, we have
\begin{equation}
  \label{eq:k1-est}
\begin{split}
  \|k_1(t,\cdot)\|_Y=& \frac{1}{2\sqrt{2\pi}}
\left\|\int_\R(\mF_yv_1)(t,z,\cdot)\varphi(z)\,dz\right\|_{L^2(-\eta_0,\eta_0)}
\\ \le & 
\frac{1}{2\sqrt{2\pi}}
\int_\R \left\|(\mF_yv_1)(t,z,\cdot)\right\|_{L^2(-\eta_0,\eta_0)}
\varphi(z)\,dz
\\ \le & \|e^{-\a|\cdot|}v_1(t,\cdot)\|_{L^2(\R^2)}\|e^{\a|\cdot|}\varphi\|_{L^2(\R)}
\lesssim \|v_1(t)\|_{W(t)}\,.
\end{split}  
\end{equation}
If $\bM_{c,x}(T)\le \delta$ and $\delta$ is sufficiently small, then
there exists $\delta'\in(0,2-\alpha)$ such that
$|c(t,y)-2|\le \delta'$ for every $t\in[0,T]$ and $y\in\R$ and
\begin{equation}
  \label{eq:k2-est}
\begin{split}
\|k_2(t,\cdot)\|_{Y_1}= & \frac{1}{2\sqrt{2\pi}}
\left\|\int_\R v_1(t,z,y)\tc(t,y)\delta\varphi_{c(t,y)}(z)e^{-iy\eta}\,dz
\right\|_{L^\infty(-\eta_0,\eta_0)}
\\ \lesssim &  \|v_1(t)\|_{W(t)}\|\tc(t)\|_Y
\quad\text{for $t\in[0,T]$.}
\end{split}  
\end{equation}
Since $Y_1\subset Y$, we see that \eqref{eq:k-est} follows from
\eqref{eq:k1-est} and \eqref{eq:k2-est}.
Moreover, we have \eqref{eq:k-lim} combining \eqref{eq:k1-est} and
\eqref{eq:k2-est} with \eqref{eq:limIx0}. Thus we complete the proof.
\end{proof}
\bigskip


\begin{thebibliography}{10}
\bibitem{APS}
J.~C.~Alexander, R.~L.~Pego and R.~L.~Sachs,
{\it On the transverse instability of solitary waves in the
Kadomtsev-Petviashvili equation},
Phys. Lett. A \textbf{226} (1997),  187--192.
%
\bibitem{Benjamin} T.~Benjamin,
 {\it The stability of solitary waves},
 Proc. R. Soc. Lond. A \textbf{328} (1972), 153-183.
%
\bibitem{Besov}
O.~V.~Besov, V.~P.~Il\'in and S.~M.~Nikolskii,
Integral representations of functions and imbedding theorems. Vol. I, J.Wiley, 1978
\bibitem{Bona} J.~L.~Bona,
{\it The stability of solitary waves},
Proc. R. Soc. Lond. A \textbf{344} (1975), 363-374.
%
\bibitem {Bourgain} J.~Bourgain,
{\it On the Cauchy problem for the Kadomtsev-Petviashvili equation},
 GAFA \textbf{3} (1993), 315-341.
%
\bibitem{Burtsev}
S.~P.~Burtsev, {\it Damping of soliton oscillations in media with a
  negative dispersion law}, Sov. Phys. JETP \textbf{61} (1985).
%
\bibitem{dBM} A.~de Bouard and Y.~Martel,
{\it Non existence of $L^2$-compact solutions of the Kadomtsev-Petviashvili
II equation}, Math. Ann. \textbf{328} (2004) 525-544.
%
\bibitem{dBS} A.~Bouard and J.~C.~Saut,
\textit{Remarks on the stability of generalized KP solitary waves},
Mathematical problems in the theory of water waves, 75--84, 
Contemp. Math. \textbf{200}, Amer. Math. Soc., Providence, RI, 1996.
%
\bibitem{GSS} M.\ Grillakis, J.\ Shatah and W.~A.\ Strauss,
{\it Stability Theory of solitary waves in the presence of symmetry, I.},
 J. Funct. Anal. \textbf{74} (1987), 160--197.

%
\bibitem{GPS} A.~Gr\"unrock, M.~Panthee and J.~Drumond Silva,
{\it On KP-II equations on cylinders}, Ann. IHP Analyse non lin\'eaire \textbf{26} (2009), 2335-2358.
%
\bibitem{Hadac} M.~Hadac,
{\it Well-posedness of the KP-II equation and generalizations}, Trans. Amer. Math. Soc.
\textbf{360} (2008), 6555-6572.
%
\bibitem{Haragus}
M.~Haragus, {\it Transverse spectral stability of small periodic traveling waves for the KP equation},
Stud. Appl. Math. \textbf{126} (2011),  157--185.
%
\bibitem{HHK} 
M.~Hadac, S.~Herr and H.~Koch, 
{\it Well-posedness and scattering for the KP-II equation in a critical space},
Ann. IHP Analyse non lin\'eaire \textbf{26} (2009), 917-941.
%
\bibitem{IM} P.~Isaza and J.~Mejia,
{\it Local and global Cauchy problems for the Kadomtsev-Petviashvili (KP-II)
equation in Sobolev spaces of negative indices},
Comm. Partial Differential Equations \textbf{26} (2001), 1027-1057.
%
\bibitem{KP} B.~B.~Kadomtsev and V.~I.~Petviashvili, 
{\it On the stability of solitary waves in weakly dispersive media}, 
Sov. Phys. Dokl. \textbf{15} (1970), 539-541.
%
\bibitem{Kapitula}
T.~Kapitula, 
{\it Multidimensional stability of planar traveling waves},
Trans. Amer. Math. Soc. \textbf{349} (1997),  257--269. 
%
\bibitem{KZ}
C.~E.~Kenig and S.~N.~Ziesler,
{\it Local well posedness for modified Kadomstev-Petviashvili equations},
Differential Integral Equations \textbf{18} (2005), 1111-1146.
%
\bibitem{Kl-Saut}
C.~Klein and J.~C.~Saut, 
\textit{Numerical study of blow up and stability of solutions of generalized Kadomtsev-Petviashvili equations},
J. Nonlinear Sci. \textbf{22} (2012), 763£ü811.
%
\bibitem{LiuW}
Y.~Liu and X.~P.~Wang,
\textit{Nonlinear stability of solitary waves of a generalized
 Kadomtsev-Petviashvili equation}, Comm. Math. Phys. \textbf{183} (1997),
253--266. 
%
\bibitem{Le-Xin}
C.~D.~Levermore and J.~X.~Xin,
{\it Multidimensional stability of traveling waves in a bistable reaction-diffusion equation, II.}
Comm. Partial Differential Equations \textbf{17} (1992), 1901--1924.
%
\bibitem{MM} Y.~Martel and F.~Merle,
{\it A Liouville theorem for the critical generalized Korteweg-de Vries
equation}, J. Math. Pures Appl. \textbf{79} (2000),  339-425.
%
\bibitem{MMT} Y.~Martel, F.~Merle and T.~P.~Tsai,
\textit{Stability and asymptotic stability in the energy space of the sum of
$N$ solitons for subcritical gKdV equations}, 
Comm. Math. Phys. \textbf{231} (2002), 347£ü373.
%
\bibitem{MV} F.~Merle and L.~Vega,
\textit{$L^2$ stability of solitons for KdV equation},  
Int. Math. Res. Not. (2003), 735-753. 
%
\bibitem{Mi1}
T.~ Mizumachi, \textit{Asymptotic stability of lattice solitons in the energy space},
Comm. Math. Phys. \textbf{288} (2009), 125-144.
%
\bibitem{Mi} T.~Mizumachi,
{\it Stability of line solitons for the KP-II equation in $\R^2$},
Mem. of AMS \textbf{238} (2015), 1125.
%
\bibitem{MPQ} T.~Mizumachi, R.~L.~Pego and J.~R.~Quintero,
\textit{Asymptotic stability of solitary waves in the Benney-Luke model of
water waves}, 
Differential Integral Equations \textbf{26} (2013), 253--301.
%
\bibitem{MT}
T.~Mizumachi and N.~Tzvetkov,
{\it Stability of the line soliton of the KP-II equation under periodic
transverse perturbations},
Mathematische Annalen \textbf{352} (2012), 659--690.
%
\bibitem{MT2}
T.~Mizumachi and N.~Tzvetkov,
\textit{$L^2$-stability of solitary waves for the KdV equation via Pego and Weinstein's method}, RIMS K\^{o}ky\^{u}roku Bessatsu B49 (2014):
Harmonic Analysis and Nonlinear Partial Differential Equations,
eds. M.~Sugimoto and H.~Kubo,  pp.33--63.
%
\bibitem{MST_contr} L. Molinet, J.-C. Saut and N. Tzvetkov, 
{\it Remarks on the mass constraint for KP-type equations},
SIAM J. Math. Anal. \textbf{39} (2007), 627-641.
%
\bibitem{MST_KPI} L. Molinet, J.-C. Saut and N. Tzvetkov,
\textit{Global well-posedness for the KP-I equation on the background of
a non-localized solution}, Comm. Math. Phys. \textbf{272} (2007), 775--810.
%
\bibitem{MST} L.~Molinet, J.~C.~Saut and N.~Tzvetkov, 
{\it Global well-posedness for the KP-II equation on the background of a
non-localized solution}, Ann. Inst. H. Poincar\'e Anal. Non Lin\'eaire
\textbf{28} (2011), 653--676.
%
\bibitem{Ped} G.~ Pedersen,
{\it Nonlinear modulations of solitary waves}, J. Fluid Mech. \textbf{267} (1994),
83--108. 
%
\bibitem{PW} R.~L.~Pego and M.~I.~Weinstein,
{\it Asymptotic stability of solitary waves},
Comm. Math. Phys. \textbf{164} (1994),  305--349.
%
\bibitem{RT1} F.~Rousset and N.~Tzvetkov,
{\it Transverse nonlinear instability for two-dimensional dispersive models },
Ann. IHP, Analyse Non Lin\'eaire \textbf{26} (2009), 477-496.
%
\bibitem{RT2} F.~Rousset and N.~Tzvetkov, {\it Transverse nonlinear instability for some Hamiltonian PDE's},
J. Math. Pures Appl. \textbf{90} (2008), 550-590.
%
\bibitem {Tak} H.~Takaoka,
{\it Global well-posedness for the Kadomtsev-Petviashvili II equation},
Discrete Contin. Dynam. Systems \textbf{6} (2000), 483-499.
%
\bibitem{TT} H.~Takaoka and N.~Tzvetkov,
{\it On the local regularity of Kadomtsev-Petviashvili-II equation},
IMRN \textbf{8} (2001), 77-114.
%
\bibitem{Tz} N.~Tzvetkov,
{\it Global low regularity solutions for Kadomtsev-Petviashvili equation},
Diff. Int. Eq. \textbf{13} (2000), 1289-1320.
%
\bibitem{Ukai} S.~Ukai,
{\it Local solutions of the Kadomtsev-Petviashvili equation},
J. Fac. Sc. Univ. Tokyo Sect. IA Math. \textbf{36} (1989), 193--209.
%
\bibitem{VA} J.~Villarroel and M.~Ablowitz, 
{\it On the initial value problem for the KPII equation with data that do not decay along a line},  
Nonlinearity  \textbf{17}  (2004), 1843-1866.
%
\bibitem{We} M.~Weinstein,
{\it Lyapunov stability of ground states of nonlinear dispersive evolution equations},
Comm.\ Pure Appl.\ Math.\ \textbf{39} (1986),  51--68.
%
\bibitem{Xin} J.~X.~Xin,
{\it Multidimensional stability of traveling waves in a bistable reaction-diffusion equation, I.}
Comm. Partial Differential Equations \textbf{17} (1992), 1889--1899.
%
\bibitem{Z} V.~Zakharov,
{\it Instability and nonlinear oscillations of solitons},
JEPT Lett. \textbf{22}(1975), 172-173.
\end{thebibliography}
\end{document}